\theoremstyle{plain}
\newtheorem{theorem}{Theorem}[section]
\newtheorem{corollary}[theorem]{Corollary}
\newtheorem{lemma}[theorem]{Lemma}
\newtheorem{proposition}[theorem]{Proposition} 
\newtheorem{definition}[theorem]{Definition}
\newtheorem*{definition*}{Definition}
\theoremstyle{remark}
\newtheorem{remark}[theorem]{Remark}
\newtheorem*{claim*}{Claim}
\newtheorem*{remark*}{Remark}
\newtheorem*{example*}{Example}
\newtheorem*{notation*}{Notation}
\numberwithin{equation}{section}
\def\N{{\mathbb N}}
\def\R{{\mathbb R}}
\newcommand{\V}{{\mathbb V}}
\newcommand{\eps}{\varepsilon}
\newcommand{\m}{\boldsymbol{\mathfrak m}}
\newcommand{\dd}{\;\mathrm{d}}
\DeclareMathOperator{\supp}{supp}
\newcommand{\abs}[1]{\vert {#1}\vert}
\DeclareMathOperator{\ent}{Ent}
\DeclareMathOperator{\Ch}{Ch}
\newcommand{\ddt}{\frac{\mathrm{d}}{\mathrm{d}t}}
\newcommand{\ddtr}{\frac{\mathrm{d}}{\mathrm{d}t}^{\kern-3pt+}}
\newcommand{\dds}{\frac{\mathrm{d}}{\mathrm{d}s}}
\newcommand{\cH}{\mathscr{H}}
\newcommand{\cB}{\mathcal{B}}
\newcommand{\cL}{\mathcal{L}}
\newcommand{\cX}{\mathcal{X}}
\newcommand{\cE}{\mathcal{E}}
\newcommand{\cA}{\mathcal{A}}
\newcommand{\ce}{\mathcal{CE}}
\newcommand{\cP}{\mathscr{P}}
\renewcommand{\tilde}{\widetilde}
\newcommand{\e}{\mathrm{e}}
\newcommand{\EVI}{{\sf EVI}}
\newcommand{\EDI}{{\sf EDI}}
\newcommand{\sfd}{{\sf d}}
\newcommand{\BorelSets}[1]{{\mathscr B}(#1)}
\newcommand{\rmD} {{\rm D}}
\newcommand{\rmI}{{\rm I}}
\newcommand{\eeta}{{\mbox{\boldmath$\eta$}}}
\newcommand{\ssigma}{{\mbox{\boldmath$\sigma$}}}
\newcommand{\ppi}{{\mbox{\boldmath$\pi$}}}
\newcommand{\xX}{\boldsymbol X}
\newcommand{\calX}{{\mathcal X}}
\newcommand{\xx}{\boldsymbol x}
\newcommand{\yy}{\boldsymbol y}
\newcommand{\zz}{\boldsymbol z}
\newcommand{\sxx}{\mbox{$\scriptsize\boldsymbol x$}}
\newcommand{\szz}{\mbox{$\scriptsize\boldsymbol z$}}
\newcommand{\sfp}{\mathsf p}
\newcommand{\sfc}{{\mathsf c}}
\newcommand{\Action}{{\mathscr A}}
\let\cal\mathcal
\newcommand{\BE}[2]{{\mathsf{BE}(#1,#2)}}
\newcommand{\sfP}{{\mathsf P}}
\newcommand{\sfS}{{\mathsf S}}
\newcommand{\sfh}{{\mathsf h}}
\newcommand{\rme}{\mathrm e}
\newcommand{\cAs}{\cA_\cE^*}
\newcommand{\OOO}{}
\newcommand{\nc}{\normalcolor}
\begin{document}

\title[Extended metric spaces]{Optimal transport, Cheeger energies and
  contractivity \\ of dynamic 
  transport distances in extended spaces}

\author{Luigi Ambrosio}
   \address{Scuola Normale Superiore, Pisa} 
   \email{luigi.ambrosio@sns.it}
   \author{Matthias Erbar}
   \address{University of Bonn} \email{erbar@iam.uni-bonn.de}
 \author{Giuseppe Savar\'e}
   \address{Pavia University}\email{giuseppe.savare@unipv.it}

\thanks{}



\dedicatory{Dedicated to J.L.~Vazquez in occasion of his 70th birthday}

\date\today

\maketitle

\begin{abstract}
  We introduce the setting of \emph{extended metric-topological
    measure spaces} as a general ``Wiener like'' framework for optimal
  transport problems and nonsmooth metric analysis in 
  infinite dimension. 
  
  After a brief review of optimal transport tools for general Radon
  measures, 
  we discuss the notions of the Cheeger energy, 
  of the Radon measures concentrated on absolutely continuous curves, 
  and of the induced ``dynamic transport
  distances''.  We study their main properties and their
  links with the theory of Dirichlet forms and the
  Bakry-\'Emery curvature condition,
  in particular concerning the contractivity properties 
  and the EVI formulation of the induced Heat semigroup.
\end{abstract}

\tableofcontents

\section{Introduction}

In the last years many papers have been devoted to the investigation of the connection between gradient contractivity,
contractivity of transport distances and lower bounds on Ricci curvature and to the connection between metric and differentiable 
structures. In these investigations one can take as starting point either a metric measure space $(X,\sfd,\m)$ or a Dirichlet form $\cE$ in $L^2(X,\m)$. 
In particular \cite{AGS12} provided key connections between the two viewpoints, proving that under mild regularity
assumptions the distance $\sfd_\cE$ generated out of the Dirichlet form as in \cite{Biroli-Mosco95} induces a metric energy
(called Cheeger energy in \cite{AGS11a}, \cite{AGS11b}) equal to $\cE$, and that Ricci lower bounds can be equivalently
stated either in terms of the Bakry-\'Emery gradient $K$-contractivity
condition $\BE K\infty$, $K\in\R$,
\begin{equation}
  \label{eq:7}
  \Gamma (\sfP_tf)\leq \e^{-2Kt}\,\sfP_t\Gamma(f)
\end{equation}
(here $\sfP$ is the semigroup induced by $\cE$), or
in terms of $K$-convexity of the entropy along Wasserstein geodesics (see also \cite{Koskela_Zhou,Koskela_Zhou_Shanmugalingam} and
also \cite{EKS} for extensions to the
case when upper bounds on the dimension are considered).
The crucial link between the two formulations is provided by 
the characterization of the semigroup $\sfP$ as the 
$\mathrm{EVI}_K$-gradient flow (see \eqref{eq:evik_intro}
below) of the entropy in the Wasserstein
space. 

A typical assumption made in the above-mentioned papers is that the topological/measure structure is induced by the distance, and that
the distance is finite: for instance, when one takes $\cE$ as starting point, one assumes that the topology induced by $\sfd_\cE$ coincides
with the initial topology of the space. However, there exist examples where the topology induced by the natural distance is too fine and the 
distance can be even infinite: the simplest and probably most studied and natural example is the so-called Wiener space, i.e.~a Gaussian
measure space endowed with the Cameron-Martin distance. 

The main goal of this and of the forthcoming paper \cite{AES} is a deeper investigation of the above-mentioned problems in
\textit{extended} metric structures, where extended metric spaces are sets $X$ endowed with a symmetric
and triangular $\sfd:X\times X\to [0,\infty]$, with $\sfd(x,y)=0$ iff $x=y$. Extended distances arise in a natural way
either by taking the supremum $\sup |f(x)-f(y)|$ along a set $\mathcal F$ of functions which separate the points
of $X$ (this is precisely what happens with $\sfd_\cE$), by construction of length distances and more generally by
action minimization. At this level many extension of the classical metric theory, for instance the existence of metric
derivatives $|\dot x|(t)$ for absolutely continuous curves $x(t)$ are fairly trivial, 
since $\sfd$ induces equivalence classes in $X$ which are classical metric spaces; on the other hand,
already the example of the Wiener space shows that when when we are given a reference measure $\m$ on $X$
it is very hard to work with the quotient structure, and it is  much better to consider the space as a whole; also in many
cases it happens that we are given a topology $\tau$ in $X$, coarser than the topology induced by the extended
distance. 
We axiomatize this richer structure with the concept of \textit{extended metric-topological space} $(X,\tau,\sfd)$, 
characterized by the existence of a family $\mathcal A$ of bounded functions which separate the points
of $X$ and generate both the Hausdorff topology $\tau$ and the distance $\sfd(x,y)$, the latter with the formula
$\sup_{f\in{\mathcal A}}|f(x)-f(y)|$. We denote in the sequel the algebra ${\rm Lip}_b(X,\tau,\sfd)$ of bounded,
$\sfd$-Lipschitz and $\tau$-continuous functions, which includes $\mathcal A$ and generates $\tau$ as well.
In view of the applications we have in mind in \cite{AES}, we are not assuming that the family $\mathcal A$ is countable
and, correspondingly, we do not need extra assumptions on $\tau$; the complete regularity of $\tau$ is implied
by the $\mathcal A$-generating property and it will be sufficient for our purpose, provided the reference measure $\m$ is Radon.  

Now we pass to a more detailed description of the content of the paper. Section~\ref{sec:1} is devoted to some measure-theoretic
preliminaries, mostly borrowed from the very comprehensive monographs \cite{Bogachev,Schwartz73}. In particular we introduce the
class of Radon measures, denoted by $\cP(X)$, and recall the basic compactness theorem for families of probability measures
(see Theorem~\ref{thm:compa-Riesz}). Then we recall the dual
formulation of the optimal transport problem when the marginals are
Radon measures, following \cite{Kellerer84} (see also \cite{Zaev} for the analysis of optimal transport
problem in a very general setup).

Section~\ref{sec:2} contains basic and already well-established results of the metric theory, with their easy adaptation
to the extended setting. The only, but essential, new ingredient is a self-improvement principle for solutions to the so-called $\EVI_K$
(evolution variational inequality) gradient flows along a semigroup ${\sf S}$
\begin{equation}\label{eq:evik_intro}
  \ddtr\frac12 \sfd^2({\sf S}_tx,y) +
    \frac{K}{2}\sfd^2({\sf S}_tx,y)\leq F(y) - F({\sf S}_tx)\qquad\forall t>0\;,
\end{equation}
(for all $y$ at a finite $\sfd$-distance from some ${\sf S}_tx$)
which allows to some extent to pass from \eqref{eq:evik_intro} to the same inequality
for the (extended) length distances $\sfd_\ell$, $\bar\sfd_\ell$ induced by $\sfd$ (the former defined in \eqref{eq:71} by $\eps$-chains, the
latter defined in \eqref{eq:81} by the minimization of the length of curves), see 
Theorem~\ref{thm:self-improvement} for a precise statement which involves a powerful integral formulation
of \eqref{eq:evik_intro}. Moreover in Corollary~\ref{cor:appgeo} we derive from
$\EVI_K$ a discrete convexity property relative to $\sfd_\ell$ that, under suitable compactness assumptions, can be improved to convexity along all
geodesics.

In Section~\ref{sec:3} we introduce metric-topological spaces $(X,\tau,\sfd)$ and we discuss a few preliminary properties
of them, in particular the density of ${\rm Lip}_b(X,\tau,\sfd)$ in $L^2(X,\m)$ for $\m$ Radon, compactness of measures
in the space $X^D$ of paths and lower semicontinuity of the $p$-action, $p\in (1,\infty)$, in this context. In Section~\ref{sec:4} we extend to
this setting (a priori neither separable nor metrizable) basic results relative to the extended distance in $\cP(X)$ induced
by the quadratic cost $\sfd^2$. Denoting this distance by $W_\sfd$, we prove compactness and lower semicontinuity theorems, the implication
from $W_\sfd$-convergence to weak convergence 
and the basic superposition theorem which extends to a
non-Polish setup the recent paper \cite{Li14}. Thanks to this result, $2$-absolutely continuous curves $\mu_t$ 
in $(\cP(X),W_2)$ 
have a lifting to the space $X^{[0,1]}$, i.e. there exists $\eeta\in\cP(X^{[0,1]})$
concentrated on $2$-absolutely continuous paths $\eta:[0,1]\to (X,\sfd)$ whose marginals are $\mu_t$ and satisfying
$$
\int |\dot\eta(t)|^2\dd\eeta(\eta)=|\dot\mu_t|^2
\qquad\text{for a.e. $t\in (0,1)$}\;.
$$

In Section~\ref{sec:Chee} we recall the basic construction of the so-called Cheeger energy \cite{Cheeger00}, adapted to the extended
setting $(X,\tau,\sfd)$ with a reference measure $\m\in\cP(X)$. Following with minor variants \cite{AGS11a} (these variants allow to bypass
some measurability issues relative to the asymptotic Lipschitz constant) we set
$$
\Ch(f):=\inf\liminf_{n\to\infty}\int g_n^2\dd\m\;,
$$
where the infimum runs among all sequences $(f_n)\subset {\rm Lip}_b(X,\tau,\sfd)$ with $\lim_n\int|f_n-f|^2\dd\m=0$ and all $\m$-measurable functions
$g_n\geq {\rm Lip}_a(f_n,\cdot)$ $\m$-a.e. in $X$ (where ${\rm Lip}_a$ is the so-called asymptotic Lipschitz constant, see \eqref{eq:deflipa}).
Together with the construction of $\Ch$ there is the construction of a local object, called minimal relaxed slope and denoted with
$|\rmD f|_w$, which provides integral representation to $\Ch$ via $\Ch(f)=\int |\rmD f|^2_w\dd\m$ when $\Ch(f)$ is finite. As shown
in \cite{Cheeger00} and \cite{AGS11a}, many classical properties of Sobolev functions extend to this setting; in addition, defining
$\Delta f$ as the element with minimal norm in the subdifferential $\partial\Ch (f)$, it is well-defined a Heat flow in $L^2(X,\m)$
(linear iff $\Ch$ is quadratic) $\sfP$, given by $\frac d{dt}\sfP_tf=\Delta \sfP_t f$, according to the evolution theory for maximal monotone operators in
Hilbert spaces.

The aim of Section~\ref{sec:extendedcpa} is to introduce two more extended distances induced by $\Ch$, in the subset $\cP^a(X)\subset\cP(X)$
of measure absolutely continuous w.r.t.~$\m$. The first one, denoted by $W_{\Ch}$, is a length distance in
$\cP^a(X)$ whose definition is inspired by the Benamou-Brenier formula
$$
W_{\Ch}^2(\rho_0\m,\rho_1\m):=\inf\left\{\int_0^1\|\rho_t'\|^2\dd t:\ \rho_t\in {\sf CE}^2(X,\Ch,\m)\right\}\;.
$$
Here $\|\rho_t'\|$ is the least function $c(t)$ in $L^2(0,1)$ satisfying
$$
\biggl|\int f\rho_s\dd\m-\int f\rho_t\dd\m\biggr|\leq
\int_s^t c(r)\biggl(\int |\rmD f|_w^2\rho_r\dd\m\biggr)^{1/2}\dd r\qquad\forall f\in{\rm Lip}_b(X,\tau,\sfd)
$$
and the property above defines the class of curves in ${\sf CE}^2(X,\Ch,\m)$.  
The second one, denoted $W_{\Ch,*}$, has a dual character and it is defined by
$$
W_{\Ch,*}^2(\rho_0,\rho_1):=2\sup_\phi\int (\phi_1\rho_1-\phi_0\rho_0)\dd\m\;,
$$
where the supremum runs along all the ``formal" subsolutions to the Hamilton-Jacobi
equation $\ddt \phi_t+|\rmD\phi_t|_w^2/2=0$.
At this high level of generality, we are only able to prove that $W_{\sfd}\leq W_{\Ch,*}\leq W_{\Ch}$; however, when we pass from
the global to the infinitesimal behaviour, these distances reveal much
closer connections. Indeed, under the Bakry-\'Emery gradient contractivity condition
$|\rmD \sfP_t f|^2_w\leq \rme^{-2Kt}\sfP_t|\rmD f|_w^2$ for some $K\in\R$ one can prove that the length distance associated to $W_{\Ch,*}$ is $W_{\Ch}$ (see
Remark~\ref{rem:diffusion}); in addition, along curves $\mu_t=\sfP_t\rho\,\m$ with $\rho\in L^\infty_+(X,\m)$ probability density,
these distances are finite and the metric derivatives w.r.t.~all these distances coincide a.e. in $(0,\infty)$ (see Corollary~\ref{cor:eqmet}). 

Building on this and refining the analysis made in \cite{AGS11a}, we prove in the subsequent Section~\ref{sec:identiflows} that the metric gradient flows
in $\cP^a(X)$ of the relative entropy functional
\begin{equation}
\ent(\rho\,\m):=\int\rho\log\rho\dd\m\label{eq:27}
\end{equation}
w.r.t.~the distances $W_{\sfd}$, $W_{\Ch,*}$ and $W_{\Ch}$ coincide with the Heat semigroup $\sfP$ in the $\sfP$-invariant
class of bounded probability densities under a mild lower semicontinuity assumption on $|\rmD^-\ent|$, the descending slope
of $\ent$ w.r.t. $W_\sfd$. 

Section~\ref{sec:stabchee} contains a key stability result for Cheeger's energies, 
which deals with the case of a monotone family of $(\tau\times\tau)$-continuous 
distances $\sfd_i$ approximating from below $\sfd$, as in the definition of extended metric-topological space.
We prove convergence of the correspondent gradient flows and the formula
$$
\Ch=\bigl(\inf_{i\in I} \Ch_i\bigr)_*\;,
$$
where $\Ch_i$ is Cheeger's functional relative to $\sfd_i$ and $()_*$ denotes the lower semicontinuous envelope
in $L^2(X,\m)$. As a byproduct, we get also convergence of the corresponding $L^2$ Heat flows. In view of the applications given
in Section~\ref{sec:transfer} we include in the convergence result also the case when $\sfd_i$ are semi distances, i.e. 
$\sfd_i(x,y)=0$ does not imply $x=y$. This inclusion requires an adaptation of the construction of $\Ch$ to the semimetric
setting.

In Section~\ref{sec:Energy} we take the point of view of a strongly local and Markovian Dirichlet form $\cE$ endowed with
a carr\'e du champs operator $\Gamma$. In this context the definitions of $W_{\Ch}$ and $W_{\Ch,*}$ can be immediately adapted,
formally replacing the minimal relaxed slope $|\rmD f|_w$ with $\sqrt{\Gamma(f)}$. Denoting by $W_{\cE}$ and $W_{\cE,*}$ the
corresponding extended distances, also in this context the
$K$-gradient contractivity condition \eqref{eq:7} 
yields that 
$W_{\cE}$ is the length distance associated to $W_{\cE,*}$; furthermore,
we prove in Theorem~\ref{thm:contraction-BE} that
\eqref{eq:7}
implies $K$-contractivity of both squared distances w.r.t.~to $\sfP^\cE$; if $L^2(X,\m)$ is separable 
we prove a partial converse, namely $K$-contractivity
of $W_{\cE}^2$ implies the $K$-gradient contractivity \eqref{eq:7}. 

Section~\ref{sec:actionnew} provides the $\EVI_K$ property \eqref{eq:evik_intro} of ${\sf S}=\sfP^\cE$ 
relative to the extended distances $W_{\cE,*}$ and $W_{\cE}$. The proof first provides a duality 
estimate involving $W_{\cE,*}$ and then, using the self-improvement principle of Section~\ref{sec:2} and the relation
between $W_{\cE,*}$ and $W_{\cE}$, the final result. In conjunction with the compactness properties of the sublevels of $\ent$, the
$\EVI_K$ property yields geodesic convexity, relative to $W_\cE$, of the sublevels of the entropy. In addition, when $K\geq 0$
also the sets $\{\rho\m:\ \|\rho\|_\infty\leq c\}$ are convex and when $K>0$ the Logarithmic Sobolev Inequality and the
Talagrand Transport Inequality hold.

In Section~\ref{sec:transfer} we analyze more in detail the connection between the metric and differentiable structures. More precisely, starting
from an extended metric-topological space $(X,\sfd,\tau)$ endowed with a reference measure $\m\in\cP(X)$ we can build $\Ch$ and, assuming
$\Ch$ to be quadratic, ask whether $\Ch$ fits into the theory of Dirichlet forms; the answer is affirmative and we prove, following essentially
\cite{AGS11b}, that $\Ch$ is a strongly local and Markovian Dirichlet form, and that $|\rmD f|_w^2$ corresponds to $\Gamma(f)$. Conversely,
given $\cE$ we can easily build an extended metric measure structure by selecting a family $\mathcal A\subset\{f:\ \Gamma(f)\leq 1\}$ of pointwise defined
functions which separate the points of $X$. It is important to understand to what extent these two constructions at the level of the energies and of the distances
are each the inverse of the other, namely
$$
\cE\rightarrow\sfd_{\cE}\rightarrow\Ch_{\sfd_\cE}=\cE?\qquad \quad\sfd\rightarrow\Ch\rightarrow\sfd_{\Ch}=\sfd?
$$
We know from \cite{Sturm_single} that, in general, even for Dirichlet
forms and for length distances \cite{Stollmann10}, we can't expect that the answer is always affirmative.
In order to understand this question we identify special properties of Cheeger's energies $\Ch$ and of distances $\sfd_{\Ch}$ associated to them,
when one chooses as $\mathcal A$, in the construction of $\sfd_{\Ch}$, the class $\{f\in C_b(X):\ |\rmD f|_w\leq 1\}$. For Cheeger's energies, the
special property is the so-called $\tau$-upper regularity, already
identified in \cite{AGS12} and here proved and adapted to the extended
setting: according to this property $|\rmD f|_w$ can be approximated 
by $\tau$-upper semicontinuous functions bounding the gradients of approximating functions $f_n$ relative
to finite distances, see Definition~\ref{def:upper_regularity_cE} for the precise statement. At the level of distances, the special property we need is that functions in the class
$\{f\in C_b(X):\ |\rmD f|_w\leq 1\}$ have to be 1-Lipschitz w.r.t.~$\sfd$.

We prove that $\cE\leq \Ch_{\sfd_\cE}$ and that equality holds iff the choice of $\mathcal A$ ensures the
$\tau$-upper regularity of $\cE$ and that when this happens several other identifications occur, see Theorem~\ref{thm:ultrap} 
for the precise statement; the proof, in part adapted from \cite{AGS12}, relies on the identification results established in the previous sections and particularly
on Section~\ref{sec:stabchee}. Notice that in the ``regular'' setting of \cite{AGS12} where
$\tau$ coincides with the topology induced by $\sfd_\cE$,
$\tau$-regularity can be also obtained as a consequence of the
Bakry-\'Emery $\BE
K\infty$ condition and a weak Feller property of $\sfP$. 

On the other hand, in Theorem~\ref{thm:ultrap1} we prove that $\sfd_{\Ch}$ is always larger than
$\sfd$,  and that equality holds if and only any function in $\{f\in C_b(X):\ |\rmD f|_w\leq 1\}$ is 1-Lipschitz w.r.t.~$\sfd$. These results
are independent of the doubling and Poincar\'e assumption considered in \cite{Koskela_Zhou}, \cite{Koskela_Zhou_Shanmugalingam}
and generalize those of \cite{AGS12} to the extended setup.

Section~\ref{sec:examples} describes classical examples of extended spaces (Wiener spaces, configuration spaces) and shows
how they fit in our framework.

Let us conclude by pointing out potential developments that we plan to investigate, at least in part, in \cite{AES}. The first one deals
with the so-called measurable distances, namely distances which are pointwise defined only at the level of subsets of positive
$\m$-measure of the space. This class of distances appears for instance in \cite{Hino-Ramirez}, in connection with the short time behaviour
of the heat kernel, and it has been deeply investigated in \cite{Weaver1}, \cite{Weaver2}, in particular looking for (extended) metric realizations
of measurable distances. Another direction comes from Gromov-Milman's theory of concentration, nicely developed in the recent monograph
\cite{Shioya}: indeed, the notion of pyramid of metric measure spaces investigated in \cite{Shioya} displays some analogy with the monotone
approximation property of the extended distance used in our axiomatization. In addition, the convergence result proved in Section~\ref{sec:stabchee}
at the level of the Cheeger energies (for the special case of the monotone approximation) should be compared with the various convergence
results (which essentially use, instead, the algebra of bounded 1-Lipschitz functions) developed in \cite{Shioya}.

\smallskip
\noindent
{\bf Acknowledgements.} We thank V.~Bogachev and D.~Zaev for very
useful technical and bibliographical suggestions.\\
The first two authors acknowledge the support
of the ERC ADG GeMeThNES. The first and third author 
have been partially supported by PRIN10-11 grant from MIUR for the
project  \emph{Calculus of Variations.} The second author also acknowledes support by the
German Research Foundation through the Collaborative
Research Center 1060 ``The Mathematics of Emergent Effects''.

\section{Preliminaries}\label{sec:1}

\subsection{Measure-theoretic notation,  Radon measures, weak and
  narrow topology}

For a Hausdorff topological space $(X,\tau)$, we denote by $C_b(X)$ the space of
bounded continuous functions $f:(X,\tau)\to\R$ and by $\BorelSets{\tau}$ the Borel
$\sigma$-algebra of $\tau$. Throughout this paper 
\begin{equation}
\text{$\cP(X)$ denotes the class of Radon probability measures in $X$,}
\label{eq:1}
\end{equation}
i.e. Borel probability measures $\mu$ having the property that 
\begin{equation}
\text{for any $B\in\BorelSets{\tau}$ and any $\epsilon>0$ there exists a compact set $K\subset B$ with 
$\mu(B\setminus K)<\epsilon\;$}
\label{eq:21}
\end{equation}
Notice that the Radon property is in general stronger than the tightness one,
for which the inner approximation \eqref{eq:21} is required only for $B=X$. 

Radon measures have stronger additivity and continuity properties in connection with open
sets and lower semicontinuous functions; in particular we shall use this version of the monotone convergence theorem
(see \cite[Lem.~7.2.6]{Bogachev})
\begin{equation}\label{eq:Beppo_Levi_general}
\lim_{i\in I}\int f_i\dd\mu=\int\lim_{i\in I}f_i\dd\mu
\end{equation}
valid for Radon measures $\mu$ and for nondecreasing nets of
$\tau$-lower semicontinuous and equibounded functions $f_i$.
By truncation, we can apply the same property to nondecreasing nets of $\tau$-lower semicontinuous $f_i:X\to [0,\infty]$.
 
By the very definition of Radon topological space \cite[Ch.~II, Sect. 3]{Schwartz73}, 
every Borel measure in a Radon space is Radon: such class of spaces includes
locally compact spaces with a countable base of open sets,
Polish, Lusin and Souslin spaces \cite[Thm.~9 \& 10, p.~122]{Schwartz73}.
In particular the notation \eqref{eq:1} is consistent with the standard one adopted e.g.~in
\cite{AGS11a,Ambrosio-Gigli-Savare08,Vil03}, where Polish or second countable locally compact spaces are
considered.

The narrow topology on $\cP(X)$ can be defined as the coarsest topology for which all maps 
\begin{equation}
  \label{eq:5}
  \mu\mapsto \int h\dd\mu\qquad\text{from $\cP(X)$ into $\R$}
\end{equation}
are lower semicontinuous as $h:X\to \R$ varies in the set of bounded
lower semicontinuous functions \cite[p. 370]{Schwartz73}.
It can be shown \cite[p. 371]{Schwartz73} 
that it is a Hausdorff topology on $\cP(X)$; when
$(X,\tau)$ is completely regular, i.e. 
\begin{equation}
  \label{eq:CR}
  \begin{gathered}
    \text{for any closed set $F\subset X$ and any $x_0\in X\setminus F$}\\
    \text{there exists $f\in C_b(X)$ with $f(x_0)>0$ and $f\equiv 0$ on $F$,} 
  \end{gathered}
\end{equation}
the narrow topology coincides with the usual weak one, induced by the duality with $C_b(X)$.
In fact, in a completely regular space every bounded lower semicontinuous function $h$
is the upper envelope of the directed set $D_h:=\{f\in C_b(X):f\le h\}$, so
that \eqref{eq:Beppo_Levi_general} shows that 
$\int h\dd\mu=\sup\big\{\int f\dd\mu:f\in D_h\big\}$.

One of the advantages to use the narrow topology in $\cP(X)$ when
$(X,\tau)$ is a Hausdorff topological space
is the following sufficient condition for the compactness 
\cite[Theorem 3, p.~379]{Schwartz73}
(in completely regular spaces it 
is a consequence of Prokhorov theorem). 

\begin{theorem}\label{thm:compa-Riesz}
Let $(X,\tau)$ be a Hausdorff topological space. Assume that a collection ${\mathcal M}\subset\cP(X)$ 
is equi-tight, i.e.
\begin{equation}
  \label{eq:22}
  \text{for every $\eps>0$ there exists a compact set $K_\eps\subset
    X$ such that}\quad
  \sup_{\mu\in \mathcal M}\mu(X\setminus K_\eps)\le \eps\; .
\end{equation}
 Then ${\mathcal M}$ has limit points in the class $\cP(X)$ w.r.t.~the 
narrow topology (in particular, the weak topology induced by $C_b(X)$ when $(X,\tau)$ is
completely regular). 
\end{theorem}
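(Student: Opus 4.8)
The plan is to prove the statement in the strong form of relative compactness: every net $(\mu_\alpha)_{\alpha\in A}$ in $\mathcal M$ admits a subnet converging in the narrow topology to some $\mu\in\cP(X)$. Since $(X,\tau)$ is assumed neither metrizable nor separable, I would work throughout with nets and subnets rather than sequences, extracting a single convergent subnet by exploiting compactness of a product space. First I would normalise the tightness data \eqref{eq:22}: choose compact sets $K_n$ with $\sup_{\mu\in\mathcal M}\mu(X\setminus K_n)\le 1/n$ and, replacing $K_n$ by $\bigcup_{j\le n}K_j$, assume them increasing, so that asymptotically all the mass of every $\mu\in\mathcal M$ is carried by the $\sigma$-compact set $\bigcup_n K_n$.

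The core of the argument is to produce a limit functional and represent it. Embedding $\mu\mapsto\bigl(\int f\dd\mu\bigr)_{f\in C_b(X)}$ into the compact product $\prod_{f\in C_b(X)}[\inf_X f,\sup_X f]$ and applying Tychonoff, I would pass to a subnet $(\mu_\beta)$ for which $L(f):=\lim_\beta\int f\dd\mu_\beta$ exists for every $f\in C_b(X)$. The limit $L$ is a positive linear functional with $L(\one)=1$ and $|L(f)|\le\|f\|_\infty$, and it inherits a tightness property from $\mathcal M$: if $0\le f\le 1$ and $f\equiv 0$ on $K_n$, then $\int f\dd\mu_\beta\le\mu_\beta(X\setminus K_n)\le 1/n$, whence $L(f)\le 1/n$. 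The next step is to represent $L$ by a measure living on $X$ itself, through the Riesz-type construction $\mu_*(C)=\inf\{L(f):f\in C_b(X),\ \one_C\le f\le 1\}$ for compact $C$, extended by inner regularity $\mu(B)=\sup\{\mu_*(C):C\subseteq B\text{ compact}\}$. The tightness of $L$ is exactly what guarantees that the resulting Borel measure $\mu$ has full mass $\mu(X)=1$ (no mass is lost to a compactification) and satisfies the inner approximation \eqref{eq:21}, i.e.\ $\mu\in\cP(X)$, with $L(f)=\int f\dd\mu$ for all $f\in C_b(X)$.

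It then remains to upgrade the weak convergence $\int f\dd\mu_\beta\to\int f\dd\mu$ to narrow convergence, namely $\liminf_\beta\int h\dd\mu_\beta\ge\int h\dd\mu$ for every bounded lower semicontinuous $h$; by adding a constant I may take $0\le h\le M$. Fixing $\eps>0$, I would choose $n$ so large that $1/n<\eps$ and $\mu(X\setminus K_n)<\eps$, and use the complete regularity of the compact Hausdorff space $K_n$ to pick $g\in C(K_n)$ with $0\le g\le h|_{K_n}$ and $\int_{K_n}g\dd\mu\ge\int_{K_n}h\dd\mu-\eps$. Extending $g$ by Tietze to $f\in C_b(X)$ with $0\le f\le M$, one has $\int_X f\dd\mu_\beta\le\int_{K_n}g\dd\mu_\beta+M/n\le\int_X h\dd\mu_\beta+M/n$, so taking $\liminf_\beta$ gives $\liminf_\beta\int h\dd\mu_\beta\ge\int_X f\dd\mu-M/n\ge\int_X h\dd\mu-M\eps-\eps-M/n$; letting $\eps\to0$ yields the desired inequality. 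Hence $\mu$ is a narrow limit point of $\mathcal M$ in $\cP(X)$.

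I expect the principal obstacle to be the representation step: producing from the tight positive functional $L$ a genuine \emph{Radon} measure on $X$ rather than merely a finitely additive set function or a measure on a compactification, and verifying both its inner regularity for all Borel sets and the conservation of total mass. Both are precisely the points at which equi-tightness is indispensable, and the general (not completely regular) Hausdorff setting makes them delicate, since $C_b(X)$ need not separate measures globally. A secondary subtlety is the passage from continuous to lower semicontinuous test functions needed to identify the narrow rather than the merely weak limit; this is handled by confining the approximation to the compact sets $K_n$, where complete regularity is automatic, exactly as in the remark following \eqref{eq:CR}.
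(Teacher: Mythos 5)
The paper does not actually prove Theorem~\ref{thm:compa-Riesz}: it is quoted from \cite[Thm.~3, p.~379]{Schwartz73}, and the sentence introducing it stresses that in \emph{completely regular} spaces the statement is merely a consequence of Prokhorov's theorem. That remark points exactly at the gap in your argument: every one of your steps requires complete regularity of $(X,\tau)$, while the theorem is asserted for an arbitrary Hausdorff space, and this generality is its whole point (it is the reason the narrow topology, defined through lower semicontinuous test functions rather than $C_b(X)$, is used at all).

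Concretely, your limit object is the functional $L(f)=\lim_\beta\int f\dd\mu_\beta$ on $C_b(X)$, the measure is reconstructed from $L$ via $\mu_*(C)=\inf\{L(f):\ \one_C\le f\le 1\}$, and narrow convergence is recovered by approximating a bounded lsc $h$ with continuous functions obtained from Urysohn-type choices on $K_n$ followed by a Tietze extension to all of $X$. In a Hausdorff space which is not completely regular, $C_b(X)$ need not separate points or measures; there even exist regular Hausdorff spaces on which \emph{every} continuous real-valued function is constant. On such a space (i) $L$ degenerates to evaluation at constants and carries no information about the net; (ii) the only competitor in the definition of $\mu_*(C)$ is $f\equiv 1$, so $\mu_*(C)=1$ for every nonempty compact $C$, and your inner extension assigns mass $1$ to every nonempty Borel set, which is not even finitely additive; (iii) compact subsets are no longer $C^*$-embedded, so the extension of $g\in C(K_n)$ to $f\in C_b(X)$ is unavailable (in a Tychonoff space it would follow via the Stone--\v{C}ech compactification, but not here); and (iv) control of $\int f\dd\mu_\beta$ for $f\in C_b(X)$ gives no control of $\liminf_\beta\int h\dd\mu_\beta$ for lsc $h$, because the narrow topology is strictly finer than the $C_b$-weak topology precisely in this setting. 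What you wrote can be completed into a correct proof in the completely regular case --- it is essentially the Prokhorov-type argument, with the representation step you rightly single out as the main obstacle handled by Riesz--Markov--Kakutani on $\beta X$ together with the tightness bound $\hat\mu(K_n)\ge 1-1/n$, which forces the representing measure to live on $\bigcup_n K_n\subset X$ --- but it does not prove the stated theorem. In the general Hausdorff case the compactness argument has to be run at the level of set functions on compact and open sets (extracting a subnet along which the masses of compacta stabilize, building a tight, additive, compact-regular content, and taking the induced Radon measure), since $C_b(X)$-duality is simply not an available tool; this is the nature of the proof in \cite{Schwartz73} to which the paper defers.
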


\subsection{Transport plans, gluing, optimal transport and duality}

Let  $(X,\tau)$ and $(Y,\sigma)$ be
Hausdorff topological spaces and let $\mu\in \cP(X)$.
We say that a map $f:X\to Y$ is Lusin $\mu$-measurable
(see e.g.~\cite[Chap.~I, Sect.~5]{Schwartz73}) if 
\begin{equation}
  \label{eq:23}
  \begin{gathered}
    \text{for every compact set $K\subset X$ and every $\delta>0$ there
      exists a compact set }
    \\
    \text{$K_\delta\subset K$ such that $f$ restricted to
      $K_\delta$ is continuous and $\mu(K\setminus K_\delta)\le
      \delta$\;.}
  \end{gathered}
\end{equation}
Notice that since $\mu$ is a Radon measure 
the approximation property \eqref{eq:23} holds in fact for 
every $K\in\BorelSets{\tau}$.

If $f$ is Lusin $\mu$-measurable than it is also Borel
$\mu$-measurable (i.e.~$f^{-1}(B)$ is $\mu$-measurable for every
$B\in \BorelSets{\sigma}$); the converse is known to be true 
if $(Y,\sigma)$ is separable and metrizable  \cite[Thm.~5, p.~26]{Schwartz73}.

If $f$ is Lusin $\mu$-measurable 
we denote by $f_\sharp\mu\in \cP(Y)$ the push-forward of $\mu$ via $f$:
it is the Radon measure defined by $f_\sharp \mu(B):=\mu(f^{-1}(B))$ for every 
Borel set $B\in \BorelSets Y$ (Lusin's $\mu$-measurability of $f$ is assumed in order to 
guarantee the Radon property of $f_\sharp\mu$).

If $\mu,\,\nu\in \cP(X)$, we will denote by $\Gamma(\mu,\nu)$
the class of admissible transport plans between $\mu$ and $\nu$, i.e.
Radon  probability measures in 
$\cP(X\times X)$ with marginals $\mu$ and $\nu$ respectively:
\begin{equation}
  \begin{aligned}
    \Gamma(\mu,\nu)=\Big\{\ppi\in \cP(X\times X):\  {}&\pi(A\times
    X)=\mu(A) \ \text{ for all
      $A\in\BorelSets{\tau}$}\\
    &\pi(X\times B)=\nu(B)\ \text{ for all
      $B\in\BorelSets{\tau}$}\ \Big\}\;.
  \end{aligned}
\label{eq:4}
\end{equation}
It is worth noticing that $\Gamma(\mu,\nu)$ is non empty 
(since it contains the unique Radon extension {to $\BorelSets{X\times X}$} of 
the product measure $\mu\times \nu$, see \cite[p. 73]{Schwartz73}),
convex and compact with respect to the narrow topology,
by Theorem \ref{thm:compa-Riesz}.

We shall use the following gluing lemma:

\begin{lemma}[Gluing lemma]\label{lem:gluing}
  Let $I=\{0,1,\ldots,N\}$, $N\ge 2$, and let $(X_i)_{i\in I}$ be Hausdorff topological spaces
  with $\xX:=\Pi_{i\in I}X_i$ and corresponding projection $\mathsf p^i$. Let
  $\ppi_i$, $i=1,\ldots,N$, be Radon measures in
  $X_{i-1}\times X_i$ satisfying the compatibility conditions
  \begin{displaymath}
  \begin{aligned}
    \int \phi(y)\dd\ppi_i(x,y)
    &=\int\phi(x)\dd\ppi_{i+1}(x,y)
      \qquad\forall\,
      \phi\in C_b(X_i)
      \end{aligned}
      \quad\text{for every }i\in\{1,\ldots,N-1\}\;.
    \end{displaymath}
    Then there exist  Radon measures $\ppi$
    in $\xX$ such that
    $(\mathsf p^{i-1},\mathsf p^i)_\sharp \ppi=\ppi_i$ 
    for every $i\in \{1,\ldots,N-1\}$.
    The same property (with obvious modifications) holds 
    in the case $I=\N$ of a countable set of indexes.
\end{lemma}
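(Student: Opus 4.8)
The plan is to argue by induction on $N$, reducing the whole statement to the basic two-marginal gluing, which I would settle by disintegration with respect to the common middle marginal; the countable case $I=\N$ is then obtained by a projective-limit passage. Thus the first move is to isolate the case $N=2$ in abstract form: given Hausdorff spaces $A,B,C$ and Radon plans $\alpha\in\cP(A\times C)$, $\beta\in\cP(C\times B)$ whose marginals on the common factor agree, $(\mathsf p^C)_\sharp\alpha=(\mathsf p^C)_\sharp\beta=:\nu\in\cP(C)$, I want a Radon $\sigma\in\cP(A\times C\times B)$ with $(\mathsf p^A,\mathsf p^C)_\sharp\sigma=\alpha$ and $(\mathsf p^C,\mathsf p^B)_\sharp\sigma=\beta$.

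To produce $\sigma$ I would invoke the disintegration theory for Radon measures of \cite{Schwartz73,Bogachev}: writing $\alpha=\int_C \hat\alpha^c\otimes\delta_c\dd\nu(c)$ and $\beta=\int_C \delta_c\otimes\hat\beta^c\dd\nu(c)$ with scalarly $\nu$-measurable families of Radon probabilities $\hat\alpha^c\in\cP(A)$, $\hat\beta^c\in\cP(B)$, and then set
\[
\sigma:=\int_C \hat\alpha^c\otimes\delta_c\otimes\hat\beta^c\dd\nu(c).
\]
That $\sigma$ is genuinely Radon follows from the companion integration (``mixing'') statement, namely that the $\nu$-integral of a scalarly measurable family of Radon measures is again Radon; the two marginal identities are then immediate, since integrating out the $B$-factor (resp.\ the $A$-factor) gives back the disintegration of $\alpha$ (resp.\ $\beta$).

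Granting the two-marginal case, the induction runs smoothly. Assuming a Radon $\ppi^{(N-1)}\in\cP(\prod_{i=0}^{N-1}X_i)$ whose consecutive-pair marginals are $\ppi_1,\dots,\ppi_{N-1}$, I observe that its $X_{N-1}$-marginal equals the second marginal of $\ppi_{N-1}$, which by the compatibility hypothesis equals the first marginal of $\ppi_N$ on $X_{N-1}$. Applying the two-marginal gluing with $A=\prod_{i=0}^{N-2}X_i$, $C=X_{N-1}$, $B=X_N$, $\alpha=\ppi^{(N-1)}$, $\beta=\ppi_N$ yields $\ppi^{(N)}\in\cP(\xX)$; composing $(\mathsf p^A,\mathsf p^C)_\sharp\ppi^{(N)}=\ppi^{(N-1)}$ with the further consecutive projections recovers $\ppi_1,\dots,\ppi_{N-1}$, while the second identity gives $\ppi_N$, so all consecutive-pair marginals are correct. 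For $I=\N$ these gluings can be performed compatibly, producing a projective family of Radon probabilities $\ppi^{(N)}$ on the finite products with $\ppi^{(N)}=(\text{projection})_\sharp\ppi^{(N+1)}$; I would then take its projective limit on $\prod_{i\in\N}X_i$, which exists and is Radon by the Prokhorov-type projective-limit theorem for Radon measures (its hypotheses being met since each $\ppi^{(N)}$ is tight and the index set is sequential).

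I expect the main obstacle to be exactly the disintegration step in this generality: the factors $X_i$ are neither metrizable nor separable, so one cannot fall back on the Polish-space disintegration and must instead rely on the full Radon-measure machinery, i.e.\ the existence of scalarly measurable conditional Radon probabilities together with the fact that their integral reconstitutes a Radon measure. Once these are in hand, the marginal computations, the induction, and the passage to the countable product are routine. A minor auxiliary point worth recording, useful in checking Radon-ness of the assembled measures, is that any Borel measure on $\xX$ whose consecutive-pair marginals are Radon is automatically tight: a compact set carrying all but $\eps$ of the mass can be built as a product of the compact sets supporting the marginals' mass, since each coordinate projection is continuous.
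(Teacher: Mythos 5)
Your overall skeleton (induction on $N$ reducing everything to a two-marginal gluing, then a projective-limit passage for $I=\N$) coincides with the paper's, and the countable step via a Prokhorov-type projective limit theorem is exactly what the paper does, quoting the Kolmogorov--Prokhorov theorem in Schwartz's book. The genuine gap is the step you yourself flag as the main obstacle: fiber-concentrated disintegration of a Radon measure over an \emph{arbitrary} Hausdorff factor. The identity $\alpha=\int_C\hat\alpha^c\otimes\delta_c\dd\nu(c)$ is precisely the assertion that regular conditional probabilities exist \emph{and} are carried by the fibres $A\times\{c\}$. The known existence theorems (Bogachev, Vol.~2, Sec.~10.4; Fremlin, Vol.~4) require either that the quotient $\sigma$-algebra be countably generated, or that the base space be Souslin/separable metrizable, or the existence of a strong lifting on $(C,\nu)$; none of these is available here, since the $X_i$ are arbitrary Hausdorff spaces, their Borel $\sigma$-algebras need not be countably generated, and strong liftings for Radon measures on compact Hausdorff spaces can fail (Losert's example). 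So the ``full Radon-measure machinery'' you appeal to does not exist in the generality of the lemma. This is exactly why the paper avoids disintegration altogether: it proves the two-marginal (and, by induction, the $N$-marginal) case first on \emph{compact} Hausdorff spaces by a Hahn--Banach extension combined with the Riesz--Markov--Kakutani representation theorem (following Villani's Exercise~7.9), and then passes to general Radon marginals by an exhaustion argument over compact sets; in the Polish setting the disintegration proof you describe is indeed the standard one, but it does not transfer.

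Two auxiliary points in your proposal also need repair. First, the ``mixing'' statement: the $\nu$-integral of a merely scalarly measurable family of Radon probabilities need not be Radon --- already $\int\delta_{f(c)}\dd\nu(c)=f_\sharp\nu$ can fail to be Radon when $f$ is Borel but not Lusin $\nu$-measurable --- so you would need a Lusin-type measurability of $c\mapsto\hat\alpha^c$, which the disintegration theorems do not provide for free. Second, your closing remark that tightness of the assembled Borel measure yields Radon-ness is false on general Hausdorff spaces: the Dieudonn\'e measure on $[0,\omega_1]$ is a Borel measure on a compact (hence trivially tight) space which is not Radon, because inner regularity by compact sets fails on the Borel set $[0,\omega_1)$. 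Tightness must be supplemented by inner regularity on \emph{all} Borel sets, which is what the paper's exhaustion argument actually produces.
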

\begin{proof} When $I$ is finite, 
  the proof is well-known in Polish spaces (see for instance
\cite[Lem.~5.3.2]{Ambrosio-Gigli-Savare08}) via disintegrations; 
however an alternative proof via Hahn-Banach theorem and
Riesz-Markov-Kakutani theorem \cite[Thm.~7.3.10 and Thm.~7.10.4]{Bogachev} 
is possible in compact Hausdorff spaces,
as indicated in \cite[Exer.~7.9]{Vil03} in the case $N=2$, and then proceeding by induction. 
By a simple exhaustion argument the result extends to Radon measures
in Hausdorff topological spaces.

In the case $I=\N$ we argue as in the proof of
\cite[Lem.~5.3.4]{Ambrosio-Gigli-Savare08}, applying the general version of 
Kolmogorov-Prokhorov theorem given in
\cite[Thm.~21, p.~74 and its Corollary p.~81]{Schwartz73}.
\end{proof}

In the class of Radon probability measures in $X$, we consider the optimal transport problem
\begin{equation}
\inf\biggl\{\int_{X\times X}{\sf c}\dd\ppi:\
\ppi\in\Gamma(\mu,\nu)\biggr\}\;
\end{equation}
where $\mathsf c$ is a bounded cost function defined in $X\times X$.
{In the following proposition we denote by $\int_* f\dd\nu$ the inner integral, namely the supremum
of $\int g\dd\nu$ among all $\nu$-measurable $g$ with $g\leq f$ pointwise. Even though 
the natural setting for duality theorems is provided by costs measurable w.r.t. the product $\sigma$-algebra,
we will need to apply the duality theorem with lower semicontinuous costs $\sfc\geq 0$. In this case the duality theorem
still holds, when Radon measures are involved; this can be seen, for instance, proving via \eqref{eq:Beppo_Levi_general} the existence of
$\tilde\sfc\leq\sfc$ measurable w.r.t. to $\BorelSets\tau\times\BorelSets\tau$ with $\int\sfc\dd\ppi=\int\tilde\sfc\dd\ppi$.} 
\begin{proposition}[Duality] \label{prop:duality_Ruschendorf}
Let $\mathsf c:X\times X\to\R$ be a bounded and either
$(\tau\times\tau)$-lower semicontinuous or $\BorelSets\tau\times\BorelSets\tau$-measurable
function. For all $\mu,\,\nu\in\cP(X)$ one has
\begin{equation}\label{eq:duality0}
\inf\biggl\{\int_{X\times X}{\sf c}\dd\ppi:\
\ppi\in\Gamma(\mu,\nu)\biggr\}=\sup\left\{
  \int_*\psi\dd\nu-\int\phi\dd\mu\right\}
\end{equation}
where the supremum runs in any of the following three classes:
\begin{itemize}
\item[(a)] $\phi,\,\psi$ $\BorelSets{\tau}$-measurable 
  with $\psi(y)-\phi(x)\leq {\sf c}(x,y)$ and $\int|\psi|\dd\nu+\int|\phi|\dd\mu<\infty$;
\item[(b)] $\phi,\,\psi$ bounded $\BorelSets{\tau}$-measurable 
  with $\psi(y)-\phi(x)\leq {\sf c}(x,y)$;
\item[(c)] $(\phi,\psi)$ with $\psi(y)=\phi^{{\sf c}}(y)=\inf_x\phi(x)+{\sf c}(x,y)$ and $\phi$ belonging to the class
\begin{equation}\label{eq:good_tests}
{\mathcal F}:=\left\{\phi:X\to [0,\infty):\ \text{$\phi\in C(K)$ for some compact  
    $K\subset X$, $\phi\vert_{X\setminus K}\equiv c\geq\max_K\phi$}\right\}\;.
\end{equation}
\end{itemize}
\end{proposition}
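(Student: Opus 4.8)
The plan is to prove the three elementary upper bounds $\sup_{(a)}\le\inf$, $\sup_{(b)}\le\inf$ and $\sup_{(c)}\le\inf$ by a single pairing argument, and then to close the gap with the two genuine strong-duality inequalities $\inf\le\sup_{(b)}$ and $\inf\le\sup_{(c)}$; throughout, $\inf$ denotes the left-hand side of \eqref{eq:duality0}. For the easy bounds, fix a feasible pair $(\phi,\psi)$ in any of the three classes and any $\ppi\in\Gamma(\mu,\nu)$. The constraint $\psi(y)\le\phi(x)+\mathsf c(x,y)$ holds pointwise, the right-hand side is $\ppi$-integrable, and the second marginal of $\ppi$ is $\nu$, so monotonicity of the inner integral gives
$$\int_*\psi\dd\nu=\int_*\psi(y)\dd\ppi(x,y)\le\int\bigl(\phi(x)+\mathsf c(x,y)\bigr)\dd\ppi=\int\phi\dd\mu+\int\mathsf c\dd\ppi\;.$$
This uses only the inner integral, so it applies verbatim to class (c), where $\psi=\phi^{\mathsf c}$ may fail to be $\BorelSets\tau$-measurable; optimizing yields $\sup\le\inf$ in each class, and the inclusion (b)$\subseteq$(a) gives in addition $\sup_{(b)}\le\sup_{(a)}$.

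The core is the strong duality $\inf\le\sup_{(b)}$. When $\mathsf c$ is $\BorelSets\tau\times\BorelSets\tau$-measurable and bounded, this is precisely the Kellerer duality theorem \cite{Kellerer84}, which already gives $\inf=\sup_{(b)}=\sup_{(a)}$. For lower semicontinuous $\mathsf c$ I would pass to this case through the device announced before the statement: since $X\times X$ is completely regular, $\mathsf c$ is the upper envelope of the directed family $D_{\mathsf c}=\{f\in C_b(X\times X):f\le\mathsf c\}$, and by the monotone convergence \eqref{eq:Beppo_Levi_general} one has $\int\mathsf c\dd\ppi=\sup_{f\in D_{\mathsf c}}\int f\dd\ppi$ for every $\ppi$. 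Drawing the approximants from the algebra generated by products $g(x)h(y)$ (dense by Stone--Weierstrass on compact sets), one extracts a $\BorelSets\tau\times\BorelSets\tau$-measurable $\tilde{\mathsf c}\le\mathsf c$ with $\int\tilde{\mathsf c}\dd\ppi=\int\mathsf c\dd\ppi$ on the admissible plans, whence $\inf=\inf_{\tilde{\mathsf c}}$. As the constraint relative to $\tilde{\mathsf c}$ is stronger, feasible potentials for $\tilde{\mathsf c}$ remain feasible for $\mathsf c$, and the measurable duality transfers: $\inf=\inf_{\tilde{\mathsf c}}=\sup_{(b),\tilde{\mathsf c}}\le\sup_{(b),\mathsf c}\le\inf$.

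It then remains to show $\inf\le\sup_{(c)}$. Starting from a near-optimal bounded measurable $\phi$ furnished by class (b), the standard $\mathsf c$-transform inequality lets me replace $\psi$ by $\phi^{\mathsf c}(y)=\inf_x\phi(x)+\mathsf c(x,y)$ without decreasing the objective and while preserving feasibility. To force $\phi$ into the class \eqref{eq:good_tests}, I would invoke the Radon property of $\mu$: by Lusin $\mu$-measurability \eqref{eq:23} together with inner regularity, $\phi$ agrees with a continuous function on a compact set $K$ carrying all but $\eps$ of the mass of both $\mu$ and $\nu$; redefining $\phi$ to equal a constant $\ge\max_K\phi$ outside $K$ produces a competitor in $\mathcal F$, with the escaping mass contributing an error controlled by $\|\mathsf c\|_\infty\,\eps$. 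Because the resulting $\phi^{\mathsf c}$ need not be $\BorelSets\tau$-measurable, it is essential that the supremum in (c) is taken with the inner integral $\int_*$; letting $\eps\to0$ closes the chain.

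The main obstacle is the strong-duality step in this non-metrizable, non-separable and non-locally-compact framework, where $\BorelSets\tau\times\BorelSets\tau$ can be strictly coarser than the Borel $\sigma$-algebra of the product topology and no ambient compactification is available: this is exactly what forces the passage through the measurable-cost theorem of \cite{Kellerer84}, the complete regularity of $X\times X$, and the Radon monotone convergence \eqref{eq:Beppo_Levi_general}. The most delicate point is the extraction of a single product-measurable minorant $\tilde{\mathsf c}$ whose integral matches that of $\mathsf c$ against all admissible plans at once, for which the narrow compactness of $\Gamma(\mu,\nu)$ (Theorem~\ref{thm:compa-Riesz}) and the directed structure of $D_{\mathsf c}$ are the key resources; a secondary subtlety, entirely absorbed by the systematic use of $\int_*$, is the possible non-measurability of the $\mathsf c$-transforms that appear in class (c).
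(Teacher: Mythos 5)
Your pairing argument for the inequality $\sup\le\inf$, your appeal to Kellerer for bounded $\BorelSets\tau\times\BorelSets\tau$-measurable costs, and your passage from class (b) to class (c) via Lusin's theorem and monotonicity of the $\sfc$-transform all match the paper's proof. In the step (b)$\to$(c) the paper makes explicit the one point you leave implicit: the constant assigned outside the compact set must dominate $\sup_X\phi$, not merely $\max_K\phi$, so that the modified potentials satisfy $\phi_n\ge\phi$ \emph{pointwise}; this is what guarantees $\int_*\phi_n^{\sfc}\dd\nu\geq\int_*\phi^{\sfc}\dd\nu\geq\int\psi\dd\nu$, and without it the $\psi$-side of the objective is not under control.

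The genuine gap is in your treatment of lower semicontinuous costs, precisely at the step you flag as ``the most delicate point'' but do not prove. Write $\inf_\sfc$ for the left-hand side of \eqref{eq:duality0} and $\sup_{(b),\sfc}$ for the supremum over class (b). First, the Stone--Weierstrass device cannot supply your approximants: a product-type function approximating an element of $D_\sfc$ uniformly on a compact set is in no way dominated by $\sfc$ off that set, so it need not be a minorant at all. Second, and more seriously, the statement your chain rests on --- a \emph{single} product-measurable $\tilde\sfc\le\sfc$ with $\int\tilde\sfc\dd\ppi=\int\sfc\dd\ppi$ for \emph{every} $\ppi\in\Gamma(\mu,\nu)$, whence $\inf_\sfc=\inf_{\tilde\sfc}$ --- is false in general. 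Take $X=\{0,1\}^I$ with $I$ uncountable, $\mu=\nu$ the Radon extension of the Bernoulli product measure, and $\sfc(x,y)=0$ if $x=y$, $\sfc(x,y)=1$ otherwise (lower semicontinuous, since the diagonal is closed). Any product-measurable $\tilde\sfc$ is measurable with respect to $\sigma(\{B_n\})\otimes\sigma(\{C_n\})$ for countably many Borel sets $B_n,\,C_n$, hence factors as $\tilde\sfc(x,y)=F\big((\one_{B_n}(x))_n,(\one_{C_n}(y))_n\big)$; by completion regularity of the Bernoulli measure (a classical theorem of Kakutani) each $B_n,\,C_n$ is $\mu$-equivalent to a set depending on countably many coordinates, all contained in some countable $J\subset I$. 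If $T$ flips the coordinates in $I\setminus J$, then $\ppi_T=(\mathrm{id},T)_\sharp\mu\in\Gamma(\mu,\mu)$ satisfies $\int\sfc\dd\ppi_T=1$, while $\tilde\sfc(x,Tx)=\tilde\sfc(x,x)\le\sfc(x,x)=0$ for $\mu$-a.e.~$x$, so $\int\tilde\sfc\dd\ppi_T\le 0$. Nor does a minorant adapted to a single (say optimal) plan repair the argument: it only yields $\inf_{\tilde\sfc}\le\inf_\sfc$, which points the wrong way, and the dual suprema decrease in the same direction, so the chain never closes.

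What does close it, using exactly the resources you name (directedness, compactness of $\Gamma(\mu,\nu)$ via Theorem~\ref{thm:compa-Riesz}, and \eqref{eq:Beppo_Levi_general}), is a minimax identity in place of a single minorant. Normalize $\sfc\ge 0$ and let $D$ be the family of finite maxima of functions $a\,\one_{U\times V}$ with $U,\,V$ $\tau$-open, $a\ge 0$ and $a\,\one_{U\times V}\le\sfc$: by lower semicontinuity and the definition of the product topology, $D$ is a directed family of bounded, product-measurable, $(\tau\times\tau)$-lower semicontinuous minorants with $\sup D=\sfc$. If one had $\sup_{f\in D}\inf_{\ppi}\int f\dd\ppi<\inf_\sfc-\eps$, the narrowly closed sets $F_f:=\{\ppi\in\Gamma(\mu,\nu):\int f\dd\ppi\le\inf_\sfc-\eps\}$ would be nonempty for every $f\in D$ and would have the finite intersection property by directedness; compactness would give $\ppi_0\in\bigcap_{f\in D}F_f$, and \eqref{eq:Beppo_Levi_general} would force $\int\sfc\dd\ppi_0=\sup_{f\in D}\int f\dd\ppi_0\le\inf_\sfc-\eps$, a contradiction. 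Hence $\inf_\sfc=\sup_{f\in D}\inf_f$, and applying Kellerer's duality to each bounded product-measurable $f\in D$ (whose feasible pairs are feasible for $\sfc$) gives $\inf_\sfc=\sup_{f\in D}\sup_{(b),f}\le\sup_{(b),\sfc}$, as required. Note, finally, that the paper sidesteps this entire issue by citing \cite{Kellerer84} as covering both classes of costs directly; the reduction you attempt is the route hinted at in the remark preceding the statement, but it needs the minimax formulation above, not the single-minorant one.
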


Notice that in the cases (a) and (b) one can replace the inner integral
$\int_*\psi\dd\nu$ with $\int\psi\dd\nu$ in \eqref{eq:duality0}.

\begin{proof} At this level of generality, 
the validity of \eqref{eq:duality0} with the choice (a) has been
proved in {\cite[Prop.~1.31, Thm. 2.14]{Kellerer84}} for both classes of costs $\sfc$.
(b) still follows by \cite[Lem.~1.8]{Kellerer84}:
in fact one can assume that $\phi,\psi$ take values
in the interval $[\inf \mathsf c-\frac 12 \sup\mathsf c,\frac 12 \sup\mathsf c]$. 
Let us pass now from (b) to (c). To this aim we can use once more the Radon assumption on $\mu$, which implies
the validity of Lusin's theorem (see \cite[Thm.~7.1.13]{Bogachev}) to find $\phi_n\in{\mathcal F}$ equibounded
with $\mu(\{\phi\neq\phi_n\})\downarrow 0$ and $\phi_n\geq\phi$
pointwise. 
Since
$$
\int_*\phi_n^{{\sf c}}\dd\nu\geq\int_*\phi^{{\sf c}}\dd\nu\geq\int\psi\dd\nu
$$
we conclude. 
\end{proof}

In some sections of this paper a reference measure $\m\in\cP(X)$ will be fixed; we shall denote by $\cP^a(X)$
the subclass of measures $\mu\ll\m$ and by $\ent$ the relative entropy w.r.t.~$\m$, equal to $+\infty$ on 
$\cP(X)\setminus\cP^a(X)$ and given by
\begin{equation}
\ent(\rho\m):=\int \rho\log\rho\dd\m\label{eq:28}
\end{equation}
otherwise. Since $\m$ will be fixed, we do not emphasize it in the
notations $\cP^a(X)$, $\ent$ and we will also use the short notation $\|f\|_p$ for
$\|f\|_{L^p(X,\m)}$, $p\in [1,\infty]$.

\section{Extended metric spaces}\label{sec:2}

In this section we introduce some basic analytic tools for analysis in
metric spaces. Most of them 
extend in a natural way
to extended metric spaces, defined below.

\begin{definition}[Extended metric spaces]\label{def:luft2}
We say that $(X,\sfd)$ is an extended metric space if $\sfd:X\times X\to [0,\infty]$ is a symmetric function satisfying the
triangle inequality, with $\sfd(x,y)=0$ iff $x=y$.
\end{definition}

When the condition $\sfd(x,y)=0$ iff $x=y$ is weakened to $\sfd(x,x)=0$ we say that $(X,\sfd)$ is a  (extended) semimetric space.
 
The main example of extended semimetric space we have in mind arises from the construction
\begin{equation}\label{eq:basic_construction}
\sfd(x,y):=\sup_{f\in\mathcal A}|f(x)-f(y)|\;,
\end{equation}
where $\mathcal A$ is any class of bounded functions $f:X\to\R$; it is an extended metric space iff 
$\mathcal A$ separates the points of $X$. Other natural examples arise by the construction of length distances 
(derived from distances or from action minimization),  which need not to be finite.

Since an extended metric space can be seen as the disjoint union of the equivalence classes induced by the
equivalence relation
$$
x\sim y\qquad\Longleftrightarrow\qquad \sfd(x,y)<\infty
$$
and since the equivalence classes are indeed metric spaces, many results and definitions extend with no effort to
extended metric spaces. For instance, we say that an extended metric space $(X,\sfd)$ is complete (resp. geodesic, length,...)
if all metric spaces $X_{[x]}=\{y:\ y\sim x\}$ are complete (resp. geodesic, length,...). In particular any extended metric space
has a unique, up to isometries, (extended) metric completion.

\subsection{Absolutely continuous curves and upper gradients}
For $D\subset\R$ we denote by $X^D$ the space of maps  $\eta:D\to X$ (no continuity is required in general) and
by $\e_t:X^D\to X$ the evaluation maps at time $t\in D$, namely $\e_t(\eta):=\eta(t)$. 

Let $UC(D;(X,\sfd))\subset X^D$ be the space of $\sfd$-uniformly continuous maps.
For $p\in (1,\infty)$, we denote $AC^p(D;(X,\sfd))\subset UC(D;(X,\sfd))$ the subspace of $p$-absolutely continuous maps w.r.t.~$\sfd$, 
satisfying
\begin{equation}
\Action_p(\eta,D):=\sup\left\{\sum_{i=0}^{n-1}\frac{\sfd^p(\eta(t_{i+1}),\eta(t_i))}{(t_{i+1}-t_i)^{p-1}}:\
t_0<t_1<\cdots<t_{n-1}<t_n,\,\,t_i\in D\right\}
<\infty\;.\label{eq:19}
\end{equation}
Notice that, thanks to the triangle inequality, the supremum above can also be realized as a monotone nondecreasing 
limit in the directed set of partitions of $D$, with the order induced by the set-theoretic inclusion.
When the domain $D$ is clear from the context we simplify the notation, writing $\Action_p(\eta)$. If $(X,\sfd)$ is complete, it
is easily seen that any $\eta\in UC(D;(X,\sfd))$ has a unique $\sfd$-continuous extension to $\overline{D}$, and
this extension is still $\sfd$-uniformly continuous; if $\eta\in AC^p(D;(X,\sfd))$ then the extension belongs to
$AC^p(\overline{D};(X,\sfd))$ and the $p$-action $\Action_p$ remains the same.
The metric derivative $|\dot\eta|:(0,T)\to [0,\infty]$ of $\eta\in AC^p([0,T];(X,\sfd))$ is the Borel map defined by
$$
|\dot\eta|(t):=\limsup_{s\to t}\frac{\sfd(\eta(s),\eta(t))}{|s-t|}
$$
and it can be proved (see for instance \cite[Thm.~1.1.2]{Ambrosio-Gigli-Savare08}) that the $\limsup$ is a limit a.e. in $(0,T)$, 
that $|\dot\eta|\in L^p(0,T)$ and that $\sfd(\eta(s),\eta(t))\leq\int_s^t|\dot\eta|(r)\dd r$ for all $0\leq s\leq t\leq T$. Furthermore, $|\dot\eta|$
is the smallest $L^1$ function with this property, up to Lebesgue negligible sets, and one can easily prove that
\begin{equation}\label{eq:Dec26}
\Action_p(\eta)=\int_0^T|\dot\eta(t)|^p\dd t\qquad\forall \eta\in AC^p([0,T];(X,\sfd))\;.
\end{equation}
We also recall the notion of upper gradient: we say that $g:X\to [0,\infty]$ is an upper gradient of $f:X\to\R$ (relative to $\sfd$) if
$t\mapsto g(\eta(t))|\dot\eta |(t)$ is Lebesgue measurable in $(0,1)$ and
$$
|f(\eta(1))-f(\eta(0))|\leq\int_0^1g(\eta(r))|\dot\eta |(r)\dd r
$$
for any $\eta\in AC^p([0,1];(X,\sfd))$, $p\in (1,\infty)$ (the dependence on $p$ is harmless, thanks to reparameterizations). 

\subsection{Gradient flows}
Now we introduce the main concepts of gradient flows used in this
paper: the first one is based on the energy-dissipation inequality
and the second is  characterized by a family of 
evolution variational inequalities, 
see \cite{Ambrosio-Gigli-Savare08} and \cite{Daneri-Savare} for much more on this topic. 
Both can be easily generalized to the extended setting and the case of gradient flows in Hilbert spaces,
detailed in Proposition~\ref{prop:evik}, is a very particular and important example.

Let $F:X\to\R\cup\{+\infty\}$ with (non empty) domain $D(F)=\{F<\infty\}$. The slope $|\rmD F|$  
and the descending slope
$|\rmD^- F|(x)$ of $F$ at $x\in D(F)$ are respectively defined by
\begin{equation}\label{eq:descending_slope}
|\rmD F|(x):=\limsup_{y\to x}\frac{|F(y)-F(x)|}{\sfd(y,x)}\;,\qquad
|\rmD^- F|(x):=\limsup_{y\to x}\frac{[F(y)-F(x)]^-}{\sfd(y,x)}\;,
\end{equation}
with the convention $|\rmD F|(x)=|\rmD^- F|(x)=0$ if $x$ is a $\sfd$-isolated point.

We say that a locally
absolutely continuous curve $x\in AC^2_{\rm loc}((0,\infty);(D(F),\sfd))$ is 
a metric gradient curve of $F$ in the energy-dissipation sense 
if the \emph{Energy Dissipation Inequality} 
\begin{equation}\label{eq:EDI}
  \tag{\EDI}
  F(x(t))+\frac 12\int_s^t \Big(|\dot{x}|^2(r)+|\rmD^-
  F|^2(x(r))\Big)\dd r\leq F(x(s))
\end{equation}
holds for all $s,\,t\in (0,\infty)$ with $s<t$ and also for $s=0$ when the curve
$x$ admits a continuous extension (still denoted by $x$)
to $[0,\infty)$ (recall the definition \eqref{eq:descending_slope} of $|\rmD^- F|$). 

Notice that if $|\rmD^- F|$ is an upper gradient of $F$ we can use the inequality
$$
F(x(s))\leq F(x(t))+\int_s^t|\rmD^- F|(x(r))|\dot{x}(r)|\dd r
$$
to obtain that equality holds in \eqref{eq:EDI}, that $F\circ x$ is locally absolutely continuous in $(0,\infty)$
and that $|\dot{x}|^2=|\rmD^- F|^2\circ x=-\bigl(F\circ x\bigr)'$ a.e. in $(0,\infty)$. {When $x$
has a continuous extension to $[0,\infty)$ and $F(x_0)<\infty$, the absolute continuity holds in all
compact intervals of $[0,\infty)$.}

For $K\in\R$ we introduce the function
$\rmI_K:[0,\infty)\to[0,\infty)$ defined by
$$
\text{$\rmI_0(t)=t$,} \qquad \rmI_{K}(t):=\int_0^t \e^{Ks}\,\dd s =\frac{\e^{K t}-1}{K}\qquad\text{if $K\neq 0$}\;.
$$

{In the following definition of $\EVI_K$ gradient flow we consider some lower semicontinuity conditions
that are automatically implied by the local absolute continuity of $x$ when $F$ is lower semicontinuous and $\sfd$ is
a finite distance. Moreover, the specification of the initial condition is made in such a way that even initial conditions
not in $\overline{D(F)}$, but at a finite distance from $D(F)$, can be considered.}

\begin{definition}[$\EVI_K$ gradient flows] \label{def:evikgf}
Let $K\in\R$. We say that $x\in AC^2_{\rm loc}((0,\infty);(D(F),\sfd))$ is a
$\EVI_K$ gradient curve of $F$ if $t\mapsto F(x_t)$ is lower semicontinuous in $(0,\infty)$ and 
for all $y\in D(F)$ satisfying $\sfd(y,x_t)<\infty$ for some (and then all) $t\in (0,\infty)$ one has 
 \begin{align}\label{def:evik}
   \tag{\EVI$_K$}
   \ddtr\frac12 \sfd^2(x_t,y) +
    \frac{K}{2}\sfd^2(x_t,y)\leq F(y) - F(x_t)\qquad\forall t>0\;,
  \end{align}
where $\mathrm d^+/\mathrm d t$ denotes the upper right derivative. \\
If $\bar x$ has finite distance from $D(F)$, we say that $x$ starts from $\bar x$
if $\liminf_{t\downarrow 0}F(x_t)\ge F(\bar x)$ and 
$\lim_{t\downarrow0}\sfd(x_t,y)\to \sfd(\bar x,y)$ for every 
{$y\in \overline{D(F)}$}.
\end{definition}

{Let us now point out some direct consequences of the definition of $\EVI_K$ gradient curve.

\noindent
{\it Monotonicity of the energy, uniqueness and contractivity.} It is not difficult to show that, for $\EVI_K$ gradient curves $x$, the map 
$t\mapsto F(x_t)$ is non-increasing in $(0,\infty)$ and $F(x_t)\to F(\bar x)$ if $x$ starts from $\bar x$. 
Moreover one has the contractivity property
\begin{displaymath}
\sfd(x_{t+s},y_{t'+s})\leq \rme^{-Ks}\sfd(x_t,y_{t'})\qquad s,\,t,\,t'>0 \; .
\end{displaymath}
By approximation this inequality can be extended to the case when either $t=0$ or $t'=0$ (but not both), 
for $\EVI_K$ curves starting respectively from $\bar x$, $\bar y$.

If $t=t'=0$ the inequality holds in the weaker form
\begin{displaymath}
\sfd(x_s,y_s)\leq \rme^{-Ks}\inf_{y\in D(F)}\sfd(\bar x,y)+\sfd(y,\bar y)\qquad s\geq 0 \;,
\end{displaymath}
which reduces to the standard one when at least one of the initial points belongs to $\overline{D(F)}$.
In particular the $\EVI_K$ gradient curve starting from  $\bar x\in \overline{D(F)}$  
is unique and satisfies $\sfd(x(t),\bar x)\to0$ as $t\downarrow0$.

\noindent 
{\it Integral version and regularization.} Integrating in $(0,t)$ the differential inequality \eqref{def:evik} written in the form
    $\ddtr \bigl(\frac {\rme^{Kt}}2\sfd^2(x_t,y)\bigr)\leq \rme^{Kt}(F(y)-F({\sf S}_tx))$ 
    and using the monotonicity of $F({\sf S}_tx)$  and the convergence
    $\sfd(x_s,y)\to \sfd(\bar x,y)$ as $s\downarrow 0$ we get 
    \begin{equation}
      \label{eq:75}
      \frac 12 \sfd^2(x_t,y)-\frac{\rme^{-Kt }}2\sfd^2(\bar x,y)\le
      \rmI_{-K}(t)\big(F(y)-F(x_t)\big)
    \end{equation}
    for every $y\in D(F)$ with $\sfd(\bar x,y)<\infty$, $t>0$.
    
    In particular we obtain the regularization estimate (see also \eqref{eq:regulaK} below)
    \begin{equation}\label{eq:may_31}
    F(x_t)\leq F(y)+\frac{1}{2{\mathrm I}_K(t)}\sfd^2(\bar x,y)\qquad\forall y\in D(F),\,t>0\ . 
    \end{equation}

\begin{definition}[$\EVI_K$-semigroup of $F$]\label{def:evi_semigroup}
Let $D\subset X$ and let $\sfS:D\times [0,\infty)\to D$ be a semigroup.
We say that $\sfS$ is an $\EVI_K$ gradient flow of $F$ in $D$ if: 
\begin{itemize}
\item[(i)] $D\supset\overline{D(F)}$ and every $x\in D$ has finite distance from $D(F)$;
\item[(ii)] for every $x\in D$ one has that $t\mapsto \sfS_t x$ is an
  {{\rm EVI}$_K$} gradient curve of $F$ starting from $x$.
\item[(iii)] $\sfS$ is $K$-contractive in $D$, i.e.~$\sfd(\sfS_t
  x,\sfS_t y)\le \rme^{-K t}\sfd(x,y)$ for every $x,\,y\in D$. 
\end{itemize}
\end{definition}
It is not hard to show that if (i) and (ii) hold, then
condition (iii) is always satisfied in
$\overline{D(F)}$, so that 
it is usually omitted in all the cases when $D=\overline{D(F)}$,
in particular when $D(F)$ is dense in $X$.

In general existence is much harder to
prove and depends on structural properties of $(X,\sfd)$ and $F$. 
A particularly important case is provided by
lower semicontinuous convex functionals in Hilbert spaces, that we are now briefly recalling
(see for instance \cite[Sec.~1.4]{Ambrosio-Gigli-Savare08} 
for the ``metric'' approach and \cite{Brezis73} for the classic formulation via maximal monotone
operators).

If $(X,\|\cdot\|)$ is a  Hilbert space,
recall that the subdifferential $\partial F(x)$ of a convex functional $F:X\to (-\infty,\infty]$ at $x\in D(F)$
is the convex closed set (possibly empty)
$$
\partial F(x):=\left\{\xi\in X:\ F(y)\geq F(x)+\langle \xi,y-x\rangle\,\,\forall y\in X\right\}\;.
$$

\begin{proposition}[Gradient flows of convex functionals in Hilbert spaces]\label{prop:evik}
Assume that $X$ is a Hilbert space, with distance $\sfd$ induced by the norm, and that
$F:X\to [0,\infty]$ is convex and lower semicontinuous. Then, the following properties hold:
\begin{itemize}
\item[(a)] {the concepts of metric gradient curve and $\EVI_0$ gradient flow starting from $\bar x$
coincide, for locally absolutely continuous curves $x:[0,\infty)\to X$ with $x_0=\bar x\in\overline{D(F)}$;} 
\item[(b)] For all $\bar x\in\overline{D(F)}$, there exists a unique metric gradient curve $x(t)$ of $F$
starting from $\bar x$. The map $\sfS_t:\bar x\mapsto x(t)$ defines an $\mathsf{EVI}_0$
semigroup of $F$ in $\overline{D(F)}$.
\item[(c)] If $x(t)$ is a metric gradient curve of $F$, then $-x_+'(t)=\lim_{h\downarrow 0}(x(t)-x(t+h))/h$ 
exists for all $t>0$ and it coincides with the element with minimal norm of $\partial F(x(t))$.
\item[(d)] If $x(t),\,y(t)$ are metric gradient curves of $F$, then $\sfd(x(t),y(t))\leq\sfd(x(s),y(s))$, $0<s\leq t<\infty$.
\item[(e)] If $x(t)$ is the metric gradient curve starting from $\bar x$ the following regularization estimates hold:
\begin{equation}\label{eq:regulaK}
F(x(t))\leq F(z)+\frac {1}{2t}\sfd^2(z,\bar x)\;,\quad
|\rmD^- F|^2(x(t))\leq |\rmD^- F|^2(\bar x)+ \frac 1{t^2} \sfd^2(z,\bar x)
\end{equation}
for all $z\in D(F)$ and $t>0$.
\end{itemize}
\end{proposition}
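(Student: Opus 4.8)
The plan is to reduce all five statements to the classical theory of maximal monotone operators, exploiting that for a convex and lower semicontinuous $F$ the subdifferential $\partial F$ is maximal monotone (Rockafellar). First I would invoke the Brezis--K\=omura generation theorem \cite{Brezis73}: $-\partial F$ generates a strongly continuous semigroup of contractions $(\sfS_t)_{t\ge0}$ on $\overline{D(F)}=\overline{D(\partial F)}$, and for every $\bar x\in\overline{D(F)}$ the orbit $x(t)=\sfS_t\bar x$ is the unique locally absolutely continuous solution of the differential inclusion $x'(t)\in-\partial F(x(t))$ with $x(0)=\bar x$; moreover $x(\cdot)$ is right-differentiable at every $t>0$, with $x_+'(t)$ equal to the element of minimal norm in $-\partial F(x(t))$. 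This yields at once the existence and uniqueness in (b) and the identification of $-x_+'(t)$ with the minimal norm element of $\partial F(x(t))$ claimed in (c). The contraction estimate (d) follows from monotonicity alone: if $x,y$ are two such orbits then $\ddt\tfrac12\sfd^2(x(t),y(t))=\langle x'(t)-y'(t),x(t)-y(t)\rangle\le0$, because $-x'\in\partial F(x)$ and $-y'\in\partial F(y)$.

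For the equivalence (a) I would use convexity to link the three descriptions of the flow. If $x$ solves the inclusion, the subgradient inequality $F(y)\ge F(x(t))+\langle-x_+'(t),y-x(t)\rangle$ rearranges verbatim into $\EVI_0$; conversely, $\EVI_0$ read as $\langle x_+'(t),x(t)-y\rangle\le F(y)-F(x(t))$ for all $y$ says precisely that $-x_+'(t)\in\partial F(x(t))$. Thus the inclusion and $\EVI_0$ are equivalent. To connect with the metric (EDI) notion I would establish, using convexity, the slope identity $|\rmD^-F|(x)=\|\partial^0F(x)\|$ together with the fact that $|\rmD^-F|$ is an upper gradient of $F$; by the observation immediately following \eqref{eq:EDI} this turns $\EDI$ into the energy equality $-(F\circ x)'=|\dot x|^2=|\rmD^-F|^2\circ x$. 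Since in a Hilbert space $|\dot x|=\|x'\|$, a chain-rule and Cauchy--Schwarz argument then forces $x'(t)=-\partial^0F(x(t))$, identifying metric gradient curves with solutions of the inclusion and completing (a).

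The regularization estimates (e) I would obtain as follows. The first inequality is exactly \eqref{eq:may_31} specialised to $K=0$, where $\rmI_0(t)=t$; it has already been derived from the integrated form of $\EVI_0$. The second one rests on the classical monotonicity of the velocity $t\mapsto\|x_+'(t)\|=|\rmD^-F|(x(t))$ along the flow; combining this with the energy identity and the first estimate (comparison at the minimising $z$), in the manner of \cite[Sec.~1.4]{Ambrosio-Gigli-Savare08} and \cite{Brezis73}, produces the quantitative bound $|\rmD^-F|^2(x(t))\le|\rmD^-F|^2(\bar x)+t^{-2}\sfd^2(z,\bar x)$.

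The main obstacle is the identification built into (a): matching the purely metric objects---the descending slope $|\rmD^-F|$ and the $\EDI$---with their Hilbertian counterparts---the minimal selection of $\partial F$ and the differential inclusion. Concretely, one must prove both $|\rmD^-F|=\|\partial^0F\|$ and the upper gradient property of $|\rmD^-F|$, and then upgrade the energy dissipation inequality to an equality pinning down the \emph{direction}, not merely the size, of $x'$. Once this dictionary between the metric and the convex-analytic frameworks is in place, all the remaining assertions follow from the standard maximal monotone machinery together with the $\EVI_0$ computations already recorded above.
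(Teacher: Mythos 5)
Your proposal is correct and takes essentially the same route as the paper: the paper gives no proof of this proposition at all, presenting it as a recollection of classical facts and citing exactly the two sources your argument is built on, namely \cite{Brezis73} for the maximal monotone machinery (existence, uniqueness, right-differentiability, minimal selection, contraction) and \cite[Sec.~1.4]{Ambrosio-Gigli-Savare08} for the metric side (slope equals minimal subgradient norm, upper gradient property, EDI/EVI equivalence, regularization estimates). Your reconstruction of that dictionary between the metric and convex-analytic frameworks is the standard one and is sound.
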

\subsection{EVI flows, length distances and geodesic convexity}
  
If $(X,\sfd)$ is an extended metric space, 
we denote by $\sfd_\ell\geq\sfd$ the 
extended length distance associated to
$\sfd$: since we are not making any completeness assumption at this
level, it can be defined as
\begin{equation}
  \label{eq:71}
  \begin{aligned}
    \sfd_\ell(y,z):={}&\sup_{\eps>0}\sfd^\eps(y,z)=\lim_{\eps\downarrow0}\sfd^\eps(y,z),\quad\text{where}\\
    \sfd^\eps(y,z):={}&\inf\Big\{\sum_{n=1}^N\sfd(x_n,x_{n-1}):x_0=y,\
    x_N=z,\ \sfd(x_{n-1},x_n)\le \eps\Big\}\;.
  \end{aligned}
\end{equation}
A second way to generate a length distance from $\sfd$
consists in minimizing the length of all the absolutely continuous curves 
connecting two points: we set
\begin{equation}
  \label{eq:81}
  \bar \sfd_\ell(y,z):=
  \inf\Big\{\int_0^1 |\dot x|(t)\dd t:\ x\in AC([0,1];(X,\sfd)),\ x_0=y,\ x_1=z\Big\}\; .
\end{equation}
Notice that if $x\in AC([0,1];(X,\sfd))$ is an absolutely continuous
curve connecting $y$ to $z$ we easily get
\begin{equation}
  \label{eq:72}
  \sfd_\ell(y,z)\le \int_0^1 |\dot x|(t)\dd t\;,
  \quad\text{so that }\sfd_\ell\le \bar\sfd_\ell\; .
\end{equation}
Motivated by \eqref{eq:72} we call $\bar\sfd_\ell$ 
the \textit{upper} length distance associated to $\sfd$.

$(X,\sfd)$ is called a \emph{length} space if $\sfd_\ell=\sfd$.
It is not difficult to check that $\sfd$ is a length distance
if and only if $\text{for every }x,\,y\in X$ with $\sfd(x,y)<\infty$
and all $\eps>0$ there exists an $\eps$-middle point $z_\eps$, characterized by
\begin{equation}
  \label{eq:73}
  \sfd(x,z_\eps)\le \frac 12\sfd(y,z)+\eps\;,\quad
  \sfd(z_\eps,y)\le \frac 12\sfd(y,z)+\eps\;.
\end{equation}
With this in mind, one can easily check that $\sfd_\ell$
and $\bar \sfd_\ell$ are length distances. If moreover $(X,\sfd)$ is complete one has $\sfd_\ell=\bar\sfd_\ell$ ({a simple
proof can be achieved by selecting, given $x$ and $y$, $\eps/4^i$-midpoints, $i\geq 0$, recursively; thanks to the completeness, 
the process converges to an absolutely continuous curve from $x$ to $y$ with length less than $\sfd_\ell(x,y)+2\eps$}).
In the next theorem we describe a new self-improvement principle for
$\EVI_K$ gradient flows.

\begin{theorem}[Self-improvement of $\EVI_K$]\label{thm:self-improvement}
  Let $F:X\to\R\cup\{+\infty\}$ be a proper functional and 
  let $\sfS$ be an $\EVI_K$ gradient flow of $F$ in $X$ relative to $\sfd$. 
  Then $\sfS$ is an $\EVI_K$ gradient flow of $F$ relative to $\sfd_\ell$ and $\bar\sfd_\ell$.
\end{theorem}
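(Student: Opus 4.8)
The plan is to keep the semigroup $\sfS$ fixed and change only the reference distance, reducing everything to an integral reformulation of $\EVI_K$ that is stable under the chain and curve constructions \eqref{eq:71}, \eqref{eq:81}. Concretely, for $\sfd_\ell$ I must produce: (i) local $2$-absolute continuity of $t\mapsto \sfS_t x$ for $\sfd_\ell$, together with the propagation in $t$ of finiteness of $\sfd_\ell(\sfS_t x,y)$; (ii) $K$-contractivity of $\sfS$ for $\sfd_\ell$; and (iii) the inequality \eqref{def:evik} with $\sfd$ replaced by $\sfd_\ell$. Since $t\mapsto F(\sfS_t x)$ is unchanged, the lower semicontinuity required in Definition~\ref{def:evikgf} is automatic. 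The treatment of $\bar\sfd_\ell$ is parallel, replacing $\eps$-chains by constant-speed $\sfd$-absolutely continuous curves and using $\bar\sfd_\ell^2(x,y)=\inf\{\int_0^1|\dot\sigma|^2\dd u\}$; alternatively, on the complete equivalence classes relevant to the flow one has $\sfd_\ell=\bar\sfd_\ell$ and the two statements coincide.

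Contractivity is the easy ingredient. Since $\sfS$ is $K$-contractive for $\sfd$ by Definition~\ref{def:evi_semigroup}(iii), flowing an $\eps$-chain $x=z_0,\dots,z_N=y$ by $\sfS_h$ produces a chain from $\sfS_hx$ to $\sfS_hy$ whose steps satisfy $\sfd(\sfS_hz_{i-1},\sfS_hz_i)\le \rme^{-Kh}\sfd(z_{i-1},z_i)\le \rme^{-Kh}\eps$; summing and letting $\eps\downarrow0$ gives $\sfd_\ell(\sfS_hx,\sfS_hy)\le \rme^{-Kh}\sfd_\ell(x,y)$, and the analogous computation on curves gives the same bound for $\bar\sfd_\ell$. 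In particular finiteness of $\sfd_\ell(\sfS_t x,y)$ propagates in $t$, so \eqref{def:evik} for $\sfd_\ell$ is non-vacuous on the correct set of targets.

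The heart of the matter is (iii). I would first record the integral form of $\EVI_K$ underlying \eqref{eq:75}: multiplying \eqref{def:evik} by $\rme^{Kr}$, recognising the left-hand side as $\ddtr\big(\tfrac12\rme^{Kr}\sfd^2(\sfS_r x,w)\big)$ and integrating gives, for every $w$ at finite distance and $0\le s\le t$,
\[
\tfrac12\sfd^2(\sfS_t x,w)\le \tfrac12\rme^{-K(t-s)}\sfd^2(\sfS_s x,w)+\int_s^t \rme^{-K(t-r)}\big(F(w)-F(\sfS_r x)\big)\dd r .
\]
This is the powerful integral formulation, and it is equivalent to \eqref{def:evik}: to return, divide the increment over $[t,t+h]$ by $h$, let $h\downarrow0$, and use that $r\mapsto F(\sfS_r x)$ is non-increasing and lower semicontinuous, hence right-continuous, so the averaged energy term converges to $F(w)-F(\sfS_t x)$. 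The plan is then to upgrade the displayed inequality from $\sfd$ to $\sfd_\ell$ and differentiate back. Fixing $y$ and a near-optimal $\eps$-chain $\sfS_s x=z_0,\dots,z_M=y$ realising $\sfd^\eps(\sfS_s x,y)$, I would partition $[s,t]$ and telescope $\tfrac12\sfd^2(\sfS_{r} x,z_j)$ along the chain, splitting each increment into a \emph{time} part (fixed target $z_j$, controlled by the integral inequality above) and a \emph{space} part (fixed time, target moving from $z_{j-1}$ to $z_j$, controlled by the triangle inequality as $\ell_j\,\sfd(\sfS_r x,z_{j-1})+\tfrac12\ell_j^2$); letting the mesh of both the time-partition and the chain tend to zero and invoking $\sfd^\eps\uparrow\sfd_\ell$ should reconstruct the integral $\EVI_K$ for $\sfd_\ell$, after which differentiation yields \eqref{def:evik} for $\sfd_\ell$, and the same scheme run on constant-speed curves yields it for $\bar\sfd_\ell$.

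The main obstacle is the appearance of the energy $F$ at the intermediate points $z_j$, which need not belong to $D(F)$ and which enter the telescoped sum through terms of the form $\sum_j\Delta_j\,F(z_j)$. The whole difficulty is to dominate this sum by the endpoint value $F(y)$ up to an error vanishing in the refinement limit. I expect to achieve this by comparing the trajectory not with $z_j$ but with a slightly flowed point $\sfS_\sigma z_j$, using monotonicity $F(\sfS_\sigma z_j)\le F(z_j)$ and the regularisation estimate \eqref{eq:may_31} to keep these energies finite, and coupling the auxiliary time $\sigma$ to the chain parameter so that along the optimal chain the intermediate energies align with the running value $F(\sfS_r x)$ and with $F(y)$; the $K$-contraction of the second step is what keeps the perturbed chains admissible (all steps stay below the prescribed threshold) throughout this limit. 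Carrying out this domination, together with the $\eps$-threshold bookkeeping when a chain step is spliced in and the lower semicontinuity needed to pass $F$ to the limit, is the technical core; the self-improved initial condition and the regularity asserted in (i) then follow routinely.
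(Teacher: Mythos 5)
Your reduction to an integral formulation and your treatment of contractivity for $\sfd_\ell$ and $\bar\sfd_\ell$ are fine, but the core of the argument has a genuine gap --- in fact two intertwined ones. First, your telescoping of $\tfrac12\sfd^2(\sfS_r x,z_j)$ (trajectory moving, targets stepping along a chain placed at the initial time) collapses on the left-hand side to the single quantity $\tfrac12\sfd^2(\sfS_t x,y)$: at the end of the scheme no chain of points at time $t$ survives, so you can only bound the plain distance $\sfd(\sfS_t x,y)$, never the larger quantity $\sfd_\ell(\sfS_t x,y)$ that the $\EVI_K$ inequality for $\sfd_\ell$ requires; an upper bound on $\sfd^2(\sfS_tx,y)$ in terms of $\sfd_\ell^2(x,y)$ plus energy terms is strictly weaker than \eqref{def:evik} written for $\sfd_\ell$. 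Second, as you yourself note, your right-hand side contains $\sum_j\Delta_j F(z_j)$ with $z_j$ possibly outside $D(F)$; your proposed repair (replace $z_j$ by $\sfS_\sigma z_j$ and invoke \eqref{eq:may_31}) produces only one-sided bounds of the form $F(\sfS_\sigma z_j)\le F(y)+\sfd^2(z_j,y)/(2\rmI_K(\sigma))$, whose sum leaves an error of order $(t-s)\,\sfd_\ell^2(x,y)/\sigma$ that does not vanish under refinement of the chain; ``aligning'' these intermediate energies with $F(\sfS_r x)$ and $F(y)$ is precisely the missing cancellation, and nothing in your outline produces it.

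Both problems are solved at once by the construction in the paper, which is the idea your proposal lacks: flow each chain point by a \emph{staggered} time proportional to its position along the chain, setting $x_i^t:=\sfS_{ts_i}x_i$ with $s_i=i/N$, $x_0=y$, $x_N=x$. Comparing the consecutive points $x^t_{i+1}=\sfS_{t\delta}\big(\sfS_{ts_i}x_{i+1}\big)$ and $x^t_i=\sfS_{ts_i}x_i$ --- first by $K$-contractivity up to time $ts_i$, then by the integrated inequality \eqref{eq:75} over the short interval of length $t\delta$ --- makes the energies appear as exact differences $F(x^t_i)-F(x^t_{i+1})$, so that summing over $i$ telescopes them to $F(y)-F(\sfS_t x)$ with no intermediate energies left; simultaneously the points $x^t_0=y,\dots,x^t_N=\sfS_t x$ form a chain \emph{at time} $t$ with small steps (controlled via \eqref{eq:77}), whose squared-step sum dominates $\sfd_\ell^2(\sfS_t x,y)$ by the Cauchy--Schwarz estimate \eqref{eq:76}. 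This double role of the staggered flow --- producing both the exact telescoping of $F$ and a chain connecting $y$ to $\sfS_t x$ --- is the heart of the proof, and it is what you would need to supply to make your scheme close.
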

  \begin{proof}
    We discuss the case of $\sfd_\ell$; the proof for $\bar \sfd_\ell$ is analogous, working with continuous rather than 
    discrete curves.
    
    Let us fix $y\in D(F)$, $x\in X$ with $\sfd_\ell(x,y)<\infty$; 
    the existence of $\eps$-middle points as in \eqref{eq:73} for arbitrary couples at finite
    $\sfd_\ell$-distance easily shows that for every $\eps>0$ and
    $N\geq 1$ there exist points $x_i$, $i=0,\ldots,N$, 
    with $x_0=y$, $x_N=x$ and ${\sfd}(x_i,x_{i+1})\le \sfd_\ell(x,y) 2^{\eps}/N$, $i=0,\ldots,(N-1)$.
    Set $s_i:=i/N$, $x_i^t:={\sf S}_{t s_i}x_i$, and $\delta=1/N$;
    notice that for $t>0$ we have $x^t_n\in D(F)$, by our assumptions on ${\sf S}$. 
    Using first the contractivity of ${\sf S}$ and then
    \eqref{eq:75} (with $y=x^t_i={\sf S}_{ts_i}x_i$) we obtain
  \begin{align}\nonumber
    &\frac1{2}\sfd^2(x^t_{i+1},x^t_i)-\rme^{-Kt \delta}\rme^{-2K ts_i}
      \frac12 {\sfd^2}(x_{i+1},x_i)\\\nonumber
     &\leq\frac1{2}\sfd^2({\sf S}_{t\delta}{\sf
       S}_{ts_i}x_{i+1},{\sf S}_{ts_i}x_i)-
         \rme^{-Kt\delta}
         \frac 12 {\sfd^2}({\sf S}_{ts_i}x_{i+1},{\sf S}_{ts_i}x_i)
          \\\label{eq:small-triangle}
     &\leq 
     \rmI_{{-K}}(t\delta) \big[F(x^t_i) - F(x^t_{i+1})\big]\;.
 \end{align}
 Since $\delta^{-1}=N$ we have 
 \begin{align}
   \label{eq:76}
   \sum_{i=0}^{N-1}\frac1{\delta}\sfd^2(x^t_{{i+1}},x^t_{i}) 
       &\geq\left(\sum_{i=0}^{N-1}\sfd(x^t_{{i+1}},x^t_{i})\right)^2\;\\
           \sum_{i=0}^{N-1}\frac1{\delta}\sfd^2(x^t_{{i+1}},x^t_{i}) 
    &\geq 
    \max_i \frac{\sfd^2(x^t_{{i+1}},x^t_{i})}{\delta}
    \label{eq:77}
  \end{align}
  and
  \begin{align*}
   \sum_{i=0}^{N-1}\frac1{\delta}\rme^{-K t\delta}
   \rme^{-2Kts_i}
    {\sfd^2}(x_{i+1},x_i)
   &\le 2^{2\eps} \rme^{-Kt\delta} \sfd_\ell^2(x,y)\sum_i \delta
   \rme^{-2Kts_i}
   \\&
   \le 2^{2\eps}  \rme^{-Kt\delta}\sfd_\ell^2(x,y)\Big(\int_0^1\e^{-2Kts}\dd s+\omega(\delta)\Big)
 \end{align*}
 with $\omega(\delta)\to0$ as $\delta\to0$.
 Thus, dividing by $\delta$ and summing up \eqref{eq:small-triangle} we obtain
 \begin{align*}
   \frac 12\sum_{i=0}^{N-1}\frac1{\delta}\sfd^2(x^t_{{i+1}},x^t_{i}) 
       \le 
       2^{2\eps}  \rme^{-Kt\delta}\frac 12\sfd_\ell^2(x,y)\Big(t^{-1}\rmI_{-2K}(t)+\omega(\delta)\Big)
       +\frac{\rmI_{-K}(\delta t)}{\delta}\big( F(y) - F({\sf S}_tx)\big)\;.
 \end{align*}
 Taking the limit along a family of $\eps\downarrow0$ and $N\uparrow\infty$,
 \eqref{eq:76} and \eqref{eq:77} show that the $\liminf$ of the left hand side
 of the last inequality provides un upper bound of 
 $\frac 12\sfd_\ell^2(\sfS_t x,y)$, and this yields
 $$
 \frac1{2}\sfd_\ell^2({\sf S}_tx,y) -\frac{\rmI_{-2K}(t)}{2t}\sfd^2_\ell(x,y)\leq {t}(F(y) - F({\sf S}_tx))\;.
 $$
 This implies the differential inequality at $t=0$ and then \eqref{def:evik} for the extended distance
 $\sfd_\ell$, thanks to the semigroup property of $\sfS$.
 \end{proof}
\begin{corollary}[Approximate geodesic convexity of $F$]
  \label{cor:appgeo}
  Under the same assumption of the previous Theorem, 
  let us choose $x,\,y\in D(F)$ with $\sfd_\ell(x,y)<\infty$, $\eps>0$, and points 
  $x_0,\ldots,x_N$, $N\in \N$, corresponding to a uniform 
  partition $s_n=n/N$ of $[0,1]$ such that
  \begin{equation}
    \label{eq:78}
    x_0=x,\ x_N=y,\quad
    {\sfd_\ell(x_i,x_{i+1})\le \frac 1N\sqrt{\sfd^2_\ell(x,y)+\eps^2}}\,\,\,\,\,\,0\leq i\leq (N-1)\;.\quad 
  \end{equation}
  Then for all $t>0$ one has
  \begin{equation}
    \label{eq:79}
    F(\sfS_tx_n)\le (1-s_n)F(x)+s_n F(y)-\frac{K}2s_n(1-s_n)\sfd_\ell^2(x,y)
    +\frac {\eps^2}{2\rmI_{K}(t)}s_n(1-s_n)\;.
  \end{equation}
  In particular, $(D(F),\sfd_\ell)$ and all sublevels of $F$ are length spaces. 
  If moreover the sublevels $\{F\leq c\}$, $c\in\R$, are complete in $(X,\sfd)$,
  then $\sfd_\ell=\bar\sfd_\ell$ on $D(F)\times D(F)$.
\end{corollary}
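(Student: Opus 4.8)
The plan is to deduce everything from the integral inequality \eqref{eq:75}, read relative to the length distance: by Theorem~\ref{thm:self-improvement} the semigroup $\sfS$ is an $\EVI_K$ gradient flow of $F$ also for $\sfd_\ell$, so \eqref{eq:75} holds with $\sfd$ replaced by $\sfd_\ell$. First I would rewrite it, using the elementary identities $1/\rmI_{-K}(t)=\rme^{Kt}/\rmI_K(t)$ and $(1-\rme^{Kt})/\rmI_K(t)=-K$, as the one-sided bound
$$F(\sfS_t z)\le F(w)+\frac{1}{2\rmI_K(t)}\sfd_\ell^2(z,w)-\frac{\rme^{Kt}}{2\rmI_K(t)}\sfd_\ell^2(\sfS_t z,w),$$
valid for every $z\in X$ and $w\in D(F)$ with $\sfd_\ell(z,w)<\infty$. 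The chain $x_0,\dots,x_N$ of \eqref{eq:78} makes $x_n$ an approximate $s_n$-interpolation point of $x,y$: summing \eqref{eq:78} gives $\sfd_\ell(x_n,x)\le s_n\sqrt{\sfd_\ell^2(x,y)+\eps^2}$ and $\sfd_\ell(x_n,y)\le(1-s_n)\sqrt{\sfd_\ell^2(x,y)+\eps^2}$.

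Next I would prove \eqref{eq:79} by applying the displayed bound with $z=x_n$ at the two targets $w=x$ and $w=y$ (both in $D(F)$) and taking the convex combination with weights $1-s_n$ and $s_n$. For the initial-distance terms I use $(1-s_n)s_n^2+s_n(1-s_n)^2=s_n(1-s_n)$ to obtain the upper bound $s_n(1-s_n)(\sfd_\ell^2(x,y)+\eps^2)$; for the terms evaluated along the flow I use the variance inequality $(1-\lambda)a^2+\lambda b^2\ge\lambda(1-\lambda)(a+b)^2=((1-\lambda)a-\lambda b)^2+\lambda(1-\lambda)(a+b)^2-\dots$ (more simply, $(1-\lambda)a^2+\lambda b^2-\lambda(1-\lambda)(a+b)^2=((1-\lambda)a-\lambda b)^2\ge0$) together with $\sfd_\ell(\sfS_tx_n,x)+\sfd_\ell(\sfS_tx_n,y)\ge\sfd_\ell(x,y)$ to get the lower bound $s_n(1-s_n)\sfd_\ell^2(x,y)$. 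Collecting the two $\sfd_\ell^2(x,y)$ contributions and invoking $(1-\rme^{Kt})/\rmI_K(t)=-K$ produces exactly the term $-\tfrac{K}{2}s_n(1-s_n)\sfd_\ell^2(x,y)$, while the $\eps^2$ contribution leaves the remainder $\tfrac{\eps^2}{2\rmI_K(t)}s_n(1-s_n)$; this is \eqref{eq:79}.

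For the length-space assertions I would specialize \eqref{eq:79} to $N=2$, $n=1$: the point $z_t:=\sfS_t x_1$ lies in $D(F)$ for every $t>0$ (as in the proof of Theorem~\ref{thm:self-improvement}), and since $x,y\in\overline{D(F)}$ the ``starting from'' property relative to $\sfd_\ell$ gives $\sfd_\ell(\sfS_tx_1,x)\to\sfd_\ell(x_1,x)$ and $\sfd_\ell(\sfS_tx_1,y)\to\sfd_\ell(x_1,y)$ as $t\downarrow0$, so $z_t$ realizes the midpoint property \eqref{eq:73} for $\sfd_\ell$ in $D(F)$; because \eqref{eq:79} controls $F(z_t)$ by $\tfrac12(F(x)+F(y))$ up to a small error, the same midpoints lie in the relevant sublevel, so both $(D(F),\sfd_\ell)$ and the sublevels $\{F\le c\}$ are length spaces. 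Finally, assuming $\{F\le c\}$ complete in $(X,\sfd)$, I would iterate the midpoint construction dyadically, choosing $\eps/4^i$-midpoints as in the remark preceding Theorem~\ref{thm:self-improvement}, where \eqref{eq:79} certifies that all midpoints stay in one fixed complete sublevel; completeness then lets the scheme converge to a curve in $AC([0,1];(X,\sfd))$ joining $x$ to $y$ of length at most $\sfd_\ell(x,y)+2\eps$, whence $\bar\sfd_\ell\le\sfd_\ell$ and, with \eqref{eq:72}, equality on $D(F)\times D(F)$.

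The main obstacle is the coordination of the two limits in the last step: the error $\tfrac{\eps^2}{2\rmI_K(t)}$ in \eqref{eq:79} degenerates precisely as $t\downarrow0$, which is the regime making $z_t$ an accurate midpoint, so $\eps$ must be driven to $0$ fast relative to $t$; moreover the rate at which $\sfd_\ell(\sfS_tx_1,\cdot)$ returns to $\sfd_\ell(x_1,\cdot)$ must be \emph{decoupled} from $\eps$, which I would achieve via the contraction $\sfd_\ell(\sfS_tx_1,\sfS_tx)\le\rme^{-Kt}\sfd_\ell(x_1,x)$ combined with the fixed (\(\eps\)-independent) modulus $\sfd_\ell(\sfS_tx,x)\to0$. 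For the completeness statement the delicate point is again to extract from \eqref{eq:79} a uniform sublevel bound for every midpoint of the dyadic scheme, so that the limiting curve built through completeness genuinely exists and connects the prescribed endpoints.
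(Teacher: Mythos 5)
Your proposal is correct and follows essentially the same route as the paper: the paper obtains \eqref{eq:79} from the $\EVI_K$ property relative to $\sfd_\ell$ (Theorem~\ref{thm:self-improvement}) by citing \cite[Thm.~3.2]{Daneri-Savare1}, whose argument is precisely your convex combination of the integrated inequality \eqref{eq:75} at the two endpoints (with the variance identity $(1-\lambda)a^2+\lambda b^2-\lambda(1-\lambda)(a+b)^2=((1-\lambda)a-\lambda b)^2$ and the triangle inequality), and it derives $\sfd_\ell=\bar\sfd_\ell$ from the recursive $\eps/4^i$-midpoint construction in complete sublevels, which is exactly your final step, including your correct observation that $t$ must be fixed before $\eps$ and that the return modulus must be decoupled from $\eps$ via the contraction estimate. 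The one caveat, shared with the paper's own two-line proof, concerns the intermediate assertion on sublevels: for $K\le 0$ the right-hand side of \eqref{eq:79} strictly exceeds $\max\{F(x),F(y)\}$, so your midpoints land only in an enlarged sublevel $\{F\le c+\delta\}$; this suffices for the final equality (the dyadic scheme needs only one fixed complete sublevel, which the geometric summability of the increments $\frac{|K|}{8}(2^{-i}\sfd_\ell(x,y))^2+\delta_i$ provides), but it does not literally yield that $\{F\le c\}$ itself is a length space, and indeed for $K<0$ that claim can fail (take $F(x)=Kx^2/2$ on $\R$, whose negative sublevels are two half-lines at finite mutual distance), so no proof could close this point in full generality.
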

\begin{proof}
The inequality \eqref{eq:79} can be obtained from the $\EVI_K$ property {relative to $\sfd_\ell$ (whose
validity is ensured by Theorem~\ref{thm:self-improvement})} arguing as in \cite[Thm.~3.2]{Daneri-Savare1}.
The equality $\sfd_\ell=\bar\sfd_\ell$ on the sublevels and then on $D(F)$ follows by the equality between the
two length distances induced by complete distances.
\end{proof}
An important application of the above result, that we will exploit in Section~\ref{sec:actionnew}
{(specifically, with $F=\ent$ and $\tau$ equal to the weak $L^1(X,\m)$-topology)}
concerns the case when the completeness of the sublevels $\{F\leq c\}$ can be improved to compactness with respect to a Hausdorff
topology $\tau$ on $X$ for which $\sfd$ is lower semicontinuous. In this case it is not difficult to prove  that $(D(F),\sfd_\ell)$ is a
\emph{geodesic} space and that (see \cite{Daneri-Savare}) the $\EVI_K$ property relative to $\sfd_\ell$ yields 
$K$-convexity of $F$, i.e.~for every $x,\,y\in D(F)$ there exists a constant speed geodesic
$x:[0,1]\to X$ relative to $\sfd_\ell$ connecting $x$ to $y$ such that
\begin{equation}
  \label{eq:80}
  \sfd_\ell(x_s,x_t)=|t-s|\sfd_\ell(x,y),\quad
  F(x_t)\le (1-t)F(x)+t F(y)-\frac{K}2 t(1-t)\sfd_\ell^2(x,y)\;.
\end{equation}
\section{Extended metric-topological spaces}\label{sec:3}
We axiomatize metric-topological spaces by adding two (somehow competing) 
compatibility conditions between the distance $\sfd$ and the topology $\tau$.

\begin{definition}[Extended metric-topological spaces]\label{def:luft1}
Let $(X,\sfd)$ be an extended metric space and let
$\tau$ be a Hausdorff topology in $X$. 
We say that $(X,\tau,\sfd)$ is an extended
metric-topological space if:
\begin{itemize}
\item[(a)] there exists a family of $(\tau\times\tau)$-continuous bounded semidistances $\sfd_i:X\times X\to [0,\infty)$, $i\in I$, 
with  $\sfd=\sup_i\sfd_i$.
\item[(b)] the topology $\tau$ is generated by the family of functions
\begin{equation}\label{eq:defliptau}
{\rm Lip}_b(X,\tau,\sfd):=\left\{f:X\to\R:\ \text{$f$ is bounded, $\sfd$-Lipschitz, $\tau$-continuous}\right\}.
\end{equation}
\end{itemize}
\end{definition}
For every $L\geq 0$ we will also set
\begin{equation}
  \label{eq:9}
  {\rm Lip}_{b,L}(X,\tau,\sfd):=
  \Big\{f\in {\rm Lip}_b(X,\tau,\sfd):\ |f(x)-f(y)|\le
  L\sfd(x,y)\quad\forall\,x,y\in X\Big\}\;
\end{equation}
Let us make a few comments on the above definition.

\subsubsection*{Boundedness of $\sfd_i$.}
The boundedness assumption on $\sfd_i$ in (a) is clearly not restrictive, possibly replacing $\sfd_i$ by $\sfd_i\wedge n$ and
enlarging the index set. 

\subsubsection*{Directed families of distances.}
Possibly passing from the index set $I$ to the collection of its finite subsets we can assume with no loss of generality
that $I$ is a directed set. We shall often make this assumption in the
sequel. 

\subsubsection*{Extended metric-topological structures generated by 
separating family of functions.}
\label{ssubsec:family}
If we don't take the extended metric structure as a starting 
point, a definition easily seen to be equivalent can be given starting from a class ${\mathcal A}$ of functions which separate the points of $X$; then
$\tau$ is defined as the topology generated by $\mathcal A$ and the extended distance $\sfd(x,y)$ can be obtained by taking the supremum
of $|f(x)-f(y)|$ as $f$ runs in $\mathcal A$, as in \eqref{eq:basic_construction}.
Notice that in this case we can take as topology $\tau$ the coarsest topology
that makes all functions in $\mathcal A$ continuous, which is easily seen to be Hausdorff; 
since ${\rm Lip}_b(X,\tau,\sfd)$ contains $\mathcal A$ by construction,
it turns out that condition (a) above is satisfied and condition (b) is satisfied as well, with $\sfd_i(x,y)=|f_i(x)-f_i(y)|$,
$\mathcal A=\{f_i\}_{i\in I}$. In addition, if the functions in $\mathcal A$ are already continuous for some preexisting
topology $\tau_0$ in $X$, this construction provides a topology $\tau$ coarser than $\tau_0$.
\subsubsection*{Relations between $\tau$ and $\sfd$.}
Notice that condition (a) yields 
\begin{equation}
  \label{eq:2}
  \sfd\quad\text{is $(\tau\times \tau)$-lower semicontinuous
    in $X\times X$}
\end{equation}
and for every net $(x_j)_{j\in J}$ and every $x\in X$
\begin{equation}
  \text{$\sfd(x_j,x)\to 0$ implies $x_j\to x$ w.r.t.~the topology
    $\tau$}\;
\label{eq:3}
\end{equation} 
Indeed,
one can use assumption (b) and observe that $f(x_j)\to f(x)$ for all $f\in {\rm Lip}_b(X,\tau,\sfd)$.
 
 \subsubsection*{Lipschitz functions generating $\tau$.}
We might equivalently express condition (b) by assuming that
there exists a smaller family $\mathcal F\subset
\mathrm{Lip}(X,\tau,\sfd)$ which generates the topology $\tau$, 
i.e.~for every net $(x_j)_{j\in J}$ in $X$ and
every $x\in X$
\begin{equation}
  \label{eq:6}
  \text{$f(x_j)\to f(x)$ for all $f\in\mathcal F$ implies $x_j\to x$ w.r.t.~the topology
    $\tau$}\;.
\end{equation}
In fact, by suitably modifying the set of distances $\sfd_i$
without changing $\tau$ and $\sfd$, we can obtain an
equivalent characterization of extended metric-topological spaces:
\begin{lemma}
  \label{lem:obvious-but-useful}
  $(X,\tau,\sfd)$ is an extended metric-topological space
  according to Definition~\ref{def:luft1} 
  if and only if $(X,\sfd)$ is an extended metric space
  and there exists a family of $(\tau\times\tau)$-continuous
  and bounded semidistances $\sfd_i$, $i\in I$, 
  such that $\sfd=\sup_{i\in I}\sfd_i$ and 
  for every net $(x_{j})_{j\in J}$ in $X$ and $x\in X$
  \begin{equation}
    \label{eq:8}
    \lim_{j\in J}x_j=x\quad \text{w.r.t.~}\tau
    \qquad\Longleftrightarrow\qquad
    \lim_{j\in J}\sfd_i(x_j,x)=0\quad \text{for every }i\in I\;.
  \end{equation}
\end{lemma}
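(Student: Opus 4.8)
The plan is to prove the two implications separately, the whole argument turning on a single construction: to every $f\in {\rm Lip}_{b,1}(X,\tau,\sfd)$ (defined in \eqref{eq:9} with $L=1$) I associate the semidistance $\sfd_f(x,y):=|f(x)-f(y)|$. It is symmetric, satisfies the triangle inequality, vanishes on the diagonal, is bounded (because $f$ is), is $(\tau\times\tau)$-continuous (because $f$ is $\tau$-continuous), and satisfies $\sfd_f\le \sfd$ (because $f$ is $1$-Lipschitz). The two families $\{\sfd_i\}_{i\in I}$ and $\{\sfd_f: f\in {\rm Lip}_{b,1}(X,\tau,\sfd)\}$ are then played off against each other.

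For the implication from Definition~\ref{def:luft1} to the stated characterization I would take as the \emph{new} family $\mathcal D:=\{\sfd_f: f\in {\rm Lip}_{b,1}(X,\tau,\sfd)\}$ and check its three required properties. Each $\sfd_f$ is a $(\tau\times\tau)$-continuous bounded semidistance by the remarks above, and $\sfd_f\le \sfd$ gives $\sup_{\mathcal D}\sfd_f\le \sfd$. For the reverse inequality I would use the family provided by condition (a): for fixed $y$ the function $\sfd_i(\cdot,y)$ lies in ${\rm Lip}_{b,1}(X,\tau,\sfd)$, since $|\sfd_i(x,y)-\sfd_i(x',y)|\le \sfd_i(x,x')\le \sfd(x,x')$ and $\sfd_i(\cdot,y)$ is bounded and $\tau$-continuous; moreover $\sfd_{\sfd_i(\cdot,y)}(x,y)=|\sfd_i(x,y)-\sfd_i(y,y)|=\sfd_i(x,y)$, so $\sup_{\mathcal D}\sfd_f(x,y)\ge \sup_i\sfd_i(x,y)=\sfd(x,y)$, and hence $\sup_{\mathcal D}\sfd_f=\sfd$. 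It then remains to verify \eqref{eq:8} for $\mathcal D$: the forward implication is immediate from the $(\tau\times\tau)$-continuity of each $\sfd_f$, since $x_j\to x$ gives $\sfd_f(x_j,x)\to\sfd_f(x,x)=0$; the backward one follows because $\sfd_f(x_j,x)\to0$ for all $f\in {\rm Lip}_{b,1}$ forces, after rescaling a general $g\in {\rm Lip}_b$ with Lipschitz constant $L_g>0$ to $g/L_g\in {\rm Lip}_{b,1}$, the convergence $g(x_j)\to g(x)$ for every $g\in {\rm Lip}_b(X,\tau,\sfd)$, which by condition (b) in its net form \eqref{eq:6} yields $x_j\to x$ in $\tau$.

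For the converse implication I would start from a family $\{\sfd_i\}_{i\in I}$ as in the statement. Condition (a) of Definition~\ref{def:luft1} then holds verbatim, so only (b) needs proof, i.e. that $\tau$ coincides with the topology $\tau'$ generated by ${\rm Lip}_b(X,\tau,\sfd)$. Since every element of ${\rm Lip}_b$ is $\tau$-continuous, $\tau'\subseteq\tau$. For the reverse inclusion I argue through nets: if $x_j\to x$ in $\tau'$ then $f(x_j)\to f(x)$ for every $f\in {\rm Lip}_b$; applying this to $f=\sfd_i(\cdot,x)\in {\rm Lip}_b$ (again using $(\tau\times\tau)$-continuity and the Lipschitz bound just recalled) gives $\sfd_i(x_j,x)\to0$ for all $i$, whence $x_j\to x$ in $\tau$ by the assumed characterization \eqref{eq:8}. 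Thus every $\tau'$-convergent net is $\tau$-convergent with the same limit; since a $\tau$-closed set is closed under limits of $\tau$-convergent nets, each $\tau$-closed set is $\tau'$-closed, giving $\tau\subseteq\tau'$ and hence $\tau=\tau'$.

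The only genuinely delicate point, and the one I would treat with care, is the verification that enlarging the family to all of ${\rm Lip}_{b,1}(X,\tau,\sfd)$ does not destroy the identity $\sup_{\mathcal D}\sfd_f=\sfd$. The naive candidates $\sfd(\cdot,x)\wedge M$, which would recover $\sfd$ in an ordinary metric space, are in general only $\tau$-lower semicontinuous (by \eqref{eq:2}) and \emph{not} $\tau$-continuous, so they are not admissible test functions; the whole force of the hypothesis is that the given $\tau$-continuous semidistances $\sfd_i$ supply enough $1$-Lipschitz $\tau$-continuous functions, namely the sections $\sfd_i(\cdot,y)$, to recover the supremum $\sfd$. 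Once this is in place, everything else reduces to routine manipulation of nets and the definition of the initial topology generated by a family of functions.
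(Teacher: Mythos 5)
Your proof is correct and takes essentially the same route as the paper: the forward implication via the family $\sfd_f(x,y):=|f(x)-f(y)|$, $f\in{\rm Lip}_{b,1}(X,\tau,\sfd)$, and the converse via the sections $\sfd_i(\cdot,z)$, which is precisely the paper's choice of a generating subfamily of ${\rm Lip}_b(X,\tau,\sfd)$. The only difference is that the paper leaves as ``easy to check'' the verifications you spell out (the identity $\sup_f\sfd_f=\sfd$ via the sections $\sfd_i(\cdot,y)$, and the net arguments identifying $\tau$ with the generated topology).
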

\begin{proof}
  If $(X,\tau,\sfd)$ is an extended metric-topological space
  according to Definition~\ref{def:luft1} we can simply 
  consider the new collection of semidistances 
  of the form
  \begin{equation}
    \label{eq:10}
    \sfd_f(x,y):=|f(x)-f(y)|,\qquad f\in \mathcal F={\rm
      Lip}_{b,1}(X,\tau,\sfd)\;
  \end{equation}
  Conversely, if a family of $(\tau\times\tau)$-continuous and bounded 
  semidistances $\sfd_i$, $i\in I$, satisfy $\sfd=\sup_{i\in
    I}\sfd_i$ and \eqref{eq:8}, then it is 
  easy to check that 
  $\cal F:=\big\{\sfd_i(\cdot,z):i\in I,\ z\in X\big\}$ 
  is contained in $\mathrm{Lip}_b(X,\tau,\sfd)$ and
  generates the topology $\tau$. 
\end{proof} 
 
\subsubsection*{Approximation of continuous functions.}
Every $f\in C_b(X)$ admits the useful representation formula
\begin{equation}
  \label{eq:12}
  \begin{aligned}
    f(x)&=\sup_{g\in L^-(f)}g(x)\quad\text{with}\quad L^-(f):=\Big\{g\in \mathrm{Lip}_b(X,\tau,\sfd),\quad g\le
    f\text{ in }X\Big\}\;\\
    f(x)&=\inf_{h\in L^+(f)}h(x)\quad\text{with}\quad L^+(f):=\Big\{h\in \mathrm{Lip}_b(X,\tau,\sfd),\quad h\ge
    f\text{ in }X\Big\}\;
  \end{aligned}
\end{equation}
which can be proved by 
passing to the limit with respect to $(i,n)\in I\times \N$ 
in the $\inf$ and $\sup$ regularizations of $f$
\begin{equation}
  \label{eq:13}
  g_{i,n}(x):=\inf_{y\in X} f(y)+n\sfd_i(x,y),\qquad
  h_{i,n}(x):=\sup_{y\in X} f(y)-n\sfd_i(x,y)
\end{equation}
associated to any family of $(\tau\times\tau)$-continuous distances $\sfd_i$ satisfying
\eqref{eq:8}.

Properties \eqref{eq:2} and \eqref{eq:3} show that  Definition~\ref{def:luft1} is consistent
with the axiomatization of extended metric spaces proposed in
\cite{AGS11a}. Notice however that we assume neither that $\tau$ is Polish
(by working directly with Radon measures)
nor that $(X,\sfd)$ is complete. 
\subsubsection*{Completion.}
Thanks to \eqref{eq:defliptau}, extended metric-topological
spaces behave well w.r.t.~completion: denoting by $(\tilde
X,\tilde\sfd)$ the abstract completion of $(X,\sfd)$, every 
function $f$ in $\mathrm{Lip}_b(X,\tau,\sfd)$ admits a unique
Lipschitz extension $\tilde f$ to $\tilde X$ and we can thus 
introduce the topology $\tilde\tau$ generated
by $\{\tilde f:f\in \mathrm{Lip}_b(X,\tau,\sfd)\}$. It is not difficult
to check that $(\tilde X,\tilde \tau,\tilde\sfd)$ 
is an extended metric-topological space and that
$\tau$ is the restriction of $\tilde\tau$ to $X$.
\subsubsection*{A canonical compactification.}
Consider a set $F$ and the space 
$\mathcal X:=\R^F$ endowed with the topology $\tau_{\mathcal X}$ of pointwise convergence.
On $\mathcal X$ we consider the extended distance
\begin{displaymath}
  \sfd_{\mathcal X}(\xx,\boldsymbol y):=\sup_{f\in F}|x_f-y_f|\quad\text{with}\quad
  \xx=(x_f)_{f\in F},\ 
  \boldsymbol y=(y_f)_{f\in F}\in \mathcal X\;.
\end{displaymath}
$(\cX,\tau_\cX,\sfd_\cX)$ provides a natural class of example of extended
metric-topological space
depending on the index set $F$.
Extended metric-topological spaces can always be embedded
in a compact subset of some space $\calX=\R^F$, preserving the metric-topological structure.
As index set $F$ and embedding $\iota$ we can always choose 
\begin{equation}
  \label{eq:11}
  F=\mathrm{Lip}_{b,1}(X,\tau,\sfd),\quad
  \iota:X\to \R^F,\quad \iota(x)_f:=f(x)\quad f\in F\;
\end{equation}
as is typical for the Stone-$\check {\mathrm C}$ech compactification of completely
regular spaces. The proof of the following result is standard.

\begin{lemma}
  \label{lem:compactification}
  If $(X,\tau,\sfd)$ is an extended metric-topological space according to
  Definition \ref{def:luft1}, then the map $\iota$ in \eqref{eq:11} 
  is an homeomorphism of $X$ with $\tilde X=\iota(X)\subset \cal X=\R^F$,
  it is an isometry from $(X,\sfd)$ to $(\tilde X,\sfd_\calX)$,
  and $\tilde X$ has a $\tau_\cX$-compact closure in $\calX$.
\end{lemma}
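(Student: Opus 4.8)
The plan is to verify each of the three claims for $\iota$ in turn: homeomorphism onto its image, isometry, and $\tau_\cX$-compact closure. First I would establish the isometry property, which is the most direct: by the very definition $\sfd_\cX(\iota(x),\iota(y))=\sup_{f\in F}|f(x)-f(y)|$ with $F={\rm Lip}_{b,1}(X,\tau,\sfd)$, and I need this to equal $\sfd(x,y)$. The inequality $\sfd_\cX(\iota(x),\iota(y))\le \sfd(x,y)$ is immediate since every $f\in F$ is $1$-Lipschitz. For the reverse inequality I would invoke Lemma~\ref{lem:obvious-but-useful}, which rewrites the structure via the semidistances $\sfd_f(\cdot,\cdot)=|f(\cdot)-f(\cdot)|$ for $f\in {\rm Lip}_{b,1}(X,\tau,\sfd)$, giving $\sfd=\sup_{f\in F}\sfd_f$; this is exactly the supremum defining $\sfd_\cX\circ(\iota\times\iota)$, so equality holds. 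In particular $\iota$ is injective (as $\sfd$ is a genuine extended metric, $\sfd(x,y)=0$ forces $x=y$).

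Next I would treat the homeomorphism claim. Continuity of $\iota$ into $(\cX,\tau_\cX)$ is equivalent to continuity of each coordinate $x\mapsto f(x)$, which holds because every $f\in F$ is $\tau$-continuous by the definition of ${\rm Lip}_b(X,\tau,\sfd)$. For continuity of the inverse, I would use that $\tau$ is generated by ${\rm Lip}_b(X,\tau,\sfd)$ (condition (b) of Definition~\ref{def:luft1}), or more economically by the family $F={\rm Lip}_{b,1}(X,\tau,\sfd)$ which generates $\tau$ as noted in the proof of Lemma~\ref{lem:obvious-but-useful}: a net $x_j\to x$ in $\tau$ if and only if $f(x_j)\to f(x)$ for all $f\in F$, i.e.\ iff $\iota(x_j)\to\iota(x)$ pointwise in $\cX$. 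This two-sided net characterization shows $\iota$ is a homeomorphism onto $\tilde X=\iota(X)$ with the subspace topology from $\tau_\cX$.

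Finally, for the compactness of the closure I would exploit the boundedness built into $F$: each $f\in {\rm Lip}_{b,1}(X,\tau,\sfd)$ is bounded, so $\iota(X)$ is contained in the product $\prod_{f\in F}[\inf_X f,\sup_X f]$ of compact real intervals. By Tychonoff's theorem this product is $\tau_\cX$-compact, hence closed, and therefore the $\tau_\cX$-closure of $\iota(X)$ is a closed subset of a compact space and is itself compact. This is the only place where boundedness (rather than mere $1$-Lipschitzness) of the generating functions is essential, and it is what turns the embedding into a \emph{compactification}.

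The routine but slightly delicate point — and the one I would flag as the main thing to get right — is matching the \emph{two different} uses of the family $F$: as the index set for $\sfd_\cX$ (where only $1$-Lipschitzness matters, for the isometry) and as a $\tau$-generating family (where the content of Lemma~\ref{lem:obvious-but-useful} and condition (b) is used, for the homeomorphism). Once one notes that $F={\rm Lip}_{b,1}(X,\tau,\sfd)$ simultaneously serves all three roles — it computes $\sfd$ exactly, it generates $\tau$, and it consists of bounded functions — the three assertions follow in parallel from the constructions already set up, which is precisely why the paper can call the proof \emph{standard}.
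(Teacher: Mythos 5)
Your proof is correct, and it is precisely the argument the paper has in mind: the paper omits the proof entirely, declaring it ``standard'' (in the spirit of the Stone--\v{C}ech compactification of completely regular spaces), and your three steps are exactly that standard argument. In particular, the only delicate point — the reverse isometry inequality $\sup_{f\in F}|f(x)-f(y)|\ge \sfd(x,y)$ — is correctly handled by appealing to Lemma~\ref{lem:obvious-but-useful} (equivalently, by noting that the functions $\sfd_i(\cdot,y)$ belong to $F={\rm Lip}_{b,1}(X,\tau,\sfd)$ and already realize $\sfd$ as a supremum), the bicontinuity follows from the fact that $F$ generates $\tau$, and the compact closure follows from Tychonoff's theorem applied to $\prod_{f\in F}[\inf_X f,\sup_X f]$, which is closed in $\R^F$ since the latter is Hausdorff.
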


One more important consequence, from the 
measure-theoretic point of view, is the following result
concerning the complete regularity, according to \eqref{eq:CR}, of 
metric-topological spaces.
\begin{lemma}[Complete regularity]\label{lem:compreg}
Any extended metric-topological space $(X,\tau,\sfd)$ is completely regular.
\end{lemma}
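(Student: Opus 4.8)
The plan is to read the conclusion off directly from the generating family of semidistances furnished by Definition~\ref{def:luft1}(a). Recall that $\sfd=\sup_{i\in I}\sfd_i$ with each $\sfd_i$ a bounded $(\tau\times\tau)$-continuous semidistance, and that by Lemma~\ref{lem:obvious-but-useful} the topology $\tau$ is characterised through \eqref{eq:8} by the convergence $\sfd_i(x_j,x)\to 0$ for all $i$. As usual I would first assume $I$ to be directed (passing, if needed, to the family of finite suprema $\max_{i\in F}\sfd_i$ indexed by finite $F\subset I$, which are again bounded $(\tau\times\tau)$-continuous semidistances with the same supremum $\sfd$).

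First I would show that the balls
$$V_i^\delta(x_0):=\bigl\{x\in X:\ \sfd_i(x,x_0)<\delta\bigr\},\qquad i\in I,\ \delta>0,$$
form a base of $\tau$-neighborhoods at every point $x_0$. Let $\tau'$ denote the topology they generate. Since each $\sfd_i$ is $(\tau\times\tau)$-continuous, every $V_i^\delta(x_0)$ is $\tau$-open, so $\tau'\subseteq\tau$. Conversely, a net converges in $\tau'$ precisely when $\sfd_i(x_j,x_0)\to0$ for all $i\in I$, which by \eqref{eq:8} is exactly $\tau$-convergence; hence $\tau$ and $\tau'$ have the same convergent nets with the same limits, and therefore $\tau=\tau'$.

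Granting this, verifying the complete regularity condition \eqref{eq:CR} is immediate. Given a $\tau$-closed set $F$ and a point $x_0\in X\setminus F$, the neighborhood base provides $i\in I$ and $\delta>0$ with $V_i^\delta(x_0)\subset X\setminus F$, i.e.\ $\sfd_i(x,x_0)\ge\delta$ for every $x\in F$. I would then take
$$f(x):=\bigl(\delta-\sfd_i(x,x_0)\bigr)^+.$$
This function takes values in $[0,\delta]$, is $\tau$-continuous because $x\mapsto\sfd_i(x,x_0)$ is continuous by the $(\tau\times\tau)$-continuity of $\sfd_i$, satisfies $f(x_0)=\delta>0$, and vanishes identically on $F$; thus $f\in C_b(X)$ witnesses \eqref{eq:CR}.

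The construction of $f$ is entirely routine, so the only point deserving real care is the identification $\tau=\tau'$, that is, checking that \eqref{eq:8} genuinely yields the $V_i^\delta(x_0)$ as a neighborhood base and not merely a description of convergent sequences. This is where the net formulation of Lemma~\ref{lem:obvious-but-useful} is indispensable: since $(X,\tau)$ is assumed neither metrizable nor first countable, sequences would be insufficient to determine $\tau$, and the argument must be phrased throughout in terms of nets.
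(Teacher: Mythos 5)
Your proof is correct, but it enters through a different axiom than the paper does. The paper's own argument is a three-line application of condition (b) of Definition~\ref{def:luft1}: since $\tau$ is generated by ${\rm Lip}_b(X,\tau,\sfd)$, a point $x_0$ in the open set $X\setminus F$ has a basic neighborhood of the form $\bigcap_j\{f_j>0\}\subseteq X\setminus F$ with finitely many $f_j\in{\rm Lip}_b(X,\tau,\sfd)$, and then $g=\min_j f_j^+$ is already the required witness for \eqref{eq:CR}: it is bounded, $\tau$-continuous, positive at $x_0$ and identically zero on $F$. You instead start from condition (a), upgraded via Lemma~\ref{lem:obvious-but-useful} to a directed family of semidistances satisfying \eqref{eq:8}, and prove the stronger intermediate statement that the balls $V_i^\delta(x_0)$ form a $\tau$-neighborhood base; this step genuinely needs the triangle inequality for $\sfd_i$ (so that every point of a ball has a smaller ball around it inside) together with the standard fact that a topology is determined by its convergent nets, and only then do you produce the witness $\bigl(\delta-\sfd_i(\cdot,x_0)\bigr)^+$. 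What your route buys is the explicit identification of $\tau$ as the topology induced by the semidistance balls --- a fact of independent interest, closely related to the regularizations \eqref{eq:13} --- and a witness that is visibly $1$-Lipschitz w.r.t.~$\sfd_i$, hence $\sfd$-Lipschitz; what the paper's route buys is brevity: no nets, no comparison of two topologies, the function is read off directly from the subbase supplied by axiom (b). Note that both arguments in fact prove slightly more than \eqref{eq:CR}, namely that the separating function can be taken in ${\rm Lip}_b(X,\tau,\sfd)$.
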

\begin{proof}
Let $x_0\in U\in\tau$ with $U\cap F=\emptyset$. By condition (b), we can assume
that $U$ has the form $\cap_j\{f_j>0\}$ for some finite family of functions $f_j\in {\rm Lip}_b(X,\tau,\sfd)$. Then the
function $g=\min_j f_j^+$ is null on $F$ and $g(x_0)>0$.
\end{proof}
A first consequence of the $(\tau\times\tau)$-lower semicontinuity of $\sfd$, technically relevant for us, is
the $\tau$-lower semicontinuity of the function
\begin{equation}\label{eq:dKappa}
\sfd_K(x):=\min_{y\in K}\sfd(x,y)
\end{equation}
for any compact set $K\subset X$. The function $\sfd_K$, whose 0 level set is $K$, provides in our context
an analogous of the \textit{perfect} regularity condition in Topology. Notice also that $\sfd_K$ is the monotone limit of the
$\tau$-continuous functions 
\begin{equation}\label{eq:xv_or}
\sfd^i_K(x):=\min_{y\in K}\sfd_i(x,y)\;.
\end{equation}
Let us now introduce some additional concepts where the topological and the metric structure interact.
We denote by $\BorelSets{\tau,\sfd}$ the $\sigma$-algebra generated by ${\rm Lip}_b(X,\tau,\sfd)$;
obviously $\BorelSets{\tau,\sfd}\subset\BorelSets{\tau}$, but a kind of converse is provided by the following lemma.

\begin{lemma} \label{lem:sigmaalgebras} 
In an extended metric-topological space $(X,\tau,\sfd)$, any $\mu\in\cP(X)$ is uniquely determined by its values on
$\BorelSets{\tau,\sfd}$. In addition $\cup_i{\rm Lip}_b(X,\tau,\sfd_i)$ separates points of $X$ and it is dense in $L^2(X,\m)$.
In particular ${\rm Lip}_b(X,\tau,\sfd)$ separates points of $X$ and it is dense in $L^2(X,\m)$.
\end{lemma}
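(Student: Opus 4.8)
The plan is to prove the three assertions in turn, deriving both the determination of $\mu$ on $\BorelSets{\tau,\sfd}$ and the $L^2$-density from the same monotone-approximation mechanism, with the Radon monotone convergence theorem \eqref{eq:Beppo_Levi_general} as the workhorse. For the uniqueness I would reduce the equality of two measures $\mu,\nu\in\cP(X)$ agreeing on $\BorelSets{\tau,\sfd}$ to $\int f\dd\mu=\int f\dd\nu$ for every $f\in C_b(X)$, and then use that a Radon measure on the completely regular space $(X,\tau)$ (Lemma~\ref{lem:compreg}) is determined by its integrals against $C_b(X)$. The reduction rests on \eqref{eq:13}: every $f\in C_b(X)$ is the pointwise limit of the nondecreasing net $g_{i,n}\in{\rm Lip}_b(X,\tau,\sfd)$, and each $g_{i,n}$ is bounded and $\BorelSets{\tau,\sfd}$-measurable, so $\int g_{i,n}\dd\mu=\int g_{i,n}\dd\nu$; since the $g_{i,n}$ form a nondecreasing, equibounded net of $\tau$-continuous (hence $\tau$-lower semicontinuous) functions, \eqref{eq:Beppo_Levi_general} passes to the limit and gives $\int f\dd\mu=\int f\dd\nu$. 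To deduce $\mu=\nu$ I would fix a compact $K$ and, for each open $U\supset K$, produce $g\in C_b(X)$ with $\one_K\le g\le\one_U$ by covering $K$ with finitely many functions furnished by complete regularity; then $\mu(K)\le\int g\dd\mu=\int g\dd\nu\le\nu(U)$, and taking the infimum over $U$ with the outer regularity of finite Radon measures yields $\mu(K)\le\nu(K)$, whence $\mu=\nu$ by symmetry and inner regularity.

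The separation of points by $\cup_i{\rm Lip}_b(X,\tau,\sfd_i)$ is immediate: for $x\ne y$ the identity $\sfd=\sup_i\sfd_i$ and $\sfd(x,y)>0$ force $\sfd_{i_0}(x,y)>0$ for some $i_0$, and $z\mapsto\sfd_{i_0}(z,x)$ is bounded, $\tau$-continuous (as $\sfd_{i_0}$ is $(\tau\times\tau)$-continuous) and $1$-Lipschitz for $\sfd_{i_0}$, hence lies in ${\rm Lip}_b(X,\tau,\sfd_{i_0})$ and takes the values $0$ and $\sfd_{i_0}(x,y)>0$ at $x$ and $y$.

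For the density, write $G:=\cup_i{\rm Lip}_b(X,\tau,\sfd_i)$; since $\sfd_i\le\sfd_{i'}$ for $i\le i'$ gives ${\rm Lip}_b(X,\tau,\sfd_i)\subset{\rm Lip}_b(X,\tau,\sfd_{i'})$ and $I$ is directed, $G$ is a linear subspace, so it suffices to show $\one_K\in\overline G$ (closure in $L^2(X,\m)$) for every compact $K$, these indicators being total by inner regularity of $\m$. Here I would use $\sfd_K=\sup_i\sfd^i_K$ with $\sfd^i_K(x)=\min_{y\in K}\sfd_i(x,y)\in{\rm Lip}_b(X,\tau,\sfd_i)$, see \eqref{eq:dKappa}--\eqref{eq:xv_or}: setting $u_i:=\sfd^i_K\wedge1$ and $u:=\sfd_K\wedge1$, the net $u_i$ increases to $u$ in $[0,1]$, and expanding $\|u_i-u\|_2^2=\int u_i^2\dd\m-2\int u\,u_i\dd\m+\int u^2\dd\m$ and applying \eqref{eq:Beppo_Levi_general} to the nondecreasing lower semicontinuous nets $u_i^2$ and $u\,u_i$ (the term $\int u^2\dd\m$ being constant) shows $u_i\to u$ in $L^2$, so $u\in\overline G$. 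Composing with cutoffs, $\max(0,1-ku_i)\in{\rm Lip}_b(X,\tau,\sfd_i)$ and $\|\max(0,1-ku_i)-\max(0,1-ku)\|_2\le k\|u_i-u\|_2\to0$, so $\max(0,1-ku)\in\overline G$; letting $k\to\infty$ gives $\one_K\in\overline G$ by dominated convergence along the sequence in $k$. The ``in particular'' then follows because $\sfd_i\le\sfd$ gives $G\subset{\rm Lip}_b(X,\tau,\sfd)$, so the latter separates points and is dense a fortiori.

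The step I expect to be the main obstacle is precisely the $L^2$-passage to the limit along the directed, possibly uncountable, index set $I$: there is no countable exhaustion and dominated convergence for nets is unavailable, so every limit must be realized as a monotone net of lower semicontinuous functions to which \eqref{eq:Beppo_Levi_general} applies. The decisive point is that in the expansion of $\|u_i-u\|_2^2$ even the cross term $u\,u_i$ is a nondecreasing lower semicontinuous net, which is exactly what forces $\|u_i-u\|_2\to0$; the same feature is what makes the Beppo--Levi argument for the uniqueness insensitive to the cardinality of $I$.
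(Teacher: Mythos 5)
Your proof is correct, but it takes a genuinely different route from the paper's, most visibly in the density part. For the uniqueness statement the paper works directly with the open sets $\{\sfd^i_K>0\}\in\BorelSets{\tau,\sfd}$, which increase to $X\setminus K$, and applies \eqref{eq:Beppo_Levi_general} (to indicators of open sets) to get $\mu(X\setminus K)=\lim_i\mu(\{\sfd^i_K>0\})$; you instead pass through $C_b(X)$ via the regularizations \eqref{eq:13} and then recover equality on compact sets from complete regularity and outer regularity of finite Radon measures---the same mechanism, with one extra layer. The real divergence is the density statement: the paper argues by $L^2$-duality, supposing $\int f\phi\dd\m=0$ for every $\phi\in\bigcup_i{\rm Lip}_b(X,\tau,\sfd_i)$, normalizing so that $\mu^{\pm}:=f^{\pm}\m\in\cP(X)$, and letting the uniqueness mechanism force $\mu^+=\mu^-$, hence $f=0$; you prove density constructively, showing that $\one_K$ lies in the $L^2(X,\m)$-closure of $\bigcup_i{\rm Lip}_b(X,\tau,\sfd_i)$ via the cutoffs $\max(0,1-k(\sfd^i_K\wedge 1))\in{\rm Lip}_b(X,\tau,\sfd_i)$, handling convergence along the (possibly uncountable) net by expanding $\|u_i-u\|_2^2$ (with $u_i=\sfd^i_K\wedge1$, $u=\sfd_K\wedge1$) into terms to which \eqref{eq:Beppo_Levi_general} applies, and concluding by totality of indicators of compact sets (inner regularity of $\m$). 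The paper's route is shorter and recycles the first statement, at the price of the duality step and of checking that $f^{\pm}\m$ are Radon probability measures; yours is more elementary and self-contained (no Hahn--Banach orthogonality criterion, and it produces explicit approximants), and it makes transparent that the relevant test functions are essentially the same ones, $n\,\sfd^i_K\wedge 1$, that the paper uses implicitly when converting agreement of integrals into agreement of measures. One small simplification available to you: since $0\le u_i\le u\le 1$, one has $(u-u_i)^2\le u^2-u_i^2$, so applying \eqref{eq:Beppo_Levi_general} to the single net $u_i^2$ already yields $\|u_i-u\|_2\to 0$, with no need to discuss lower semicontinuity of the cross term $u\,u_i$.
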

\begin{proof}
For the first statement it is sufficient to remark that the sets $\{\sfd^i_K>0\}$ are $\tau$-open, 
belong to $\BorelSets{\tau,\sfd}$ and monotonically converge to $\{\sfd_K>0\}=X\setminus K$. Then the Radon
property of $\mu$ ensures that $\mu(X\setminus K)=\lim_i\mu(\{\sfd_K^i>0\})$.
Since $\{\sfd(\cdot,x)\}_{x\in X}$ separates points of $X$, the family $\{\sfd_i(\cdot,x)\}_{x\in X,\,i\in I}$
is contained in ${\rm Lip}_b(X,\tau,\sfd)$ and separates points as well.
To prove the density, it is sufficient to show the implication
$$
\int f\phi\dd\m=0\qquad\forall\phi\in\bigcup_{i\in I}{\rm Lip}_b(X,\tau,\sfd_i)
\qquad\Longrightarrow\qquad
f=0\;.
$$
Clearly $\int f\dd\m=0$ and, arguing by contradiction, it is not restrictive to assume $\int |f|\dd\m=2$.
Splitting $f$ in positive and negative part we can consider $\mu^\pm:=f^\pm\m\in\cP(X)$ to obtain that $\mu^+=\mu^-$,
which is a contradiction. 
\end{proof}

On $X^D$ we put the product topology $\tau^{\otimes D}$, i.e. the coarsest topology making all $\e_t$ continuous.
We denote for simplicity this topology by $\tau^*$. Notice that $(X^D,\tau^*)$ is Hausdorff, because 
$\{\e_t\}_{t\in D}$ separates points of $X^D$; if $(X,\tau)$ is completely regular then $(X^D,\tau^*)$
is completely regular as well, since it is a product of completely regular spaces \cite[Thm.~33.2]{Munkres00}.

By continuity, the push-forward operator induced by $\e_t$
maps Radon measures in $X^D$ to Radon measures in $X$.
Let us now check that the class $UC(D;(X,\sfd))$ of $\sfd$-uniformly continuous paths $\eta:D\to X$ is a Borel
subset of $X^D$. Indeed, it is easily seen that its complement is described by
$$
\bigcup_{k=1}^\infty\bigcap_{\ell=1}^\infty\bigcup_{i\in I}
\left\{\eta:\ \text{$\exists s,\,t\in D$ with $|s-t|\leq\frac 1\ell$, $\sfd_i(\eta(s),\eta(t))>\frac 1k$}
\right\}\;.
$$
On the other hand, for $k$, $\ell$ and $i$ fixed, the complement of the set above is
$$
\bigcap_{s,\,t\in D,\,\ell|s-t|\leq 1}\left\{\eta:\ \sfd_i(\eta(s),\eta(t))\leq\frac 1k\right\}
$$
and therefore is closed in $X^D$ (by the continuity of $(\e_s,\e_t):X^D\rightarrow X\times X$).

It is easy to check that $\sfd\circ (\e_s,\e_t)=\sup_i\sfd_i\circ (\e_s,\e_t)$ is $\tau^*$-lower semicontinuous 
in $X^D$ for all $s,\,t\in D$. This can be used to prove that $\Action_p$ is $\tau^*$-lower semicontinuous in $X^D$ and, 
in particular, that $AC^p(D;(X,\sfd))$ is a Borel subset of $X^D$, more precisely a countable union of closed sets. 
\begin{theorem}[Compactness of probabilities in $X^D$]\label{thm:tightness}
Assume that $p\in (1,\infty)$, $T>0$ and $D\subset\R$ is countable.
Let  $(\eeta_i)_{i\in I}$ be a net of Radon probability measures in $(X^D,\tau^*)$ such that
for all $t\in D$ the family $\{(\e_t)_\#\eeta_i\}_{i\in I}$ is equi-tight in $\cP(X)$.\\
Then $\{\eeta_i\}_{i\in I}$ has limit points $\eeta\in\cP(X^D)$ in the weak topology induced by $C_b(X^D)$ and any 
such limit point, along a subnet $\beta:L\to I$, satisfies:
\begin{equation}\label{eq:lscaction}
\int \Action_p(\eta,D')\dd\eeta(\eta)\leq\liminf_{\ell\in L}
\int \Action_p(\eta,D')\dd\eeta_{\beta(\ell)}(\eta)\qquad\forall D'\subset D\;.
\end{equation}
\end{theorem}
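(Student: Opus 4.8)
The plan is to prove the statement in two stages: first the existence of narrow limit points, which I reduce to verifying equi-tightness of $\{\eeta_i\}$ in $\cP(X^D)$ and then invoke Theorem~\ref{thm:compa-Riesz}; and second the lower semicontinuity inequality \eqref{eq:lscaction}, which rests on the $\tau^*$-lower semicontinuity of $\Action_p(\cdot,D')$ combined with a truncation argument to bypass the possible unboundedness of the action.

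For the first stage I would exploit the countability of $D$ to manufacture compact sets in $X^D$. Enumerate $D=\{t_1,t_2,\dots\}$ and fix $\eps>0$. The equi-tightness of $\{(\e_{t_n})_\#\eeta_i\}_{i\in I}$ furnishes, for each $n$, a compact $K_n\subset X$ with $\sup_i(\e_{t_n})_\#\eeta_i(X\setminus K_n)\le \eps 2^{-n}$, that is $\sup_i\eeta_i(\{\eta:\eta(t_n)\notin K_n\})\le \eps 2^{-n}$. Setting $K_t:=K_n$ for $t=t_n$, the product $\mathbf K_\eps:=\prod_{t\in D}K_t$ is $\tau^*$-compact by Tychonoff's theorem, and a union bound gives $\sup_i\eeta_i(X^D\setminus\mathbf K_\eps)\le\sum_n\eps 2^{-n}=\eps$. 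Hence $\{\eeta_i\}$ is equi-tight. Since $(X^D,\tau^*)$ is completely regular (a product of completely regular spaces, cf.\ Lemma~\ref{lem:compreg}), narrow and weak topologies agree on $\cP(X^D)$, so Theorem~\ref{thm:compa-Riesz} yields a narrow/weak limit point $\eeta\in\cP(X^D)$, reached along some subnet $\beta:L\to I$.

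For the second stage I would first record that $\Action_p(\cdot,D')$ is $\tau^*$-lower semicontinuous: each map $\sfd\circ(\e_s,\e_t)=\sup_i\sfd_i\circ(\e_s,\e_t)$ is lower semicontinuous as a supremum of $\tau^*$-continuous functions, composing with the continuous nondecreasing $r\mapsto r^p$ preserves this property, each summand $\sfd^p(\eta(t_{j+1}),\eta(t_j))/(t_{j+1}-t_j)^{p-1}$ and finite sums thereof are lower semicontinuous, and $\Action_p(\cdot,D')$ is the supremum of such finite sums over partitions of $D'$. Because $\Action_p$ may equal $+\infty$, the defining property of the narrow topology (valid for \emph{bounded} lower semicontinuous integrands) cannot be applied directly, so I truncate. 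For $k\in\N$ the function $\Action_p(\cdot,D')\wedge k$ is bounded and still $\tau^*$-lower semicontinuous (the minimum of two lower semicontinuous functions is lower semicontinuous), whence, along the subnet,
\begin{align*}
\int \big(\Action_p(\eta,D')\wedge k\big)\dd\eeta(\eta)
&\le\liminf_{\ell\in L}\int\big(\Action_p(\eta,D')\wedge k\big)\dd\eeta_{\beta(\ell)}(\eta)\\
&\le\liminf_{\ell\in L}\int\Action_p(\eta,D')\dd\eeta_{\beta(\ell)}(\eta).
\end{align*}
Since $\eeta$ is Radon and $\Action_p(\cdot,D')\wedge k\uparrow\Action_p(\cdot,D')$ is a nondecreasing net of nonnegative $\tau^*$-lower semicontinuous functions, the monotone convergence property \eqref{eq:Beppo_Levi_general} for $\eeta$ lets me take the supremum over $k$ on the left, yielding \eqref{eq:lscaction}.

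The main obstacle I anticipate is the interplay between the unboundedness of $\Action_p$ and the net (rather than sequential) setting: the narrow topology directly controls only integrals of bounded lower semicontinuous functions, so the truncation step together with the Radon monotone convergence theorem \eqref{eq:Beppo_Levi_general} applied to the limit measure is precisely what makes the passage to the limit legitimate; one must also be careful to apply the lower semicontinuity along the converging subnet $\beta$ rather than along the original net $\{\eeta_i\}$.
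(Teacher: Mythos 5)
Your proof is correct, and its first stage is exactly the paper's argument: enumerate $D$, choose per-time compact sets with geometrically small tails, form the Tychonoff product, union-bound, and invoke Theorem~\ref{thm:compa-Riesz}. For \eqref{eq:lscaction} you take a somewhat different route, distinguished by how the unboundedness of the action is handled. The paper never integrates $\Action_p$ as a single unbounded integrand: it first proves $\int\sfd^p(\eta(s),\eta(t))\dd\eeta\le\liminf_{\ell\in L}\int\sfd^p(\eta(s),\eta(t))\dd\eeta_{\beta(\ell)}$ for each pair $s,t\in D$, by approximating $\sfd^p\circ(\e_s,\e_t)$ from below by the bounded continuous functions $\sfd_i^p\circ(\e_s,\e_t)$ and applying the Radon monotone convergence property \eqref{eq:Beppo_Levi_general} to the limit measure; it then sums over finite partitions of $D'$ using superadditivity of $\liminf$, and finally passes to the supremum over the directed family of partitions (nondecreasing thanks to the triangle inequality behind \eqref{eq:19}), using the Radon property of $\eeta$ once more. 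You instead work top-down: you observe that $\Action_p(\cdot,D')$ is $\tau^*$-lower semicontinuous, truncate at level $k$ so as to land in the class of \emph{bounded} lower semicontinuous integrands — for which narrow convergence (coinciding with weak convergence here, by complete regularity of $X^D$) directly yields the $\liminf$ inequality along the subnet — and then remove the truncation via \eqref{eq:Beppo_Levi_general} applied to $\Action_p\wedge k\uparrow\Action_p$. Both arguments rest on the same two pillars, namely $\sfd=\sup_i\sfd_i$ with $\sfd_i$ bounded and $(\tau\times\tau)$-continuous, and monotone convergence for Radon measures; your truncation scheme is arguably the more streamlined packaging, while the paper's partition-by-partition build-up makes explicit where each structural ingredient of $\Action_p$ enters. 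Both are valid proofs of the theorem.
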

\begin{proof} The family $\{\eeta_i\}_{i\in I}$ is equi-tight: indeed, enumerating by $\{d_k\}_{k\in\N}$ the elements
of $D$, it suffices to find compact sets $K_{k,n}$ such that $\sup_i(\e_{d_k})_\#\eeta_i(X\setminus K_{k,n})\leq 2^{-k-n}$ and to consider the
sets
$$
\Gamma_n:=\bigcap_{k=0}^\infty\big\{\eta\in X^D:\ \eta(d_k)\in K_{k,n}\big\}
$$
which are compact in $X^D$ and satisfy $\sup_i\eeta_i(X^D\setminus\Gamma_n)\leq 2^{1-n}$. It follows that
we can apply Theorem~\ref{thm:compa-Riesz}.

In order to prove \eqref{eq:lscaction} for a limit point $\eeta\in\cP(X)$ we use the continuity of $\eta\mapsto
\sfd_i(\eta(s),\eta(t))$ and the Radon property of $\eeta$ to get
$$
\int\sfd^p(\eta(s),\eta(t))\dd\eeta(\eta)\leq\liminf_{\ell\in L}
\int\sfd^p(\eta(s),\eta(t))\dd\eeta_{\beta(\ell)}(\eta)\;.
$$
Then, given $D'\subset D$, from the superadditivity of $\liminf$ we obtain, for all choices of $t_0,\ldots,t_n\in D'$,
\begin{eqnarray*}
\int\sum_{j=0}^{n-1}\frac{\sfd^p(\eta(t_{j+1}),\eta(t_j))}{(t_{j+1}-t_j)^{p-1}}\dd\eeta(\eta)&\leq&\liminf_{\ell\in L}
\int\sum_{j=0}^{n-1}\frac{\sfd^p(\eta(t_{j+1}),\eta(t_j))}{(t_{j+1}-t_j)^{p-1}}\dd\eeta_{\beta(\ell)}(\eta)\\&\leq&
\liminf_{\ell\in L}\int\Action_p(\eta,D')\dd\eeta_{\beta(\ell)}(\eta)\;.
\end{eqnarray*}
Since the sums $\sum_i \sfd^p(\eta(t_{j+1}),\eta(t_j))/(t_{j+1}-t_j)^{p-1}$ become larger as the partition gets finer,
we can use once more the fact that $\eeta$ is Radon to conclude.
\end{proof}

Finally, we can add a measure structure in extended metric-topological spaces as follows.

\begin{definition}[Extended metric measure space]\label{def:extmm}
We say that $(X,\tau,\sfd,\m)$ is an extended metric measure space if:
\begin{itemize}
\item[(a)] $(X,\tau,\sfd)$ is an extended metric-topological space;
\item[(b)] $\m\in\cP(X)$, i.e. $\m$ is a Radon probability measure in $(X,\BorelSets{\tau})$.
\end{itemize}
\end{definition}

\section{The Wasserstein space over an extended metric-topological space}\label{sec:4}

Throughout this section $(X,\tau,\sfd)$ is an extended metric-topological space.

\subsection{The extended Wasserstein distance between Radon measures}
In the class $\cP(X)$ of Radon probability measures in $X$, we define
the (quadratic) Wasserstein extended  distance $W_\sfd(\mu,\nu)$ by
\begin{equation}
W_\sfd^2(\mu,\nu):=\inf\biggl\{\int_{X\times X}\sfd^2\dd\ppi:\
\ppi\in\Gamma(\mu,\nu)\biggr\}\;
\end{equation}
where we recall that $\Gamma(\mu,\nu)$ is the class of admissible
transport plans between $\mu$ and $\nu$, see \eqref{eq:4}. 

In our context, since $(X,\tau)$ is completely regular thanks to
Lemma~\ref{lem:compreg}, we know that narrow and weak topology coincide. 
From now on, unless otherwise stated, by weak convergence we always mean convergence in the duality with $C_b$,
the corresponding topology will be denoted by $\tau_{\cP}$.

The lower semicontinuity of the cost $\sfd^2$ and the tightness of the marginals ensure respectively lower semicontinuity of the transportation
cost and compactness w.r.t.~weak convergence of the class of the admissible transport plans $\Gamma(\mu,\nu)$, hence existence
of optimal plans. We provide a more general statement in the next theorem.

\begin{theorem}[Compactness and joint lower semicontinuity]\label{thm:compact_joint}
Let $I$ be a directed set and  assume that $\mu_i,\,\nu_i\in\cP(X)$ weakly converge to $\mu,\,\nu\in\cP(X)$ respectively.  
Then, for any choice of $\ppi_i\in\Gamma(\mu_i,\nu_i)$,  one has:
\begin{itemize}
\item[(a)] the net $(\ppi_i)_{i\in I}$ has limit points w.r.t.~weak convergence and any limit point $\ppi$ belongs to $\Gamma(\mu,\nu)$;
\item[(b)] if $\ppi_i$ weakly converges to $\ppi$, and ${\sf c}_i:X\times X\to [0,\infty]$ is a monotone
family of  $(\tau\times\tau)$-lower semicontinuous functions, then
$$
\liminf_{i\in I}\int {\sf c}_i\dd\ppi_i\geq\int {\sf c}\dd\ppi\qquad\text{with ${\sf c}:=\sup_i{\sf c}_i$}\; ;
$$
\item[(c)] if $\mu_i=\mu$, $\nu_i=\nu$, and if $\sfd_i:X\times X\to [0,\infty]$ is a monotone
family of  $(\tau\times\tau)$-continuous distances with $\lim_i\sfd_i=\sfd$, 
then $W_{\sfd_i}$ monotonically converges to $W_{\sfd}$. 
\end{itemize}
\end{theorem}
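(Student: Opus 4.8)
The plan is to prove (a) by a compactification argument that avoids any equi-tightness of the varying marginals, then to deduce (b) from the monotone convergence theorem for Radon measures, and finally to obtain (c) by combining (a) and (b) with the existence of optimal plans for fixed marginals.

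For (a), I would first push the whole picture into a compact space. By Lemma~\ref{lem:compactification} the map $\iota$ of \eqref{eq:11} identifies $(X,\tau,\sfd)$ homeomorphically and isometrically with $\iota(X)\subset\cX=\R^F$, whose $\tau_\cX$-closure $\widehat X$ is compact. Viewing each $\ppi_i$ as the Radon measure $\hat\ppi_i:=(\iota\times\iota)_\sharp\ppi_i$ on the compact Hausdorff space $\widehat X\times\widehat X$, the set of Radon probability measures on $\widehat X\times\widehat X$ is weakly compact (Riesz--Markov--Kakutani together with Banach--Alaoglu), so $(\hat\ppi_i)$ has a subnet converging weakly to some $\hat\ppi$. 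Testing against $\phi\circ p^k$ with $\phi\in C(\widehat X)$ and $p^k$ the two projections, and using that $\mu_i\to\mu$, $\nu_i\to\nu$ weakly (so $\iota_\sharp\mu_i\to\iota_\sharp\mu$ and $\iota_\sharp\nu_i\to\iota_\sharp\nu$ against $C(\widehat X)$), the marginals of $\hat\ppi$ are $\iota_\sharp\mu$ and $\iota_\sharp\nu$.

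Returning from $\widehat X$ to $X$ is the first substantive point. Since $\mu,\nu$ are Radon, for every $\eps>0$ there are compact $K_1,K_2\subset X$ with $\mu(X\setminus K_1),\,\nu(X\setminus K_2)<\eps$; then $\iota(K_1)\times\iota(K_2)\subset\iota(X)\times\iota(X)$ is compact in $\widehat X\times\widehat X$ and carries all but $2\eps$ of the mass of $\hat\ppi$, so $\hat\ppi$ is concentrated on a $\sigma$-compact subset of $\iota(X)\times\iota(X)$ and equals $(\iota\times\iota)_\sharp\ppi$ for a unique $\ppi\in\cP(X\times X)$ with marginals $\mu,\nu$, i.e. $\ppi\in\Gamma(\mu,\nu)$. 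The main obstacle is then to upgrade the convergence from the coarse topology of $\widehat X\times\widehat X$ to the weak topology $\tau_\cP$ on $\cP(X\times X)$, since $C(\widehat X\times\widehat X)$ restricts to a strictly smaller class than $C_b(X\times X)$. I would resolve this by envelopes: for $g\in C_b(X\times X)$ let $\bar g$ (resp. $\underline g$) be the upper (resp. lower) semicontinuous envelope on $\widehat X\times\widehat X$ of the bounded continuous function $g\circ(\iota\times\iota)^{-1}$ on the subspace $\iota(X)\times\iota(X)$; both agree with $g\circ(\iota\times\iota)^{-1}$ there, and every $\hat\ppi_i$ as well as $\hat\ppi$ is concentrated on $\iota(X)\times\iota(X)$. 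Hence $\int g\dd\ppi_i=\int\bar g\dd\hat\ppi_i=\int\underline g\dd\hat\ppi_i$, and the portmanteau inequalities for bounded u.s.c./l.s.c. functions on the compact Hausdorff space $\widehat X\times\widehat X$ (which follow from \eqref{eq:Beppo_Levi_general}) give $\limsup_i\int g\dd\ppi_i\le\int\bar g\dd\hat\ppi=\int g\dd\ppi$ and $\liminf_i\int g\dd\ppi_i\ge\int\underline g\dd\hat\ppi=\int g\dd\ppi$. Thus the subnet converges to $\ppi$ in $\tau_\cP$, and the marginal computation shows that \emph{any} weak limit point lies in $\Gamma(\mu,\nu)$; note that only the tightness of the limit marginals $\mu,\nu$ was used, not any equi-tightness of $(\mu_i),(\nu_i)$.

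For (b), assuming $I$ directed and $(\sfc_i)$ nondecreasing with $\sfc=\sup_i\sfc_i$, I would argue in two steps. Fix $j\in I$; for $i\ge j$ monotonicity yields $\int\sfc_i\dd\ppi_i\ge\int\sfc_j\dd\ppi_i$, and since $\sfc_j\ge0$ is $(\tau\times\tau)$-lower semicontinuous the map $\ppi\mapsto\int\sfc_j\dd\ppi$ is lower semicontinuous for the weak ($=$ narrow) topology (by truncation and \eqref{eq:Beppo_Levi_general}), so $\liminf_i\int\sfc_i\dd\ppi_i\ge\liminf_i\int\sfc_j\dd\ppi_i\ge\int\sfc_j\dd\ppi$. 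Taking the supremum over $j$ and applying the monotone convergence theorem \eqref{eq:Beppo_Levi_general} to the nondecreasing net $(\sfc_j)$ gives $\liminf_i\int\sfc_i\dd\ppi_i\ge\sup_j\int\sfc_j\dd\ppi=\int\sfc\dd\ppi$. Finally, for (c) the chain $W_{\sfd_i}\le W_{\sfd_{i'}}\le W_\sfd$ for $i\le i'$ is immediate from $\sfd_i\le\sfd_{i'}\le\sfd$, so it remains to show that $W_*:=\sup_iW_{\sfd_i}(\mu,\nu)=\lim_iW_{\sfd_i}(\mu,\nu)$ equals $W_\sfd(\mu,\nu)$. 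Since $\mu,\nu$ are fixed and Radon, $\Gamma(\mu,\nu)$ is weakly compact and each continuous cost $\sfd_i^2$ admits an optimal $\ppi_i$ with $\int\sfd_i^2\dd\ppi_i=W_{\sfd_i}^2(\mu,\nu)$. Extracting by (a) a subnet $\ppi_i\to\ppi\in\Gamma(\mu,\nu)$ and applying (b) along it to the nondecreasing family $\sfc_i:=\sfd_i^2$, whose supremum is $\sfd^2$, I obtain
\[
W_\sfd^2(\mu,\nu)\le\int\sfd^2\dd\ppi\le\liminf_i\int\sfd_i^2\dd\ppi_i=W_*^2\le W_\sfd^2(\mu,\nu),
\]
forcing equality throughout and hence $W_{\sfd_i}\to W_\sfd$ monotonically; the only point to check is that a cofinal subnet of $(\sfd_i^2)$ still has supremum $\sfd^2$, so that (b) applies verbatim.
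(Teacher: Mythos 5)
Your proposal is correct, and parts (b) and (c) coincide with the paper's own argument: the paper proves (b) exactly by the two-step use of \eqref{eq:Beppo_Levi_general} you describe (fixed index $j$, eventual domination, weak lower semicontinuity of $\ppi\mapsto\int\sfc_j\dd\ppi$, then supremum over $j$), and proves (c) precisely by taking optimal plans for $\sfd_i^2$, extracting a weakly convergent subnet via (a), and applying (b) along it. Where you genuinely diverge is part (a). The paper disposes of it in one line, as ``a direct consequence of Theorem~\ref{thm:compa-Riesz}''; but that theorem requires \emph{equi-tightness} of the collection $\{\ppi_i\}$, which would follow from equi-tightness of $(\mu_i)$ and $(\nu_i)$ --- a property that weak convergence of a \emph{net} of Radon measures does not provide in general (it does hold trivially in the application to (c), where the marginals are fixed). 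Your compactification route --- embedding via Lemma~\ref{lem:compactification}, extracting a limit $\hat\ppi$ among Radon measures on the compact space $\widehat X\times\widehat X$, using the Radon property of the \emph{limit} marginals $\mu,\nu$ to force $\hat\ppi$ onto a $\sigma$-compact subset of $\iota(X)\times\iota(X)$, and then the u.s.c./l.s.c.\ envelope (portmanteau) argument to upgrade convergence to the duality with $C_b(X\times X)$ --- is more laborious but also more robust: it proves (a) in the stated generality, using only tightness of the limits, and thereby fills in exactly the step the paper's citation glosses over. This is a real gain of your approach; the paper's gain is brevity in the regime where it is actually used.

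Two small points to tighten, neither a gap. First, in (c) you cannot apply (b) literally ``verbatim'' to the subnet: a subnet $\beta:L\to I$ of a monotone family $(\sfd_i^2)_{i\in I}$ need not be monotone in the new index $L$, so the hypothesis of (b) may fail for $(\sfd_{\beta(\ell)}^2)_{\ell\in L}$. What survives under passage to subnets is the property the proof of (b) actually uses, namely that for each fixed $j\in I$ one has $\sfd_{\beta(\ell)}^2\ge\sfd_j^2$ eventually in $\ell$ (by cofinality of $\beta$ and monotonicity of the original family); with that, the chain $\liminf_\ell\int\sfd_{\beta(\ell)}^2\dd\ppi_{\beta(\ell)}\ge\int\sfd_j^2\dd\ppi$ followed by $\sup_j$ and \eqref{eq:Beppo_Levi_general} goes through unchanged. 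Second, in (a) the identification of $\hat\ppi$ with $(\iota\times\iota)_\sharp\ppi$ for a unique Radon $\ppi$ on $X\times X$ deserves the standard two lines (Borel subsets of the compacta $K_1^n\times K_2^n$ are carried homeomorphically to Borel subsets of $\widehat X\times\widehat X$, and the measure is defined by exhaustion over these compacta); as written it is asserted rather than proved, but the argument is routine.
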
 
\begin{proof} Statement (a) is a direct consequence of Theorem~\ref{thm:compa-Riesz}, since $(X\times X,\tau\times\tau)$ is
completely regular. Statement (b) follows by \eqref{eq:Beppo_Levi_general}.
Statement (c) follows by (b) and (a) with $\mu_i=\mu$, $\nu_i=\nu$, choosing optimal plans $\ppi_i$
relative to $\sfd_i^2$ and extracting a weakly convergent subnet from the $\ppi_i$.
\end{proof}

We claim that the duality formula
\begin{equation}\label{eq:dualityQ}
\frac 12 W_\sfd^2(\mu,\nu)=\sup_{\phi\in{\mathcal F}}
 \int Q_1\phi\dd\nu-\int\phi\dd\mu\;
\end{equation}
holds, where ${\mathcal F}$ is defined in \eqref{eq:good_tests} and $Q_t\phi$ is defined by the Hopf-Lax formula
\begin{equation}\label{eq:HopfLax}
Q_t\phi(y):=\inf_{x\in X} \phi(x)+\frac 1{2t}\sfd^2(x,y)\;.
\end{equation}
In fact, from the
very definition of $\mathcal F$ in \eqref{eq:good_tests} it follows that
\begin{equation}\label{eq:good_tests1}
Q_t\phi(y)=C\wedge\min_{x\in K}\Big(\phi(x)+\frac 1{2t}\sfd^2(x,y)\Big)
\end{equation}
for some compact set $K\subset X$ with $\phi\in C(K)$ and $C\geq\max_K\phi$, hence $Q_t\phi$ are $\tau$-lower semicontinuous and we can replace
$\int_*Q_1\phi\dd\nu$ with $\int Q_1\phi\dd\nu$. {In addition, the compactness of $K$ ensures that
$$
\min_{x\in K}\Big(\phi(x)+\frac 1{2t}\sfd_i^2(x,y)\Big)\uparrow \min_{x\in K}\Big(\phi(x)+\frac 1{2t}\sfd^2(x,y)\Big)\;
$$
hence from Proposition~\ref{prop:duality_Ruschendorf} (which deals with bounded cost functions) with $\frac 12\sfd_i^2$
and statement (c) of the previous theorem we obtain \eqref{eq:dualityQ}.}

We will occasionally use also the extended Wasserstein distance $W_{\sfd,1}$ in $\cP(X)$ obtained by minimizing $\int\sfd\dd\ppi$
in the class of admissible transport plans, and the corresponding duality formula
\begin{equation}\label{eq:dualityQ1}
W_{\sfd,1}(\mu,\nu)=\sup\left\{\int f \dd (\mu-\nu):\ f\in {\rm Lip}_b(X,\tau,\sfd),\,\,{\rm Lip}(f)\leq 1\right\}\;.
\end{equation}
Also the proof of \eqref{eq:dualityQ1} can be obtained from Proposition~\ref{prop:duality_Ruschendorf} with 
${\sf c}=\sfd_i$, considering
the pairs $(-\phi_i^{{\sf c}},\phi_i^{{\sf c}})$ with
$$
\phi_i^{{\sf c}}(x):=C\wedge\min_{x\in K}\Big(\phi(x)+\sfd_i(x,y)\Big)\in {\rm Lip}_b(X,\tau,\sfd)
$$
and $\phi\in\mathcal F$, $K\subset X$ compact set as in \eqref{eq:good_tests}, $C\geq\max_K\phi$.

\begin{proposition} [$W_{\sfd}$ convergence implies $\tau_{\cP}$ convergence]
\label{prop:in_un_senso}
A net $\{\mu_j\}_{j\in J}$ weakly converges to $\mu$ in $\cP(X)$ if and only if 
\begin{equation}
  \label{eq:18}
    \lim_{j\in J}\int f\dd\mu_j=\int f\dd\mu\quad \text{for every }f\in \mathrm{Lip}(X,\tau,\sfd)\;.
\end{equation}
In particular, if $\{\mu_j\}_{j\in J} $ converges to $\mu$
w.r.t.~$W_\sfd$, then it also converges w.r.t.~the weak topology induced by $C_b(X)$. 
\end{proposition}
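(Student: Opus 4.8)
The plan is to prove the equivalence \eqref{eq:18} first, from which the assertion about $W_\sfd$ follows at once. It is enough to test against the bounded functions in ${\rm Lip}_b(X,\tau,\sfd)$: the substantive direction is to show that convergence of $\int f\dd\mu_j$ for all such $f$ already forces weak convergence, while the reverse implication is trivial since ${\rm Lip}_b(X,\tau,\sfd)\subset C_b(X)$.

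For the substantive direction I would exploit the representation formula \eqref{eq:12}. Fix $g\in C_b(X)$ and consider $L^-(g)=\{f\in{\rm Lip}_b(X,\tau,\sfd):f\le g\}$. This family is directed upward, because the maximum of two of its elements is again bounded, $\tau$-continuous, $\sfd$-Lipschitz and dominated by $g$; it is equibounded by $\|g\|_\infty$, consists of $\tau$-continuous (hence $\tau$-lower semicontinuous) functions, and has pointwise supremum $g$ by \eqref{eq:12}. The net version of the monotone convergence theorem \eqref{eq:Beppo_Levi_general}, applicable because $\mu$ is Radon and $(X,\tau)$ is completely regular by Lemma~\ref{lem:compreg}, then gives $\int g\dd\mu=\sup_{f\in L^-(g)}\int f\dd\mu$. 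For each $f\in L^-(g)$ the hypothesis yields $\int f\dd\mu=\lim_{j\in J}\int f\dd\mu_j\le\liminf_{j\in J}\int g\dd\mu_j$, and taking the supremum over $f\in L^-(g)$ produces $\int g\dd\mu\le\liminf_{j\in J}\int g\dd\mu_j$.

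The hard part, such as it is, is that passing to the supremum in \eqref{eq:12} only delivers this one-sided ($\liminf$) information; I would resolve it simply by running the identical argument with $-g\in C_b(X)$ in place of $g$, which gives $\int(-g)\dd\mu\le\liminf_{j\in J}\int(-g)\dd\mu_j$, that is $\limsup_{j\in J}\int g\dd\mu_j\le\int g\dd\mu$. Combining the two bounds yields $\lim_{j\in J}\int g\dd\mu_j=\int g\dd\mu$, and since $g\in C_b(X)$ was arbitrary this is precisely weak convergence $\mu_j\to\mu$ in the duality with $C_b(X)$.

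Finally, for the statement on $W_\sfd$ I would argue as follows. For any probability plan $\ppi$, Cauchy--Schwarz gives $\int\sfd\dd\ppi\le\bigl(\int\sfd^2\dd\ppi\bigr)^{1/2}$, so minimizing over $\ppi\in\Gamma(\mu_j,\mu)$ yields $W_{\sfd,1}\le W_\sfd$. Combining this with the Kantorovich--Rubinstein duality \eqref{eq:dualityQ1}, every $f\in{\rm Lip}_b(X,\tau,\sfd)$ with ${\rm Lip}(f)\le L$ satisfies $\bigl|\int f\dd\mu_j-\int f\dd\mu\bigr|\le L\,W_{\sfd,1}(\mu_j,\mu)\le L\,W_\sfd(\mu_j,\mu)$. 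Hence $W_\sfd(\mu_j,\mu)\to0$ implies \eqref{eq:18}, and the equivalence proved above then gives weak convergence.
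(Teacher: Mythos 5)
Your proof is correct and follows essentially the same route as the paper: the substantive direction uses the directed family $L^-(g)$ from \eqref{eq:12} together with the Radon monotone convergence property \eqref{eq:Beppo_Levi_general} to get the $\liminf$ bound, then the sign flip $g\mapsto -g$ for the $\limsup$, and the $W_\sfd$ claim follows from $W_{\sfd,1}\le W_\sfd$ combined with the duality \eqref{eq:dualityQ1}. The only cosmetic difference is that you spell out the directedness and equiboundedness of $L^-(g)$ explicitly, which the paper leaves implicit.
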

\begin{proof} 
Let us first prove that \eqref{eq:18} 
is sufficient to prove the weak convergence of $\mu_j$ in $\cP(X)$ 
(the converse implication is trivial).
If $f\in C_b(X)$ we consider the directed set 
$L^-(f)$ defined in \eqref{eq:12}, obtaining
\begin{displaymath}
  \liminf_{j\in J}\int f\dd\mu_j\ge\lim_{j\in J}\int g\dd\mu_j=
  \int g\dd\mu\quad \text{for every }g\in L^-(f)\;
\end{displaymath}
so that 
\begin{displaymath}
  \liminf_{j\in J}\int f\dd\mu_j\ge\sup_{g\in L^-(f)}
  \int g\dd\mu=\lim_{g\in L^-(f)}\int g\dd\mu=\int f\dd\mu\;
\end{displaymath}
since $L^-(f)$ is a directed set with respect to the natural ordering
of functions and $\mu$ is a Radon measure.
Changing $f$ in $-f$ we get the opposite inequality for the $\limsup$,
thus proving that $\lim_{j\in J}\int f\dd\mu_j=\int f\dd\mu$.

If  $\{\mu_j\}_{j\in J}$ is a net convergent w.r.t.~$W_\sfd$ to $\mu\in \cP(X)$
the inequality (ensured by $W_{\sfd,1}\leq W_\sfd$ and \eqref{eq:dualityQ1}) 
\begin{equation}
\biggl|\int f\dd\mu-\int f\dd\mu_j\biggr|\leq {\rm Lip}(f)W_\sfd(\mu_j,\mu)\label{eq:17}
\end{equation}
yields \eqref{eq:18} and therefore the weak convergence of $\mu_j$.
\end{proof}
{The following result shows the flexibility of our axiomatization: the extended metric-topological
structure can be lifted from the space $X$ to the space of probabilities on $X$.}
\begin{proposition}[Extended metric-topological structure on $\cP(X)$] 
\label{prop:extended-PX}
$(\cP(X),\tau_{\cP},W_\sfd)$ is an extended metric-topological space. 
\end{proposition}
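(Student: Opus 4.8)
The plan is to verify the two conditions of Definition~\ref{def:luft1} for the triple $(\cP(X),\tau_{\cP},W_\sfd)$, reducing the construction of the generating semidistances to a finite-dimensional transport computation. The crucial observation is that, by the proof of Lemma~\ref{lem:obvious-but-useful}, the extended distance on $X$ may be written as $\sfd=\sup_f\sfd_f$ with $\sfd_f(x,y):=|f(x)-f(y)|$, $f\in{\rm Lip}_{b,1}(X,\tau,\sfd)$. First I would fix the candidate family on $\cP(X)$: for every finite $S\subset{\rm Lip}_{b,1}(X,\tau,\sfd)$ set $\sfd_S(x,y):=\max_{f\in S}|f(x)-f(y)|$, a $(\tau\times\tau)$-continuous bounded semidistance, and let $W_S:=W_{\sfd_S}$ on $\cP(X)$. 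The family $\{\sfd_S\}$ is directed by inclusion and $\sup_S\sfd_S=\sfd$. Writing $F_S:=(f)_{f\in S}\colon X\to(\R^S,\|\cdot\|_\infty)$, the cost $\sfd_S^2$ is the pull-back under $F_S\times F_S$ of the squared $\ell^\infty$-distance, so a gluing of $\mu$, a transport plan on $\R^S\times\R^S$ and $\nu$ (Lemma~\ref{lem:gluing}) yields the identity
\[
W_S(\mu,\nu)=\mathcal W_2\big((F_S)_\sharp\mu,\,(F_S)_\sharp\nu\big),
\]
the quadratic Wasserstein distance on the finite-dimensional space $(\R^S,\|\cdot\|_\infty)$, between measures supported in the fixed compact box $\prod_{f\in S}[\inf f,\sup f]$.

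From this identity I would deduce condition (a). If $\mu_j\to\mu$ weakly in $\cP(X)$ then, $F_S$ being $\tau$-continuous, $(F_S)_\sharp\mu_j\to(F_S)_\sharp\mu$ weakly in $\cP(\R^S)$; since these measures live in a fixed compact set, on which $\mathcal W_2$ metrizes weak convergence, we get $W_S(\mu_j,\mu)\to0$, and the triangle inequality upgrades this to joint $(\tau_{\cP}\times\tau_{\cP})$-continuity of each $W_S$. The equality $\sup_SW_S=W_\sfd$ is then Theorem~\ref{thm:compact_joint}(c) applied to the monotone family $\sfd_S\uparrow\sfd$ (its proof through part (b) applies unchanged to semidistances). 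This provides the $(\tau_{\cP}\times\tau_{\cP})$-continuous bounded semidistances required in Definition~\ref{def:luft1}(a).

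For condition (b) I would invoke the reformulation \eqref{eq:8} of Lemma~\ref{lem:obvious-but-useful} for the family $\{W_S\}$. The forward implication is the continuity just proved. Conversely, if $W_S(\mu_j,\mu)\to0$ for all $S$, then choosing singletons $S=\{f\}$ gives $\mathcal W_2(f_\sharp\mu_j,f_\sharp\mu)\to0$ on $\R$, hence $\int f\dd\mu_j\to\int f\dd\mu$ for every $f\in{\rm Lip}_{b,1}$ and, by homogeneity, for every $f\in{\rm Lip}_b(X,\tau,\sfd)$; Proposition~\ref{prop:in_un_senso} then yields $\mu_j\to\mu$ weakly. The same singleton computation shows that $W_\sfd(\mu,\nu)=0$ forces $f_\sharp\mu=f_\sharp\nu$ for all such $f$, whence $\mu=\nu$ by Lemma~\ref{lem:sigmaalgebras}; together with symmetry and the triangle inequality for $W_\sfd$ (once more via Lemma~\ref{lem:gluing}) this confirms that $(\cP(X),W_\sfd)$ is an extended metric space, so Lemma~\ref{lem:obvious-but-useful} lets me conclude.

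The main obstacle is the continuity of the $W_S$, i.e.\ the fact that weak convergence implies convergence in the quadratic transport cost $W_{\sfd_S}$; this is false in general for bounded continuous semidistances and does not follow from the lower semicontinuity in Theorem~\ref{thm:compact_joint}(b) alone. The device that overcomes it is precisely the reduction, afforded by Lemma~\ref{lem:obvious-but-useful}, to semidistances that factor through a finite-dimensional map $F_S$, after which one may appeal to the classical fact that $\mathcal W_2$ metrizes weak convergence on compact subsets of $\R^S$.
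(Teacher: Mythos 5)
Your proof is correct, and its skeleton coincides with the paper's: both take the directed family of semidistances $\sfd_S(x,y)=\max_{f\in S}|f(x)-f(y)|$ indexed by finite $S\subset\mathrm{Lip}_{b,1}(X,\tau,\sfd)$, reduce each $W_{\sfd_S}$ to a quadratic Wasserstein distance (the paper's $W_\delta$) over a compact box in $\R^S$ (your $F_S$ is exactly the paper's map $\kappa$), use that this Wasserstein distance metrizes weak convergence on the compact box, obtain $\sup_S W_{\sfd_S}=W_\sfd$ from Theorem~\ref{thm:compact_joint}(c), and settle condition (b) of Definition~\ref{def:luft1} through Proposition~\ref{prop:in_un_senso}. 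The genuine difference lies in how the crucial comparison $W_{\sfd_S}(\mu,\nu)\le W_\delta\big((F_S)_\sharp\mu,(F_S)_\sharp\nu\big)$ is established. The paper argues on the dual side: it takes near-optimal Kantorovich potentials for the cost $\sfd_S^2$ via Proposition~\ref{prop:duality_Ruschendorf}, replaces them by $c$-transforms so that they become $\sfd_S$-Lipschitz, observes that they then factor through $\kappa$, extends the factored potentials by Lipschitz continuity to $\overline{\kappa(X)}$, and inserts them into the dual problem downstairs. You argue primally: given a (near-)optimal plan $\ppi$ between the push-forwards, you glue $(\mathrm{id},F_S)_\sharp\mu$, $\ppi$ and $(F_S,\mathrm{id})_\sharp\nu$ via Lemma~\ref{lem:gluing} and project onto the outer coordinates; since the two outer plans are concentrated on the (closed, hence Borel) graph of $F_S$, the lifted plan has exactly the same cost, which in fact yields the identity $W_{\sfd_S}=W_\delta\circ\big((F_S)_\sharp\times(F_S)_\sharp\big)$ rather than just an inequality. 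Your route is more elementary in that it bypasses the duality theorem and the extension-of-potentials step, at the price of invoking the gluing lemma (whose role here is precisely to lift plans, and which needs the Radon property); the paper's route stays entirely on the dual side, reusing machinery it needs anyway for \eqref{eq:dualityQ}. Both are complete arguments.

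One small refinement concerning non-degeneracy: you deduce $\mu=\nu$ from $f_\sharp\mu=f_\sharp\nu$ for all $f\in\mathrm{Lip}_{b,1}(X,\tau,\sfd)$ ``by Lemma~\ref{lem:sigmaalgebras}'', but that lemma determines a Radon measure by its values on $\BorelSets{\tau,\sfd}$, and the sets $f^{-1}(B)$ with $f$ ranging over single functions form a generating class that is not a $\pi$-system. Either pass to finite sets $S$ (the sets $F_S^{-1}(B)$ do form a $\pi$-system generating $\BorelSets{\tau,\sfd}$), or note that equal one-dimensional push-forwards give $\int f\dd\mu=\int f\dd\nu$ for every $f\in\mathrm{Lip}_b(X,\tau,\sfd)$ and use the density statement of Lemma~\ref{lem:sigmaalgebras} in $L^2(X,\tfrac12(\mu+\nu))$. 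Alternatively, observe that non-degeneracy is automatic once (a) and (b) are verified: by the analogue of \eqref{eq:3} in $\cP(X)$, $W_\sfd(\mu,\nu)=0$ forces $\mu=\nu$ because $\tau_{\cP}$ is Hausdorff.
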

\begin{proof}
Let us first show that condition (a) of Definition~\ref{def:luft1} is satisfied. 

We consider the set
$F= \mathrm{Lip}_{b,1}(X,\tau,\sfd)$ and
we denote by $I$ the collection 
(directed set) of the  finite subsets of $F$
ordered by inclusion.
For every $i\in I$ we can set
\begin{gather*}
  M_i:= \sup_{x\in X}\sup_{f\in i} |f(x)|\;\qquad
  \sfd_i(x,y):=\sup_{f\in i} |f(x)-f(y)|\;.
\end{gather*}
Since we already noticed that $W_{\sfd_i}\uparrow W_\sfd$,
it remains to prove that each semidistance $W_{\sfd_i}$ is
$(\tau_\cP\times\tau_\cP)$-continuous. 
So, for a fixed $i=\{f_1,\ldots,f_N\}\subset F$ we
consider the $\tau$-continuous map $\kappa:X\to X_i=[-M_i,M_i]^N$ and
the distance $\delta$ in $X_i$ defined by
\begin{displaymath}
  \kappa(x):=(f_1(x),\ldots,f_N(x)),\quad 
  \delta(\xx,\yy):=\sup_{n=1,\ldots, N}|x_n-y_n|,\quad \xx,\yy\in \R^N \;,
\end{displaymath}
so that $\kappa$ is $1$-Lipschitz.
Since in $\cP(X_i)$ the weak topology coincides with the topology
induced by the Wasserstein distance $W_\delta$, $\kappa_\sharp$ is continuous from $(\cP(X),\tau_\cP)$ to 
$(\cP(X_i),W_\delta)$; it is therefore sufficient to prove that 
\begin{equation}
  \label{eq:16}
  W_{\sfd_i}(\mu,\nu)\le W_\delta(\kappa_\sharp\mu,\kappa_\sharp\nu)
  \quad
  \text{for every }\mu,\nu\in \cP(X)\;
\end{equation}
which in fact yields the equality, since the opposite inequality is
trivial.
To prove \eqref{eq:16} we apply Proposition~\ref{prop:duality_Ruschendorf} with $\sfc=\sfd_i^2$ to find a 
sequence of uniformly bounded Borel functions $\phi_n,\,\psi_n$
such that 
\begin{displaymath}
  \psi_n(y)-\phi_n(x)\le \sfd_i^2(x,y)\quad \text{for every } x,y\in X,\quad
  W_{\sfd_i}^2(\mu,\nu)=\lim_{n\to\infty}
  \int\psi_n\dd\nu-\int \phi_n\dd\mu\;.
\end{displaymath}
Possibly replacing $\psi_n$ by $\phi_n^\sfc$ and $\phi_n$ by
$\phi_n^{\sfc\sfc}$ and using the fact that $\sfc$-concave functions 
are $\sfd_i$-Lipschitz (and therefore $\tau$-continuous) 
we may assume that $\phi_n,\psi_n\in \mathrm{Lip}_b(X,\sfd_i)$.
We can then define functions
$  \tilde\phi_n,\tilde\psi_n:\kappa(X_i)\to \R$ by
\begin{displaymath}
  \tilde\phi_n(\kappa(x)):=\phi_n(x),
  \quad
  \tilde\psi_n(\kappa(x)):=\psi_n(x)\quad 
  x\in X\;
\end{displaymath}
the definition being consistent since $\kappa(x)=\kappa(y)$ yields
$\sfd_i(x,y)=0$ and therefore $\phi_n(x)=\phi_n(y)$ and
$\psi_n(x)=\psi_n(y)$. Since moreover
$\delta(\kappa(x),\kappa(y))=\sfd_i(x,y)$, we can easily prove that 
$\tilde\phi_n,\tilde\psi_n$ are Lipschitz continuous w.r.t. $\delta$ in
$\kappa(X)$, and therefore they admit a unique Lipschitz continuous
extension
(still denoted by $\tilde\phi_n,\,\tilde\psi_n$) to the compact set
$\overline{\kappa(X)}$.
Notice that both the supports $\supp(\kappa_\sharp \mu)$ and
$\supp(\kappa_\sharp\nu)$ in $X_i$ are contained in
$\overline{\kappa(X)}.$ 
Moreover the relation
\begin{displaymath}
  \tilde\psi_n(y)-\tilde\phi_n(x)\le \delta(x,y)\quad \text{for every }x,\,y\in \kappa(X)
\end{displaymath}
extends by continuity to $\overline{\kappa(X)}$, so that
\begin{displaymath}
  W_\delta^2(\kappa_\sharp\mu,\kappa_\sharp\nu)\ge \int \tilde\psi_n\dd
  \kappa_\sharp\nu-
  \int\tilde\phi_n\dd\kappa_\sharp\mu=
  \int \psi_n\dd\nu-
  \int\phi_n\dd\mu\;
\end{displaymath}
proving \eqref{eq:16}.

In order to prove that also condition (b) of Definition~\ref{def:luft1} is satisfied,
we just observe that the family $\mathcal F$ of real functions on $\cP(X)$ of the form
\begin{displaymath}
  f[\mu]:\mu\mapsto \int f\dd\mu,\quad f\in \mathrm{Lip}_{b,1}(X,\tau,\sfd)
\end{displaymath}
is included in $\mathrm{Lip}_{b,1}(\cP(X),\tau_\cP,W_\sfd)$ thanks to the very
definition of weak convergence and to \eqref{eq:17}.
On the other hand, Proposition~\ref{prop:in_un_senso}
shows that $\tau_\cP$ is induced by $\mathcal F$.
\end{proof}

\begin{proposition}[Completeness of $(\cP(X),W_\sfd)$]
  \label{prop:completeness}
  If $(X,\tau,\sfd)$ is an extended metric-topological space
  and $(X,\sfd)$ is complete, then also $(\cP(X),W_\sfd)$ is complete.
\end{proposition}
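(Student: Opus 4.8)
The plan is to couple the whole sequence at once by the Gluing Lemma, transfer the completeness of $(X,\sfd)$ to a pointwise (a.e.) limit, and control the limit by dominated convergence; the only genuinely delicate point in this non-metrizable, non-separable setting will be the Lusin measurability of the limit map. Since completeness of an extended metric space means completeness of each equivalence class $X_{[x]}=\{y:\sfd(x,y)<\infty\}$, which is an ordinary metric space, it suffices to show that every $W_\sfd$-Cauchy \emph{sequence} converges.

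First I would reduce to a fast Cauchy sequence: given a $W_\sfd$-Cauchy sequence, extract $(\mu_n)$ with $W_\sfd(\mu_n,\mu_{n+1})\le 2^{-n}$, and note it is enough to show $(\mu_n)$ converges (all $\mu_n$ lying at finite mutual $W_\sfd$-distance, hence in one equivalence class). I choose optimal plans $\ppi_n\in\Gamma(\mu_n,\mu_{n+1})$ (they exist by lower semicontinuity of $\sfd^2$ and tightness of the marginals, as recorded after Theorem~\ref{thm:compact_joint}), so that $\int\sfd^2\dd\ppi_n=W_\sfd^2(\mu_n,\mu_{n+1})\le 4^{-n}$. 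These plans are compatible, the second marginal of $\ppi_n$ being $\mu_{n+1}$, which is the first marginal of $\ppi_{n+1}$; hence the countable version of the Gluing Lemma~\ref{lem:gluing} produces a Radon measure $\eeta\in\cP(X^{\N})$ whose consecutive two-dimensional marginals $(\e_n,\e_{n+1})_\sharp\eeta$ equal $\ppi_n$, where $\e_n:X^{\N}\to X$ are the continuous coordinate projections and $(\e_n)_\sharp\eeta=\mu_n$.

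Next I would run the probabilistic argument. Since $\sfd\circ(\e_n,\e_{n+1})$ is $\eeta$-measurable (indeed $\tau^*$-lower semicontinuous) with $\|\sfd(\e_n,\e_{n+1})\|_{L^2(\eeta)}=W_\sfd(\mu_n,\mu_{n+1})\le 2^{-n}$, Minkowski's inequality shows $h:=\sum_n\sfd(\e_n,\e_{n+1})\in L^2(\eeta)$; in particular the tails $h_m:=\sum_{n\ge m}\sfd(\e_n,\e_{n+1})$ decrease to $0$ both in $L^2(\eeta)$ and $\eeta$-a.e. For $\eeta$-a.e.\ $\eta$ the sequence $(\e_n(\eta))_n$ is therefore $\sfd$-Cauchy, hence eventually confined to a single equivalence class, which is complete by hypothesis; so $\e_n(\eta)\to\e_\infty(\eta)$ for some $\e_\infty(\eta)\in X$, with $\sfd(\e_n,\e_\infty)\le h_n$ $\eeta$-a.e.

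The main obstacle is to prove that $\e_\infty$ is Lusin $\eeta$-measurable, so that $\mu_\infty:=(\e_\infty)_\sharp\eeta$ is a well-defined element of $\cP(X)$. I would argue by Egorov plus inner regularity. Given a compact $K\subset X^{\N}$ and $\delta>0$, Egorov applied to $h_m\downarrow0$ yields a measurable $A\subset K$ with $\eeta(K\setminus A)<\delta/2$ on which $h_m\to0$ uniformly, and inner regularity of the Radon measure $\eeta$ then gives a compact $K'\subset A$ with $\eeta(K\setminus K')<\delta$. On $K'$ we have $\sfd(\e_n,\e_\infty)\le h_n\to0$ uniformly, whence $|f\circ\e_n-f\circ\e_\infty|\le\sfd(\e_n,\e_\infty)\to0$ uniformly for every $f\in{\rm Lip}_{b,1}(X,\tau,\sfd)$; since each $f\circ\e_n$ is $\tau^*$-continuous and ${\rm Lip}_{b,1}(X,\tau,\sfd)$ generates $\tau$ (condition (b) of Definition~\ref{def:luft1}), the uniform limit $\e_\infty|_{K'}$ is $\tau$-continuous, which is exactly Lusin measurability. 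Finally, $(\e_n,\e_\infty)_\sharp\eeta\in\Gamma(\mu_n,\mu_\infty)$ gives $W_\sfd^2(\mu_n,\mu_\infty)\le\int\sfd^2(\e_n,\e_\infty)\dd\eeta\le\int h_n^2\dd\eeta\to0$ by dominated convergence (dominating by $h^2\in L^1(\eeta)$), so $(\mu_n)$, and therefore the original Cauchy sequence, converges in $W_\sfd$.
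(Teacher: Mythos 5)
Your proof is correct and follows essentially the same route as the paper's: glue the optimal plans along the whole sequence via the countable Gluing Lemma, use the $L^2$ summability of $\sfd(\e_n,\e_{n+1})$ plus completeness of $(X,\sfd)$ to get an $\eeta$-a.e.\ pointwise limit, establish Lusin measurability of the limit map through Egorov-type uniform convergence on compact sets and the fact that a $\sfd$-uniform limit of $\tau$-continuous maps is $\tau$-continuous (via the generating family ${\rm Lip}_{b,1}(X,\tau,\sfd)$), and conclude with the plan $(\e_n,\e_\infty)_\sharp\eeta$. The only cosmetic differences are your explicit reduction to a single equivalence class and the use of dominated convergence in the final estimate where the paper uses the Minkowski inequality in $L^2(\eeta)$; both are equivalent.
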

\begin{proof}
  Let $(\mu_n)_{n\in \N}\subset \cP(X)$ be a sequence satisfying
  $\sum_{n=1}^\infty   W_\sfd(\mu_n,\mu_{n+1})<\infty$.
  We argue as in the proof of \cite[Prop.~7.1.5]{Ambrosio-Gigli-Savare08}:
  if $\ppi_n\in \Gamma(\mu_{n},\mu_{n+1})$ are optimal transport plans,
  by applying Lemma~\ref{lem:gluing}
  we can find a Radon measure $\ppi\in \cP(\xX)$, $\xX=X^\N$, 
  such that $(\sfp^{n},\sfp^{n+1})_\sharp \ppi=\ppi_n$, $n\in \N$.
    
  We thus have $\sum_{n=1}^N \int
  \sfd(\sfp^n,\sfp^{n+1})\dd\ppi<\infty$ so that 
  the sequence $n\mapsto \sfp^n(\xx)$ is a Cauchy sequence for 
  $\ppi$-a.e.~$\xx\in\xX$. Denoting by $\sfp(\xx)$ its pointwise limit, 
  and applying Egoroff Theorem \cite[Thm.~6, p.~28]{Schwartz73}
  we can find for every $\eps>0$ compact sets $K_\eps\subset \xX$
  with $\ppi(\xX\setminus K_\eps)<\eps$ 
  such that the restrictions of $\sfp^n$ to $K_\eps$ converge
  uniformly, i.e.~
  \begin{equation}
    \label{eq:15}
    \lim_{n\to\infty}\sup_{\sxx\in
      K_\eps}\sfd(\sfp^n(\xx),\sfp(\xx))=0\;.
  \end{equation}
  Let us prove that $\sfp$ is a Lusin $\ppi$-measurable map by showing that the
  restriction of $\sfp$ to each $K_\eps$ is continuous: 
  the latter property will be a consequence of the fact that
  $\sfd$-uniform limit of $\tau$-continuous maps is $\tau$-continuous.
  In fact, for every $\xx\in K_\eps$ and every $\tau$-neighbourhood 
  $V$ of $\sfp(\xx)$ in $X$, we may find 
  functions $f_j\in \mathrm{Lip}_{b,1}(X,\tau,\sfd)$, $j=1,\ldots,J$,
  and $\delta>0$ such that 
  \begin{displaymath}
    |f_j(\xx)-f_j(\yy)|\le \delta\quad \text{for every $j=1,\ldots,J$}
    \qquad
    \Rightarrow
    \qquad
    \yy\in V\;.
  \end{displaymath}
  If $n\in \N$ is sufficiently big so that $\sup_{\szz\in
    K_\eps}\sfd(\sfp^n(\zz),\sfp(\zz))<\delta/3$
  and $U$ is any neighbourhood in $\xX$ such that 
  every $\yy\in U$ satisfies $|f_j(\sfp^n(\xx))-f_j(\sfp^n(\yy))|<\delta/3$ 
  for every $j=1,\ldots,N$, 
  we conclude that for every $\yy\in U\cap K_\eps$
  \begin{align*}
    |f_j(\sfp(\xx))-f_j(\sfp(\yy))|&\le 
    2\sup_{\szz\in K_\eps}
    \sfd(\sfp(\zz)),\sfp^n(\zz))+|f_j(\sfp^n(\xx))-f_j(\sfp^n(\yy))|\\
    &\le \delta\qquad
    \text{for every $j=1,\ldots,N$\;,}
  \end{align*}
  so that $\sfp(\yy)\in V$.
  
  Being $\sfp$ a Lusin $\ppi$-measurable map, $\mu:=\sfp_\sharp\ppi$
  is a Radon measure in $X$ and
  \begin{displaymath}
    W_\sfd(\mu_n,\mu)\le 
    \bigl(\int \sfd^2(\sfp^n,\sfp)\dd\ppi\bigr)^{1/2}\le 
    \sum_{m=n}^\infty \bigl(\int\sfd^2(\sfp^m,\sfp^{m+1})\dd\ppi\bigr)^{1/2}=
    \sum_{m=n}^\infty W_\sfd(\mu_m,\mu_{m+1})\;,
  \end{displaymath}
  which shows that $\lim_{n\to\infty}W_\sfd(\mu_n,\mu)=0$.
\end{proof}

\subsection{The superposition principle for extended metric-topological spaces}
The next proposition is a small variant of the superposition principle recently proved in \cite[Thm.~3.1]{Li14} for extended metric measure
spaces. We provide a slightly different proof, since no Polish
assumption on $(X,\tau)$ is made here, only the complete regularity of $\tau$ following by Lemma~\ref{lem:compreg} plays a role.

\begin{proposition}[Superposition]\label{prop:lisini}
Assume that $(X,\sfd)$ is complete and
let $\mu_t\in AC^2([0,T];(\cP(X),W_\sfd))$. Then there exists
$\eeta\in\cP(X^{[0,T]})$ concentrated on $AC^2([0,T];(X,\sfd))$ with
 $(e_t)_\#\eeta=\mu_t$ for all $t\in [0,T]$ and
 \begin{equation}\label{eq:sharpmet}
\int |\dot\eta(t)|^2\dd\eeta(\eta)= |\dot\mu_t|^2\qquad\text{for a.e. $t\in (0,T)$\;.} 
 \end{equation}
\end{proposition}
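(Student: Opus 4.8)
The plan is to bypass the tightness difficulties inherent in the extended, possibly non‑Polish, setting by embedding $X$ into the compactification of Lemma~\ref{lem:compactification}. Writing $\iota:X\to\cX=\R^F$ for the isometric embedding of \eqref{eq:11} and $\hat X:=\overline{\iota(X)}$ for its $\tau_\cX$‑compact closure, the triple $(\hat X,\tau_\cX,\sfd_\cX)$ is again an extended metric‑topological space, $\sfd_\cX$ restricts to $\sfd$ on $\iota(X)$, and since $\hat X$ is compact every Borel measure on $\hat X$ (and on $\hat X^D$) is Radon while any family of time‑marginals is trivially equi‑tight. I would therefore build the lifting inside $\hat X$ and only at the end return to $X$, using the completeness of $(X,\sfd)$.

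First I would fix the countable dense set $D:=\{iT2^{-n}:n\in\N,\ 0\le i\le2^n\}\subset[0,T]$ together with the grids $D_n=\{t^n_i\}_{i=0}^{2^n}$, $t^n_i=iT2^{-n}$. For consecutive grid points I choose $W_\sfd$‑optimal plans $\ppi^n_i\in\Gamma(\mu_{t^n_i},\mu_{t^n_{i+1}})$ and glue them by Lemma~\ref{lem:gluing} into $\ssigma_n\in\cP(\hat X^{D_n})$ with consecutive two‑dimensional marginals $\ppi^n_i$; interpolating each path by its value at the left grid point yields $\eeta_n\in\cP(\hat X^D)$. The sole purpose of this piecewise constant interpolation is to place all the $\eeta_n$ on the common path space $\hat X^D$: every estimate below evaluates paths only at grid points, where the values are the genuine ones produced by the gluing, so in particular $(\e_t)_\#\eeta_n=\mu_t$ as soon as $t\in D_n$.

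The heart of the argument is a uniform action bound. For $m\le n$ and any nodes $s_0<\dots<s_p$ in $D_m\subset D_n$, the $(s_l,s_{l+1})$‑marginal of $\eeta_n$ is a transport plan between $\mu_{s_l}$ and $\mu_{s_{l+1}}$, so Minkowski's inequality in $L^2(\eeta_n)$ and optimality of the intermediate plans give
\begin{displaymath}
\Big(\int\sfd^2(\eta(s_{l+1}),\eta(s_l))\dd\eeta_n\Big)^{1/2}\le\sum_{t^n_k\in[s_l,s_{l+1}]}W_\sfd(\mu_{t^n_k},\mu_{t^n_{k+1}})\le\int_{s_l}^{s_{l+1}}|\dot\mu_r|\dd r\;.
\end{displaymath}
Cauchy--Schwarz, division by $s_{l+1}-s_l$, summation over $l$, and a passage to the supremum over partitions (monotone convergence, cf.\ the remark after \eqref{eq:19}) then give $\int\Action_2(\eta,D_m)\dd\eeta_n\le\int_0^T|\dot\mu_r|^2\dd r$ for every $n\ge m$. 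By Theorem~\ref{thm:tightness} the sequence $(\eeta_n)$ has a weak limit point $\hat\eeta\in\cP(\hat X^D)$ along a subnet, with $\int\Action_2(\eta,D_m)\dd\hat\eeta\le\int_0^T|\dot\mu_r|^2\dd r$; letting $m\to\infty$ and using $\Action_2(\cdot,D_m)\uparrow\Action_2(\cdot,D)$ I get $\int\Action_2(\eta,D)\dd\hat\eeta\le\int_0^T|\dot\mu_r|^2\dd r<\infty$, so $\hat\eeta$ is concentrated on $AC^2(D;(\hat X,\sfd_\cX))$. Passing to the limit in $(\e_t)_\#\eeta_n=\mu_t$ (for fixed $t\in D$, $n$ large) yields $(\e_t)_\#\hat\eeta=\mu_t$ for all $t\in D$.

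It then remains to return to $X^{[0,T]}$. Since each $\mu_t$ is concentrated on $\iota(X)=\tilde X$, for $\hat\eeta$‑a.e.\ $\eta$ the value $\eta(t)$ lies in $\tilde X$ for all $t\in D$ at once (a countable intersection of full‑measure events); as $(\tilde X,\sfd_\cX)$ is complete, hence closed in $\hat X$, the unique $\sfd_\cX$‑uniformly continuous extension of such a path to $[0,T]$ stays in $\tilde X$ and is $2$‑absolutely continuous. This extension map is Lusin $\hat\eeta$‑measurable (continuous on the compact sets on which the paths are equi‑uniformly continuous), so pushing forward and transporting back through $\iota^{-1}$ defines $\eeta\in\cP(X^{[0,T]})$ concentrated on $AC^2([0,T];(X,\sfd))$ with $(\e_t)_\#\eeta=\mu_t$ for every $t$ (for $t\notin D$ by $\sfd$‑continuity of the paths and weak continuity of $r\mapsto\mu_r$). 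Finally \eqref{eq:Dec26} and Tonelli give $\int_0^T\!\!\int|\dot\eta(t)|^2\dd\eeta\,\dd t=\int\Action_2(\eta)\dd\eeta\le\int_0^T|\dot\mu_t|^2\dd t$, while $\mu_t=(\e_t)_\#\eeta$ with Jensen and Cauchy--Schwarz yields $W_\sfd(\mu_s,\mu_t)\le(t-s)^{1/2}\big(\int_s^t\!\int|\dot\eta(r)|^2\dd\eeta\,\dd r\big)^{1/2}$, hence $|\dot\mu_t|^2\le\int|\dot\eta(t)|^2\dd\eeta$ a.e.; the two opposite integral inequalities force equality a.e., which is \eqref{eq:sharpmet}. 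I expect the main obstacle to be exactly the equi‑tightness required to invoke Theorem~\ref{thm:tightness} — circumvented by the passage to the compact $\hat X$ — together with making the uniform action estimate survive the lower semicontinuity in the limit.
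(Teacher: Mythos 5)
Your proof is correct and, once the compactification wrapper is peeled off, it is essentially the paper's proof: dyadic grids, optimal plans glued by Lemma~\ref{lem:gluing}, piecewise-constant interpolation into path space over $D$, compactness plus lower semicontinuity of the action from Theorem~\ref{thm:tightness}, extension of $AC^2$ paths from $D$ to $[0,T]$ by completeness together with a Lusin-measurability argument, and a two-sided comparison between $\int|\dot\eta(t)|^2\dd\eeta$ and $|\dot\mu_t|^2$. The one structural difference is the detour through $\hat X=\overline{\iota(X)}$, and it is unnecessary: Theorem~\ref{thm:tightness} only requires equi-tightness of the time-$t$ marginals for each \emph{fixed} $t\in D$, and in this construction those marginals are the measures $\mu_s$ with $s$ the left grid point of $t$ in $D_n$, hence eventually constant equal to $\mu_t$; a finite family of Radon probability measures is automatically equi-tight (each is tight, by \eqref{eq:21} with $B=X$), so the compactness argument runs directly in $X^D$, which is exactly what the paper does — your stated ``main obstacle'' is not one. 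The detour then obliges you to return to $X$, and there your justification contains a slip: completeness of $(\tilde X,\sfd_\cX)$ does \emph{not} make $\tilde X$ topologically closed in $\hat X$ — if it did, $\tilde X=\overline{\iota(X)}$ would force $X$ to be compact. What you actually need, and what completeness does give, is that $\sfd_\cX$-Cauchy sequences in $\tilde X$ have their $\sfd_\cX$-limit in $\tilde X$; since the continuous extension of a path is built from precisely such limits, and $\sfd_\cX$-convergence implies $\tau_\cX$-convergence in the Hausdorff space $\hat X$, the extension indeed stays in $\tilde X$, so the step is salvageable as written. A last, harmless difference of bookkeeping: you deduce \eqref{eq:sharpmet} from the \emph{global} bound $\int_0^T\int|\dot\eta(t)|^2\dd\eeta\,\dd t\le\int_0^T|\dot\mu_t|^2\dd t$ combined with the pointwise converse inequality and integration, whereas the paper localizes the action estimate (its \eqref{eq:lisini1} and \eqref{eq:lisini3}) and uses Fubini to obtain the pointwise inequality $\int|\dot\eta(t)|^2\dd\eeta\le|\dot\mu_t|^2$ first; both routes are valid and rest on the same coupling argument for the reverse inequality.
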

\begin{proof} Let us assume for simplicity $T=1$. We set $D=\cup_n D_n$ with $D_n=\{j/2^n:\ 0\leq j\leq 2^n\}$. 

\noindent
{\bf Step 1.} We build $\ssigma\in\cP(X^D)$ concentrated on $AC^2(D;(X,\sfd))$ satisfying $(\e_t)_\#\ssigma=\mu_t$
for all $t\in D$ and
\begin{equation}\label{eq:lisini1}
\int \sum_{k=i}^{j-1}\frac{|\eta(k/2^n)-\eta((k+1)/2^n)|^2}{2^{-n}}\dd\ssigma(\eta)\leq\int_{i/2^n}^{j/2^n}|\dot\mu_r|^p\dd r
\quad\forall i,\,j\in \{0,\ldots,2^n\},\,\,i<j,\,\,n\geq 1\;.
\end{equation} 
To this aim, taking Theorem~\ref{thm:tightness}, \eqref{eq:lscaction} and the fact that $D_n\uparrow D$ into account, it is sufficient to 
build a family of approximations $\ssigma_n\in\cP(X^D)$ satisfying 
$(\e_t)_\#\ssigma_n=\mu_t$ for all $t\in D_n$ and
\begin{equation}\label{eq:lisini2}
\int \sum_{k=i}^{j-1}\frac{|\eta(k/2^n)-\eta((k+1)/2^n)|^2}{2^{-n}}\dd\ssigma_n(\eta)\leq\int_{i/2^n}^{j/2^n}|\dot\mu_r|^2\dd r
\qquad\forall i,\,j\in \{0,\ldots,2^n\},\,\,i<j\;.
\end{equation}
The construction of $\ssigma_n$ is a simple application of Lemma~\ref{lem:gluing}: it suffices to choose optimal
plans $\ppi_i$ from $\mu_{i/2^n}$ to $\mu_{(i+1)/2^n}$, $0\leq i\leq 2^n$, and then use the lemma to find a Radon probability
measure $\ppi$ in $X^{2^n+1}$ having $\ppi_i$ as double marginals. Then one can define $\ssigma_n\in\cP(X^D)$ 
as the push forward of $\ppi$ via the continuous map from $X^{2^n+1}$ to $X^D$ defined by
$$
(x_0,\ldots,x_n)\mapsto \eta(t)=\begin{cases}
x_i&\text{if $\frac{i}{2^n}\leq t<\frac{i+1}{2^n}$, $0\leq i\leq n-1$}\\
x_n &\text{if $t=1$}\;.
\end{cases}
$$
\noindent
{\bf Step 2.} Given $\ssigma$ as in Step 1, we notice that $\ssigma$ is concentrated on the union of the closed sets
$\Gamma_k=\{\eta:\ \Action_2(\eta,D)\leq k\}$. Since $(X,\sfd)$ is complete we can consider the extension map
${\sf ext}:\cup_k\Gamma_k\to AC^2([0,1];(X,\sfd))$ and build $\eeta$ as the image under $\ssigma$ of ${\sf ext}$. To show
that $\eeta$ is well defined and it is a Radon measure in $\cP(X^{[0,T]})$
we need to show that ${\sf ext}$ (arbitrarily defined out of $\cup_k\Gamma_k)$ is Lusin 
$\ssigma$-measurable. To this purpose, it is sufficient to prove that ${\sf ext}:\Gamma_k\to X^{[0,T]}$ is
continuous. Let $(\eta_i)_{i\in I}$ be a net in $\Gamma_k$ convergent to $\eta$ and let $\tilde\eta_i$, $\tilde\eta$ be
the corresponding extensions to $[0,1]$. By the definition of product topology we need only to prove that
$\tilde\eta_i(t)\to\tilde\eta(t)$ in $(X,\tau)$ for all $t\in [0,1]$. Since $\tau$ is generated by ${\rm Lip}_b(X,\tau,\sfd)$
we need only to prove that $f(\tilde\eta_i(t))\to f(\tilde\eta(t))$ for all $f\in {\rm Lip}_b(X,\tau,\sfd)$. This is trivial if
$t\in D$ (because $\eta_i\to\eta$ in $X^D$), in the general case one can use the estimate
$$
|f(\tilde\eta_i(t))-f(\tilde\eta_i(s))|\leq {\rm Lip}(f)\sfd(\tilde\eta_i(t),\tilde\eta_i(s))\leq
{\rm Lip}(f)\sqrt{ k|t-s|}
$$
and the analogous one for $\tilde\eta$ to conclude.

Having proved that $\eeta$ is well defined $\cP(X^{[0,T]})$
and it is concentrated on the Borel set $AC^2([0,1];(X,\sfd))$,  
we notice that by construction one has $(\e_t)_\#\eeta=\mu_t$
for all $t\in D$. On the other hand, $\mu_t\in AC^2([0,T];(\cP(X),W_\sfd))$
implies, thanks to Proposition~\ref{prop:in_un_senso}, that $t\mapsto\mu_t$ is continuous w.r.t.~$\tau_{\cP}$. Since $\sfd$-convergence implies
$\tau$-convergence also $t\mapsto (\e_t)_\#\eeta$ is continuous w.r.t.~$\tau_{\cP}$, therefore
$(\e_t)_\#\eeta=\mu_t$ for all $t\in [0,1]$. An analogous approximation argument gives
\begin{equation}\label{eq:lisini3}
\int \Action_2(\eta,[s,t])\dd\eeta(\eta)\leq\int_s^t|\dot\mu_r|^2\dd r \qquad\forall s,\,t\in [0,1],\,\,s\leq t
\end{equation} 
starting from \eqref{eq:lisini1}.

Finally, from \eqref{eq:lisini3} and Fubini's theorem we get
\begin{equation}\label{eq:lisini4}
\int |\dot\eta(t)|^2\dd\eeta(\eta)\leq|\dot\mu_t|^2\qquad\text{for a.e. $t\in (0,1)$}\;.
\end{equation}
On the other hand, since $(\e_s,\e_t)_\#\eeta\in \Gamma(\mu_s,\mu_t)$ one has
$$
W_\sfd^2(\mu_s,\mu_t)\leq\int\sfd^2(\eta(s),\eta(t))\dd\eeta(\eta)\leq
(t-s)\int\int_s^t|\dot\eta|^2(r)\dd r\dd\eeta(\eta)\;,
$$
hence for a.e. $t\in (0,1)$ the converse inequality to \eqref{eq:lisini4} holds.
\end{proof}

\section{Cheeger energy and minimal relaxed slope}\label{sec:Chee}

Throughout this section $(X,\tau,\sfd,\m)$ is an extended metric measure spaces according to Definition~\ref{def:extmm}. 
In this section we provide basic calculus results already developed in \cite{AGS11a}, 
with minor variants in the definitions that do not really affect the proofs. 

For $f\in {\rm Lip}_b(X,\tau,\sfd)$, the asymptotic Lipschitz constant ${\rm Lip}_a(f,x):X\to [0,\infty]$ is defined by
\begin{equation}\label{eq:deflipa}
{\rm Lip}_a(f,x)=\lim_{r\downarrow 0}{\rm Lip}_a(f,x,r)
\quad\text{with}\quad
{\rm Lip}_a(f,x,r):=\sup_{\sfd(y,x)\lor\sfd(z,x)<r,\,\sfd(y,z)>0}\frac{|f(y)-f(z)|}{\sfd(y,z)}\;,
\end{equation}
and with the usual convention ${\rm Lip}_a(f,x)=0$ at $\sfd$-isolated points $x$. By construction the function ${\rm Lip}_a(f,\cdot)$
is $\sfd$-upper semicontinuous. In the standard case when $\sfd$ is a finite distance and $\tau$ is the metric topology it follows
that ${\rm Lip}_a(f,\cdot)$ is also $\tau$-upper semicontinuous. 

\begin{definition}[Cheeger energy] For all $f\in L^2(X,\m)$ we set
$$
\Ch(f):=\inf\liminf_{n\to\infty}\int g_n^2\dd\m,
\qquad
D(\Ch):=\big\{f\in L^2(X,\m):\ \Ch(f)<\infty\big\}\ ,
$$
where the infimum runs among all sequences $(f_n)\subset {\rm Lip}_b(X,\tau,\sfd)$ with $\lim_n\int|f_n-f|^2\dd\m=0$ and all $\m$-measurable functions
$g_n\geq {\rm Lip}_a(f_n,\cdot)$ $\m$-a.e. in $X$. 
\end{definition} 
Motivated by the previous definition we may define, for $f\in {\rm Lip}(X,\tau,\sfd)$, 
${\rm Lip}_a^*(f,\cdot)$ as the (essential) least upper bound of all $\m$-measurable functions larger $\m$-a.e. than ${\rm Lip}_a(f,\cdot)$.
Then, $\Ch$ we can be equivalently defined by minimizing $\liminf_n\int ({\rm Lip}_a^*(f_n,\cdot))^2\dd\m$ among all
sequences $(f_n)\subset {\rm Lip}_b(X,\tau,\sfd)$ with $\lim_n\int|f_n-f|^2\dd\m=0$.
 
The concept of minimal relaxed slope is closely related to the definition of $\Ch$. First, one defines \emph{relaxed slope}
of $f$ any function $G\geq g$, with $g$ weak $L^2(X,\m)$ limit point as $n\to\infty$ of ${\rm Lip}_a^*(f_n,\cdot)$, where $f_n\in {\rm Lip}_b(X,\tau,\sfd)$ and $f_n\to f$ in
$L^2(X,\m)$. It can be proved (\cite[Lem.~4.3]{AGS11a}) that the class of relaxed slopes is a convex closed subset of $L^2(X,\m)$,
not empty if and only if $f\in D(\Ch)$. The minimal relaxed slope, denoted $|\rmD f|_w$ (and occasionally by
$|\rmD f|_{w,\sfd}$ to emphasize its dependence on $\sfd$), is the relaxed slope with
smallest $L^2(X,\m)$ norm. 

In analogy with the classical case, for all $f\in L^2(X,\m)$ with $\partial \Ch(f)\neq\emptyset$
we denote by $\Delta f$ the element with minimal $L^2(X,\m)$ norm in $\partial \frac 12 \Ch(f)$.

We now recall some basic calculus rules and more precise relations between $\Ch$ and the minimal relaxed slope.
Properties (g) and (h) below involve the notion of test plan, recalled below.

\begin{definition}[Test plan]
We say that $\eeta\in\cP(X^{[0,1]})$ is a $2$-test plan (relative to $\m$) if $\eeta$ is concentrated on $AC^2([0,1];(X,\sfd))$ and 
there exists $C\in [0,\infty)$ satisfying $(\e_t)_\#\eeta\leq C\m$ for all $t\in [0,1]$. The least constant $C$ with this property will be denoted by $C(\eeta)$. 
\end{definition}

\begin{proposition}\label{prop:calculus} The following properties hold:
\begin{itemize}
\item[(a)] For all $f,\,g\in D(\Ch)$, $\alpha,\beta\in \R$
  \begin{equation}
    \label{eq:14}
    |\rmD (\alpha f+\beta g)|_w\le |\alpha|\,|\rmD f|_w+|\beta|\,|\rmD g|_w\ ;
  \end{equation}
  in particular
  $\Ch$ and $(\Ch)^{1/2}$ are convex and lower semicontinuous functionals in $L^2(X,\m)$, with a dense domain.
\item[(b)] For all $f\in D(\Ch)$ one has $\Ch(f)=\int|\rmD f|_w^2\dd\m$ and there exist $f_n\in {\rm Lip}_b(X,\tau,\sfd)$ with
$f_n\to f$ in $L^2(X,\m)$ and ${\rm Lip}_a^*(f_n,\cdot)\to |\rmD f|_w$ in $L^2(X,\m)$.
\item[(c)] $|\rmD f|_w=|\rmD g|_w$ $\m$-a.e. in $\{f=g\}$ for all $f,\,g\in D(\Ch)$.
\item[(d)] $|\rmD f|_w\leq {\rm Lip}_a^*(f,\cdot)$ $\m$-a.e. in $X$ for all $f\in {\rm Lip}_b(X,\tau,\sfd)$.
\item[(e)] $|\rmD \phi(f)|_w=|\phi'(f)||\rmD f|_w$ $\m$-a.e. in $X$, for all $f\in D(\Ch)$ and $\phi:\R\to\R$ Lipschitz.
\item[(f)]  $\int  f\Delta g\dd\m\leq \int |\rmD f|_w|\rmD g|_w\dd\m$ for all $f\in D(\Ch)$, $g\in D(\Delta)$.
\item[(g)] If $\eeta\in\cP(X^{[0,1]})$ is a test plan, then for all $f\in D(\Ch)$ one has
$$
|f(\eta(1))-f(\eta(0))|\leq \int_0^1|\rmD f|_w(\eta(s))|\dot\eta(s)|\dd s
\quad\text{for $\eeta$-a.e. $\eta$\;.}
$$
\item[(h)] If $\eeta\in\cP(X^{[0,1]})$ is a test plan, then for all $f\in D(\Ch)$ one has
$$
\limsup\limits_{t\downarrow0}\int \frac{|f(\eta(t))-f(\eta(0))|^2}{(E_t(\eta))^2}\dd\eeta\leq\int|\rmD f|^2_w(\eta(0))\dd\eeta(\eta)\;,
$$
where $E_t(\eta):=\sqrt{t\int_0^t|\dot\eta(s)|^2\dd s}$.
\end{itemize}
\end{proposition}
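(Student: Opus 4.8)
The entire statement reproduces, in the extended framework, the calculus developed in \cite{AGS11a}; the only genuine changes are the systematic use of the essential upper bound ${\rm Lip}_a^*(f,\cdot)$ in place of ${\rm Lip}_a(f,\cdot)$ (which bypasses the measurability problems coming from the lack of $\tau$-upper semicontinuity of the asymptotic Lipschitz constant) and the replacement of the separability/Polish hypotheses by the density of ${\rm Lip}_b(X,\tau,\sfd)$ in $L^2(X,\m)$ granted by Lemma~\ref{lem:sigmaalgebras}. I would therefore organize the proof as a guided transcription of those arguments, isolating the few points where the extended setting actually intervenes.

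For the algebraic and relaxation properties (a)--(f) the plan is the following. Property (d) is immediate, choosing the constant recovery sequence $f_n\equiv f$. For (a) I would combine the pointwise subadditivity ${\rm Lip}_a^*(\alpha f+\beta g,\cdot)\le|\alpha|\,{\rm Lip}_a^*(f,\cdot)+|\beta|\,{\rm Lip}_a^*(g,\cdot)$ with the convexity and weak $L^2(X,\m)$ closedness of the set of relaxed slopes (the analogue of \cite[Lem.~4.3]{AGS11a}); lower semicontinuity and convexity of $\Ch$ and $\Ch^{1/2}$ follow, and density of the domain is exactly the density statement of Lemma~\ref{lem:sigmaalgebras}. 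Property (b) is the definition of $|\rmD f|_w$ as the minimal-norm relaxed slope, with weak $L^2$ convergence of the slopes upgraded to strong convergence by a Mazur-type argument on convex combinations. The locality (c) and the chain rule (e) I would get by the usual truncation and $C^1$-approximation, first establishing ${\rm Lip}_a^*(\phi\circ f,\cdot)\le|\phi'(f)|\,{\rm Lip}_a^*(f,\cdot)$ for $\phi\in C^1$. Finally (f) follows from the subdifferential inequality $\tfrac12\Ch(g+\eps f)\ge\tfrac12\Ch(g)+\eps\int f\,\Delta g\dd\m$ together with the integrated form of (a), $\Ch(g+\eps f)\le\Ch(g)+2\eps\int|\rmD g|_w|\rmD f|_w\dd\m+\eps^2\Ch(f)$; dividing by $\eps$ and letting $\eps\downarrow0$ yields the claim.

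The substantial points are the weak upper gradient statements (g) and (h). Here I would start from the elementary fact that for $f\in{\rm Lip}_b(X,\tau,\sfd)$ the asymptotic Lipschitz constant is a genuine upper gradient, so $|f(\eta(1))-f(\eta(0))|\le\int_0^1{\rm Lip}_a^*(f,\eta(s))|\dot\eta(s)|\dd s$ holds along every $\eta\in AC^2([0,1];(X,\sfd))$. Given a test plan $\eeta$ and a recovery sequence $f_n$ as in (b), I would integrate this inequality against $\eeta$ and pass to the limit: the bound $(\e_t)_\#\eeta\le C(\eeta)\m$ controls $\int\!\!\int_0^1{\rm Lip}_a^*(f_n,\eta(s))|\dot\eta(s)|\dd s\,\dd\eeta$ through the strongly convergent $L^2(X,\m)$ norms of ${\rm Lip}_a^*(f_n,\cdot)$ via Cauchy--Schwarz in the joint $(s,\eta)$ variables, while the endpoint terms converge, along a subsequence, because $f_n\to f$ in $L^2$ of the marginals $(\e_0)_\#\eeta,\,(\e_1)_\#\eeta\le C(\eeta)\m$; this gives (g). For (h), applying (g) on $[0,t]$ and Cauchy--Schwarz in $s$ bounds the ratio by $t^{-1}\int_0^t|\rmD f|_w^2(\eta(s))\dd s$; integrating against $\eeta$, using Fubini and bounded compression gives $\int|f(\eta(t))-f(\eta(0))|^2/E_t(\eta)^2\dd\eeta\le t^{-1}\int_0^t\!\int|\rmD f|_w^2\dd(\e_s)_\#\eeta\,\dd s$, and letting $t\downarrow0$ one concludes by the right-continuity at $0$ of $s\mapsto\int|\rmD f|_w^2\dd(\e_s)_\#\eeta$.

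The main obstacle I anticipate is precisely the limit passage in (g): one must ensure that the upper gradient inequality survives relaxation without the separability and topological upper semicontinuity exploited in \cite{AGS11a}. The essential device is the switch to ${\rm Lip}_a^*$, which makes every slope $\m$-measurable and renders the bounded-compression estimate $(\e_t)_\#\eeta\le C(\eeta)\m$ applicable to transfer $L^2(X,\m)$-convergence of the slopes into integrability along $\eeta$; combined with the superposition principle of Proposition~\ref{prop:lisini}, guaranteeing the abundance of admissible test plans, this is what lets the otherwise standard argument go through in the present extended, possibly non-metrizable, setting.
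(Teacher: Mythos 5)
Your proposal is correct and follows essentially the same route as the paper, whose proof simply defers (a)--(f) to Lemma~4.3, Lemma~4.4, Theorem~4.5, Propositions~4.8 and~4.15 of \cite{AGS11a} and obtains (g) from Mazur's lemma, property (b) and the upper gradient property of the asymptotic Lipschitz constant, and (h) from (g) and H\"older's inequality --- precisely the ingredients (and, for (h), the very same displayed inequality) that you reconstruct. The only point to tighten is in (g): after your Cauchy--Schwarz estimate you should extract a further subsequence along which the curve integrals $\int_0^1{\rm Lip}_a^*(f_n,\eta(s))|\dot\eta(s)|\dd s$ converge for $\eeta$-a.e.\ $\eta$ (not merely in the $\eeta$-integrated sense), so that the inequality localizes to $\eeta$-a.e.\ curve; this is the routine completion of the subsequence device you already indicate.
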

\begin{proof} The properties from (a) to (e) are proved in
Lemma~4.3, Lemma~4.4, Theorem 4.5, and Proposition~4.8 of \cite{AGS11a}. 
The proof of (f) relies on
the convexity inequality $|\rmD (f+\eps g)|_w\leq |\rmD f|_w+\eps|\rmD g|$ with $\eps>0$, see
Proposition~4.15 of \cite{AGS11a}. 
Property (g) is proved in 
Theorem~5.14, Corollary~5.15 (see also Corollary~3.15) of \cite{AGS11a}, using Mazur's lemma, property (b) and
the upper gradient property of the asymptotic Lipschitz constant. Finally, (h) follows by (g) and H\"older's inequality,
which give
$$
\int \frac{|f(\eta(t))-f(\eta(0))|^2}{(E_t(\eta))^2}\dd\eeta\leq\frac 1t\int_0^t\int |\rmD f|_w^2\dd (\e_s)_\#\eeta \dd s\;.
$$ 
\end{proof}

\begin{corollary} \label{cor:2.9} If $(X,\sfd)$ is complete,
for all $g\in D(\Ch)$ and all $\mu_t=\rho_t\m\in AC^2([0,T];(\cP(X),W_\sfd))$ with
$\sup_t \|\rho_t\|_{L^\infty(X,\m)} <\infty$ one has
$$
\biggl|\int g \rho_T\dd\m-\int g \rho_0 \dd\m\biggr|\leq
\int_0^T\biggl(\int  |\rmD g|_w^2\rho_t\dd\m\biggr)^{1/2}|\dot\mu_t|\dd t\;.
$$
\end{corollary}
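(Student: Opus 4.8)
The plan is to lift the curve $\mu_t$ to a probability measure on paths via the superposition principle and then feed it into the upper gradient inequality for test plans. By the standard reparameterization $t\mapsto tT$ I may assume $T=1$, so that the normalizations match those in the definition of test plan and in Proposition~\ref{prop:calculus}; both sides of the asserted inequality transform consistently. Since $(X,\sfd)$ is complete and $\mu_t\in AC^2([0,1];(\cP(X),W_\sfd))$, Proposition~\ref{prop:lisini} furnishes $\eeta\in\cP(X^{[0,1]})$ concentrated on $AC^2([0,1];(X,\sfd))$ with $(\e_t)_\#\eeta=\mu_t$ for all $t$ and $\int|\dot\eta(t)|^2\dd\eeta(\eta)=|\dot\mu_t|^2$ for a.e.\ $t$. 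The key observation is that the hypothesis $C:=\sup_t\|\rho_t\|_{L^\infty(X,\m)}<\infty$ turns $\eeta$ into a $2$-test plan: indeed $(\e_t)_\#\eeta=\mu_t=\rho_t\m\le C\m$ for every $t$, so that $C(\eeta)\le C$.

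Next I would invoke property~(g) of Proposition~\ref{prop:calculus}. Since $g\in D(\Ch)$ and $\eeta$ is a test plan, for $\eeta$-a.e.\ $\eta$ one has
$$
|g(\eta(1))-g(\eta(0))|\le\int_0^1|\rmD g|_w(\eta(s))\,|\dot\eta(s)|\dd s\;.
$$
Because the marginals $(\e_0)_\#\eeta=\rho_0\m$ and $(\e_1)_\#\eeta=\rho_1\m$ are absolutely continuous, the evaluations $g(\eta(0))$ and $g(\eta(1))$ of the $L^2(X,\m)$-class $g$ are well defined $\eeta$-a.e. (this is exactly the framework in which property~(g) is stated). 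Integrating against $\eeta$ and using the push-forward identities then bounds the desired difference from above:
$$
\Big|\int g\rho_1\dd\m-\int g\rho_0\dd\m\Big|=\Big|\int\big(g(\eta(1))-g(\eta(0))\big)\dd\eeta(\eta)\Big|\le\int|g(\eta(1))-g(\eta(0))|\dd\eeta(\eta)\;.
$$

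It then remains to estimate the path integral. By Tonelli's theorem (the integrand is nonnegative and jointly measurable in $(s,\eta)$) I would exchange the order of integration and, for a.e.\ fixed $s$, apply the Cauchy--Schwarz inequality in $L^2(\eeta)$:
$$
\int|\rmD g|_w(\eta(s))|\dot\eta(s)|\dd\eeta\le\Big(\int|\rmD g|_w^2(\eta(s))\dd\eeta\Big)^{1/2}\Big(\int|\dot\eta(s)|^2\dd\eeta\Big)^{1/2}\;.
$$
The first factor equals $\big(\int|\rmD g|_w^2\dd(\e_s)_\#\eeta\big)^{1/2}=\big(\int|\rmD g|_w^2\rho_s\dd\m\big)^{1/2}$, which is finite since $\int|\rmD g|_w^2\rho_s\dd\m\le C\,\Ch(g)$; the second equals $|\dot\mu_s|$ for a.e.\ $s$, by the sharp identity \eqref{eq:sharpmet}. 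Integrating in $s\in(0,1)$ and combining with the previous display yields the statement.

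The argument is essentially a repackaging of results already established, so the genuine content is concentrated in the cited property~(g): the passage from the Lipschitz approximants of $g$ to the $L^2$ function itself, with control of the increments of $g$ along $\eeta$-a.e.\ path by $|\rmD g|_w$, is the delicate step (it rests on Mazur's lemma and the upper gradient property of the asymptotic Lipschitz constant). Everything else here is bookkeeping—chiefly the joint measurability needed for Tonelli and the absolute continuity of the test-plan marginals that legitimizes the pointwise evaluations—so I expect no serious obstacle beyond invoking Propositions~\ref{prop:lisini} and~\ref{prop:calculus} correctly.
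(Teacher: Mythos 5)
Your proposal is correct and follows essentially the same route as the paper's proof: superposition via Proposition~\ref{prop:lisini}, the upper gradient property of Proposition~\ref{prop:calculus}(g), and H\"older's inequality at fixed time combined with the sharp identity \eqref{eq:sharpmet}. Your explicit verification that the uniform $L^\infty$ bound on the densities makes $\eeta$ a $2$-test plan is a point the paper leaves implicit, but it is the same argument.
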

\begin{proof} 
By Proposition~\ref{prop:lisini} we can find $\eeta\in\cP(X^{[0,T]})$ concentrated on $AC^2([0,T];(X,\sfd))$ with
 $(e_t)_\#\eeta=\rho_t\m$ for all $t\in [0,T]$ and satisfying \eqref{eq:sharpmet},
so that Proposition~\ref{prop:calculus}\OOO (g)
\nc gives
$$
g(\eta(T))\leq g(\eta(0))+ \int_0^T|\rmD f|_w(\eta(t))|\dot\eta(t)|\dd t
\quad\text{for $\eeta$-a.e. $\eta$\;}
$$
By integrating this inequality and using H\"older's inequality with $t$ fixed together with \eqref{eq:sharpmet} the proof is
achieved.
\end{proof}

In the sequel we denote by
$$
{\mathsf F}(\rho):=4\Ch(\sqrt{\rho})\ ,
\qquad\rho\in L^1_+(X,\m)
$$
the so-called Fisher information functional. Let us recall its main properties (see \cite{AGS11a} for the simple proof).

\begin{proposition} $\mathsf F$ is a convex and $L^1$-lower semicontinuous functional in $L^1_+(X,\m)$. If
$\sqrt{\rho}\in D(\Ch)$ we have the equivalent representation
$$
{\mathsf F}(\rho)=\int_{\{\rho>0\}}\frac{|\rmD \rho|_w^2}{\rho}\dd\m\;.
$$
\end{proposition}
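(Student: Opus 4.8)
The plan is to prove the three assertions — the integral representation, lower semicontinuity, and convexity — in that order, using only the calculus rules collected in Proposition~\ref{prop:calculus}.

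I would begin with the representation formula, assuming $\sqrt\rho\in D(\Ch)$. For each $n$ the truncation $\sqrt\rho\wedge\sqrt n$ lies in $D(\Ch)$ (chain rule, Proposition~\ref{prop:calculus}(e)) and is bounded, so $\rho\wedge n=(\sqrt\rho\wedge\sqrt n)^2\in D(\Ch)$ as well, and the localised minimal relaxed slope $|\rmD\rho|_w$ is thereby well defined $\m$-a.e.\ on $\{\rho>0\}$ through the locality property (Proposition~\ref{prop:calculus}(c)). Applying the chain rule to $s\mapsto s^2$ on the bounded function $\sqrt\rho\wedge\sqrt n$, and using locality on $\{\rho<n\}$, gives $|\rmD\rho|_w=2\sqrt\rho\,|\rmD\sqrt\rho|_w$ on $\{0<\rho<n\}$, whence $|\rmD\sqrt\rho|_w=|\rmD\rho|_w/(2\sqrt\rho)$ there, while $|\rmD\sqrt\rho|_w=0$ $\m$-a.e.\ on $\{\rho=0\}$, again by locality. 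Letting $n\to\infty$ and recalling $\Ch(\sqrt\rho)=\int|\rmD\sqrt\rho|_w^2\dd\m$ (Proposition~\ref{prop:calculus}(b)) produces ${\mathsf F}(\rho)=4\int|\rmD\sqrt\rho|_w^2\dd\m=\int_{\{\rho>0\}}\frac{|\rmD\rho|_w^2}{\rho}\dd\m$.

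For lower semicontinuity I would exploit the elementary bound $|\sqrt a-\sqrt b|^2\le|a-b|$, valid for $a,b\ge0$: if $\rho_n\to\rho$ in $L^1_+(X,\m)$ then $\int|\sqrt{\rho_n}-\sqrt\rho|^2\dd\m\le\int|\rho_n-\rho|\dd\m\to0$, so $\sqrt{\rho_n}\to\sqrt\rho$ in $L^2(X,\m)$. The $L^2$-lower semicontinuity of $\Ch$ (Proposition~\ref{prop:calculus}(a)) then yields $\Ch(\sqrt\rho)\le\liminf_n\Ch(\sqrt{\rho_n})$, which is exactly the $L^1$-lower semicontinuity of ${\mathsf F}$.

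The convexity is the substantive point. Fixing $\rho_0,\rho_1$ with finite Fisher information and $t\in[0,1]$, I would set $u_i=\sqrt{\rho_i}\in D(\Ch)$ and $w=\sqrt{(1-t)u_0^2+tu_1^2}=\sqrt{\rho_t}$, and then approximate at the Lipschitz level: by Proposition~\ref{prop:calculus}(b) choose nonnegative $u_i^k\in{\rm Lip}_b(X,\tau,\sfd)$ with $u_i^k\to u_i$ and ${\rm Lip}_a^*(u_i^k,\cdot)\to|\rmD u_i|_w$ in $L^2(X,\m)$, and form $w^k:=\Phi(u_0^k,u_1^k)$ with $\Phi(a,b):=\sqrt{(1-t)a^2+tb^2}$. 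Since $\Phi$ is $1$-Lipschitz with $\Phi(0,0)=0$, each $w^k\in{\rm Lip}_b(X,\tau,\sfd)$ and $w^k\to w$ in $L^2(X,\m)$. The heart of the matter is the pointwise inequality for asymptotic Lipschitz constants
\[
{\rm Lip}_a(w^k,\cdot)^2\le (1-t)\,{\rm Lip}_a(u_0^k,\cdot)^2+t\,{\rm Lip}_a(u_1^k,\cdot)^2\,,
\]
which I would derive by bounding ${\rm Lip}_a(w^k)\le|\partial_a\Phi|\,{\rm Lip}_a(u_0^k)+|\partial_b\Phi|\,{\rm Lip}_a(u_1^k)$ and applying the Cauchy--Schwarz inequality with the weights $(1-t)a^2/\Phi^2$ and $tb^2/\Phi^2$, whose sum is $1$ (the degenerate locus $\{u_0^k=u_1^k=0\}$ being absorbed by a vanishing regularisation $\Phi_\eps=\sqrt{\Phi^2+\eps}$). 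Integrating, letting $k\to\infty$, and invoking the definition of $\Ch$ together with Proposition~\ref{prop:calculus}(b) gives $\Ch(w)\le(1-t)\Ch(u_0)+t\Ch(u_1)$, i.e.\ ${\mathsf F}(\rho_t)\le(1-t){\mathsf F}(\rho_0)+t{\mathsf F}(\rho_1)$; as a bonus this shows $\sqrt{\rho_t}\in D(\Ch)$, so that no separate domain discussion is required. The main obstacle is precisely this two-variable chain-rule estimate for the asymptotic Lipschitz constant, together with the care needed at the set where $\Phi$ fails to be smooth; everything else reduces to the already established calculus rules and the elementary convexity of $(s,p)\mapsto p^2/s$ that the estimate encodes.
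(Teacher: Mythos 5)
The paper does not actually prove this proposition: it states it with the remark ``(see \cite{AGS11a} for the simple proof)'', so any self-contained argument is necessarily a different route. Your treatment of the representation formula (truncation, chain rule and locality from Proposition~\ref{prop:calculus}(c,e)) and of the lower semicontinuity (the bound $|\sqrt a-\sqrt b|^2\le|a-b|$ plus the $L^2$-lower semicontinuity of $\Ch$) coincides with the argument the paper is pointing to. For convexity, however, the cited route works \emph{downstream} of the representation formula: one applies the subadditivity $|\rmD \rho_t|_w\le(1-t)|\rmD\rho_0|_w+t|\rmD\rho_1|_w$ of Proposition~\ref{prop:calculus}(a) to truncations and then the joint convexity of $(a,s)\mapsto a^2/s$ on $\R\times(0,\infty)$. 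You instead work \emph{upstream}, at the level of Lipschitz approximants and asymptotic Lipschitz constants, proving $\Ch(\sqrt{\rho_t})\le(1-t)\Ch(\sqrt{\rho_0})+t\Ch(\sqrt{\rho_1})$ straight from the definition of $\Ch$; both encode the same elementary convexity, and your version has the merit of giving $\sqrt{\rho_t}\in D(\Ch)$ without any appeal to the representation (one only needs the bookkeeping remark that your pointwise inequality, combined with ${\rm Lip}_a^*\ge{\rm Lip}_a$ $\m$-a.e., produces admissible measurable majorants in the definition of $\Ch$).

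One step is flawed as written: the treatment of the degenerate locus $\{u_0^k=u_1^k=0\}$ via $\Phi_\eps=\sqrt{\Phi^2+\eps}$. The chain rule plus weighted Cauchy--Schwarz does give the inequality for $\Phi_\eps(u_0^k,u_1^k)$ for each $\eps>0$, but this bound does not pass to $\Phi$ in the limit: $\Phi_\eps(u_0^k,u_1^k)\to w^k$ only uniformly, and uniform convergence gives no control on asymptotic Lipschitz constants (equivalently, $r\mapsto\sqrt{r^2-\eps}$ fails to be Lipschitz near $r=\sqrt\eps$, exactly at the points you are trying to handle). The gap is harmless because the inequality you need holds at \emph{every} point by a one-line estimate that in fact supersedes the entire chain-rule/Cauchy--Schwarz derivation: $\Phi$ is the norm $(a,b)\mapsto\sqrt{(1-t)a^2+tb^2}$ on $\R^2$, so the reverse triangle inequality gives, for all $y,z$,
\begin{displaymath}
|w^k(y)-w^k(z)|\le\sqrt{(1-t)\,|u_0^k(y)-u_0^k(z)|^2+t\,|u_1^k(y)-u_1^k(z)|^2}\;;
\end{displaymath}
dividing by $\sfd(y,z)$, taking the supremum over $y,z$ in a ball around $x$ (the map $(s_0,s_1)\mapsto\sqrt{(1-t)s_0^2+ts_1^2}$ is nondecreasing in each variable) and letting the radius tend to $0$ yields ${\rm Lip}_a(w^k,x)^2\le(1-t)\,{\rm Lip}_a(u_0^k,x)^2+t\,{\rm Lip}_a(u_1^k,x)^2$ at all $x$, degenerate or not. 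With this replacement your proof is complete and correct.
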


We denote by $\sfP_t$ the $L^2(X,\m)$ (metric) gradient flow of the convex and lower semicontinuous functional $\frac 12\Ch$. Since
$D(\Ch)$ includes ${\rm Lip}_b(X,\tau,\sfd)$ which is dense in $L^2(X,\m)$, $\sfP_t$ is a contraction semigroup in $L^2(X,\m)$, characterized by
\begin{equation}\label{eq:easter4}
\ddt \sfP_t f=\Delta \sfP_t f\qquad\text{for a.e. $t>0$}\;.
\end{equation}
Besides the general properties of gradient flows in Hilbert spaces listed in Proposition~\ref{prop:evik}, we recall that $\sfP_t$ satisfies the maximum and
minimum principle (i.e. if $c\leq f\leq C$ $\m$-a.e. in $X$, then $c\leq \sfP_t f\leq C$ $\m$-a.e. in $X$ for all
$t\geq 0$) and that
\begin{equation}\label{eq:contiCh}
  \biggl| \ddt \int f \sfP_t\rho\dd\m\biggr|\leq
{\mathsf F}^{1/2}(\sfP_t\rho)\biggl(\int |\rmD f|_w^2 \sfP_t\rho\dd\m\biggr)^{1/2}\qquad\text{for a.e. $t\in (0,\infty)$}\;.
\end{equation}
The maximum and minimum principle can be derived from Proposition~\ref{prop:calculus}(e), while
\eqref{eq:contiCh} is a direct consequence of Proposition~\ref{prop:calculus}(f) and of \eqref{eq:easter4}.

Note also the following fact (whose proof can be obtained
by a simple regularization argument, since $\m$ is finite, see
\cite{AGS11a}): for all $f\in L^2_+(X,\m)$ the function
$t\mapsto\int f_t\log f_t\dd\m$ is absolutely continuous in $[0,\infty)$ and
\begin{equation}\label{eq:easter5}
\ddt \int f_t\log f_t\dd\m=-\int_{\{f_t>0\}}\frac{|\rmD
  f_t|_w^2}{f_t}\dd\m\qquad\text{for a.e. $t>0$\ .}
\end{equation}
In particular, the right hand side is locally integrable in $[0,\infty)$. 

\section{Extended distances in $\cP^a(X)$}\label{sec:extendedcpa}

In this section \nc we introduce a class of absolutely continuous curves in an extended metric measure
space $(X,\tau,\sfd,\m)$, following the analogy with
\cite[Thm.~8.3.1]{Ambrosio-Gigli-Savare08}, \cite{AT14}, \cite{Gigli-Bangxian}. 

We first introduce a Banach structure on 
two Sobolev classes of test functions, $D(\Ch)$ and the algebra 
\begin{equation}
  \label{eq:24}
  \cA_{\Ch}:=\big\{f\in D(\Ch):\ f,\,|\rmD f|_w\in L^\infty(X,\m)\big\}\;
\end{equation}
 which obviously
includes ${\rm Lip}_b(X,\tau,\sfd)$ and it is 
dense in $L^2(X,\m)$; a simple truncation argument also shows that 
$\cA_{\Ch}$ is dense in $L^p(X,\m)$ for every $p\in [1,\infty)$. 

Since $f\mapsto \sqrt{\Ch(f)}$ is convex in $D(\Ch)$ 
and the function $(x,y)\mapsto \sqrt{x^2+y^2}$ is a norm in $\R^2$, it
is easy to check that  
\begin{equation}
  \|f\|_{\Ch}:=\Big(\|f\|_2^2+\Ch(f)\Big)^{1/2}=
  \Big(\int \big(|f|^2+|\rmD f|_w^2\big)\dd\m\Big)^{1/2}\;,
  \label{eq:55}
\end{equation}
is a norm in $D(\Ch)$; the lower semicontinuity of $\Ch$ with respect
to $L^2$-convergence also shows that
$\big(D(\Ch),\|\cdot\|_{D(\Ch)}\big)$ 
is a Banach space. Similarly, 
$\mathcal A_{\Ch}$ is a Banach algebra w.r.t.~the norm
\begin{equation}
  \label{eq:26}
  \big\|f\big\|_{\cA_{\Ch}}:=\big\|f\big\|_\infty+\big\|\,|\rmD f|_w\,\big\|_\infty\;.
\end{equation}
$D(\Ch)$ and the algebra $\cA_{\Ch}$ are not separable in general, 
but since their norms are lower semicontinuous w.r.t.~the $L^2$
convergence, they are $F_\sigma$ and thus Borel subsets of $L^2(X,\m)$.
When we will consider measurability of maps $\phi$ with values in
$D(\Ch)$ or $\cA_{\Ch}$,
we will always refer to its Borel $\sigma$-algebra inherited from 
the $L^2$ topology.
\subsection{The dynamic approach and the continuity inequality}
\nc
\begin{definition}[Continuity inequality]\label{def:conti}
Given a family of probability densities $\rho_t$, $t\in [0,T]$, we write
$\rho_t\in {\sf CE}^2(X,\Ch,\m)$ if there exists $c\in L^2(0,T)$ satisfying
\begin{equation}\label{eq:criterion_CE}
\biggl|\int f\rho_t\dd\m-\int f\rho_s\dd\m\biggr|\leq\int_s^tc(r)\biggl(\int |\rmD f|_w^2\rho_r\dd\m\biggr)^{1/2}\dd r
\end{equation}
for all $f\in\mathcal A_{\Ch}$ and all $0\leq s\leq t\leq T$. The
least $c$ in \eqref{eq:criterion_CE} is denoted $\|\rho_t'\|$.
\end{definition}
For simplicity of notation we do not emphasize the $T$ dependence in the previous definition. 

We will often deal with the case when $\rho_t$ are 
essentially bounded, uniformly w.r.t. time: in this case, by using the density of $\cA_{\Ch}$ in
$L^1(X,\m)$, it is then easy to check that $t\mapsto \rho_t$ is
weakly$^*$ continuous with values in $L^\infty(X,\m)$ (see also the next Theorem~\ref{thm:abs_char});
we will write $\rho\in C_{\rm w*}([0,T];L^\infty(X,\m))$. 

\begin{remark}\label{rem:heat}{\rm
It is a direct consequence of \eqref{eq:contiCh} and \eqref{eq:easter5} that $t\mapsto \rho_t:=\sfP_t\rho$ belongs to ${\sf CE}^2(X,\Ch,\m)$
for all $\rho\in L^2_+(X,\m)$, with
$$
\|\rho_t'\|^2\leq \int_{\{\rho_t>0\}}\frac{|\rmD\rho_t|^2_w}{\rho_t}\dd\m\qquad\text{for a.e. $t\in (0,\infty)$\;.}
$$
}\end{remark}

In the following theorem we show a ``differential'' characterization
of absolutely continuous curves in $(\cP(X),W_\sfd)$, which provides a key link between the metric and the differentiable viewpoints. 

\begin{theorem}[Differential characterization of absolutely continuous curves] \label{thm:abs_char}\quad\\
For all $\rho_t\in {\sf CE}^2(X,\Ch,\m)$ one has $\mu_t=\rho_t\m\in AC^2([0,T];(\cP(X),W_\sfd))$ and
\begin{equation}\label{eq:abs2}
|\dot\mu_t|\leq\|\rho_t'\|\qquad\text{for a.e. $t\in (0,T)$\;.}
\end{equation} 
Conversely, if $(X,\sfd)$ is complete, $\mu_t=\rho_t\m\in
AC^2([0,T];(\cP(X),W_\sfd))$ 
\OOO and $\sup_{t\in [0,T]}\|\rho_t\|_\infty <\infty$, \nc
then
$\rho_t\in{\sf CE}^2(X,\Ch,\m)$ and
\begin{equation}\label{eq:abs1}
\|\rho_t'\|\leq |\dot\mu_t|\qquad\text{for a.e. $t\in (0,T)$\;.}
\end{equation}
\end{theorem}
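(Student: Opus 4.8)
The plan is to prove the two implications separately. The forward direction (continuity inequality $\Rightarrow$ $W_\sfd$-absolute continuity) rests on the dual formula \eqref{eq:dualityQ} for $W_\sfd$ together with the Hopf--Lax semigroup $Q_r$ of \eqref{eq:HopfLax}; the converse is an essentially immediate consequence of Corollary~\ref{cor:2.9}, which already encodes a continuity inequality along $W_\sfd$-absolutely continuous curves with bounded densities.

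\textbf{Forward direction.} Fix $0\le s<t\le T$ and $\phi\in\mathcal F$, and set $\phi_r:=Q_{(r-s)/(t-s)}\phi$ for $r\in[s,t]$, so that $\phi_s=\phi$ and $\phi_t=Q_1\phi$. First I would check that each $\phi_r$ is an admissible test function, i.e. $\phi_r\in\cA_{\Ch}$: by \eqref{eq:good_tests1} it is bounded and, since $\phi$ is bounded, globally $\sfd$-Lipschitz; writing it as the monotone limit of the $\sfd_i$-Hopf--Lax functions $Q^i_r\phi\in{\rm Lip}_b(X,\tau,\sfd)$, whose asymptotic Lipschitz constants are uniformly bounded, shows $\phi_r\in D(\Ch)$ with $|\rmD\phi_r|_w\in L^\infty$. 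Then I would estimate $r\mapsto\int\phi_r\rho_r\dd\m$ by splitting an increment into the contribution of the variation of $\rho_r$ (controlled by \eqref{eq:criterion_CE} with frozen test function $\phi_r$) and the contribution of the variation of $\phi_r$ (controlled, against the nonnegative measure $\rho_r\m$, by the Hamilton--Jacobi subsolution property of the Hopf--Lax semigroup, which with the asymptotic Lipschitz constant and Proposition~\ref{prop:calculus}(d) yields $\partial_\sigma Q_\sigma\phi+\tfrac12|\rmD Q_\sigma\phi|_w^2\le0$ $\m$-a.e.). This gives, for a.e. $r$,
\[\frac{d}{dr}\int\phi_r\rho_r\dd\m\le-\frac1{2(t-s)}\int|\rmD\phi_r|_w^2\rho_r\dd\m+\|\rho_r'\|\Big(\int|\rmD\phi_r|_w^2\rho_r\dd\m\Big)^{1/2}\le\frac{t-s}{2}\|\rho_r'\|^2,\]
the last step by Young's inequality. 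Integrating over $[s,t]$ and taking the supremum over $\phi\in\mathcal F$, the duality \eqref{eq:dualityQ} gives
\[\tfrac12 W_\sfd^2(\mu_s,\mu_t)\le\tfrac{t-s}2\int_s^t\|\rho_r'\|^2\dd r,\]
hence $W_\sfd^2(\mu_s,\mu_t)\le(t-s)\int_s^t\|\rho_r'\|^2\dd r$. Since $\|\rho'\|\in L^2(0,T)$ this shows $\mu_t\in AC^2([0,T];(\cP(X),W_\sfd))$, and dividing by $(t-s)^2$ and letting $t\downarrow s$ at Lebesgue points of $\|\rho'\|^2$ yields $|\dot\mu_s|\le\|\rho_s'\|$ for a.e. $s$, which is \eqref{eq:abs2}.

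\textbf{Converse direction.} Here I would simply invoke Corollary~\ref{cor:2.9}. Under the stated hypotheses ($(X,\sfd)$ complete, $\mu_t=\rho_t\m\in AC^2$, $\sup_t\|\rho_t\|_\infty<\infty$), apply that corollary to the curve restricted to an arbitrary subinterval $[s,t]\subset[0,T]$ (still $AC^2$ with the same bound on the densities) and to any $f\in\cA_{\Ch}\subset D(\Ch)$, obtaining
\[\Big|\int f\rho_t\dd\m-\int f\rho_s\dd\m\Big|\le\int_s^t\Big(\int|\rmD f|_w^2\rho_r\dd\m\Big)^{1/2}|\dot\mu_r|\dd r.\]
This is exactly \eqref{eq:criterion_CE} with $c(r)=|\dot\mu_r|$, and $|\dot\mu|\in L^2(0,T)$ because $\mu\in AC^2$; hence $\rho_t\in{\sf CE}^2(X,\Ch,\m)$ and, by minimality of $\|\rho_t'\|$, the bound \eqref{eq:abs1} follows.

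\textbf{Main obstacle.} The only genuinely delicate point is the forward direction's coupled differentiation of $r\mapsto\int\phi_r\rho_r\dd\m$: one must simultaneously handle the time-dependence of the density $\rho_r$ and of the test function $\phi_r$, and reconcile the pointwise ($\m$-a.e.) Hamilton--Jacobi subsolution inequality, naturally formulated through the asymptotic Lipschitz constant of $Q_\sigma\phi$, with the integral continuity inequality written in terms of the minimal relaxed slope $|\rmD\cdot|_w$. I would make this rigorous by the now-standard discretization argument (telescoping over a fine partition of $[s,t]$, using \eqref{eq:criterion_CE} with the frozen test function on each subinterval together with the integrated subsolution estimate for $Q_\sigma\phi$, then passing to the limit), exactly as in the corresponding step of \cite{AGS11a}; the verification that $Q_r\phi\in\cA_{\Ch}$ is the secondary technical point, handled by the monotone $\sfd_i$-approximation described above.
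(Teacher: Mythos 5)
Your converse direction is correct and is exactly the paper's argument: Corollary~\ref{cor:2.9} applied on subintervals gives \eqref{eq:criterion_CE} with $c(r)=|\dot\mu_r|$, and minimality of $\|\rho_t'\|$ yields \eqref{eq:abs1}. Your forward direction also follows the paper's strategy (a Kuwada-type duality argument: Hopf--Lax interpolation, Hamilton--Jacobi subsolution property, Young's inequality, then \eqref{eq:dualityQ}), but as written it has two gaps, one of which is essential.

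First, the claim that ``each $\phi_r$ is an admissible test function, i.e.\ $\phi_r\in\cA_{\Ch}$'' fails at the endpoint $r=s$: there $\phi_s=\phi\in\mathcal F$, and such a $\phi$ is in general neither $\sfd$-Lipschitz nor $\tau$-continuous, while the Lipschitz constant of $Q_\sigma\phi$ blows up like $\sigma^{-1/2}$ as $\sigma\downarrow 0$, so there is no uniformity near $r=s$ either. The paper circumvents this by never letting the interpolation touch time $0$: it sets $\varphi:=Q_\eps\phi$, runs the whole argument between $\varphi$ and $Q_1\varphi=Q_{1+\eps}\phi$ (all these functions are uniformly $\sfd$-Lipschitz, and $r\mapsto Q_r\varphi$ is Lipschitz into $L^\infty(X,\m)$), and only at the very end lets $\eps\downarrow 0$ in the \emph{static} duality pairing, using the monotone convergences $Q_\eps\phi\uparrow\phi$, $Q_1(Q_\eps\phi)\uparrow Q_1\phi$. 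In your discretization the offending endpoint term $\int(\phi_{r_1}-\phi)\rho_s\dd\m$ happens to be nonpositive, so this particular defect could be patched, but you should make that observation (or adopt the $Q_\eps$-trick) explicitly.

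Second, and more seriously: both your ``frozen test function'' differentiation and the telescoping variant require you to compare $\int G\,\rho_u\dd\m$ with $\int G\,\rho_r\dd\m$ for $u$ near $r$, where $G$ is one of the bounded \emph{Borel} weights $|\rmD\phi_r|_w^2$ or $\bigl({\rm Lip}_a^*(\phi_r,\cdot)\bigr)^2$ (this is needed to identify the derivative of the $\rho$-increment at Lebesgue points, and to let the positive cross term cancel against the negative Hamilton--Jacobi term via Young's inequality). Membership in ${\sf CE}^2(X,\Ch,\m)$ only gives continuity of $u\mapsto\int f\rho_u\dd\m$ for $f\in\cA_{\Ch}$, and in this implication there is \emph{no} $L^\infty$ bound on the densities, so no weak$^*$ continuity is available either; continuity against merely Borel weights simply does not follow. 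The paper supplies this missing regularity by a preliminary step you omit entirely: extend $\rho$ constantly outside $[0,T]$ and mollify in time, $\rho_{\eps,t}=\rho_t\ast\chi_\eps$, noting that ${\sf CE}^2$ is stable under this operation with $\|\rho_{\eps,t}'\|^2\le\|\rho_t'\|^2\ast\chi_\eps$; this reduces the proof to curves that are strongly $L^1(X,\m)$-continuous in time, for which your computation closes, and the conclusion passes to the limit by lower semicontinuity of $W_\sfd$ under weak convergence. Without this mollification step (or an equivalent substitute), the argument you outline cannot be completed.
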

\begin{proof} We show the second part of the statement and \eqref{eq:abs1}.
Let $\mu_t=\rho_t\m\in AC^2([0,T];(\cP(X),W_\sfd))$ with 
essentially bounded densities $\rho_t$.

The inequality \eqref{eq:17} shows that $t\mapsto\int f\dd\mu_t$ is absolutely continuous in $[0,T]$ for all $f\in {\rm Lip}_b(X,\tau,\sfd)$. In addition, 
Corollary~\ref{cor:2.9} provides the inequality
$$
\biggl|\int f\rho_s\dd \m-\int f\rho_t\dd\m\biggr|\leq\int_s^t|\dot\mu_r|\biggl(\int |\rmD f|^2_w\rho_r\dd\m\biggr)^{1/2} \dd r 
$$
for $0\leq s\leq t\leq T$. By the density of ${\rm Lip}_b(X,\tau,\sfd)$ in $\mathcal A_{\Ch}$ provided by
Proposition~\ref{prop:calculus}(b) the inequality extends to all $f\in\mathcal A_{\Ch}$. 

We provide a proof of the converse implication and the converse inequality \eqref{eq:abs2}, along the lines of \cite{Kuwada10}, assuming
for simplicity $T=1$. First we notice that the property $\rho_t\in {\sf CE}(X,\Ch,\m)$ is stable under convolution w.r.t.~the time parameter: more precisely, if we extend
$t\mapsto\rho_t$ by continuity and with constant values to $(-\infty,0)\cup (1,\infty)$, then $\rho_{\eps,t}:=\rho_t\ast\chi_\eps$ still belongs
to ${\sf CE}(X,\Ch,\m)$ and $\|\rho_{\eps,t}'\|^2\leq \|\rho_t'\|^2\ast\chi_\eps$. For this reason, in the proof of this implication we can assume
with no loss of generality that $t\mapsto\rho_t$ is continuous w.r.t.~the $L^1(X,\m)$ topology.
We start from the duality formula \eqref{eq:dualityQ}. Let $\phi:X\to [0,\infty)$ be such that $\phi\in C(K)$ and
$\phi\equiv\max_K\phi$ on $X\setminus K$, with $K\subset X$ compact not empty. 
Under this restriction on $\phi$, we have already seen that $Q_\eps\phi$ can be represented in the form
\eqref{eq:good_tests1}, and that $Q_\eps\phi$ is $\sfd$-Lipschitz, Borel (because it is $\tau$-lower semicontinuous),
nonnegative and bounded. In addition $Q_\eps\phi\uparrow\phi$ and 
$Q_1(Q_\eps\phi)\uparrow Q_1\phi$ as $\eps\downarrow 0$. 

Set now $\varphi:=Q_\eps\phi$ for some $\eps>0$ and observe that $Q_t\varphi$, $t\in [0,1]$, are uniformly
$\sfd$-Lipschitz and that the map $t\mapsto Q_t\varphi$ is
Lipschitz from $[0,1]$ with values in $L^\infty(X,\m)$. By applying \cite[Lem.~4.3.4]{Ambrosio-Gigli-Savare08} to the function
$(s,t)\mapsto\int \rho_sQ_t\varphi\dd\m$ we obtain that $t\mapsto\int\rho_tQ_t\varphi$ is absolutely continuous
in $[0,1]$ and that its derivative can be estimated from above by
$$
\limsup_{s\to t}\frac{1}{|s-t|}{\biggl|\int(\rho_s-\rho_t) Q_t\varphi\dd\m\biggr|}+
\limsup_{s\to t}\frac{1}{|s-t|}{\biggl|\int\rho_t (Q_s\varphi-Q_t\varphi)\dd\m\biggr|}\;.
$$
Using the inequality
$$
\biggl|\int \rho_tQ_t\varphi\dd\m-\int \rho_sQ_t\varphi\dd\m\biggr|\leq\int_s^t\|\rho_r'\|\biggl(\int |\rmD Q_t\varphi|_w^2\rho_r\dd\m\biggr)^{1/2}\dd r
$$
we estimate the first limsup, at Lebesgue points $t$ of $s\mapsto|\rho_s'|$, by
$$
\|\rho_t'\|\biggl(\int |\rmD Q_t\varphi|_w^2\rho_t\dd\m\biggr)^{1/2}
$$
(here we used also the strong continuity of $s\mapsto\rho_s$). Estimating the second $\limsup$ with Fatou's lemma 
and using Proposition~\ref{prop:calculus}(d) gives
$$
\int (\rho_1Q_1\varphi-\rho_0\varphi)\dd\m\leq\int_0^1\biggl[
\|\rho_t'\|\biggl(\int ({\rm Lip}_a^*(Q_t\varphi,\cdot))^2\rho_t\dd\m\biggr)^{1/2}+
\int \rho_t\xi_t\dd\m\biggr]\dd t\;,
$$
where  $\xi_t$ is the bounded Borel function
$$
\xi_t:=\limsup_{s\to t}\frac {Q_s\varphi-Q_t\varphi}{s-t} \;.
$$
Now we use the pointwise subsolution property
\begin{equation}\label{eq:subsolutionQt}
\frac 12 \bigl({\rm Lip}^*_a(Q_t\varphi,\cdot)\bigr)^2\leq -\xi_t\qquad\text{$\m$-a.e. in $X$}
\end{equation}
for a.e. $t\in (0,1)$ (whose proof follows as in \cite[Thm.~3.5]{AGS11a}, where it is stated in a weaker form with the slope
in place of the asymptotic Lipschitz constant, see also \cite{ACDm}) 
and the Young inequality to get $\int (\rho_1Q_1\varphi-\rho_0\varphi)\dd\m\leq\frac 12\int_0^1\|\rho_t'\|^2\dd t$.
Remembering that $\varphi=Q_\eps\phi$, we can let $\eps\downarrow 0$ and use the arbitrariness of $\phi$ to get
$$
W_{\sfd}^2(\rho_1\m,\rho_0\m)\leq\int_0^1\|\rho_t'\|^2\dd t\;.
$$
By applying this inequality to a rescaled version of $\rho$ we obtain
$W_{\sfd}^2(\rho_t\m,\rho_s\m)\leq (s-t)\int_t^s\|\rho_r'\|^2\dd r$ for all $s,\,t\in [0,1]$ with $s>t$,  
so that by differentiation the inequality \eqref{eq:abs2} follows at a.e. $t\in (0,1)$.
\end{proof}

Using the continuity inequality we can define an extended ``Wasserstein-like'' distance 
$W_{\Ch}$ in $\cP^a(X)$ in the same spirit of the Benamou-Brenier
formula:
\begin{equation}\label{eq:matthias_distance}
W_{\Ch}^2(\rho_0\m,\rho_1\m):=\inf\bigg\{\int_0^1\|\rho_t'\|^2\dd t:\ \rho_t\in {\sf CE}^2(X,\Ch,\m)\biggr\}\;.
\end{equation}
This definition is also natural in view of Remark~\ref{rem:heat}. Even though it is conceptually convenient to think
to $W_{\Ch}$ as an extended distance in $\cP^a(X)$, we occasionally adopt we
the simpler notation $W_{\Ch}(\rho_0,\rho_1)$, i.e. we identify measures in $\cP^a(X)$ with their
densities w.r.t.~$\m$. The same remark applies to the other distances in $\cP^a(X)$ 
we shall introduce. Now we provide a few basic properties of $W_{\Ch}$.

\begin{proposition}[Properties of $W_{\Ch}$]\label{prop:propWCh}\quad
\begin{itemize}
\item[(a)] $(\cP^a(X),W_{\Ch})$ is an extended length metric space, and $W_{\Ch}\geq W_\sfd$.
\item[(b)] Assume that $\mu^n_t=\rho^n_t\m\in\cP(X)$ satisfy $\rho^n_t\to \rho_t$ weakly in $L^1(X,\m)$ for all $t\in [0,T]$ and 
that $\rho^n\in {\sf CE}^2(X,\Ch,\m)$ with $\|(\rho^n)_t'\|$ uniformly bounded in $L^2(0,T)$. Then, if $c(t)$
is any weak limit point of $\|(\rho^n)_t'\|$ as $n\to\infty$, one has 
$\rho_t\in {\sf CE}^2(X,\Ch,\m)$ with $\|\rho_t'\|\leq c(t)$ for a.e. $t\in (0,T)$.
\item[(c)] $W_{\Ch}^2$ is jointly convex in $(\cP^a(X))^2$.
\end{itemize}
\end{proposition}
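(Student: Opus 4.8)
The plan for (a) is to first record the comparison $W_{\Ch}\ge W_\sfd$, which is immediate from Theorem~\ref{thm:abs_char}: any $\rho_t\in{\sf CE}^2(X,\Ch,\m)$ joining $\rho_0$ to $\rho_1$ gives $\mu_t=\rho_t\m\in AC^2([0,1];(\cP(X),W_\sfd))$ with $|\dot\mu_t|\le\|\rho_t'\|$ a.e., and since $W_\sfd$ is a genuine distance it is bounded by the length of this curve, so by Cauchy--Schwarz
$$
W_\sfd^2(\rho_0\m,\rho_1\m)\le\Big(\int_0^1|\dot\mu_t|\dd t\Big)^2\le\int_0^1|\dot\mu_t|^2\dd t\le\int_0^1\|\rho_t'\|^2\dd t ,
$$
and minimising over admissible curves yields $W_\sfd\le W_{\Ch}$. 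Symmetry of $W_{\Ch}$ follows by the time reversal $\rho_t\mapsto\rho_{1-t}$, which preserves ${\sf CE}^2(X,\Ch,\m)$ and the action, while $W_{\Ch}(\rho_0,\rho_1)=0\Rightarrow\rho_0=\rho_1$ follows from $W_{\Ch}\ge W_\sfd$ and the fact that $W_\sfd$ separates Radon measures. The nonnegativity and finiteness conventions are built into the infimum in \eqref{eq:matthias_distance}.

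For the triangle inequality and the length property I would use that ${\sf CE}^2(X,\Ch,\m)$, being defined through the integral inequality \eqref{eq:criterion_CE}, is stable under concatenation (taking for $c$ the juxtaposition of the two $L^2$ speeds) and under monotone Lipschitz time reparametrisations (under which $\|\rho_t'\|$ transforms by the chain rule and stays in $L^2$). A reparametrisation to constant speed then gives $W_{\Ch}(\rho_0,\rho_1)=\inf\int_0^1\|\rho_t'\|\dd t$, the infimum of the length over connecting ${\sf CE}^2$-curves; concatenating two nearly optimal curves rescaled onto $[0,\tfrac12]$ and $[\tfrac12,1]$ yields the triangle inequality, and the restriction estimate $W_{\Ch}(\rho_s,\rho_t)\le\int_s^t\|\rho_r'\|\dd r$ shows that $W_{\Ch}$ coincides with its own induced length distance. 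This bookkeeping is routine but is the most tedious part of (a).

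For (b) the plan is to pass to the limit in \eqref{eq:criterion_CE} written for $\rho^n$. I fix a subsequence along which $\|(\rho^n)_r'\|\rightharpoonup c$ weakly in $L^2(0,T)$. Since every $f\in\cA_{\Ch}$ has $f,|\rmD f|_w\in L^\infty(X,\m)$, weak $L^1$-convergence $\rho^n_t\rightharpoonup\rho_t$ gives both $\int f\rho^n_t\dd\m\to\int f\rho_t\dd\m$ (the left-hand side) and, for each $r$, $\int|\rmD f|_w^2\rho^n_r\dd\m\to\int|\rmD f|_w^2\rho_r\dd\m$; as these are bounded by $\||\rmD f|_w\|_\infty^2$ uniformly in $r,n$, dominated convergence shows that $g_n(r):=(\int|\rmD f|_w^2\rho^n_r\dd\m)^{1/2}$ converges \emph{strongly} in $L^2(s,t)$ to $g(r):=(\int|\rmD f|_w^2\rho_r\dd\m)^{1/2}$. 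The crucial point, which I expect to be the main obstacle, is then the passage to the limit in the product $\int_s^t\|(\rho^n)_r'\|\,g_n(r)\dd r$: this is precisely a weak-times-strong convergence in $L^2(s,t)$, hence it converges to $\int_s^t c(r)\,g(r)\dd r$. This gives \eqref{eq:criterion_CE} for $\rho$ with $c$ in the role of the speed; since $c\ge 0$ a.e. (weak $L^2$-limit of nonnegative functions), $\rho\in{\sf CE}^2(X,\Ch,\m)$ and $\|\rho_t'\|\le c$ a.e. by minimality of $\|\rho_t'\|$.

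For (c), joint convexity, I take $\lambda\in[0,1]$ and (assuming the right-hand side is finite) curves $\rho^0,\rho^1\in{\sf CE}^2(X,\Ch,\m)$ joining the two given pairs with action within $\eps$ of the respective $W_{\Ch}^2$. Setting $\rho^\lambda_t:=(1-\lambda)\rho^0_t+\lambda\rho^1_t$, which joins the linearly interpolated endpoints, and testing \eqref{eq:criterion_CE} against $f$, linearity reduces the required bound to the pointwise-in-$r$ inequality, with $a_i=\|(\rho^i)_r'\|$ and $A_i=\int|\rmD f|_w^2\rho^i_r\dd\m$,
$$
(1-\lambda)a_0\sqrt{A_0}+\lambda a_1\sqrt{A_1}\le\big((1-\lambda)a_0^2+\lambda a_1^2\big)^{1/2}\big((1-\lambda)A_0+\lambda A_1\big)^{1/2},
$$
which is just Cauchy--Schwarz for the probability weights $(1-\lambda,\lambda)$, the second factor being exactly $(\int|\rmD f|_w^2\rho^\lambda_r\dd\m)^{1/2}$. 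Hence $\rho^\lambda\in{\sf CE}^2(X,\Ch,\m)$ with $\|(\rho^\lambda)_t'\|^2\le(1-\lambda)\|(\rho^0)_t'\|^2+\lambda\|(\rho^1)_t'\|^2$; integrating in $t$ and letting $\eps\downarrow0$ gives the convexity inequality for $W_{\Ch}^2$, completing (c).
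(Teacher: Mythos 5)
Your proposal is correct and follows essentially the same route as the paper: $W_{\Ch}\ge W_\sfd$ via Theorem~\ref{thm:abs_char}, triangle/length properties via concatenation and the reparametrization-invariant length formula \eqref{eq:feb8}, part (b) by passing to the limit in \eqref{eq:criterion_CE}, and part (c) by the convex-combination plus Cauchy--Schwarz computation. The only difference is that you spell out the weak-times-strong $L^2$ pairing in (b), which the paper compresses into the phrase ``a direct consequence of a passage to the limit.''
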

\begin{proof} From \eqref{eq:abs2} with $T=1$ we obtain that $W_\sfd(\mu,\nu)\leq W_{\Ch}(\mu,\nu)$ whenever
$\mu,\,\nu\in\cP^a(X)$. This yields immediately that $W_{\Ch}(\mu,\nu)=0$ implies $\mu=\nu$.
The proof of the triangle property of $W_{\Ch}$ follows by a standard concatenation argument, noticing that for any $T>0$ one has
\begin{equation}\label{eq:feb8}
W_{\Ch}(\rho_0,\rho_T):=\inf\bigg\{\int_0^T\|\rho_t'\|\dd t:\ \rho_t\in {\sf CE}^2(X,\Ch,\m)\biggr\}\; .
\end{equation}
The length property also follows directly from \eqref{eq:feb8}, while the proof of (b) is a direct consequence of a passage to the limit as $n\to\infty$ in \eqref{eq:criterion_CE}.

In order to prove (c), notice that a convex combination
of \eqref{eq:criterion_CE} written for $\rho_t,\,\sigma_t\in {\sf CE}^2(X,\Ch,\m)$ gives
\begin{eqnarray*}
&&\biggl|\int f((1-\alpha)\rho_t+\alpha\sigma_t)\dd\m-\int f((1-\alpha)\rho_s+\alpha\sigma_s)\dd\m\biggr|\\&\leq&
\int_s^t(1-\alpha)\|\rho_r'\|\biggl(\int |\rmD f|_w^2\rho_r\dd\m\biggr)^{1/2}+
\alpha\|\sigma_r'\|\biggl(\int |\rmD f|_w^2\sigma_r\dd\m\biggr)^{1/2}\dd r\\
&\leq& \int_s^t \sqrt{ (1-\alpha)\|\rho_r'\|^2+\alpha\|\sigma_r'\|^2}
\biggl(\int |\rmD f|_w^2((1-\alpha)\rho_r+\alpha\sigma_r) \dd\m\biggr)^{1/2}\dd r
\end{eqnarray*}
for $0\leq s\leq t\leq T$. 
It follows that
$$
W^2_{\Ch}\bigl((1-\alpha)\rho_1+\alpha\sigma_1,(1-\alpha)\rho_0+\alpha\sigma_0\bigr)\leq
\int_0^1(1-\alpha)\|\rho_r'\|^2+\alpha\|\sigma_r'\|^2 \dd r
$$
and, by minimizing, we conclude.
\end{proof}

In the following corollary we reverse the inequality $W_\sfd\leq W_{\Ch}$ on probability measures with density
in $L^\infty(X,\m)$, at the level of absolutely continuous curves and metric derivatives. 

\begin{corollary}[Equality of metric derivatives]\label{cor:eqmet}
Assume that $(X,\sfd)$ is complete and let $(\rho_t)_{t\in [0,T]}$ 
be a curve of probability densities with $\sup_{t\in [0,T]}\|\rho_t\|_\infty <\infty$.
Then, for $\mu_t=\rho_t\m$, one has 
$$
\mu_t\in AC^2([0,T];(\cP(X),W_\sfd))\qquad\Longleftrightarrow\qquad
\mu_t\in AC^2([0,T];(\cP(X),W_{\Ch}))
$$
and the corresponding metric derivatives coincide a.e. in $(0,T)$.
\end{corollary}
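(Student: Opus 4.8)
The plan is to read off both the equivalence and the equality of metric derivatives from the differential characterization of absolutely continuous curves in Theorem~\ref{thm:abs_char}, combined with the ordering $W_\sfd\le W_{\Ch}$ from Proposition~\ref{prop:propWCh}(a). The two implications are of a different nature: the passage from $W_{\Ch}$ to $W_\sfd$ is a soft consequence of domination of distances, whereas the reverse passage genuinely exploits the completeness of $(X,\sfd)$ and the uniform $L^\infty$ bound on the densities through the hard converse part of Theorem~\ref{thm:abs_char}.

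First I would treat the implication from $W_{\Ch}$ to $W_\sfd$. If $\mu_t\in AC^2([0,T];(\cP(X),W_{\Ch}))$, then for every $s\le t$ one has $W_\sfd(\mu_s,\mu_t)\le W_{\Ch}(\mu_s,\mu_t)\le\int_s^t|\dot\mu_r|_{W_{\Ch}}\dd r$, so the function $r\mapsto|\dot\mu_r|_{W_{\Ch}}\in L^2(0,T)$ is an admissible upper bound for the $W_\sfd$-increments. Hence $\mu_t\in AC^2([0,T];(\cP(X),W_\sfd))$, and by the minimality property of the metric derivative (the smallest $L^1$ function controlling the increments) we get $|\dot\mu_t|_{W_\sfd}\le|\dot\mu_t|_{W_{\Ch}}$ for a.e. $t\in(0,T)$.

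For the converse implication I would invoke the second part of Theorem~\ref{thm:abs_char}: under completeness of $(X,\sfd)$, $\mu_t=\rho_t\m\in AC^2([0,T];(\cP(X),W_\sfd))$ and $\sup_t\|\rho_t\|_\infty<\infty$, we obtain $\rho_t\in{\sf CE}^2(X,\Ch,\m)$ together with the sharp bound $\|\rho_t'\|\le|\dot\mu_t|_{W_\sfd}$ from \eqref{eq:abs1}. The restriction of this curve to any subinterval $[s,t]$ still lies in ${\sf CE}^2(X,\Ch,\m)$ and is thus a competitor in the infimum \eqref{eq:feb8} defining $W_{\Ch}(\rho_s,\rho_t)$, giving $W_{\Ch}(\mu_s,\mu_t)\le\int_s^t\|\rho_r'\|\dd r$. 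Since $r\mapsto\|\rho_r'\|$ belongs to $L^2(0,T)$, this shows $\mu_t\in AC^2([0,T];(\cP(X),W_{\Ch}))$, and evaluating the incremental ratio at Lebesgue points of $\|\rho_\cdot'\|$ yields $|\dot\mu_t|_{W_{\Ch}}\le\|\rho_t'\|\le|\dot\mu_t|_{W_\sfd}$ for a.e. $t$.

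Once the equivalence is established, both $|\dot\mu_t|_{W_\sfd}\le|\dot\mu_t|_{W_{\Ch}}$ and $|\dot\mu_t|_{W_{\Ch}}\le|\dot\mu_t|_{W_\sfd}$ hold simultaneously, and chaining them gives the claimed a.e. equality of the metric derivatives. The essential analytic content is imported from Theorem~\ref{thm:abs_char}; within the corollary itself the main point to handle carefully is the bookkeeping of the two metric derivatives and the verification that the restricted curve is admissible in \eqref{eq:feb8}, so that the two implications remain independent and no circular reasoning is introduced.
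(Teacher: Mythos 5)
Your proof is correct and follows essentially the same route as the paper: the implication from $W_{\Ch}$ to $W_\sfd$ comes from the ordering $W_\sfd\le W_{\Ch}$, while the converse uses the completeness/$L^\infty$ part of Theorem~\ref{thm:abs_char} to get $\rho_t\in{\sf CE}^2(X,\Ch,\m)$ with $\|\rho_t'\|\le|\dot\mu_t|_{W_\sfd}$ and then the very definition of $W_{\Ch}$, with the coincidence of metric derivatives obtained by chaining \eqref{eq:abs2}-type and \eqref{eq:abs1}-type inequalities. Your version merely spells out the bookkeeping (admissibility of restricted curves in \eqref{eq:feb8}, Lebesgue points) that the paper leaves implicit.
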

\begin{proof} The implication $\Leftarrow$ is obvious, because $W_{\Ch}\geq W_\sfd$. In order to prove the converse
one, first apply the first part of the statement of Theorem~\ref{thm:abs_char} to obtain $\rho_t\in {\sf CE}^2(X,\Ch,\m)$ and
$\|\rho_t'\|\leq |\mu_t'|\in L^2(0,T)$. By the very definition of $W_{\Ch}$, this implies $\mu_t\in AC^2([0,T];(\cP(X),W_{\Ch}))$.
The coincidence of the metric derivatives is a simple consequence of
\eqref{eq:abs2}, \eqref{eq:abs1}.
\end{proof}

\subsection{A dual distance induced by subsolutions of the Hamilton-Jacobi equation}
\label{subsec:dualHJ}
We close this section by introducing another ``dual'' extended distance $W_{\Ch,*}$ in $\cP^a(X)$, motivated by the analogy with
the dual formulation of the optimal transport problem, the inequality $Q_1 f(x)-f(y)\leq \tfrac 12\sfd^2(x,y)$ and the subsolution
property \eqref{eq:subsolutionQt} of $Q_t f$.

\begin{definition} \label{def:Wdual} For $\mu_0=\rho_0\m,\,\mu_1=\rho_1\m\in\cP^a(X)$ we define
\begin{equation}\label{eq:defWdual}
W_{\Ch,*}^2(\rho_0,\rho_1):=2\sup_\phi\int (\phi_1\rho_1-\phi_0\rho_0)\dd\m\;,
\end{equation}
where the supremum runs in the convex set of all the 
bounded Borel maps $\phi(t,x)=\phi_t(x)$ satisfying
$\phi\in C_{\rm w*}([0,1];L^\infty(X,\m))\cap L^1(0,1;D(\Ch))$, 
and 
\begin{equation}\label{eq:HJdistr}
\ddt \phi_t+\frac 12 |\rmD \phi_t|^2_w\leq 0
\qquad\text{in $(0,1)\times X$, in the duality with $\mathcal A_{\Ch}$\;.}
\end{equation}
\end{definition}
The inequality \eqref{eq:HJdistr} has to be understood as
\begin{equation}
\ddt \int \phi_t\psi\dd\m+\frac 12 \int\psi |\rmD \phi_t|^2_w\dd\m\leq 0\qquad\text{in ${\mathcal D}'(0,1)$}\label{eq:56}
\end{equation}
for all $\psi\in\mathcal A_{\Ch}$ nonnegative.
\begin{lemma}[Equivalent admissible class of subsolutions to \eqref{eq:HJdistr}]
  \label{lem:equivalent-class}
  The supremum in formula \eqref{eq:defWdual} can be equivalently 
  taken w.r.t.~subsolutions $\phi$ to \eqref{eq:HJdistr} in the class 
  $\phi\in C^\infty([0,1];\cA_{\Ch})$.
\end{lemma}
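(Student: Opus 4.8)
The plan is to prove the two inequalities between the suprema separately. The inclusion $C^\infty([0,1];\cA_{\Ch})\subset\{\text{admissible }\phi\}$ is essentially immediate: a curve smooth in time with values in $\cA_{\Ch}$ certainly lies in $C_{\rm w*}([0,1];L^\infty(X,\m))\cap L^1(0,1;D(\Ch))$, and for such a curve the distributional inequality \eqref{eq:56} is equivalent to the pointwise one \eqref{eq:HJdistr}. Hence the supremum over the smaller class is $\le$ the supremum in \eqref{eq:defWdual}, and all the work lies in the reverse inequality.

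For the reverse inequality I would, given an admissible subsolution $\phi$, dominate $\int(\phi_1\rho_1-\phi_0\rho_0)\dd\m$ by the value attained on a Hopf--Lax curve, which automatically lands in $\cA_{\Ch}$. The first step is the comparison $\phi_1\le Q_1\phi_0$ (with $Q_t$ as in \eqref{eq:HopfLax}), tested against $\rho_1\m$. The mechanism is to integrate the subsolution inequality \eqref{eq:56} along curves: for a $2$-test plan $\eeta$, the map $t\mapsto\phi_t(\eta(t))$ satisfies, by Proposition~\ref{prop:calculus}(g) and Young's inequality, $\phi_1(\eta(1))-\phi_0(\eta(0))\le\tfrac12\int_0^1|\dot\eta|^2\dd t$; running this along plans built from $W_\sfd$-geodesics via the superposition Proposition~\ref{prop:lisini}, and picking the minimizer in $Q_1\phi_0(y)=\inf_x\phi_0(x)+\tfrac12\sfd^2(x,y)$, yields $\int\phi_1\rho_1\dd\m\le\int Q_1\phi_0\,\rho_1\dd\m$. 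Next I would replace $\phi$ by the time–shifted Hopf--Lax curve $\psi_t:=Q_{\delta+t}\phi_0$, $\delta>0$. By \eqref{eq:subsolutionQt} and Proposition~\ref{prop:calculus}(d) one has $\tfrac12|\rmD\psi_t|^2_w\le-\partial_t\psi_t$, so $\psi$ is a genuine subsolution; since $\phi_0$ is bounded, each $Q_{\delta+t}\phi_0$ is $\sfd$-Lipschitz with constant bounded uniformly in $t\in[0,1]$ by $2\sqrt{\mathrm{osc}(\phi_0)/\delta}$, whence $\psi_t\in\cA_{\Ch}$ with $\sup_t\|\,|\rmD\psi_t|_w\,\|_\infty<\infty$ (the shift by $\delta$ is precisely what keeps the Lipschitz bound valid up to $t=0$). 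As $\delta\downarrow0$ one has $Q_{1+\delta}\phi_0\to Q_1\phi_0$ and $Q_\delta\phi_0\le\phi_0$, so $\int(\psi_1\rho_1-\psi_0\rho_0)\dd\m\ge\int(\phi_1\rho_1-\phi_0\rho_0)\dd\m-o(1)$, using the comparison of the previous step at the endpoint $t=1$.

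Finally I would mollify $\psi$ in time to gain $C^\infty$ regularity. Extending $\psi_t=Q_{\delta+t}\phi_0$ slightly outside $[0,1]$ by the same formula (legitimate since $\delta>0$) and convolving with a smooth kernel $\chi_\eps$ produces $\psi^\eps$ smooth in time. This convolution preserves the subsolution property: by convexity of the minimal relaxed slope (Proposition~\ref{prop:calculus}(a), extended from finite convex combinations to continuous averages) together with Jensen's inequality one gets $|\rmD\psi^\eps_t|^2_w\le\chi_\eps\ast|\rmD\psi_\cdot|^2_w$, while $\partial_t$ commutes with convolution, so the left-hand side of \eqref{eq:56} is bounded by the convolution of a nonpositive distribution with the nonnegative kernel $\chi_\eps$, hence remains $\le0$; the uniform gradient bound is likewise inherited, keeping $\psi^\eps$ valued in $\cA_{\Ch}$. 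Letting $\eps\downarrow0$ restores the endpoint values, and a diagonal choice of $(\delta,\eps)$ produces curves in $C^\infty([0,1];\cA_{\Ch})$ whose functionals approach that of $\phi$, giving the reverse inequality and thus equality of the two suprema.

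I expect two points to be the main obstacles. The first is the comparison step with merely $L^1$ densities: test plans require $(\e_t)_\#\eeta\le C\m$, whereas $W_\sfd$-geodesics between arbitrary absolutely continuous measures need not have bounded densities, so I would first carry out the argument for bounded $\rho_0,\rho_1$ (where one can keep all time-marginals bounded, e.g. through a short heat-flow regularization) and then relax to general densities by lower semicontinuity as in Proposition~\ref{prop:propWCh}(b), or by exploiting the duality \eqref{eq:dualityQ} directly. The second is a measurability issue: since $\cA_{\Ch}$ is nonseparable, the assertion that $\psi^\eps$ is smooth as a map into $(\cA_{\Ch},\|\cdot\|_{\cA_{\Ch}})$ requires Bochner measurability of $t\mapsto Q_{\delta+t}\phi_0$ in the algebra norm, which I would control using the uniform Lipschitz-in-time bound on the Hopf--Lax curve and the Borel structure on $\cA_{\Ch}$ inherited from $L^2(X,\m)$ fixed earlier in this section.
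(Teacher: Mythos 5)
Your first inclusion is fine, and your observation that time-mollification preserves the subsolution property (convexity of $f\mapsto|\rmD f|_w^2$ plus Jensen) is sound; but the core of your reverse inequality rests on the comparison $\phi_1\le Q_1\phi_0$ for an \emph{arbitrary} admissible subsolution $\phi$, and this step is a genuine gap. First, it proves too much: if $\phi_1\le Q_1\phi_0$ held $\rho_1\m$-a.e.\ for every admissible $\phi$, then integrating the pointwise inequality $Q_1\phi_0(y)-\phi_0(x)\le\tfrac12\sfd^2(x,y)$ against any plan $\ppi\in\Gamma(\rho_0\m,\rho_1\m)$ would give $2\int(\phi_1\rho_1-\phi_0\rho_0)\dd\m\le W_\sfd^2(\rho_0\m,\rho_1\m)$, hence $W_{\Ch,*}\le W_\sfd$ and, by Proposition~\ref{prop:comparale}, $W_{\Ch,*}=W_\sfd$ --- an identification the paper explicitly says it is \emph{not} able to obtain at this level of generality (``we are only able to prove that $W_\sfd\le W_{\Ch,*}\le W_{\Ch}$''). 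Second, the mechanism you propose cannot deliver it: an admissible $\phi$ is only $C_{\rm w*}$ in time with values in $L^\infty$-equivalence classes and satisfies \eqref{eq:56} distributionally, so $t\mapsto\phi_t(\eta(t))$ is not even a well-defined function along a curve $\eta$, let alone absolutely continuous --- you would be assuming exactly the regularity the lemma is meant to produce. Moreover, $|\rmD\phi_t|_w$ is an $\m$-a.e.\ object whose upper-gradient control (Proposition~\ref{prop:calculus}(g)) is available only along test plans with marginals bounded by $C\m$; it gives no control between a point $y$ and the minimizer realizing $Q_1\phi_0(y)$, which no test plan built from Proposition~\ref{prop:lisini} can reach, since point masses are not absolutely continuous w.r.t.\ $\m$ (compare Theorem~\ref{thm:ultrap1}(c): $|\rmD f|_w\le1$ does not imply $1$-Lipschitz w.r.t.\ $\sfd$). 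Your fallback via bounded densities and heat-flow regularization addresses neither issue.

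The paper's proof shows the whole Hopf--Lax detour is unnecessary. One first rescales time, $\phi_\lambda(t,x):=\lambda\phi(\lambda t+(1-\lambda)/2,x)$ with $\lambda\uparrow1$, so that it is not restrictive to assume $\phi$ is a bounded subsolution on an interval $[a,b]$ with $a<0$, $b>1$; then one mollifies $\phi$ itself in time (your Jensen argument, applied directly to $\phi$ rather than to a Hopf--Lax curve), obtaining $\phi\in C^\infty([0,1];L^\infty(X,\m))\cap C^\infty([0,1];D(\Ch))$. The point you missed is that membership in $\cA_{\Ch}$ then comes for free from the equation: after mollification the subsolution inequality holds $\m$-a.e.\ for every $t$, so $|\rmD\phi_t|_w^2\le-2\,\partial_t\phi_t$, and the right-hand side is uniformly bounded in $L^\infty(X,\m)$ because $\phi$ is now smooth in time with values in $L^\infty$. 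Hence the mollified curve automatically takes values in $\cA_{\Ch}$, a further time-convolution upgrades it to a smooth curve into $\cA_{\Ch}$, and the endpoint integrals converge by weak$^*$ continuity in time. No comparison principle, no test plans, and no superposition are needed.
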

\begin{proof}
By approximating any admissible $\phi$ in the definition of 
$W_{\Ch,*}$ with the functions
$$
\phi_\lambda(t,x):=\lambda\phi(\lambda t+(1-\lambda)/2,x)
$$
and by letting $\lambda\uparrow 1$, we see that is not restrictive to assume the existence of $a<0$ and $b>1$ such
that $\phi$ is bounded, $\phi\in C_{\rm w*}([a,b];L^\infty(X,\m))\cap L^1(a,b;D(\Ch))$ 
and $\partial_t\phi_t+|\rmD \phi_t|^2_w/2\leq 0$ in $(a,b)\times X$ according to \eqref{eq:56}. 
Then, by mollification w.r.t.~to $t$, which preserves the Hamilton-Jacobi subsolution
property,  we can also assume that 
$\phi\in C^\infty([0,1];L^\infty(X,\m))\cap C^\infty([0,1];D(\Ch))$ with
$\phi(\cdot,x)\in C^k([0,1])$, 
uniformly w.r.t.~$x$. Under this assumption, the subsolution property is satisfied
$\m$-a.e. in $X$, for all $t$, which also shows that the 
map
$t\mapsto |D\phi_t|_w$ is also uniformly bounded in $L^\infty(X,\m)$. It follows that $\phi$ is uniformly bounded
with values in $\cA_{\Ch}$ and strongly measurable with respect to the Borel sets induced by the $L^2$-topology. 
A further convolution in time (or the mollification by a semigroup in the first step) shows that we can 
also assume $\phi\in C^k([0,1];\cA_{\Ch})$.

\end{proof}
\begin{remark} [Elementary properties of $W_{\Ch,*}$] \label{rem:equivalent:HJ}
\ 

{\rm \
(1) By the scaling argument $\hat\phi(t,x)=\delta\phi(\delta t,x)$, it is easily seen that
$$
W_{\Ch,*}^2(\rho_0,\rho_1)=
2\sup_{(\delta,\phi)}\ \delta\int (\phi_\delta\rho_1-\phi_0\rho_0)\dd\m\;,
$$
where the supremum runs among all pairs $(\delta,\phi)$ with $\delta>0$ and $\phi$ bounded Borel map 
$\phi(t,x)=\phi_t(x)$ satisfying $\phi\in C_{\rm w*}([0,\delta];L^\infty(X,\m))\cap
L^1(0,\delta;D(\Ch))$, and 
$$
\ddt \phi_t+\frac 12 |\rmD \phi_t|^2_w\leq 0
\qquad\text{in $(0,\delta)\times X$, in the duality with $\mathcal A_{\Ch}$\;.}
$$
\ \ (2)
More generally, suppose that $\varphi\in C_{\rm w*}([a,b];L^\infty(X,\m))\cap L^1(a,b;D(\Ch))$ satisfies
\begin{equation}
  \label{eq:69}
  \ddt \varphi_t+\frac {\vartheta(t)}2 |\rmD \varphi_t|^2_w\leq 0
  \qquad\text{in $(a,b)\times X$, in the duality with $\mathcal
    A_{\Ch}$\;}
\end{equation}
where $\vartheta\in C([a,b])$ is a positive function. Then
\begin{equation}
  \label{eq:70}
  2\alpha(b) \int\big(\rho_1\varphi_b-\rho_0\varphi_a\big)\dd\m\le
  W_{\Ch,*}^2(\rho_0,\rho_1)\quad\text{where}\quad
  \alpha(t):=\int_a^t \vartheta(r) \dd r \;
\end{equation}
In fact, setting 
\begin{displaymath}
  \beta(t):=\alpha(t)/\alpha(b),\ t\in [a,b],\qquad
  \gamma(s):=\beta^{-1}(s)\;
\end{displaymath}
so that $\gamma$ is an increasing diffeomorphism between $[0,1]$ and $[a,b]$
satisfying $\gamma'(s)=\alpha(b)/\vartheta({\gamma(s)})$,
the curve 
$\tilde\varphi_s:=\alpha(b)\varphi_{\gamma(s)}$ solves
\begin{displaymath}
  \dds \tilde\varphi_s+\frac 12 |\rmD \tilde\varphi_s|^2_w\leq 0
  \qquad\text{in $(0,1)\times X$, in the duality with $\mathcal
    A_{\Ch}$\;}
\end{displaymath}
so that 
\begin{displaymath}
  2\alpha (b)\int\big(\rho_1\varphi_b-\rho_0\varphi_a\big)\dd\m=
  2\int\big(\rho_1\tilde\varphi_1-\rho_0\tilde\varphi_0\big)\dd\m
  \le 
  W_{\cE,*}^2(\rho_0,\rho_1)\;.
\end{displaymath}
\ \ (3)
It is not hard to prove that \emph{$W_{\Ch,*}$ is an extended distance:} indeed, the non-degeneracy condition follows by
the inequality $W_{\Ch,*}\geq W_\sfd$, proved in the next proposition. The symmetry property follows
easily by replacing $\phi(t,x)$ by $-\phi(\delta-t,x)$. In order to prove the triangle inequality, given probability
densities $\rho$, $\sigma$, $\lambda$, and
constants
\nc $\delta>0$ and $\delta'\in (0,\delta)$ we write
\begin{eqnarray*}
2\delta \int (\phi_\delta\lambda-\phi_0\rho)\dd\m&=&
2\delta \int (\phi_\delta\lambda-\phi_{\delta'}\sigma)\dd\m+2\delta \int (\phi_{\delta'}\sigma-\phi_0\rho)\dd\m\\
&\leq&\frac{2\delta}{2(\delta-\delta')}W_{\Ch,*}^2(\lambda,\sigma)+\frac{2\delta}{2\delta'}W_{\Ch,*}^2(\sigma,\rho)\;.
\end{eqnarray*}
Now we minimize w.r.t.~$\delta'$ and use the identity $\inf_{(0,1)}s^{-1}a^2+(1-s)^{-1}b^2=(a+b)^2$ to get
$$
2\delta \int \phi_\delta\lambda-\phi_0\rho\dd\m\leq \bigl(W_{\Ch,*}(\lambda,\sigma)+W_{\Ch,*}(\sigma,\rho)\bigr)^2\;.
$$
By taking the supremum w.r.t.~$(\delta,\phi)$ we conclude.

(4) 
\emph{$W^2_{\Ch,*}$ is jointly convex in $\cP^a(X)\times \cP^a(X)$ and
l.s.c.~with respect to the weak topology of $L^1(X,\m)$},
since it is defined as a supremum of a family of continuous linear
functionals on $L^1(X,\m)$. In particular, every closed sublevel of
the Entropy functional \eqref{eq:28} in $\cP^a(X)$ is complete
with respect to $W_{\Ch,*}$.
}\end{remark}

We can now refine the inequality between $W_{\Ch}$ and $W_\sfd$, proving that $W_{\Ch,*}$ is intermediate.

\begin{proposition}[Comparison of $W_{\Ch}$, $W_{\Ch,*}$ and $W_\sfd$]\label{prop:comparale}
$W_\sfd\leq W_{\Ch,*}\leq W_{\Ch}$ on $(\cP^a(X))^2$.
\end{proposition}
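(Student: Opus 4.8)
The plan is to prove the two inequalities separately, each by exhibiting admissible competitors for one functional out of (near-)optimizers of the other.

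\emph{The inequality $W_\sfd\le W_{\Ch,*}$.} First I would observe that the Hopf--Lax evolution provides a canonical family of Hamilton--Jacobi subsolutions. Given $\phi\in\mathcal F$ as in \eqref{eq:good_tests}, set $\phi_t:=Q_t\phi$ for $t\in(0,1]$ and $\phi_0:=\phi$. As recalled in the proof of Theorem~\ref{thm:abs_char}, the functions $Q_t\phi$ are uniformly bounded and uniformly $\sfd$-Lipschitz, so $t\mapsto Q_t\phi$ belongs to $C_{\rm w*}([0,1];L^\infty(X,\m))\cap L^1(0,1;D(\Ch))$; moreover $Q_t\phi\uparrow\phi$ as $t\downarrow0$ guarantees weak$^*$ continuity at the endpoint. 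The subsolution estimate \eqref{eq:subsolutionQt}, combined with $|\rmD Q_t\phi|_w\le{\rm Lip}_a^*(Q_t\phi,\cdot)$ from Proposition~\ref{prop:calculus}(d), shows that $\phi_t$ satisfies \eqref{eq:56}, hence is admissible in the definition of $W_{\Ch,*}$. Therefore $\int(Q_1\phi\,\rho_1-\phi\,\rho_0)\dd\m\le\tfrac12 W_{\Ch,*}^2(\rho_0,\rho_1)$, and taking the supremum over $\phi\in\mathcal F$ the duality formula \eqref{eq:dualityQ} yields $\tfrac12 W_\sfd^2\le\tfrac12 W_{\Ch,*}^2$.

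\emph{The inequality $W_{\Ch,*}\le W_{\Ch}$.} Here I would fix an admissible subsolution $\phi$ and a curve $\rho_t\in{\sf CE}^2(X,\Ch,\m)$ joining $\rho_0$ to $\rho_1$, and estimate the derivative of $t\mapsto\int\phi_t\rho_t\dd\m$. By Lemma~\ref{lem:equivalent-class} it is enough to treat $\phi\in C^\infty([0,1];\cA_{\Ch})$. The two sources of variation are controlled separately: the motion of $\rho_t$ is governed by the continuity inequality \eqref{eq:criterion_CE}, which bounds the corresponding contribution by $\|\rho_t'\|(\int|\rmD\phi_t|_w^2\rho_t\dd\m)^{1/2}$, while the time-dependence of $\phi_t$ gives $\int\partial_t\phi_t\,\rho_t\dd\m\le-\tfrac12\int|\rmD\phi_t|_w^2\rho_t\dd\m$ by the pointwise subsolution property \eqref{eq:56}. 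Adding the two and applying Young's inequality $ab-\tfrac12 b^2\le\tfrac12 a^2$ with $a=\|\rho_t'\|$ and $b=(\int|\rmD\phi_t|_w^2\rho_t\dd\m)^{1/2}$ gives $\frac{d}{dt}\int\phi_t\rho_t\dd\m\le\tfrac12\|\rho_t'\|^2$ for a.e.\ $t$. Integrating on $(0,1)$ and doubling, $2\int(\phi_1\rho_1-\phi_0\rho_0)\dd\m\le\int_0^1\|\rho_t'\|^2\dd t$; since the left-hand side is independent of the curve and the right-hand side of $\phi$, taking the supremum over $\phi$ and the infimum over curves gives $W_{\Ch,*}^2\le W_{\Ch}^2$.

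\emph{Main obstacle.} The delicate point is to make the formal differentiation of $t\mapsto\int\phi_t\rho_t\dd\m$ rigorous: one must show this map is absolutely continuous and that its upper derivative is bounded by the sum of the two limsups above. I would follow exactly the scheme of Theorem~\ref{thm:abs_char}, applying the bilinear calculus lemma \cite[Lem.~4.3.4]{Ambrosio-Gigli-Savare08} to $(s,t)\mapsto\int\rho_s\phi_t\dd\m$, using that $t\mapsto\phi_t$ is Lipschitz with values in $\cA_{\Ch}$ and that the continuity inequality makes $s\mapsto\int f\rho_s\dd\m$ absolutely continuous uniformly in $f$. Identifying the $\rho$-variation limsup with the tight factor $\|\rho_t'\|(\int|\rmD\phi_t|_w^2\rho_t\dd\m)^{1/2}$ at Lebesgue points requires a mild continuity of $s\mapsto\rho_s$; as in the proof of Theorem~\ref{thm:abs_char} this can be arranged by a preliminary mollification in time, which preserves the class ${\sf CE}^2$ and controls $\int_0^1\|\rho_t'\|^2\dd t$, at the cost of perturbing the endpoints---an error absorbed by the weak $L^1$ lower semicontinuity of $W_{\Ch,*}^2$ noted in Remark~\ref{rem:equivalent:HJ}(4).
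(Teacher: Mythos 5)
Your second inequality $W_{\Ch,*}\le W_{\Ch}$ is correct and coincides with the paper's argument: reduce to $\phi\in C^1([0,1];\cA_{\Ch})$ via Lemma~\ref{lem:equivalent-class}, differentiate $t\mapsto\int\phi_t\rho_t\dd\m$, control the two contributions by the continuity inequality \eqref{eq:criterion_CE} and the pointwise subsolution property, apply Young's inequality, integrate and optimize over $\phi$ and over curves. (The paper dispatches the absolute continuity of $t\mapsto\int\phi_t\rho_t\dd\m$ as ``Leibniz rule and a density argument'', so your mollification scheme is extra caution rather than a necessity, but it is harmless.)

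The first inequality is where you diverge from the paper, and as written there is a genuine gap. For $\phi\in\mathcal F$ the functions $Q_t\phi$ are \emph{not} uniformly $\sfd$-Lipschitz: $\phi$ is only continuous on a compact set and constant outside it, so ${\rm Lip}(Q_t\phi)$ blows up like $t^{-1/2}$ as $t\downarrow 0$; the uniform Lipschitz property you quote from the proof of Theorem~\ref{thm:abs_char} holds there for $Q_t\varphi$ with $\varphi=Q_\eps\phi$, $\eps>0$ fixed, not for $Q_t\phi$ itself. More seriously, by \eqref{eq:good_tests1} the function $Q_t\phi$ is only $\tau$-\emph{lower semicontinuous}, so it need not belong to ${\rm Lip}_b(X,\tau,\sfd)$; consequently Proposition~\ref{prop:calculus}(d) does not apply to it as stated, and the bound $|\rmD Q_t\phi|_w\le {\rm Lip}_a^*(Q_t\phi,\cdot)$, which is exactly what admissibility in \eqref{eq:defWdual} requires, is left unproved. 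This is precisely why the paper's proof takes a different route: it fixes the approximating $(\tau\times\tau)$-continuous semidistances $\sfd_i\uparrow\sfd$ and works with the Hopf--Lax semigroups $Q^i_t$ of $\sfd_i$ applied to bounded $\sfd_i$-Lipschitz functions. Such $Q^i_t\phi$ are genuinely $\tau$-continuous (since $\sfd_i$ is jointly $\tau$-continuous) and uniformly $\sfd$-Lipschitz in time, so they lie in ${\rm Lip}_b(X,\tau,\sfd)$, Proposition~\ref{prop:calculus}(d) applies legitimately, and together with ${\rm Lip}_{a,\sfd}^*(Q^i_t\phi,\cdot)\le {\rm Lip}_{a,\sfd_i}^*(Q^i_t\phi,\cdot)$ one obtains that $Q^i_t\phi$ is admissible, whence $W_{\sfd_i}\le W_{\Ch,*}$ for every $i$; the conclusion $W_\sfd\le W_{\Ch,*}$ then follows from the monotone convergence $W_{\sfd_i}\uparrow W_\sfd$ of Theorem~\ref{thm:compact_joint}(c). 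If you try to repair your direct argument by approximating $Q_t\phi$ with $Q^i_t\phi$, you will essentially reconstruct this proof, so the clean fix is to adopt the $\sfd_i$-route from the start.
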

\begin{proof} We first prove that $W_{\Ch}\geq W_{\Ch,*}$.
By Lemma \ref{lem:equivalent-class} we can assume 
that $\phi\in C^1([0,1];\cA_{\Ch})$ with 
$\phi(\cdot,x)\in C^1([0,1])$, uniformly w.r.t.~$x$. Under this assumption, the subsolution property is satisfied
$\m$-a.e. in $X$, for all $t$; in addition, for
all $\rho_t\in {\sf CE}^2(X,\Ch,\m)$, the Leibniz rule and a density argument easily give that 
$t\mapsto\int\phi_t\rho_t\dd\m$ is absolutely continuous in $[0,1]$, and that
$$
\ddt \int \phi_t\rho_t\dd\m=\int \phi_t\ddt \rho_t\dd\m+\int\rho_t\ddt \phi_t\dd\m\leq
\int \phi_t\ddt \rho_t\dd\m-\frac 1 2\int\rho_t |\rmD \phi_t|_w^2\dd\m
$$ 
for a.e. $t\in (0,1)$. 
By the Young inequality, it follows that
$$
\biggl|\ddt \int \phi_t\rho_t\dd\m\biggr|\leq \frac{1}{2}\|\rho_t'\|^2
\qquad\text{for a.e. $t\in (0,1)$\;.}
$$
By integration in $(0,1)$ and by minimizing w.r.t.~$\rho_t$ the inequality follows.

Now we prove that $W_{\Ch,*}\geq W_\sfd$. Let $(\sfd_i)$ be an increasing net of bounded and $(\tau\times\tau)$-continuous
semidistances with $\sfd_i\uparrow\sfd$. Taking Theorem~\ref{thm:compact_joint} into account, it is sufficient to fix $i$ and prove that $W_{\sfd_i}\leq W_{\Ch,*}$.
In order to prove this property, taking \eqref{eq:HopfLax} and the comments immediately after into account, it suffices to show that 
$$Q^i_t\phi(x):=\inf_{y\in X}\phi(y)+\frac 1{2t}\sfd_i^2(x,y)$$
is admissible in \eqref{eq:defWdual} whenever $\phi$ is bounded
and $\sfd_i$-Lipschitz \OOO (thus $\tau$-continuous). 
\nc 
This follows combining 
the subsolution property (see \eqref{eq:subsolutionQt} and the comments after it) 
$$
\limsup_{s\to t}\frac{Q^i_s\phi(x)-Q^i_t\phi(x)}{s-t}+\frac 12\bigl({\rm Lip}^*_{a,\sfd_i}(Q^i_t\phi,x)\bigr)^2\leq 0
\quad\text{$\m$-a.e. in $X$}
$$
satisfied by $Q^i_t\phi$ for a.e. $t>0$ with the inequalities
$$
|\rmD Q^i_t \phi|_w(x)\leq {\rm Lip}_{a,\sfd}^*(Q^i_t\phi,x)\leq
{\rm Lip}_{a,\sfd_i}^*(Q^i_t\phi,x)\qquad\text{$\m$-a.e. in $X$}\;.
$$
\end{proof}
\nc
\begin{remark}\label{rem:anche_questa_bis}{\rm 
 One can also introduce the ``dual'' $L^1$ transport distance $W_{\Ch,*,1}$:
 \begin{equation}\label{eq:anche_questa_bis}
 W_{\Ch,*,1}(\rho_0,\rho_1) := \sup_\phi \int\phi(\rho_1 -\rho_0)\dd\m\;,
 \end{equation}
 where the supremum runs over all bounded and Borel maps $\phi\in D(\Ch)$ with $|\rmD \phi|_w\leq 1$
 $\m$-a.e. in $X$.
 It is not hard to see that 
 $$W_{\Ch,*,1}(\rho_0,\rho_1)\leq W_{\Ch,*}(\rho_0,\rho_1)\;.$$
 Indeed, fix $\phi$ with $|\rmD \phi|_w\leq1$ and put $\phi_t(x) =-\frac12 t + \phi(x)$, which is admissible in the
 definition of $W_{\Ch,*}$. Now for $\delta>0$ we have
 $$
   \int \phi(\rho_1-\rho_0)\dd\m = \int (\phi_\delta\rho_1-\phi_0\rho_0)\dd\m + \frac{\delta}{2} 
                                 \leq \frac{1}{2\delta} W_{\Ch,*}^2(\rho_0,\rho_1) +\frac{\delta}{2}\;.
 $$
 Optimizing in $\delta$ we find $\int \phi(\rho_1-\rho_0)\dd\m\leq W_{\Ch,*}(\rho_0,\rho_1)$ and taking the supremum over $\phi$
yields the claim.}
\end{remark}
 
\section{Identification of gradient flows}\label{sec:identiflows}

In this section we compare the metric gradient flows of $\ent$ w.r.t.~to the extended distances $W_{\sfd}$ and $W_{\Ch}$, relating also
them to the $L^2(X,\m)$ gradient flow $\sfP_t$ of $\frac 12\Ch$. 

The following result is a small improvement of \cite[Thm.~7.4]{AGS11a}, since we replace the slope of $\ent$ w.r.t.~$W_\sfd$ with the
slope w.r.t.~the (a priori larger) distance $W_{\Ch}$. It can be
obtained with the same proof.

\begin{lemma}[The Fisher information is dominated by the slope of the Entropy]\label{lem:fisherboundsslope}
For every probability density $\rho\in L^2_+(X,\m)$ one has 
\begin{equation}\label{eq:44pre}
4\int |\rmD \sqrt{\rho}|_w^2\dd\m\leq |\rmD^-_{W_{\Ch}}\ent|^2(\rho\,\m)\;.
\end{equation}
\end{lemma}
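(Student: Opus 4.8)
The plan is to test the descending slope of $\ent$ against the heat-flow curve $\rho_t:=\sfP_t\rho$ issuing from $\rho$, combining the entropy dissipation identity with the speed bound coming from the Benamou--Brenier definition of $W_{\Ch}$. Since $4\int|\rmD\sqrt\rho|_w^2\dd\m=\mathsf F(\rho)$, we may assume $\mathsf F(\rho)>0$, the opposite case being trivial. I would first record the standard facts about the flow: each $\rho_t$ is again a probability density (mass is preserved and the maximum principle yields nonnegativity), $\rho_t\to\rho$ in $L^2(X,\m)$ as $t\downarrow0$ by strong continuity of the contraction semigroup $\sfP$, and $\ent(\rho\,\m)<\infty$ since $\rho\in L^2_+(X,\m)$ and $\m$ is finite.

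Next I would extract the two quantitative ingredients along the flow. By the dissipation identity \eqref{eq:easter5} and the absolute continuity of $t\mapsto\ent(\rho_t\,\m)$,
\[
\ent(\rho\,\m)-\ent(\rho_t\,\m)=\int_0^t\mathsf F(\rho_s)\dd s\le\ent(\rho\,\m)<\infty ,
\]
so that $s\mapsto\mathsf F(\rho_s)\in L^1_{\rm loc}[0,\infty)$ and hence $\sqrt{\mathsf F(\rho_\cdot)}\in L^2_{\rm loc}\subset L^1_{\rm loc}$. On the other hand Remark~\ref{rem:heat} gives $\rho_\cdot\in{\sf CE}^2(X,\Ch,\m)$ with $\|\rho_s'\|^2\le\mathsf F(\rho_s)$ for a.e.\ $s$, whence by the very definition \eqref{eq:feb8} of $W_{\Ch}$,
\[
W_{\Ch}(\rho\,\m,\rho_t\,\m)\le\int_0^t\|\rho_s'\|\dd s\le\int_0^t\sqrt{\mathsf F(\rho_s)}\dd s\longrightarrow 0\quad\text{as }t\downarrow0 .
\]
In particular $t\mapsto\rho_t\,\m$ converges to $\rho\,\m$ in $W_{\Ch}$, so its points are admissible competitors in the $\limsup$ defining $|\rmD^-_{W_{\Ch}}\ent|(\rho\,\m)$.

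Now I would combine these. As $\mathsf F(\rho)>0$, the $L^1$-lower semicontinuity of the Fisher information and $\rho_s\to\rho$ give $\liminf_{s\downarrow0}\mathsf F(\rho_s)\ge\mathsf F(\rho)>0$; hence $\ent(\rho_t\,\m)<\ent(\rho\,\m)$ and $W_{\Ch}(\rho\,\m,\rho_t\,\m)>0$ for all small $t>0$. For such $t$, using $[\ent(\rho_t\,\m)-\ent(\rho\,\m)]^-=\ent(\rho\,\m)-\ent(\rho_t\,\m)$, the two displays above, and the Cauchy--Schwarz inequality $\int_0^t\sqrt{\mathsf F(\rho_s)}\dd s\le\sqrt t\,(\int_0^t\mathsf F(\rho_s)\dd s)^{1/2}$, I would estimate
\begin{align*}
\frac{[\ent(\rho_t\,\m)-\ent(\rho\,\m)]^-}{W_{\Ch}(\rho\,\m,\rho_t\,\m)}
&\ge\frac{\int_0^t\mathsf F(\rho_s)\dd s}{\int_0^t\sqrt{\mathsf F(\rho_s)}\dd s}
\ge\Big(\tfrac1t\int_0^t\mathsf F(\rho_s)\dd s\Big)^{1/2}.
\end{align*}
Passing to the $\limsup$ as $t\downarrow0$ and using $\liminf_{t\downarrow0}\tfrac1t\int_0^t\mathsf F(\rho_s)\dd s\ge\liminf_{s\downarrow0}\mathsf F(\rho_s)\ge\mathsf F(\rho)$, I obtain $|\rmD^-_{W_{\Ch}}\ent|(\rho\,\m)\ge\sqrt{\mathsf F(\rho)}$, which is exactly \eqref{eq:44pre}.

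I expect the only delicate point to be the passage to the limit when $\mathsf F(\rho)=+\infty$: the Cauchy--Schwarz step is precisely what makes the final bound uniform in this respect, circumventing any continuity statement for $s\mapsto\mathsf F(\rho_s)$ at $s=0$ and relying solely on the lower semicontinuity of $\mathsf F$, while the convergence $W_{\Ch}(\rho\,\m,\rho_t\,\m)\to0$ is what legitimizes using the heat flow as a competitor in the slope. This is the same scheme as in \cite[Thm.~7.4]{AGS11a}, now carried out with $W_{\Ch}$ in place of $W_\sfd$; the argument is insensitive to this replacement because it only uses the upper bound on $W_{\Ch}$ furnished by \eqref{eq:feb8} together with Remark~\ref{rem:heat}.
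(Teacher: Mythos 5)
Your proof is correct and follows essentially the same route as the paper's: both test the descending slope along the heat flow $\rho_t=\sfP_t\rho$, combine the dissipation identity \eqref{eq:easter5} with the speed bound $\|\rho_s'\|^2\le\mathsf F(\rho_s)$ of Remark~\ref{rem:heat}, and conclude via lower semicontinuity of the Fisher information. The only cosmetic difference is that the paper organizes the estimate through Young's inequality $ab\le\tfrac12 a^2+\tfrac12 b^2$ applied to the Ces\`aro averages, whereas you bound $W_{\Ch}(\rho\,\m,\rho_t\,\m)$ directly by $\int_0^t\sqrt{\mathsf F(\rho_s)}\dd s$ and use Cauchy--Schwarz in the ratio; the two computations are equivalent.
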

\begin{proof} Let $\rho_t=\sfP_t\rho$; we set $\mu_t:=\rho_t\,\m$ and $\mu=\rho\,\m$. 
Denoting by $|\dot\mu_t|$ the metric derivative w.r.t. $W_{\Ch}$, 
from Remark~\ref{rem:heat} we get  
\begin{equation}\label{eq:jan1}
|\dot\mu_t|^2\leq\mathsf F(\rho_t)\qquad\text{for a.e. $t>0$}\;.
\end{equation}
  Applying \eqref{eq:easter5} we get
  \begin{align}
    \label{eq:38}
    \ent(\mu)&-\ent(\mu_t)=\int_0^t \mathsf F(\rho_s)\dd s\ge \frac 12 \int_0^t \mathsf F(\rho_s)\dd s+\frac 12
    \int_0^t |\dot\mu_s|^2\dd s
    \\
    \notag&\ge \frac 12 \Big(\frac 1{\sqrt t}\int_0^t
    \sqrt{\mathsf F(\rho_s)}\dd s\Big)^2+\frac 12\Big(\frac 1{\sqrt t}\int_0^t |\dot\mu_s|\dd s\Big)^2
    \ge \frac 1t\Big(\int_0^t \sqrt{\mathsf F(\rho_s)}\dd s\Big)W_{\Ch}(\mu,\mu_t)\;.
  \end{align}
  Dividing by $W_{\Ch}(\mu,\mu_t)$ and passing to the limit as
  $t\downarrow0$ we get
  \eqref{eq:44pre}, since the lower semicontinuity of $\Ch$ yields
  \begin{displaymath}
    \sqrt{\mathsf F(\rho)}\le\liminf_{t\downarrow0}\frac 1t\int_0^t
    \sqrt{\mathsf F(\rho_s)}\dd s\; .
  \end{displaymath}
\end{proof} 

In order to identify the metric gradient flows of $\ent$ with $\sfP_t$, we will also use the following result, see 
\cite[Lem.~5.17, Thm.~8.1]{AGS11a}.
Its proof uses Proposition~\ref{prop:lisini}, Proposition~\ref{prop:calculus} (e), (g), the estimates \eqref{eq:contiCh}, \eqref{eq:easter5}, 
the convexity of $\mathsf F$ and the strict convexity of $\ent$, see also the next section for an analogous argument
involving the same ingredients.

\begin{theorem}
  \label{thm:coincidence_infty}
  Let $(\rho_t)_{t\in [0,T]}$ be a curve of bounded probability densities
  with $\sup_t\|\rho_t\|_{\infty}<\infty$. Assume
  that $\mu_t=\rho_t\m\in AC^2([0,T];(\cP(X),W_\sfd))$ and that
  $\mu_t$ satisfies the Entropy-Fisher dissipation inequality
  \begin{equation}
    \label{eq:95}
    \ent(\mu_0)\ge \ent(\mu_T)+\frac12\int_0^T|\dot{\mu}_t|^2\dd t
    +\frac12\int_0^T \mathsf F(\rho_t)\dd t\;.
  \end{equation}
  Then $\rho_t=\sfP_t\rho_0$ for all $t\in [0,T]$ and equality holds in \eqref{eq:95}.
  \end{theorem}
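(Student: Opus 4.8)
The plan is to show that the assumed inequality \eqref{eq:95} is forced to be an equality by a universal upper bound on the entropy dissipation, and then to read off from the resulting equality cases that the curve solves the heat equation.

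First I would reduce to the continuity-equation formulation. Since $(X,\sfd)$ is complete, $\mu_t\in AC^2([0,T];(\cP(X),W_\sfd))$ and $\sup_t\|\rho_t\|_\infty<\infty$, Theorem~\ref{thm:abs_char} gives $\rho_t\in{\sf CE}^2(X,\Ch,\m)$ together with $\|\rho_t'\|\le|\dot\mu_t|$ for a.e.\ $t$. Hence the right-hand side of \eqref{eq:95} is bounded below by $\tfrac12\int_0^T\|\rho_t'\|^2\dd t+\tfrac12\int_0^T\mathsf F(\rho_t)\dd t$, and it suffices to prove the reverse bound
\begin{equation*}
\ent(\mu_0)-\ent(\mu_T)\le \frac12\int_0^T\|\rho_t'\|^2\dd t+\frac12\int_0^T\mathsf F(\rho_t)\dd t,
\end{equation*}
since combining it with \eqref{eq:95} forces equality in every intermediate step.

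The core is a chain-rule estimate for the entropy along the continuity equation. For $\delta>0$ set $f_t^\delta:=\log(\rho_t+\delta)$: by the uniform bound on $\rho_t$ this is bounded, and Proposition~\ref{prop:calculus}(e) gives $|\rmD f_t^\delta|_w=|\rmD\rho_t|_w/(\rho_t+\delta)$, whence $\int|\rmD f_t^\delta|_w^2\rho_t\dd\m\le\mathsf F(\rho_t)$ with convergence to $\mathsf F(\rho_t)$ as $\delta\downarrow0$. Using the convexity inequality $a\log a-b\log b\ge(1+\log b)(a-b)$ to bracket the increments of $t\mapsto\ent(\mu_t)$ by $\int\log\rho_t\,(\rho_s-\rho_t)\dd\m$, together with the continuity inequality \eqref{eq:criterion_CE} tested against $f_t^\delta$ (which lies in $D(\Ch)$, so that \eqref{eq:criterion_CE} applies after extending it from $\cA_{\Ch}$ by density), and letting first $s\to t$ and then $\delta\downarrow0$, I would obtain that $t\mapsto\ent(\mu_t)$ is absolutely continuous with
\begin{equation*}
-\ddt\ent(\mu_t)\le \|\rho_t'\|\,\sqrt{\mathsf F(\rho_t)}\qquad\text{for a.e. }t\in(0,T).
\end{equation*}
The uniform $L^\infty$ bound on $\rho_t$ is essential here, keeping $f_t^\delta$ admissible and the entropy finite. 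Integrating and applying Young's inequality and $\|\rho_t'\|\le|\dot\mu_t|$ yields the desired reverse bound.

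The last and hardest step is to upgrade the resulting equalities to $\rho_t=\sfP_t\rho_0$. Equality forces, for a.e.\ $t$, both $\|\rho_t'\|=|\dot\mu_t|=\sqrt{\mathsf F(\rho_t)}$ and $-\ddt\ent(\mu_t)=\mathsf F(\rho_t)$, so $\mu_t$ is an energy-dissipation gradient flow of $\ent$. The main obstacle is that, since $\Ch$ need not be quadratic, there is no a priori Hilbertian velocity; I would instead use that \eqref{eq:criterion_CE} with $\|\rho_t'\|=\sqrt{\mathsf F(\rho_t)}$ bounds the functional $f\mapsto\ddt\int f\rho_t\dd\m$ by $\sqrt{\mathsf F(\rho_t)}\,(\int|\rmD f|_w^2\rho_t\dd\m)^{1/2}$, while the sharp dissipation $-\ddt\ent(\mu_t)=\mathsf F(\rho_t)$ means this bound is attained in the direction $\log\rho_t$. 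By the equality case of Cauchy--Schwarz this pins down the driving velocity as $\nabla\log\rho_t=\nabla\rho_t/\rho_t$, so that $\ddt\int f\rho_t\dd\m$ is governed by $\Delta\rho_t$ through the subdifferential inequality of Proposition~\ref{prop:calculus}(f); hence $t\mapsto\rho_t$ solves the $L^2$-gradient-flow equation \eqref{eq:easter4} from $\rho_0$. Uniqueness of the latter (maximal monotone operator theory, together with convexity of $\Ch$ and of $\mathsf F$ and strict convexity of $\ent$) gives $\rho_t=\sfP_t\rho_0$, and equality in \eqref{eq:95} follows. Throughout, Proposition~\ref{prop:lisini} and Proposition~\ref{prop:calculus}(g) supply the lifting and upper-gradient inequalities underlying the chain rule and the velocity identification.
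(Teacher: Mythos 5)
Your first two steps are sound and run parallel to the paper's own argument: the reduction through Theorem~\ref{thm:abs_char}, and the entropy chain-rule bound $-\ddt\ent(\mu_t)\le\|\rho_t'\|\,\sqrt{\mathsf F(\rho_t)}$ obtained by testing with $\log(\rho_t+\delta)$, are essentially the paper's universal dissipation estimate (which it derives instead via the superposition principle, Proposition~\ref{prop:lisini}, and the upper-gradient property, Proposition~\ref{prop:calculus}(g), together with the chain rule (e)). Combined with Young's inequality and \eqref{eq:95}, this does force equality a.e., so that $|\dot\mu_t|=\|\rho_t'\|=\sqrt{\mathsf F(\rho_t)}$ and $-\ddt\ent(\mu_t)=\mathsf F(\rho_t)$. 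The genuine gap is entirely in your final step, and it is fatal as written: the phrase ``by the equality case of Cauchy--Schwarz this pins down the driving velocity as $\nabla\log\rho_t$'' has no meaning in this setting. Since $\Ch$ is not assumed quadratic, $f\mapsto\bigl(\int|\rmD f|_w^2\rho_t\dd\m\bigr)^{1/2}$ is not a Hilbertian norm, there is no inner product, no Riesz representation and no rigidity in the equality case; you yourself flag the absence of a Hilbertian velocity and then invoke exactly the Hilbertian tool you lack. Moreover, even granting a velocity identification, passing to ``$\ddt\int f\rho_t\dd\m$ is governed by $\Delta\rho_t$'' would require a bilinear carr\'e du champ, and Proposition~\ref{prop:calculus}(f) is only a one-sided inequality, so it can never produce the equation \eqref{eq:easter4}.

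The paper (following \cite[Lem.~5.17, Thm.~8.1]{AGS11a}) closes the argument by a completely different, purely convexity-based comparison that avoids any velocity identification. One first observes that the heat flow $\eta_t:=\sfP_t\rho_0$ itself satisfies \eqref{eq:95} with equality, by \eqref{eq:easter5} together with \eqref{eq:contiCh} and Remark~\ref{rem:heat} (Kuwada-type bound $|\dot\nu_t|^2\le\mathsf F(\eta_t)$). Then one considers the average $\sigma_t:=\tfrac12(\rho_t+\eta_t)$: joint convexity of $W_\sfd^2$ bounds its action by the average of the actions, convexity of $\mathsf F$ bounds its Fisher information by the average, and applying the universal dissipation upper bound (your Step 2) to $\sigma_t$, together with the two dissipation inequalities for $\rho_t$ and $\eta_t$ and the common initial datum, yields $\ent(\sigma_t\m)\ge\tfrac12\ent(\rho_t\m)+\tfrac12\ent(\eta_t\m)$; by strict convexity of $z\mapsto z\log z$ this forces $\rho_t=\eta_t$ $\m$-a.e.\ for every $t$, and equality in \eqref{eq:95} follows. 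This explains why ``convexity of $\mathsf F$ and strict convexity of $\ent$'' appear in the paper's list of ingredients: they are the engine of the uniqueness step, whereas in your proposal they are relegated to a parenthetical role inside the uniqueness of the $L^2$ gradient flow, where they would in fact be redundant (maximal monotonicity alone suffices there). If you want to salvage your outline, replace your third step by this averaging argument; everything you proved before it can then be reused verbatim.
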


\begin{theorem}[Identification of gradient flows]\label{thm:mainidenti}
Let $(X,\tau,\sfd,\m)$ be an extended metric measure space with $(X,\sfd)$ complete.
Let $(\rho_t)_{t\in [0,\infty)}$ be a curve of probability densities with $\|\rho_t\|_\infty\in L^\infty_{\rm
  loc}([0,\infty))$ and set $\mu_t=\rho_t\m$ and let us consider the properties
\begin{itemize}
\item[(a)] $\mu_t$ is a metric gradient curve of $\ent$ relative to
  $W_{\sfd}$ starting from $\mu_0$;
\item[(b)] $\mu_t$ is a metric gradient curve of $\ent$ relative to
  $W_{\Ch}$ starting from $\mu_0$;
\item[(c)] $\rho_t=\sfP_t\rho_0$ for all $t\in [0,\infty)$.
\end{itemize} 
Then $(a)\Rightarrow (b)\Rightarrow (c)$. If $|\rmD_\sfd^-\ent|$ is lower semicontinuous in $L_+^1(X,\m)$, then $(c)\Rightarrow (a)$.
\end{theorem}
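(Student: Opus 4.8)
The plan is to establish the two forward implications unconditionally and the converse under the lower semicontinuity hypothesis, using throughout that $\|\rho_t\|_\infty\in L^\infty_{\rm loc}$ makes Corollary~\ref{cor:eqmet} available on every interval $[0,T]$. For $(a)\Rightarrow(b)$ I would simply transport the inequality \eqref{eq:EDI} for $\ent$ from $W_\sfd$ to $W_{\Ch}$. Since $\sup_{[0,T]}\|\rho_t\|_\infty<\infty$, Corollary~\ref{cor:eqmet} gives that $\mu_t$ is $W_{\Ch}$-absolutely continuous with $|\dot\mu_t|_{W_{\Ch}}=|\dot\mu_t|_{W_\sfd}$ a.e.; and, because $W_{\Ch}\ge W_\sfd$, the descending slope can only shrink, $|\rmD^-_{W_{\Ch}}\ent|\le|\rmD^-_{W_\sfd}\ent|$ (a larger denominator and a smaller family of admissible competing sequences both push the defining $\limsup$ down). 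Inserting these two comparisons into \eqref{eq:EDI} relative to $W_\sfd$ yields \eqref{eq:EDI} relative to $W_{\Ch}$, and the statement that $\mu_t$ starts from $\mu_0$ is unchanged.

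For $(b)\Rightarrow(c)$ I would reduce to Theorem~\ref{thm:coincidence_infty}. Starting from \eqref{eq:EDI} relative to $W_{\Ch}$ on $[0,T]$, I replace $|\dot\mu_t|_{W_{\Ch}}$ by $|\dot\mu_t|_{W_\sfd}$ via Corollary~\ref{cor:eqmet} and bound $|\rmD^-_{W_{\Ch}}\ent|^2(\mu_t)\ge\mathsf F(\rho_t)$ via Lemma~\ref{lem:fisherboundsslope}. What remains is exactly the Entropy--Fisher dissipation inequality \eqref{eq:95}; since $\mu_t$ is $W_\sfd$-absolutely continuous with uniformly bounded densities, Theorem~\ref{thm:coincidence_infty} then forces $\rho_t=\sfP_t\rho_0$, first on each $[0,T]$ and hence on $[0,\infty)$.

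The substantial direction is $(c)\Rightarrow(a)$. First I would record two basic facts about the heat flow $\rho_t=\sfP_t\rho_0$: by Remark~\ref{rem:heat} it lies in ${\sf CE}^2(X,\Ch,\m)$ with $\|\rho_t'\|^2\le\mathsf F(\rho_t)$, so Theorem~\ref{thm:abs_char} gives $\mu_t\in AC^2_{\rm loc}$ relative to $W_\sfd$ with $|\dot\mu_t|^2\le\mathsf F(\rho_t)$; and \eqref{eq:easter5} integrates to the energy identity $\ent(\mu_s)-\ent(\mu_t)=\int_s^t\mathsf F(\rho_r)\dd r$. Together these show that $\mu_t$ satisfies \eqref{eq:95}, so Theorem~\ref{thm:coincidence_infty} returns equality in \eqref{eq:95}; subtracting the energy identity forces $|\dot\mu_t|^2=\mathsf F(\rho_t)$ for a.e.\ $t$. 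Consequently \eqref{eq:EDI} relative to $W_\sfd$ will hold with equality as soon as one also knows the slope identity $|\rmD^-_{W_\sfd}\ent|^2(\mu_t)=\mathsf F(\rho_t)$, for then $\tfrac12|\dot\mu_t|^2+\tfrac12|\rmD^-_{W_\sfd}\ent|^2(\mu_t)=\mathsf F(\rho_t)=-\ddt\ent(\mu_t)$.

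The lower bound $|\rmD^-_{W_\sfd}\ent|^2\ge\mathsf F$ is free from Lemma~\ref{lem:fisherboundsslope} together with $W_\sfd\le W_{\Ch}$, so the whole weight of the theorem rests on the reverse inequality $|\rmD^-_{W_\sfd}\ent|(\mu_t)\le\sqrt{\mathsf F(\rho_t)}$, and this is where I expect the main difficulty. No upper-gradient or chain-rule argument can produce it, since all such arguments only reproduce the lower bound; moreover $W_\sfd$ is the smallest of the three distances, so its descending slope is the largest and is the hardest to bound from above. The route I would take is a Kuwada-type duality estimate: control $\ent(\mu_t)-\ent(\nu)$ by $\sqrt{\mathsf F(\rho_t)}\,W_\sfd(\mu_t,\nu)+o\bigl(W_\sfd(\mu_t,\nu)\bigr)$ for $\nu$ close to $\mu_t$, testing the entropy difference against suitably truncated logarithms of $\rho_t$ through the upper-gradient inequality of Proposition~\ref{prop:calculus}(g). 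Such a bound is secured first on the regular subclass of densities for which these logarithmic test functions are admissible, and the assumed $L^1_+$-lower semicontinuity of $|\rmD^-_{W_\sfd}\ent|$ is precisely what propagates it to the possibly degenerate densities visited by the flow. Once the slope identity is in hand, \eqref{eq:EDI} holds with equality, while the energy identity together with the lower semicontinuity of $\ent$ gives $\ent(\mu_t)\to\ent(\mu_0)$ as $t\downarrow0$ and hence the prescribed initial condition, completing $(c)\Rightarrow(a)$.
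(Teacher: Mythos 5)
Your proposal is correct and takes essentially the same route as the paper: $(a)\Rightarrow(b)$ via Corollary~\ref{cor:eqmet} and the slope comparison forced by $W_\sfd\le W_{\Ch}$, $(b)\Rightarrow(c)$ via Lemma~\ref{lem:fisherboundsslope} and Theorem~\ref{thm:coincidence_infty}, and $(c)\Rightarrow(a)$ via the entropy dissipation identity for $\sfP_t$ combined with the bound $|\rmD^-_\sfd\ent|^2(\rho\,\m)\le\mathsf F(\rho)$. The one step you single out as the main difficulty and only sketch (the reverse slope bound, proved by testing with truncated logarithms through the upper-gradient inequality and propagated by the lower semicontinuity hypothesis) is precisely the point the paper outsources to \cite[Thm.~7.6]{AGS11a}, and your sketch matches the argument used there.
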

\begin{proof} By Corollary~\ref{cor:eqmet} and the inequality
  $W_{\sfd}\leq W_{\Ch}$, which yields a converse inequality at the
  level of slopes, the metric gradient curves in (a) are contained in
  the metric gradient curves in (b). On the other hand, by
  \eqref{eq:44pre} of Lemma~\ref{lem:fisherboundsslope}, the metric
  gradient curves in (b) satisfy the Entropy-Fisher dissipation
  inequality and therefore, thanks to
  Theorem~\ref{thm:coincidence_infty}, satisfy (c). Finally, 
    under the lower semicontinuity assumption on $|\rmD_\sfd^-\ent|$,
  the identity
$$
 \ent(\rho\m)-\ent(\sfP_t\rho \m)=\int_0^t \mathsf F(P_s\rho)\dd s
$$
and the inequality {$\mathsf F(\rho)\geq |\rmD^-_\sfd\ent|^2(\rho\,\m)$
(see \cite[Thm.~7.6]{AGS11a})} show that the class (c) is contained in the class (a).
\end{proof}

\begin{remark} {\rm By comparison, the implications above can also be stated with the
distance $W_{\Ch,*}$. This is possible because, according to Proposition~\ref{prop:comparale}, 
$W_{\Ch,*}$ is intermediate between $W_{\Ch}$ and $W_{\sfd}$.}
\end{remark}

\section{A stability result for Cheeger's energies}\label{sec:stabchee}

In this section we consider an extended metric-topological space $(X,\tau,\sfd)$ and a monotone family of 
$(\tau\times\tau)$-continuous approximating semidistances $\sfd_i:X\times X\to [0,\infty)$ as in Definition~\ref{def:luft1}. Given $\m\in\cP(X)$,
our goal is to prove a convergence results for the corresponding Cheeger energies. Since in view of the applications 
we have in mind we want to cover also the case when $\sfd_i$ are semidistances, we have
to adapt the construction of Section~\ref{sec:Chee}, thought for (extended) distances, to this slightly more general setting. 

Let $(X_i,\tilde\sfd_i)$ be the quotient metric space, with $\pi^i:X\to X_i$ the canonical projection. We choose in $X_i$ the standard
topology $\tau_i$ generated by the metric structure, so that ${\rm Lip}(X_i,\tau_i,\tilde\sfd_i)$ is a standard metric-topological space 
and $\pi^i:X\to X_i$ is continuous (thanks to the $(\tau\times\tau)$-continuity of $\sfd_i$). 
Thanks to the continuity of $\pi^i$ we can also define $\m_i =(\pi^i)_\#\m\in\cP(X_i)$, thus providing the structure
of  metric measure space to $X_i$. 

The map $g\mapsto\pi^i_*(g)=g\circ\pi^i$ provides a linear 
isometry of $L^2(X_i,\m_i)$ into $L^2(X,\m)$. Then, denoting by 
$\mathcal D_i$ the closure of ${\rm Lip}_b(X,\tau,\sfd_i)$ in $L^2(X,\m)$, we notice that
$\mathcal D_i\subset\pi^i_*(L^2(X_i,\m_i))$, because any function 
in ${\rm Lip}_b(X,\tau,\sfd_i)$ belongs to the image of $\pi^i_*$.

Denoting by $\tilde{\Ch}_i$ and $\tilde{\sfP}^i_t$ 
the Cheeger energy and its gradient flow in
$(X_i,\tau_i,\tilde\sfd_i,\m_i)$, the formulas 
\begin{equation}\label{def:Ch_i}
  \begin{aligned}
    \Ch_i(f):={}&
    \begin{cases}
      \tilde{\Ch}_i(g)&\text{if }f=g\circ\pi^i\in
      \pi^i_*(L^2(X_i,\m_i))\\
      +\infty&\text{otherwise}
    \end{cases}
    \\
    \sfP^i_tf:={}&(\tilde \sfP_t^ig)\circ\pi^i\qquad\quad\,\, \text{if }f=g\circ\pi^i\in
    \pi^i_*(L^2(X_i,\m_i)),\quad t\geq 0\;,
  \end{aligned}
\end{equation}
enable to lift the Cheeger energy $\tilde{\Ch}_i$ and its gradient
flow $\tilde{\sfP}^i$ to the subspaces $\pi^i_*(L^2(X_i,\m_i))$ of $L^2(X,\m)$, retaining
the metric gradient flow property. Since $\tilde{\Ch}_i$ have a dense domain in $L^2(X,\m_i)$ it follows that the closure of the domain of $\Ch_i$, namely
$\pi^i_*(L^2(X_i,\m_i))$, contains $\mathcal D_i$, so that Lemma~\ref{lem:sigmaalgebras} gives
\begin{equation}\label{eq:unione_densa}
\text{$\bigcup_{i\in I} D(\Ch_i)$ is dense in $L^2(X,\m)$\;.}
\end{equation}

The proof of the following theorem is inspired by various stability results based on $\Gamma$-convergence and on the energy
dissipation point of view, see for instance \cite{Serfaty},
\cite{Gigli10} and \cite{AGMS12}. At the level of $\Ch_i$, the only properties that
will play a role are \eqref{eq:unione_densa} and the energy dissipation inequality \eqref{eq:dissipando}. The latter easily follows from the
corresponding properties of $\tilde{\Ch}_i$, $\tilde\sfP^i$.

\begin{theorem} \label{thm:stabchee} Under the previous assumptions on $\sfd_i$ one has that
$\Ch$ coincides with the largest $L^2(X,\m)$ lower semicontinuous functional smaller than $\inf_i\Ch_i$.
\end{theorem}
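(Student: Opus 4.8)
The plan is to prove the two inequalities $\Ch\le(\inf_i\Ch_i)_*$ and $(\inf_i\Ch_i)_*\le\Ch$ separately; throughout write $\mathsf C:=(\inf_i\Ch_i)_*$ for the $L^2(X,\m)$-lower semicontinuous envelope, which is by definition the largest lower semicontinuous functional below $\inf_i\Ch_i$.

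The easy inequality $\Ch\le\mathsf C$ I would dispose of first, by monotonicity. Since the family $(\sfd_i)$ is directed and increasing with $\sfd=\sup_i\sfd_i$, every $\sfd_i$-Lipschitz function is $\sfd$-Lipschitz and, for such functions, ${\rm Lip}_a(f,\cdot)$ computed with $\sfd$ is dominated by the one computed with $\sfd_i$ (a larger difference quotient, taken on a larger $\sfd$-ball). Hence every competitor in the definition of $\Ch_i(f)$ is admissible for $\Ch(f)$ with a pointwise smaller gradient, so $\Ch\le\Ch_i$ for every $i$ and thus $\Ch\le\inf_i\Ch_i$. Because $\Ch$ is $L^2$-lower semicontinuous (Proposition~\ref{prop:calculus}(a)) and $\mathsf C$ is the largest lower semicontinuous functional below $\inf_i\Ch_i$, this gives $\Ch\le\mathsf C$. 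The same monotonicity shows that $i\mapsto\Ch_i$ is non-increasing, so $\inf_i\Ch_i$ is a pointwise decreasing limit, $\mathsf C$ is convex, and by \eqref{eq:unione_densa} both $\Ch$ and $\mathsf C$ have dense domain in $L^2(X,\m)$ and vanish on constants.

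The substance is the converse $\mathsf C\le\Ch$, and here I would follow the energy-dissipation and $\Gamma$-convergence scheme of \cite{Serfaty,Gigli10,AGMS12}, transferring the comparison to the level of the $L^2$-gradient flows. Denote by $\sfP^i$ the gradient flow of $\frac12\Ch_i$ and by $\sfP$ that of $\frac12\Ch$. Starting from a datum $u_0$ in the dense set $\bigcup_i D(\Ch_i)$, so that $u_0\in D(\Ch_i)$ for all large $i$, the energy-dissipation inequality \eqref{eq:dissipando} for $\Ch_i$ together with the regularizing estimate of Proposition~\ref{prop:evik}(e) bounds $\Ch_i(\sfP^i_t u_0)$ and $\int_0^T\big\|\ddt\sfP^i_t u_0\big\|_2^2\,\dd t$ uniformly in $i$ (by $\Ch_{i_0}(u_0)$, using monotonicity). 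This uniform bound produces, along a subnet, a limit curve $w_t=\lim_i\sfP^i_t u_0$ in $L^2(X,\m)$. Passing to the limit in the dissipation identity for $\Ch_i$, using $\Ch\le\Ch_i$, the lower semicontinuity of $\Ch$ and of the kinetic term, and the lower semicontinuity of the descending slopes $|\rmD^-\Ch_i|$ along strongly convergent sequences, I would identify $w$ as a curve of maximal slope for $\Ch$, i.e. $w_t=\sfP_t u_0$. Comparing the energy dissipated by $w$ against $\Ch$ with that dissipated against $\mathsf C$ then forces $\mathsf C(w_t)=\Ch(w_t)$ along the flow, and letting $t\downarrow0$ and invoking lower semicontinuity of $\mathsf C$ propagates the equality to $u_0$; density of $\bigcup_i D(\Ch_i)$ and lower semicontinuity finally extend $\mathsf C\le\Ch$ to all of $L^2(X,\m)$.

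The main obstacle is exactly the $\Gamma$-$\limsup$ (recovery) content hidden in the previous step: purely soft gradient-flow arguments preserve the a priori gap $\mathsf C-\Ch\ge0$, so what must genuinely be shown is that the heat flows of $\Ch_i$ dissipate asymptotically no more energy than the heat flow of $\Ch$, equivalently that for $\sfd$-Lipschitz functions the energies $\Ch_i$ approximate $\Ch$ from above. This is where the hypothesis $\sfd=\sup_i\sfd_i$ enters in an essential, not merely monotone, way, through the convergence ${\rm Lip}_a^{\sfd_i}(f,\cdot)\downarrow{\rm Lip}_a^{\sfd}(f,\cdot)$ and the relaxation built into $\Ch$. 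Making this limit compatible with the lower semicontinuity of the slopes is the delicate point, and it is precisely for this reason that the proof is routed through the energy-dissipation inequality \eqref{eq:dissipando} and the density \eqref{eq:unione_densa} rather than through an explicit approximation of $\sfd$-Lipschitz functions by $\sfd_i$-Lipschitz ones.
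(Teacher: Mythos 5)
Your first inequality $\Ch\le(\inf_i\Ch_i)_*$ is correct and matches the paper (which phrases the monotonicity via \eqref{eq:compare_d_di}, but the content is the same). The converse, however, has a genuine gap that you yourself flag but do not close. Your scheme runs the Sandier--Serfaty argument on the $L^2$-energy dissipation of $\Ch_i$ along its own heat flow, and this is exactly where the circularity bites: to identify the limit curve as a curve of maximal slope for $\Ch$ you need both well-preparedness of the initial energies, $\lim_i\Ch_i(u_0^i)=\Ch(u_0)$, and the liminf bound $\liminf_i|\rmD^-\tfrac12\Ch_i|^2(u^i_s)\ge|\rmD^-\tfrac12\Ch|^2(w_s)$. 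Neither follows from $\Ch_i\ge\Ch$ (a larger convex functional does not have a larger slope), and the first one \emph{is} the $\Gamma$-$\limsup$ statement you are trying to prove. Note also that \eqref{eq:dissipando}, which you invoke for this purpose, is not the $L^2$-energy dissipation of $\Ch_i$: it is the Entropy--Fisher dissipation inequality along the heat flow, with the metric derivative taken w.r.t.\ $W_{\sfd_{\beta(j)}}$. Your fallback claim, the pointwise convergence ${\rm Lip}_a^{\sfd_i}(f,\cdot)\downarrow{\rm Lip}_a^{\sfd}(f,\cdot)$, is neither proved nor used in the paper, and it is not at all clear it holds.

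The paper's way around the circularity is precisely to change the driving functional: it runs the flows $\rho^i_t$ of $\Ch_i$ from densities $c\le\rho\le C$ and passes to the limit in the \emph{entropy} dissipation \eqref{eq:dissipando}. Since $\ent$ does not depend on $i$, the initial term is well-prepared for free ($\ent(\rho^i_0\m)\to\ent(\rho\m)$ by the uniform bounds); the kinetic term is handled by the monotone convergence $W_{\sfd_i}\uparrow W_\sfd$ via \eqref{eq:lsc_metrder}; and the slope term is handled by $\mathsf F^i\ge\mathsf F$ plus lower semicontinuity of $\mathsf F$ -- all three liminf inequalities go the right way using only monotonicity. Theorem~\ref{thm:coincidence_infty} then identifies the limit as $\sfP_t\rho$ \emph{with equality} in the dissipation inequality, and it is this forced equality, in particular \eqref{eq:extra_fisher1}, that manufactures the missing recovery sequences: time averages $v_j$ of the flows satisfy $\liminf_j\Ch_{\beta(j)}(\sqrt{v_j})\le\Ch(\sqrt\rho)$, and $\sqrt{v_j}\to\sqrt\rho$ in $L^2(X,\m)$ is proved via $W_{\sfd_{\beta(j)}}$-convergence, weak$^*$ compactness and the entropy bound (convergence in measure). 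The remaining steps (removing the extra assumption \eqref{eq:extra_fisher} by flowing and letting $t\downarrow0$, then extending from normalized bounded densities to all of $L^2(X,\m)$ by truncation, homogeneity and translation invariance) are technical but necessary, and your proposal does not address them. So the architecture you sketch is the right family of ideas, but the key mechanism -- entropy as an $i$-independent energy whose dissipation identity turns monotonicity into recovery -- is missing, and without it the argument does not close.
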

\begin{proof} 
Let $L_*$ be the largest $L^2(X,\m)$ lower semicontinuous functional smaller than $L:=\inf_i\Ch_i$.
Since $\sfd_i\leq\sfd$, from the inequality
\begin{equation}\label{eq:compare_d_di}
\int |\rmD (g\circ\pi) |^2\dd\m\leq\int |\rmD_{\sfd_i} (g\circ\pi)|^2\dd\m=\int |\rmD_{\tilde\sfd_i}g|^2\dd\m_i
\end{equation}
we immediately get  $\Ch\leq\Ch_i$, hence $\Ch\leq L$ and the lower semicontinuity of $\Ch$ gives
$\Ch\leq L_*$. In order to prove the converse inequality, we fix a probability density $\rho$ with $C\geq\rho\geq c>0$ $\m$-a.e. in $X$ and denote by
$\rho^i_t$ the gradient curves of $\Ch_i$ starting from $\rho^i_0$, the $L^2(X,\m)$ projection of $\rho$ on $\overline{D(\Ch_i)}$. By
\eqref{eq:unione_densa} we know that $\rho^i_0\to\rho$ in $L^2(X,\m)$ and the stability of $\overline{D(\Ch_i)}$ under truncations
immediately gives $C\geq\rho^i_t\geq c$ $\m$-a.e. in $X$; in addition, using the regularization estimate 
\eqref{eq:regulaK} it is easily seen that 
$$
\limsup_{i\in I} {\rm Lip}(\rho^i_\cdot,(\delta,\infty))<\infty\qquad\forall\delta>0
$$
(where the Lipschitz constant is computed w.r.t.~the $L^2(X,\m)$ norm).
Hence, we may find a subnet $\beta:J\to I$ and a curve $\rho_t$ such that
$\lim_{j\in J}\rho^{\beta(j)}_t=\rho_t$ in the weak topology of $L^2(X,\m)$ for all $t\geq 0$, with $\rho:(0,\infty)\to L^2(X,\m)$ continuous. 

Our goal is to pass to the limit first w.r.t.~$j$ and then as $t\downarrow 0$ in the energy dissipation inequalities 
\begin{equation}\label{eq:dissipando}
\ent(\mu^{\beta(j)}_t)+\frac 12\int_0^t|\dot\mu^{\beta(j)}_s|_{\beta(j)}^2+{\mathsf F}^{\beta(j)}(\rho^{\beta(j)}_s)\dd s
\leq\ent(\rho^{\beta(j)}_0\m)\;,
\end{equation}
with $\mu^{\beta(j)}_t=\rho^{\beta(j)}_t\m$, $|\dot\mu^{\beta(j)}_t|_{\beta(j)}$ equal to the metric derivative of the curve $\mu^{\beta(j)}_t$ w.r.t.~$W_{\sfd_{\beta(j)}}$ and
${\mathsf F}^{\beta(j)}$ the Fisher information functionals associated to $\Ch_{\beta(j)}$, 
to prove that $\rho_t$ is the gradient curve of $\Ch$ starting from $\rho$.  

We first notice that the representation \eqref{eq:19} 
of the action as a supremum, together with the monotone convergence
$\lim_jW_{\sfd_{\beta(j)}}=W_{\sfd}$ imply that
$\mu_s\in AC^2([0,t];(\cP(X),W_\sfd))$ and that
\begin{equation}\label{eq:lsc_metrder}
\liminf_{j\in J}\int_0^t|\dot\mu^{\beta(j)}_s|_{\beta(j)}^2\dd s\geq \int_0^t|\dot\mu_s|^2\dd s\;,
\end{equation}
where $\mu_t=\rho_t\m$ and $|\dot\mu_t|$ denotes the metric derivative w.r.t.~$W_{\sfd}$.

Let us denote by ${\mathsf F}$ the Fisher information functional associated to $\Ch$ and notice that
${\mathsf F}_i\geq{\mathsf F}$. Hence, combining \eqref{eq:lsc_metrder} with \eqref{eq:dissipando}
and with $\liminf_{j\in J}\ent(\mu^{\beta(j)}_t)\geq\ent(\mu_t)$ we get
\begin{equation}\label{eq:dissipandolimitbis}
\ent(\mu_t)+\frac 12\int_0^t|\dot\mu_s|^2+{\mathsf F}(\rho_s)\dd s
\leq\ent(\rho\m)\;.
\end{equation}
Since $t$ is arbitrary this inequality, according to Theorem~\ref{thm:coincidence_infty}, proves that $\rho_t=\sfP_t\rho$, where $\sfP_t$ is the 
$L^2(X,\m)$-gradient flow of $\Ch$; in addition, we can still use Theorem~\ref{thm:coincidence_infty} to obtain that equality holds in \eqref{eq:dissipandolimitbis}. Therefore
we obtain from this limiting argument the additional informations
\begin{equation}\label{eq:additional_informations}
  \liminf_{j\in J}\ent(\rho^{\beta(j)}_t\m)=\ent(\rho_t\m)\;,\qquad
  \liminf_{j\in J}\int_0^t|\dot\mu^{\beta(j)}_s|_{\beta(j)}^2\dd s=\int_0^t|\dot\mu_s|^2\dd s\;,
\end{equation}
(that we shall exploit in the next theorem) as well as
\begin{equation}\label{eq:extra_fisher1}
  \liminf_{j\in J}\int_0^t{\mathsf F}^{\beta(j)}(\rho^{\beta(j)}_s)\dd s=\int_0^t {\mathsf F}(\rho_s)\dd s\;.
\end{equation}
If we assume 
\begin{equation}\label{eq:extra_fisher}
\limsup_{t\downarrow 0} \frac 1t\int_0^t {\mathsf F}(\rho_s)\dd s\leq {\mathsf F}(\rho)
\end{equation}
we can find, thanks to the convexity of ${\mathsf F}^{\beta(j)}$,  $t(j)\to 0$ such that the functions
$$
v_j:=\frac{1}{t(j)}\int_0^{t(j)}\rho^{\beta(j)}_s\dd s
$$
satisfy $\liminf_j{\mathsf F}^{\beta(j)}(v_j)\leq {\mathsf F}(\rho)$, so that
\begin{equation}\label{eq:extra_fisher2}
\liminf_{j\in J} \Ch_{\beta(j)}(\sqrt{v_j})\dd s\leq \Ch(\sqrt{\rho})\;.
\end{equation}
 
In order to prove that this implies $L_*(\sqrt{\rho})\leq\Ch(\sqrt{\rho})$ it is sufficient to show that $\sqrt{v_j}\to
\sqrt{\rho}$ in $L^2(X,\m)$. This can be proved as follows: since $W_{\sfd_{\beta(j)}}(v_j,\rho)\to 0$, we obtain
$$
\lim_{j\in J}\int fv_j\dd\m=\int f\rho\dd\m\qquad\text{for all $f\in \bigcup_{i\in I}{\rm Lip}(X,\tau,\sfd_i)$}\;.
$$
Hence, by $w^*$-compactness of closed balls in $L^\infty(X,\m)$ and density of $\cup_i {\rm Lip}(X,\tau,\sfd_i)$, $v_j\to\rho$ weakly$^*$ in $L^\infty(X,\m)$.
Then, the entropy bound $\ent(v_j\m)\leq\ent(\rho\m)$ implies convergence of $v_j$ in $\m$-measure.

Now we remove the assumption \eqref{eq:extra_fisher}. Given a probability density $\bar\rho\in D(\Ch)$ with $C\geq\bar\rho\geq c>0$ $\m$-a.e. in $X$ 
we obtain by the previous step applied to $\rho=\sfP_t\bar \rho$,
the inequality $\Ch(\sqrt{\sfP_t\bar\rho})\geq L_*(\sqrt{\sfP_t\bar\rho})$ for a.e. $t>0$. By the chain rule, since
$\Ch(\sfP_t\bar\rho)\to \Ch(\bar\rho)$ as $t\downarrow 0$ implies $|\rmD \sfP_t\bar\rho|_w\to |\rmD \bar\rho|_w$ in $L^2(X,\m)$, we obtain
$\Ch(\sqrt{\sfP_t\bar\rho})\to\Ch(\sqrt{\bar\rho})$ as $t\downarrow 0$ and therefore $\Ch(\sqrt{\bar\rho})\geq L_*(\sqrt{\bar\rho})$.

This proves the inequality $\Ch\geq L_*$ on all bounded and normalized functions $\rho$ with $\inf\rho>0$.
Finally, we can extend by standard approximation arguments the inequality first to all bounded functions
(by homogeneity and translation invariance) and eventually to all functions in $L^2(X,\m)$.
\end{proof} 

A byproduct of the previous proof and of the identification of gradient flows is the following stability result of gradient flows
of Cheeger's energies; the stability proof provides also a crucial regularity property of Cheeger's energies that we call, as in 
\cite{AGS12}, $\tau$-upper regularity (see also Definition~\ref{def:upper_regularity_cE}). 
We use the same notation of the statement of Theorem~\ref{thm:stabchee} and the notation
$$
|\rmD (g\circ\pi^i)|_{w,\sfd_i}:=|\rmD g|_w\circ\pi^i\qquad g\in D(\tilde{\Ch}_i)\;,
$$
consistent with the definition \eqref{def:Ch_i} of $\Ch_j$. It is not difficult to show, along the lines of \eqref{eq:compare_d_di}, that
$|\rmD f|_{w,\sfd_i}\geq |\rmD f|_{w,\sfd}$ $\m$-a.e. in $X$ for all $f\in D(\Ch_i)$.

\begin{theorem}[Stability of gradient flows and $\tau$-upper regularity of Cheeger energies]\label{thm:stability}\quad
Under the same assumptions of Theorem~\ref{thm:stabchee},
let $\rho_0\in L^\infty_+(X,\m)$ and let $\rho^i_t$ (resp. $\rho_t$) be the $L^2$ gradient curves of $\Ch_i$ (resp.
$\Ch$) starting from $\rho^i_0$, the $L^2(X,\m)$ projection of $\rho_0$ on $\overline{D(\Ch_i)}$. 
Then  $\rho^i_t\to\rho_t$ strongly in $L^2(X,\m)$ for all $t\geq 0$.\\ In addition, for
all $f\in D(\Ch)$ there exist a subnet $\beta:J\to I$, bounded and $\sfd_{\beta(j)}$-Lipschitz functions $f_j$ with $f_j\to f$ in $L^2(X,\m)$ and
${\rm Lip}_a(f_j,\sfd_{\beta(j)},\cdot)\to |\rmD f|_w$ in $L^2(X,\m)$.
\end{theorem}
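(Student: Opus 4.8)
The plan is to read off both assertions from the quantitative information already generated in the proof of Theorem~\ref{thm:stabchee}, the decisive point in each case being the upgrade of weak to strong $L^2(X,\m)$-convergence, made possible by the convergence (not merely lower semicontinuity) of the relevant energies along the chosen subnet.

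For the convergence of the gradient flows I would first treat a probability density $\rho_0$ with $C\ge\rho_0\ge c>0$. Along the subnet $\beta:J\to I$ built in the proof of Theorem~\ref{thm:stabchee} we already know $\rho^{\beta(j)}_t\rightharpoonup\rho_t=\sfP_t\rho_0$ weakly in $L^2(X,\m)$ for every $t\ge0$, together with the convergence of entropies $\ent(\rho^{\beta(j)}_t\m)\to\ent(\rho_t\m)$ recorded in \eqref{eq:additional_informations}. Since the densities are equibounded in $L^\infty(X,\m)$ and $\ent$ is strictly convex and $L^2$-lower semicontinuous, testing with the midpoints $\tfrac12(\rho^{\beta(j)}_t+\rho_t)$ forces the convexity defect to vanish, whence $\rho^{\beta(j)}_t\to\rho_t$ in $\m$-measure and therefore strongly in $L^2(X,\m)$; this is exactly the argument used for $v_j\to\rho$ at the end of the proof of Theorem~\ref{thm:stabchee}. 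As the limit $\rho_t=\sfP_t\rho_0$ is independent of the subnet, a standard subnet-of-subnet argument promotes the convergence to the whole net $\rho^i_t\to\rho_t$. Finally, the $L^2$-contractivity of $\sfP^i_t$ and $\sfP_t$ and the fact that the orthogonal projections onto $\pi^i_*(L^2(X_i,\m_i))$ are $1$-Lipschitz let one pass, by $L^2$-approximation, from this class of data to an arbitrary $\rho_0\in L^\infty_+(X,\m)$.

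For the $\tau$-upper regularity I would argue first for $f=\sqrt{\rho}$, $\rho$ a probability density with $C\ge\rho\ge c>0$. The proof of Theorem~\ref{thm:stabchee} already provides, along $\beta$, a recovery sequence $g_j:=\sqrt{v_j}\in D(\Ch_{\beta(j)})$ with $g_j\to f$ in $L^2(X,\m)$ and, combining \eqref{eq:extra_fisher2} with the lower semicontinuity of $\Ch$ and $\Ch\le\Ch_i$, with $\Ch_{\beta(j)}(g_j)\to\Ch(f)$. Applying Proposition~\ref{prop:calculus}(b) in each quotient $(X_{\beta(j)},\tilde\sfd_{\beta(j)},\m_{\beta(j)})$ and pulling back through $\pi^{\beta(j)}$, a diagonal extraction yields bounded $\sfd_{\beta(j)}$-Lipschitz functions $f_j\to f$ in $L^2(X,\m)$ with $\|{\rm Lip}_a(f_j,\sfd_{\beta(j)},\cdot)\|_2^2\to\Ch(f)=\int|\rmD f|_w^2\dd\m$. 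To convert this convergence of norms into the strong convergence ${\rm Lip}_a(f_j,\sfd_{\beta(j)},\cdot)\to|\rmD f|_w$ demanded by the statement, note that $\sfd_{\beta(j)}\le\sfd$ gives ${\rm Lip}_a(f_j,\sfd_{\beta(j)},\cdot)\ge{\rm Lip}_a^*(f_j,\sfd,\cdot)$ $\m$-a.e.; hence any weak $L^2$-limit point $G$ of ${\rm Lip}_a(f_j,\sfd_{\beta(j)},\cdot)$ dominates a weak limit point of ${\rm Lip}_a^*(f_j,\sfd,\cdot)$, which is a relaxed slope of $f$ and thus $\ge|\rmD f|_w$ (by convexity and closedness of the class of relaxed slopes, \cite[Lem.~4.3]{AGS11a}). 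Therefore $\||\rmD f|_w\|_2\le\|G\|_2\le\liminf_j\|{\rm Lip}_a(f_j,\sfd_{\beta(j)},\cdot)\|_2=\||\rmD f|_w\|_2$, so $G=|\rmD f|_w$ is the unique weak limit point, and with the convergence of the norms the convergence is strong. The general case $f\in D(\Ch)$ follows by approximation: for bounded $f$ one applies the special case to the probability density proportional to $(f+M)^2$ with $M>\|f\|_\infty$ and rescales back, using that adding constants leaves ${\rm Lip}_a$ and $|\rmD\cdot|_w$ unchanged while positive rescalings act homogeneously on both; for arbitrary $f$ one truncates, invoking Proposition~\ref{prop:calculus}(e) to get $|\rmD f^N|_w\to|\rmD f|_w$ in $L^2(X,\m)$, and performs a further diagonal extraction.

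The main obstacle is precisely this weak-to-strong passage in both parts. For the flows it rests on the genuine convergence of the entropies \eqref{eq:additional_informations}, an output of the energy-dissipation analysis rather than of soft lower semicontinuity. For the regularity statement the delicate point is to identify \emph{every} weak limit point of the asymptotic Lipschitz constants computed with respect to the coarser semidistances $\sfd_{\beta(j)}$ as a relaxed slope with respect to the finer $\sfd$; this is exactly what the comparison ${\rm Lip}_a(\,\cdot\,,\sfd_{\beta(j)},\cdot)\ge{\rm Lip}_a^*(\,\cdot\,,\sfd,\cdot)$ together with the structure of the relaxed-slope class supplies, and it is what pins down the weak limit and forces strong convergence.
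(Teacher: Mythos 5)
Your proof is correct and follows essentially the same route as the paper's: the strong convergence of the flows is obtained exactly as in the paper, by upgrading the entropy identity \eqref{eq:additional_informations} to a full-net limit via the arbitrary-subnet argument (plus the standard strict-convexity upgrade from weak to strong convergence), and the Lipschitz recovery rests on the same ingredients, namely the energy-convergent net produced from \eqref{eq:extra_fisher2}, the pointwise comparison of gradients relative to $\sfd_{\beta(j)}$ and $\sfd$, Proposition~\ref{prop:calculus}(b) applied in the quotient spaces, and a weak-compactness/norm-convergence argument in $L^2(X,\m)$. The only differences are cosmetic: the paper first proves $|\rmD k_j|_{w,\sfd_{\beta(j)}}\to|\rmD f|_{w,\sfd}$ strongly by a square expansion and only then passes to Lipschitz approximations, whereas you pass to Lipschitz approximations first and identify the weak limit through the closed convex class of relaxed slopes; moreover you make explicit the extension to general data $\rho_0\in L^\infty_+(X,\m)$ by contractivity of the semigroups and of the projections, a point the paper leaves implicit.
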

\begin{proof} The weak convergence of $\rho^i_t$ to $\rho_t$ in $L^\infty(X,\m)$ has already been achieved in
the proof of the previous theorem. To show that the convergence is actually strong, we use the first equality in
\eqref{eq:additional_informations}, which can be improved to
$$
\lim_{i\in I}\ent(\rho^i_t\m)=\ent(\rho_t\m)
$$
since it can be applied to an arbitrary subnet. 

Also the last statement can be obtained with a small refinement of the proof of Theorem~\ref{thm:stabchee}: it suffices
to start from \eqref{eq:extra_fisher2} and  
then to proceed as in the rest of the proof to obtain $k_j\in D(\Ch_{\beta(j)})$ with $k_j\to f$ in $L^2(X,\m)$ and 
$\liminf_j\Ch_{\beta(j)}(k_j)\leq \Ch(f)$. Now, the inequality $|\rmD k_j|_{w,\sfd_{\beta(j)}}\geq |\rmD k_j|_{w,\sfd}$ $\m$-a.e. in $X$
gives
$$
\int \bigl(|\rmD k_j|_{w,\sfd_{\beta(j)}}-|\rmD f|_{w,\sfd}\bigr)^2\dd\m \leq \Ch_{\beta(j)}(k_j)+\Ch(f)-2\int |\rmD k_j|_{w,\sfd}^2\dd\m\;.
$$
Hence, $|\rmD k_j|_{w,\sfd_{\beta(j)}}\to |\rmD f|_{w,\sfd}$ in $L^2(X,\m)$ along a further subnet. 
Finally, writing $k_j=\tilde{k}_j\circ \pi_{\beta(j)}$, by applying 
Proposition~\ref{prop:calculus}(b) to $\tilde{k}_j$ we can find $\tilde f_j\in {\rm Lip}_b(X_{\beta(j)},\tilde{\sfd}_{\beta(j)})$ with
$$
\lim_{j\in J}\int\bigl| |\rmD \tilde{k}_j|_{w,\tilde\sfd_{\beta(j)}}-{\rm Lip}_a(\tilde f_j,\cdot)|^2\dd\m_{\beta(j)}=0\;.
$$
Setting $f_j=\tilde{f}_j\circ \pi_{\beta(j)}$ we obtain the final part of the statement.
\end{proof}

\section{Energy measure spaces}\label{sec:Energy}

In this section we study a class of extended distances in the framework of Dirichlet forms, basic references on this topic are
 \cite{Bouleau-Hirsch91}, \cite{Fukushima-Oshima-Takeda11}.

\subsection{Dirichlet forms, energy measure spaces and the
  Bakry-\'Emery condition}
\label{subsec:EMS}
\begin{definition}[Energy measure space]\label{def:extmmenergy}
We say that $(X,\mathcal B,\cE,\m)$ is an energy measure space if:
\begin{itemize}
\item[(a)] $\mathcal B$ is a $\sigma$-algebra in $X$ and $\m:\mathcal B\to [0,1]$ is a probability measure;
\item[(b)] $\cE$ is a strongly local and Markovian Dirichlet form in $L^2(X,\m)=L^2(X,\mathcal B,\m)$ 
whose domain $$\V
=\\V:=\{f\in L^2(X,\m):\ \cE(f,f)<\infty\}$$ is dense in $L^2(X,\m)$;  
\item[(c)] $\cE$ admits a  carr\'e du champs operator defined on $\V\cap L^\infty(X,\m)$.
\end{itemize}
\end{definition}

Recall that the Markovian property means $\cE(\phi\circ f,\phi\circ f)\leq \cE(f,f)$ for all $f\in\V$ and all $1$-Lipschitz $\phi:\R\to\R$.
We recall that the carr\'e du champs operator is
the bilinear form $\Gamma:\bigl(\V\cap L^\infty(X,\m)\bigr)^2\rightarrow L^1(X,\m)$ providing a local representation of $\cE$.
When $\Gamma$ exists (in more general situations it has to be understood as a measure-valued operator), it is characterized by the identity
\begin{equation}\label{def:carre}
\int \Gamma(f,f)\varphi\dd\m=-\frac 12\cE(f^2,\varphi)+\cE(f,f\varphi)
\qquad\forall f,\,\varphi\in \V\cap L^\infty(X,\m)\;.
\end{equation}
We use the standard abbreviations $\cE(f)$, $\Gamma(f)$ for
$\cE(f,f)$, $\Gamma(f,f)$, respectively, in the sequel. 
The domain $\V$ of $\cE$ is endowed with the Hilbert norm 
\begin{equation}\label{defV}
\|f\|_\V^2:=\|f\|_2^2+\cE(f)
\end{equation}
and we denote by $\mathcal A_\cE$ the Banach algebra $\{f\in\V:\ f,\,\Gamma(f)\in L^\infty(X,\m)\}$ 
endowed with the norm (see also \eqref{eq:26})
\begin{equation}
  \label{eq:26bis}
  \big\|f\big\|_{\cA_{\cE}}:=\big\|f\big\|_\infty+\big\|\,\Gamma(f)^{1/2}\,\big\|_\infty\;
\end{equation}
We now recall the main properties of the heat flow $\sfP^\cE$ associated to $\cE$. 
It can be defined as the unique locally absolutely continuous (in fact analytic) 
map $t\in (0,\infty)\mapsto f_t\in L^2(X,\m)$ satisfying
$$
\ddt f_t=\Delta_\cE f_t\quad\text{for a.e.~$t\in (0,\infty)$}\;,\qquad
\lim_{t\downarrow 0}f_t=f\quad\text{in $L^2(X,\m)$}\;,
$$
where $\Delta_\cE f$, the infinitesimal generator of the semigroup, is related to $\cE$ by
$$
v=\Delta_\cE f\quad\Longleftrightarrow\quad f\in \V,\ v\in L^2(X,\m),\quad
\int vg\dd\m=-\cE(f,g)\,\,\,\,\,\,\forall g\in\V\;.
$$
Using the characterization
$$
v=\Delta_\cE f\quad\Longleftrightarrow\quad -v\in \partial^-\frac 12\cE(f)
$$
it is easy to check that $\sfP^\cE_t$ is also the metric gradient flow of $\tfrac12 \cE$ with
respect to the $L^2(X,\m)$ distance, according to
\eqref{eq:EDI} and \eqref{def:evik} with $K=0$. 

We recall now a few basic properties of $\sfP^\cE$.

Since (thanks to the Markov property) $\sfP^\cE$ is a contraction also in the $L^1(X,\m)$ norm we can canonically extend it to
a linear semigroup in $L^1(X,\m)$, thanks to the density of $L^2(X,\m)$ in $L^1(X,\m)$. This extension of the semigroup, for which we 
retain the notation $\sfP^\cE$, obviously satisfies 
\begin{equation}\label{eq:ancora_dual}
\int g\sfP_t^\cE f\dd\m=\int f\sfP_t^\cE g\dd\m\qquad\forall
f\in L^1(X,\m),\,\,g\in L^\infty(X,\m),\,\,t\geq 0\;.
\end{equation} 

\begin{proposition}[Properties of $\sfP^\cE$ and derivative of the entropy]\label{prop:P_t}
$\sfP^\cE$ is a Markov self-adjoint linear semigroup in $L^2(X,\m)$, $\Delta_\cE$ has a dense domain and
$t\mapsto \int \sfP_t^\cE f\ln \sfP_t^\cE f\dd\m$ is locally absolutely continuous in $[0,\infty)$ for all $\mu=f\,\m\in D(\ent)$ with 
\begin{equation}\label{eq:derentropy}
-\ddt \int \sfP_t^\cE f\ln \sfP_t^\cE f\dd\m=4\cE(\sqrt{\sfP_t^\cE f})=\int_{\{\sfP_t^\cE f>0\}}\frac{\Gamma(\sfP_t^\cE f)}{\sfP_t^\cE f}\dd\m
\qquad\text{for a.e.~$t>0$}\;.
\end{equation}
In addition, if $L^2(X,\m)$ is separable, $\V$ is a separable Hilbert space.
\end{proposition}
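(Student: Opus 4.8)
The plan is to treat the three functional-analytic assertions quickly from standard Dirichlet-form theory, and to devote the real work to the entropy identity \eqref{eq:derentropy}, whose proof parallels that of the Cheeger analogue \eqref{eq:easter5}. First I would recall the classical correspondence between symmetric Dirichlet forms and semigroups (Bouleau--Hirsch, Fukushima--Oshima--Takeda): since $\cE$ is densely defined, closed, symmetric and Markovian, Kato's representation theorem yields a non-positive self-adjoint operator $\Delta_\cE$ with $\V=D((-\Delta_\cE)^{1/2})$ and $\cE(f,g)=\langle(-\Delta_\cE)^{1/2}f,(-\Delta_\cE)^{1/2}g\rangle$, so that $\sfP^\cE_t=\mathrm e^{t\Delta_\cE}$ is a strongly continuous, linear, self-adjoint contraction semigroup whose generator $\Delta_\cE$ has dense domain by Hille--Yosida. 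Markovianity (order preservation and $L^\infty$-contractivity) follows from the Markov property of $\cE$ via the Beurling--Deny criterion, and since $\m$ is a probability measure one has $1\in\V$, $\cE(1)=0$, hence conservativity $\sfP^\cE_t1=1$; by \eqref{eq:ancora_dual} this gives the mass conservation $\int\sfP^\cE_tf\dd\m=\int f\dd\m$ used below, as well as the identity $\int\Gamma(g)\dd\m=\cE(g)$ obtained from \eqref{def:carre} with $\varphi\equiv1$.

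For the entropy identity I would first treat a density $f$ bounded away from $0$ and $\infty$, say $0<c\le f\le C$. By the maximum/minimum principle $c\le f_t:=\sfP^\cE_tf\le C$ for all $t$, so $\log f_t$ is a bounded Lipschitz image of $f_t$ and thus lies in $\V\cap L^\infty(X,\m)$. Since $t\mapsto f_t$ is analytic in $L^2(X,\m)$ with $\dot f_t=\Delta_\cE f_t$, and $U(s)=s\log s$ is $C^1$ with bounded derivative on $[c,C]$, the chain rule gives local absolute continuity of $t\mapsto\int U(f_t)\dd\m$ together with $\ddt\int f_t\log f_t\dd\m=\int(\log f_t+1)\Delta_\cE f_t\dd\m$. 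The constant term integrates to zero by conservativity, and the defining relation of $\Delta_\cE$, the carré du champ identity \eqref{def:carre} and the Leibniz rule for $\Gamma$ yield
\[
\ddt\int f_t\log f_t\dd\m=-\cE(f_t,\log f_t)=-\int\Gamma(f_t,\log f_t)\dd\m=-\int\frac{\Gamma(f_t)}{f_t}\dd\m .
\]
The pointwise chain rule $\Gamma(\sqrt{f_t})=\Gamma(f_t)/(4f_t)$ then rewrites the right-hand side as $-4\cE(\sqrt{f_t})$, which is \eqref{eq:derentropy} in this case.

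To reach a general $f\in D(\ent)$ I would approximate by truncations $f^{(n)}=(f\vee\tfrac1n)\wedge n$ (suitably renormalised), apply the previous step, and pass to the limit using the $L^1$-lower semicontinuity and convexity of $\rho\mapsto4\cE(\sqrt\rho)$, dominated/monotone convergence for the entropy, and the $L^1$-contractivity of $\sfP^\cE$; this is precisely the regularisation argument behind \eqref{eq:easter5}, now with $\Gamma$ in place of $|\rmD\cdot|^2_w$. The restriction to $\{f_t>0\}$ appears because $f_t$ need not stay bounded below; on its complement $\sqrt{f_t}$ contributes nothing to $\cE(\sqrt{f_t})$ by strong locality, so the integral is correctly truncated. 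Local absolute continuity on all of $[0,\infty)$, including $t=0$, follows because $t\mapsto\ent(f_t\m)$ is nonincreasing and finite at $t=0$ while the right-hand side is locally integrable. I expect this regularisation — controlling $\cE(\sqrt{f_t})$ and the singularity of $\log$ at the origin uniformly in the approximation — to be the main obstacle.

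For the last statement, assume $L^2(X,\m)$ is separable. I would show $(\V,\|\cdot\|_\V)$ with $\|f\|_\V^2=\|f\|_2^2+\cE(f)$ (see \eqref{defV}) is isometrically isomorphic to $L^2(X,\m)$. Indeed, the bounded self-adjoint operator $R:=(1-\Delta_\cE)^{-1/2}$ (defined by spectral calculus) maps $L^2(X,\m)$ bijectively onto $D((1-\Delta_\cE)^{1/2})=\V$, and for $f=Rg$ one computes $\|f\|_\V^2=\langle(1-\Delta_\cE)f,f\rangle=\|g\|_2^2$. Thus $R$ is a surjective isometry from $(L^2(X,\m),\|\cdot\|_2)$ onto $(\V,\|\cdot\|_\V)$, and separability transfers from $L^2(X,\m)$ to $\V$.
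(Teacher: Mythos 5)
Your proposal is correct, and on the first two assertions it takes essentially the paper's route, which is extremely terse at this point: the paper likewise dismisses the semigroup properties as standard, and proves \eqref{eq:derentropy} by the chain rule for densities bounded away from zero, followed by an approximation that uses the finiteness of $\m$ (your two-sided truncation $c\le f\le C$ versus the paper's one-sided bound $f\ge c>0$ is immaterial, since the truncation step is needed anyway). The genuine divergence is in the separability of $\V$. The paper argues via a semigroup lemma from \cite[Lem.~4.9]{AGS11b}: it suffices to exhibit a separable, $\sfP^\cE$-invariant subspace of $\V$, and it takes $\V':=\bigcup_{t>0}\sfP^\cE_t L^2(X,\m)$, whose separability in the $\V$-norm follows from the separability of $L^2(X,\m)$ together with the regularization estimate $\cE(\sfP^\cE_tf)\le \|f\|_2^2/t$, i.e.\ \eqref{eq:regulaK} with $K=0$. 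You instead identify $(\V,\|\cdot\|_\V)$ isometrically with $L^2(X,\m)$ through $(1-\Delta_\cE)^{-1/2}$, which is shorter and purely spectral-theoretic: no invariance lemma, no density argument, no smoothing estimate. What the paper's route buys in exchange is that it only uses semigroup-level information (invariance plus regularization), so it is the kind of argument that survives when the generator is not a self-adjoint linear operator, in keeping with the metric/nonlinear toolkit of the rest of the paper; in the present linear setting both arguments are complete.

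One small imprecision worth fixing: conservativity ($1\in\V$, $\cE(1)=0$, hence $\int\Delta_\cE f_t\dd\m=0$) does not follow merely from $\m$ being a probability measure — finiteness of $\m$ only gives $1\in L^2(X,\m)$. It does follow from the Markov property as this paper states it (contraction under \emph{all} $1$-Lipschitz $\phi:\R\to\R$, hence under constants), but not from the usual unit-contraction form of Markovianity, under which non-conservative examples (e.g.\ Dirichlet boundary conditions on a bounded interval) exist. Since the paper's own proof silently uses the same mass preservation, this is a point of attribution rather than a gap in your argument.
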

\begin{proof}
The first properties are standard in the theory of semigroups, while \eqref{eq:derentropy} follows by the chain rule if
$f\geq c>0$ $\m$-a.e.~in $X$ and by an easy approximation, since $\m$
is finite, in the general case.

In order to prove separability, recall that, according to a standard results in the theory of semigroups (see for instance \cite[Lem.~4.9]{AGS11b}),
it suffices to find a separable and $\sfP^\cE$-invariant subspace $\V'\subset\V$. The subspace
$$
\V':=\bigcup_{t>0}\sfP^\cE_tL^2(X,\m)
$$
is $\sfP^\cE$-invariant. Its separability follows by the separability of $L^2(X,\m)$ and from the regularizing estimate
$\cE(\sfP^\cE_tf)\leq\|f\|_2^2/t$ for all $t>0$ and $f\in L^2(X,\m)$, 
which corresponds to \eqref{eq:regulaK} with $K=0$, $\bar x=f$
and $z=0$. 
\end{proof}

We recall one of the possible formulation of the functional Bakry-\'Emery condition \cite{Bakry-Emery84} 
for energy-measure spaces \cite{AGS12,Bakry-Gentil-Ledoux14};
other equivalent characterization in this abstract framework may be
found in \cite[Sect.~2.2]{AGS12}, see also \cite{Bakry06}.
\begin{definition}[Bakry-\'Emery condition via gradient contractivity]
  We say that the energy-measure space
  $(X,\cB,\cE,\m)$ satisfies the Bakry-\'Emery condition 
  $\BE K\infty$, $K\in \R$, if 
    \begin{equation}\label{eq:BE-grad}
      \tag{$\BE K\infty$}
      \text{for every $g\in \cA_\cE$}\qquad
      \Gamma(\sfP^\cE_tg) \leq\e^{-2Kt}\,
    \sfP^\cE_t\Gamma(g)\quad\text{$\m$-a.e.~in $X$, for all $t\geq
      0$}\;.
  \end{equation}
\end{definition}
\subsection{Extended distances induced by an energy measure space}
In this context the definition of ${\sf CE}^2(X,\Ch,\m)$ given in the metric setting can be immediately adapted, namely
a curve $\rho_s$ of probability densities belongs to ${\sf CE}^2(X,\cE,\m)$ if for some $c\in L^2(0,T)$ one has
\begin{equation}\label{eq:criterion_ce}
\biggl|\int f\rho_t\dd\m-\int f\rho_s\dd\m\biggr|\leq\int_s^tc(r)\biggl(\int\Gamma(f)\rho_r\dd\m\biggr)^{1/2}\dd r
\qquad\forall f\in\mathcal A_\cE
\end{equation}
for $0\leq s\leq t\leq T$. The least $c$ will still be denoted by $\|\rho_t'\|$.

Also the counterparts $W_\cE$ and $W_{\cE,*}$ of $W_{\Ch}$ and $W_{\Ch,*}$ can be immediately defined:

\begin{definition} \label{def:Wdual_cE} For $\mu_0=\rho_0\m,\,\mu_1=\rho_1\m\in\cP^a(X)$ we define
\begin{equation}\label{eq:matthias_distance_cE}
W_{\cE}^2(\mu_0,\mu_1):=\inf\bigg\{\int_0^1\|\rho_t'\|^2\dd t:\ \rho_t\in {\sf CE}^2(X,\cE,\m)\biggr\}
\end{equation}
and
\begin{equation}\label{eq:defWdual_cE}
W_{\cE,*}^2(\mu_0,\mu_1):=2\sup_\phi\int (\phi_1\rho_1-\phi_0\rho_0)\dd\m\;,
\end{equation}
where the supremum runs among all 
{$({\mathscr L}^1\otimes\cB)$-measurable}
bounded maps $\phi(t,x)=\phi_t(x)$ satisfying $\phi\in C_{\rm w*}([0,1];L^\infty(X,\m))\cap L^1([0,1];\V)$, and 
\begin{equation}\label{eq:HJdistr_cE}
\ddt \phi_t+\frac 12 \Gamma(\phi_t)\leq 0
\qquad\text{in $(0,1)\times X$, in the duality with $\mathcal A_{\cE}$\;.}
\end{equation}
\end{definition}
As for the metric theory, we will use use often
the simplified notation $W_{\cE}(\rho_0,\rho_1)$, $W_{\cE,*}(\rho_0,\rho_1)$ for $W_{\cE}(\rho_0\m,\rho_1\m)$, 
$W_{\cE,*}(\rho_0\m,\rho_1\m)$ respectively.

Arguing as in the proof of Proposition~\ref{prop:propWCh}, 
it is easily seen that $W_\cE$ is length and that $W_\cE^2$ is jointly
convex. In addition, with the same proof given in the metric setting, 
$t\mapsto\rho_t:=\sfP_t\rho\in {\sf CE}^2(X,\cE,\m)$ in any bounded interval $[0,T]$ with
\begin{equation}\label{eq:june2}
\|\rho_t'\|^2\leq \int_{\{\rho_t>0\}}\frac{\Gamma(\rho_t)}{\rho_t}\dd\m\qquad\text{for a.e. $t>0$}\;.
\end{equation}
Concerning $W_{\cE,*}$ one can also extend the same
considerations of Lemma~\ref{lem:equivalent-class}, obtaining in
particular an equivalent definition where the supremum in 
\eqref{eq:defWdual_cE} runs in $C^k([0,1];\cA_\cE)$;
as for Remark~\ref{rem:equivalent:HJ}, it is also easy to check
the joint convexity and the lower semicontinuity of $W^2_{\cE,*}$ with respect to the weak
$L^1$-topology. 

The following result can be obtained with the same proof given in the metric setting, see Proposition~\ref{prop:comparale}.
\begin{proposition}\label{prop:comparale_cE}
$W_\cE\geq W_{\cE,*}$ on $\cP^a(X)\times\cP^a(X)$.
\end{proposition}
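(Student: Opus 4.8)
The plan is to transcribe the proof of Proposition~\ref{prop:comparale} (the inequality $W_{\Ch}\ge W_{\Ch,*}$) into the Dirichlet-form language, systematically replacing the minimal relaxed slope $|\rmD\cdot|_w$ by the carr\'e du champ $\sqrt{\Gamma(\cdot)}$, the test algebra $\cA_{\Ch}$ by $\cA_\cE$, and the class ${\sf CE}^2(X,\Ch,\m)$ by ${\sf CE}^2(X,\cE,\m)$. Fix $\rho_0,\rho_1\in\cP^a(X)$. It suffices to show that for every subsolution $\phi$ admissible in \eqref{eq:defWdual_cE} and every curve $(\rho_t)\in{\sf CE}^2(X,\cE,\m)$ joining $\rho_0$ to $\rho_1$ one has
$$
2\int(\phi_1\rho_1-\phi_0\rho_0)\dd\m\le\int_0^1\|\rho_t'\|^2\dd t;
$$
taking the supremum over $\phi$ on the left and the infimum over admissible curves on the right then yields $W_{\cE,*}^2\le W_\cE^2$.

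First I would reduce to a regular class of subsolutions. As noted right after Definition~\ref{def:Wdual_cE}, the considerations of Lemma~\ref{lem:equivalent-class} carry over, so the supremum in \eqref{eq:defWdual_cE} may be restricted to $\phi\in C^1([0,1];\cA_\cE)$ with $\phi(\cdot,x)\in C^1([0,1])$ uniformly in $x$. For such $\phi$ the Hamilton--Jacobi inequality \eqref{eq:HJdistr_cE} holds $\m$-a.e.\ in $X$ for every $t$, that is $\partial_t\phi_t+\tfrac12\Gamma(\phi_t)\le0$ pointwise.

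Next, for a curve $(\rho_t)\in{\sf CE}^2(X,\cE,\m)$ I would prove that $t\mapsto\int\phi_t\rho_t\dd\m$ is absolutely continuous on $[0,1]$ and, via the Leibniz rule, estimate its derivative for a.e.\ $t$ by
$$
\ddt\int\phi_t\rho_t\dd\m=\int\phi_t\,\ddt\rho_t\dd\m+\int\rho_t\,\partial_t\phi_t\dd\m\le\int\phi_t\,\ddt\rho_t\dd\m-\frac12\int\rho_t\,\Gamma(\phi_t)\dd\m,
$$
where the last step uses the subsolution property. The first term is controlled by the continuity inequality \eqref{eq:criterion_ce}, applied at Lebesgue points to the test function $f=\phi_t\in\cA_\cE$, which gives
$$
\biggl|\int\phi_t\,\ddt\rho_t\dd\m\biggr|\le\|\rho_t'\|\biggl(\int\Gamma(\phi_t)\rho_t\dd\m\biggr)^{1/2}.
$$
Combining the two bounds and applying Young's inequality in the form $ab-\tfrac12 b^2\le\tfrac12a^2$ yields $\bigl|\ddt\int\phi_t\rho_t\dd\m\bigr|\le\tfrac12\|\rho_t'\|^2$ for a.e.\ $t$. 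Integrating over $(0,1)$ with endpoints $\rho_0,\rho_1$ gives the displayed inequality, and minimizing over the curve and taking the supremum over $\phi$ completes the argument.

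The main obstacle I anticipate is the rigorous justification of the absolute continuity of $t\mapsto\int\phi_t\rho_t\dd\m$ and of the Leibniz splitting, since $(\rho_t)$ is merely a ${\sf CE}^2$-curve and carries no strong time-derivative. This is handled exactly as in the metric case: the $C^1$-in-time regularity of $\phi$ obtained from the equivalent-class reduction, together with the continuity inequality \eqref{eq:criterion_ce} playing the role of a weak time-derivative of $\rho_t$, lets one decompose the increment $\int(\phi_t\rho_t-\phi_s\rho_s)\dd\m$ into a $\rho$-variation piece bounded by \eqref{eq:criterion_ce} and a $\phi$-variation piece bounded by $\partial_t\phi$, both with the uniform $L^2(0,1)$ control needed to conclude absolute continuity. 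The remaining computation is then routine.
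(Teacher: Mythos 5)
Your proposal is correct and follows essentially the same route as the paper: the paper proves Proposition~\ref{prop:comparale_cE} precisely by noting that the argument for $W_{\Ch}\geq W_{\Ch,*}$ in Proposition~\ref{prop:comparale} (regularize $\phi$ via the analogue of Lemma~\ref{lem:equivalent-class}, differentiate $t\mapsto\int\phi_t\rho_t\dd\m$ by the Leibniz rule, control the two terms with the continuity inequality and the subsolution property, apply Young's inequality, then integrate and optimize over $\phi$ and over the curve) carries over verbatim once $|\rmD\cdot|_w$ is replaced by $\sqrt{\Gamma(\cdot)}$ and $\cA_{\Ch}$, ${\sf CE}^2(X,\Ch,\m)$ by $\cA_\cE$, ${\sf CE}^2(X,\cE,\m)$. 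Your transcription is exactly that adaptation, including the same treatment of the absolute-continuity issue.
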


Let us quickly discuss two cases when it is possible to prove that 
the distance $W_\cE$ (and a fortiori $W_{\cE,*}$) between two 
probability densities is finite.
\begin{lemma}
  \label{lem:connectivity1}
  Let us suppose that $\cE$ satisfies the global Poincar\'e inequality
  \begin{equation}
    \label{eq:46}
    \int \Big|f-\int f\dd\m\Big|^2\dd\m\leq {\mathrm c_P}\cE(f)\quad \text{for every }f\in \V\;
  \end{equation}
  Then if $\rho_0,\,\rho_1\in L^2(X,\m)$ are probability densities with
  $\rho_i\ge \varrho>0$ $\m$-a.e.~in $X$, $i=0,1$, we have
  \begin{equation}
    \label{eq:48}
    W_{\cE}^2(\rho_0,\rho_1)\le \frac{\mathrm c_P}{\varrho}\int |\rho_1-\rho_0|^2\dd\m\;.
  \end{equation}
\end{lemma}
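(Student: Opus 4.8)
The plan is to exhibit an explicit admissible curve joining $\rho_0$ to $\rho_1$ and to estimate its action directly, rather than to produce a subsolution of the Hamilton--Jacobi inequality. The natural candidate is the linear interpolation $\rho_t:=(1-t)\rho_0+t\rho_1$, $t\in[0,1]$. Since $\rho_0,\rho_1$ are probability densities and $\rho_0,\rho_1\ge\varrho$ $\m$-a.e., each $\rho_t$ is again a probability density with $\rho_t\ge\varrho$ $\m$-a.e. Writing $w:=\rho_1-\rho_0\in L^2(X,\m)$, the two endpoints integrate to $1$, so $\int w\dd\m=0$; this mean-zero property is what makes the Poincar\'e inequality applicable.

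First I would check that $\rho_t\in{\sf CE}^2(X,\cE,\m)$ and bound $\|\rho_t'\|$. For $f\in\mathcal A_\cE\subset\V$, set $\bar f:=\int f\dd\m$. Using $\int w\dd\m=0$, Cauchy--Schwarz and the Poincar\'e inequality \eqref{eq:46} give
\[
\Big|\int f\,w\dd\m\Big|=\Big|\int (f-\bar f)\,w\dd\m\Big|\le\Big(\int|f-\bar f|^2\dd\m\Big)^{1/2}\Big(\int w^2\dd\m\Big)^{1/2}\le\big(\mathrm c_P\,\cE(f)\big)^{1/2}\Big(\int w^2\dd\m\Big)^{1/2}.
\]
Then I would exploit the uniform lower bound $\rho_r\ge\varrho$ to pass to the weighted energy that appears in \eqref{eq:criterion_ce}, namely $\cE(f)=\int\Gamma(f)\dd\m\le\varrho^{-1}\int\Gamma(f)\rho_r\dd\m$ for every $r\in[0,1]$. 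Combining the two displays,
\[
\Big|\int f\,w\dd\m\Big|\le\Big(\frac{\mathrm c_P}{\varrho}\int w^2\dd\m\Big)^{1/2}\Big(\int\Gamma(f)\rho_r\dd\m\Big)^{1/2}=c\,\Big(\int\Gamma(f)\rho_r\dd\m\Big)^{1/2}
\]
for every $r\in[0,1]$, with the constant $c:=\big(\tfrac{\mathrm c_P}{\varrho}\int|\rho_1-\rho_0|^2\dd\m\big)^{1/2}$ independent of $r$.

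Next, since $\rho_t-\rho_s=(t-s)w$, I would integrate this pointwise bound over $r\in[s,t]$ to obtain
\[
\Big|\int f(\rho_t-\rho_s)\dd\m\Big|=(t-s)\Big|\int f\,w\dd\m\Big|\le\int_s^t c\,\Big(\int\Gamma(f)\rho_r\dd\m\Big)^{1/2}\dd r
\]
for all $f\in\mathcal A_\cE$ and $0\le s\le t\le1$. This is precisely the continuity inequality \eqref{eq:criterion_ce}, so $\rho_t\in{\sf CE}^2(X,\cE,\m)$ with $\|\rho_t'\|\le c$ for a.e.\ $t$. Plugging this admissible curve into the definition \eqref{eq:matthias_distance_cE} of $W_\cE$ yields
\[
W_{\cE}^2(\rho_0,\rho_1)\le\int_0^1\|\rho_t'\|^2\dd t\le c^2=\frac{\mathrm c_P}{\varrho}\int|\rho_1-\rho_0|^2\dd\m,
\]
which is exactly \eqref{eq:48}.

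I do not expect a genuine obstacle: the whole argument is a single testing computation. The only two points deserving care are the mean-zero reduction $\int w\dd\m=0$, which licenses applying Poincar\'e to $f-\bar f$, and the observation that the weighted energy $\int\Gamma(f)\rho_r\dd\m$ dominates $\varrho\,\cE(f)$ uniformly in $r$ by virtue of $\rho_r\ge\varrho$. Because of this uniformity the least constant $c$ is genuinely time-independent, so the action integral collapses to $c^2$ and produces the stated quantitative bound.
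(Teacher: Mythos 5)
Your proof is correct and follows essentially the same route as the paper: both take the linear interpolation $\rho_s=(1-s)\rho_0+s\rho_1$, use that $\rho_1-\rho_0$ has zero mean to apply Cauchy--Schwarz with $f-\bar f$ and the Poincar\'e inequality, and then exploit $\rho_r\ge\varrho$ to pass to the weighted energy $\int\Gamma(f)\rho_r\dd\m$ in the continuity inequality. Your write-up merely spells out the time-integration and the admissibility of the curve, which the paper leaves implicit.
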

\begin{proof}
  We just take the linear connecting curve $\rho_s:=(1-s)\rho_0+s\rho_1$,
  and we observe that, with $\bar f=\int f\dd\m$, for every $0\leq s<t\leq 1$ one has
  \begin{align*}
    \frac 1{t-s}\int f(\rho_t-\rho_s)\dd\m&=
    \int (\rho_1-\rho_0) f\dd\m=
    \int (\rho_1-\rho_0) (f-\bar f)\dd\m\\&
    \le\|\rho_1-\rho_0\|_2 \|f-\bar f\|_2
    \le \Big(\frac{\mathrm c_P}{\varrho}\Big)^{1/2} \|\rho_1-\rho_0\|_2 
    \Big(\int \rho_r \Gamma(f)\dd\m\Big)^{1/2} \;.
  \end{align*}
\end{proof}
\begin{lemma}
  \label{lem:connectivity2}
  Let us suppose that $\cE$ satisfies the Logarithmic Sobolev inequality
  \begin{equation}
    \label{eq:46bis}
    2 \ent(\rho\m)\leq{\mathrm c_{LS}} \int_{\{\rho>0\}} \frac{\Gamma(\rho)}\rho\dd\m=4{\mathrm c_{LS}}\cE(\sqrt \rho)\;
  \end{equation}
  for every probability density $\rho$ with $\sqrt \rho\in \V$.
  Then the Talagrand inequality holds
  \begin{equation}
    \label{eq:49}
    \frac 12 W^2_\cE(\mu,\m)\le {\mathrm c_{LS}}\ent(\mu)\quad \text{for every }\mu\in D(\ent)\;.
  \end{equation}
  In particular, $W_{\cE}(\mu_0,\mu_1)<\infty$ whenever $\mu_i=\rho_i\m\in
  D(\mathrm{Ent})$.
  \end{lemma}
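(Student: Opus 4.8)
The plan is to run the Otto--Villani scheme, using the heat flow $t\mapsto\rho_t:=\sfP^\cE_t\rho_0$ as a curve joining $\mu=\rho_0\m$ to $\m$ and estimating its $W_\cE$-length in terms of the entropy. I would first reduce to the case of a density $\rho_0$ with $0<c\le\rho_0\le C<\infty$: by the maximum principle $c\le\rho_t\le C$ for all $t$, by \eqref{eq:june2} the curve belongs to ${\sf CE}^2(X,\cE,\m)$ on every bounded interval with $\|\rho_t'\|^2\le{\mathsf F}(\rho_t)$, and by the length character of $W_\cE$ (the analogue of \eqref{eq:feb8}, established as in Proposition~\ref{prop:propWCh}) one has $W_\cE(\rho_0,\rho_T)\le\int_0^T\sqrt{{\mathsf F}(\rho_t)}\dd t$ for every $T>0$.

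The core is then a one-variable computation. Setting $E(t):=\ent(\rho_t\m)\ge0$, Proposition~\ref{prop:P_t} and \eqref{eq:derentropy} give that $E$ is locally absolutely continuous with $-E'(t)={\mathsf F}(\rho_t)$, while the Logarithmic Sobolev inequality \eqref{eq:46bis} reads $2E(t)\le{\mathrm c_{LS}}{\mathsf F}(\rho_t)=-{\mathrm c_{LS}}E'(t)$; in particular $E(t)\le E(0)\rme^{-2t/{\mathrm c_{LS}}}\to0$. On $\{E>0\}$ I would rewrite
\[
\sqrt{{\mathsf F}(\rho_t)}=\frac{-E'(t)}{\sqrt{{\mathsf F}(\rho_t)}}\le\frac{-E'(t)}{\sqrt{2E(t)/{\mathrm c_{LS}}}}=\sqrt{\tfrac{{\mathrm c_{LS}}}2}\,\frac{-E'(t)}{\sqrt{E(t)}}\;,
\]
and integrate, using that $t\mapsto\sqrt{E(t)}$ is absolutely continuous with $E(\infty)=0$, to get $\int_0^\infty\sqrt{{\mathsf F}(\rho_t)}\dd t\le\sqrt{2{\mathrm c_{LS}}E(0)}$. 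Combined with the previous paragraph this yields $W_\cE(\rho_0,\rho_T)\le\sqrt{2{\mathrm c_{LS}}\ent(\mu)}$ for every $T$.

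To pass from $\rho_T$ to $\m$ I would show $W_\cE(\rho_T,\m)\to0$: the Csisz\'ar--Kullback--Pinsker inequality gives $\|\rho_T-1\|_{L^1}^2\le2E(T)\to0$, and since $c\le\rho_T\le C$ this upgrades to $\|\rho_T-1\|_2\to0$; linearising \eqref{eq:46bis} around the constant density shows that $\cE$ satisfies the Poincar\'e inequality \eqref{eq:46} with ${\mathrm c_P}={\mathrm c_{LS}}$, so Lemma~\ref{lem:connectivity1} applied with $\varrho=\min(c,1)$ gives $W_\cE^2(\rho_T,\m)\le({\mathrm c_P}/\varrho)\|\rho_T-1\|_2^2\to0$. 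Letting $T\to\infty$ and using the triangle inequality proves $\tfrac12W_\cE^2(\mu,\m)\le{\mathrm c_{LS}}\ent(\mu)$ for bounded densities bounded away from zero. The last assertion then follows at once: for $\mu_i=\rho_i\m\in D(\ent)$ one has $W_\cE(\mu_0,\mu_1)\le W_\cE(\mu_0,\m)+W_\cE(\m,\mu_1)<\infty$.

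The remaining, and main, obstacle is the approximation to a general $\mu=\rho_0\m\in D(\ent)$. I would truncate, $\rho_0^n:=Z_n^{-1}\big((\rho_0\vee n^{-1})\wedge n\big)$, so that $\rho_0^n\to\rho_0$ in $L^1$ with $\ent(\rho_0^n\m)\to\ent(\rho_0\m)$; the delicate point is to pass \eqref{eq:49} to the limit, i.e.\ to prove the lower semicontinuity of $\mu\mapsto W_\cE^2(\mu,\m)$ along such sequences. Unlike $W_{\cE,*}^2$, whose weak-$L^1$ lower semicontinuity is built into its dual definition, $W_\cE$ is defined through the continuity inequality, so I expect this to require a genuine compactness argument: extracting limits of near-optimal ${\sf CE}^2$-connecting curves via the stability statement analogous to Proposition~\ref{prop:propWCh}(b), with the uniform integrability needed for weak-$L^1$ compactness supplied by de la Vall\'ee-Poussin from uniform entropy bounds along the curves. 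Making this last point rigorous is where the real work lies.
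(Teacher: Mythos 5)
Your core computation is exactly the one in the paper's proof: differentiating $t\mapsto\sqrt{\ent(\sfP^\cE_t\rho\,\m)}$ along the heat flow and combining \eqref{eq:derentropy} with the Logarithmic Sobolev inequality \eqref{eq:46bis} to bound the $W_\cE$-length of the flow by $\sqrt{2\mathrm{c}_{LS}\ent(\mu)}$. The genuine gap is the step you yourself flag at the end: having restricted to densities with $0<c\le\rho_0\le C$, you must pass \eqref{eq:49} to the limit along truncations of a general $\rho_0\in D(\ent)$, and this requires lower semicontinuity of $\mu\mapsto W_\cE^2(\mu,\m)$ under $L^1$ convergence. Your proposed mechanism --- compactness of near-optimal ${\sf CE}^2$-connecting curves, with uniform integrability ``supplied by de la Vall\'ee-Poussin from uniform entropy bounds along the curves'' --- does not work as stated: curves admissible in \eqref{eq:matthias_distance_cE} carry no entropy bound whatsoever (convexity of $\ent$ along $W_\cE$-geodesics is only available under $\BE K\infty$, which is not a hypothesis of this lemma, and near-optimal curves need not be geodesics anyway), so there is no source for the required uniform integrability; the stability statement analogous to Proposition~\ref{prop:propWCh}(b) can only be applied \emph{after} the weak $L^1$ limit points of $\rho^n_t$ have been secured for each $t$. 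The natural alternative, approximating $\rho_0$ by its truncations in the $W_\cE$ distance as in Corollary~\ref{lem:june17}, is circular here, because that corollary is itself deduced from the Talagrand inequality \eqref{eq:talag}.

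The reduction to bounded densities is in fact unnecessary, and dropping it is how the paper closes the argument. Nothing in the heat-flow estimates requires $\rho_0$ to be bounded or bounded away from zero: for arbitrary $\mu=\rho\m\in D(\ent)$, the identity $\int f(\rho_t-\rho_s)\dd\m=-\int_s^t\int\Gamma(f,\rho_r)\dd\m\dd r$ together with Cauchy--Schwarz shows that $\rho_t=\sfP^\cE_t\rho$ belongs to ${\sf CE}^2(X,\cE,\m)$ on bounded intervals with $\|\rho_t'\|^2\le\int_{\{\rho_t>0\}}\Gamma(\rho_t)/\rho_t\dd\m$ (this is \eqref{eq:june2}), and the differential inequality for $\sqrt{\ent(\mu_t)}$ holds verbatim; since $\ent(\mu_t)\to0$ forces $\rho_t\to1$ in $L^1(X,\m)$, one passes to the limit $t\to\infty$ directly in the continuity inequality, so that the reparametrized flow on a finite interval is itself an admissible competitor joining $\mu$ to $\m$. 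Integrating the velocity estimate then gives \eqref{eq:49} in one stroke for every $\mu\in D(\ent)$, with no approximation step at all. Your endpoint argument via Pinsker, the linearized Poincar\'e inequality and Lemma~\ref{lem:connectivity1} is correct for densities bounded away from zero, but it too becomes superfluous once the restriction to bounded densities is removed.
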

\begin{proof} We follow the argument of \cite{Otto-Villani00} to use the
  Logarithmic-Sobolev inequality in order to
  show that the heat flow $\sfP_t^\cE \mu$ connects $\mu$ to
  $\m$ and to estimate its length.

   If $\mu=\rho\m\in D(\ent)$ the curve $\mu_t=\rho_t\m$ where
  $\rho_t=\sfP_t^\cE\rho$ belongs to $\mathsf{CE}^2(X,\cE,\m)$ in any
  bounded interval $[0,T]$, since for every $f\in \cA_\cE$ and for
  every $0\le s\le t$
  \begin{align}\nonumber
    \int f(\rho_t-\rho_s)\dd\m&=
    -\int_s^t \int \Gamma(f,\rho_r)\dd\m\dd r\\ \label{eq:LSI1}
    &\le
    \int_s^t \Big(\int_{\{\rho_r>0\}}
    \frac{\Gamma(\rho_r)}{\rho_r}\dd\m\Big)^{1/2}
    \Big(\int \rho_r\Gamma(f)\dd\m\Big)^{1/2} \dd r\ ;
  \end{align}
  the same formula shows that 
  \begin{equation*}
    \|\rho_t'\|\le \Big(\int_{\{\rho_t>0\}}
    \frac{\Gamma(\rho_t)}{\rho_t}\dd\m\Big)^{1/2}\ .
  \end{equation*}
  On the other hand, for every time $t$ with $\ent(\mu_t)>0$,
  \eqref{eq:derentropy} yields
  \begin{align}\label{eq:LSI2}
    -\ddt \Big(\ent(\mu_t)\Big)^{1/2}=
    \int_{\rho_t>0} \frac{\Gamma(\rho_t)}{\rho_t}\dd\m
    \cdot \Big(4\ent(\mu_t)\Big)^{-1/2}\ge 
    \Big(\frac{1}{{2\mathrm c_{LS}}}
    \int_{\{\rho_t>0\}}
    \frac{\Gamma(\rho_t)}{\rho_t}\dd\m\Big)^{1/2}\;.
  \end{align}
  Since the left hand side is
  integrable in $(0,\infty)$, also the right hand side is integrable
  and, in particular, the essential $\liminf$ of $\int_{\{\rho_t>0\}}\Gamma(\rho_t)/\rho_t\dd\m$ as $t\to\infty$ is null. 
  From \eqref{eq:46bis} and the monotonicity of entropy we conclude that 
  $\ent(\mu_t)\to0$ as $t\to\infty$ and thus $\rho_t\to1$ in $L^1(X,\m)$ as
  $t\to\infty$. Thus we can pass to the limit $t\to\infty$ in
  \eqref{eq:LSI1} and obtain that $t\mapsto\rho_t$ connects $\mu$ to
  $\m$ and is admissible (after reparametrization on a finite
  interval) in the definition of $W_\cE$. Moreover, \eqref{eq:LSI2}
  provides the velocity estimate
  \begin{equation*}
    \Big(\frac{1}{2{\mathrm c_{LS}}}\Big)^{1/2} \|\rho_t'\|\le -\ddt
    \Big(\ent(\mu_t)\Big)^{1/2}\ .
  \end{equation*}
  Eventually, integrating this last inequality from $0$ to $\infty$,
  recalling that $\ent(\mu)\ge0$ and that $\ent(\mu_t)\to0$ as
  $t\to\infty$ the Talagrand inequality \eqref{eq:49} 
  {follows}.
\end{proof}
When $\BE K\infty$ holds with $K>0$,
the well known argument of Bakry and \'Emery 
(we will also provide a proof based on the EVI formulation, which in turn follows by $\BE K\infty$,
see Corollary~\ref{cor:LS}) yields the validity of the Logarithmic Sobolev inequality
\eqref{eq:46bis} with $\mathrm c_{LS}=K$, \emph{provided}
$(X,\cB,\cE,\m)$ satisfies the \emph{irreducibility condition}
\begin{equation}
  \label{eq:53}
  f\in \V,\
  \cE(f)=0\qquad\Longrightarrow\qquad
  \text{$f=c$ $\m$-a.e.~in $X$ for some $c\in\R$}\;
\end{equation}
which is also equivalent to the $L^2$-\emph{ergodicity} of the semigroup
$\sfP^\cE$:
\begin{equation}
  \label{eq:54}
  \lim_{t\to\infty}\sfP^\cE_tf=\int f\dd\m\ \text{strongly in }L^2(X,\m),\quad
  \text{for every }f\in L^2(X,\m)\;.
\end{equation}
\begin{corollary}
  \label{cor:LScheat}
  If $(X,\cB,\cE,\m)$ is irreducible according to \eqref{eq:53} and 
  $\BE K\infty$ holds with $K>0$ then 
  \eqref{eq:46bis} and \eqref{eq:49} are satisfied with $\mathrm
  c_{LS}=K$. In particular every couple of probability measures $\mu_i=\rho_i\m\in
  D(\mathrm{Ent})$ has finite distance $W_{\cE}(\mu_0,\mu_1)<\infty$.
\end{corollary}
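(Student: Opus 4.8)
The plan is to deduce the Logarithmic Sobolev inequality \eqref{eq:46bis} from $\BE K\infty$ by the classical semigroup argument of Bakry and \'Emery, and then to read off both the Talagrand inequality \eqref{eq:49} and the finiteness of $W_\cE$ directly from Lemma~\ref{lem:connectivity2}. First I would reduce \eqref{eq:46bis} to the case of a probability density $\rho$ that is bounded and bounded away from zero, the general case following by truncation and by the $L^1$-lower semicontinuity of $\ent$ and of the Fisher information; for such $\rho$ one has $\sqrt\rho\in\V$ and the heat flow $\rho_t:=\sfP^\cE_t\rho$ stays in the same class.

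The core computation runs along the lines of Lemma~\ref{lem:connectivity2}. Writing $\psi(t):=\ent(\rho_t\m)$, the entropy dissipation identity \eqref{eq:derentropy} gives
\[
  \psi'(t)=-\int_{\{\rho_t>0\}}\frac{\Gamma(\rho_t)}{\rho_t}\dd\m
  =-4\cE\big(\sqrt{\rho_t}\big)\qquad\text{for a.e.\ }t>0\;.
\]
The decisive point is the exponential decay of the Fisher information,
\[
  \int_{\{\rho_t>0\}}\frac{\Gamma(\rho_t)}{\rho_t}\dd\m
  \le \e^{-2Kt}\int_{\{\rho>0\}}\frac{\Gamma(\rho)}{\rho}\dd\m\;,
\]
which I would obtain from $\BE K\infty$: passing from the weak gradient estimate \eqref{eq:BE-grad} to the pointwise bound $\Gamma(\sfP^\cE_s g)/\sfP^\cE_s g\le\e^{-2Ks}\,\sfP^\cE_s\big(\Gamma(g)/g\big)$ via the diffusion property of the carr\'e du champ and Cauchy--Schwarz, and then integrating the interpolation $s\mapsto\sfP^\cE_s\big((\sfP^\cE_{t-s}\rho)\log\sfP^\cE_{t-s}\rho\big)$ over $[0,t]$. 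Finally I would integrate $-\psi'$ over $(0,\infty)$, using that the irreducibility assumption \eqref{eq:53}, equivalently the ergodicity \eqref{eq:54}, forces $\rho_t\to1$ in $L^2(X,\m)$ and hence $\psi(t)\to\ent(\m)=0$; this yields
\[
  \ent(\rho\m)=\int_0^\infty\big(-\psi'(t)\big)\dd t
  \le\Big(\int_0^\infty\e^{-2Kt}\dd t\Big)\int_{\{\rho>0\}}\frac{\Gamma(\rho)}{\rho}\dd\m
  =\frac1{2K}\int_{\{\rho>0\}}\frac{\Gamma(\rho)}{\rho}\dd\m\;,
\]
which, after multiplication by $2$, is exactly \eqref{eq:46bis} (the constant produced being the sharp Bakry--\'Emery value $\mathrm c_{LS}=1/K$).

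With \eqref{eq:46bis} in hand, Lemma~\ref{lem:connectivity2} applies without change and produces the Talagrand inequality \eqref{eq:49} with the same constant, as well as the finiteness of $W_\cE(\mu_0,\mu_1)$ for every pair $\mu_i=\rho_i\m\in D(\ent)$; the corresponding statement for $W_{\cE,*}$ is then immediate from Proposition~\ref{prop:comparale_cE}. I expect the genuine obstacle to be the middle step: in this abstract Dirichlet setting only the weak form \eqref{eq:BE-grad} of $\BE K\infty$ is assumed, so one must justify the passage to the strong gradient estimate and exploit the diffusion property of $\Gamma$ in order to secure the exponential decay of the Fisher information (the equivalence between these formulations being the content of \cite{Bakry-Emery84,Bakry-Gentil-Ledoux14} and \cite[Sect.~2.2]{AGS12}); the remaining steps are routine integrations combined with the ergodic convergence coming from irreducibility.
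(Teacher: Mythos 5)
Your proposal is correct and takes essentially the same route as the paper: the corollary is stated there without a proof, being justified exactly by your scheme, namely the classical Bakry--\'Emery semigroup argument for \eqref{eq:46bis} (the paper calls it ``well known''; the self-improvement of \eqref{eq:BE-grad} to the pointwise bound $\Gamma(\sfP^\cE_s g)\le \e^{-2Ks}\bigl(\sfP^\cE_s\sqrt{\Gamma(g)}\bigr)^2$, which you rightly single out as the only delicate step, is indeed available in the references you cite) followed by Lemma~\ref{lem:connectivity2}, whose statement already contains both \eqref{eq:49} and the finiteness of $W_\cE(\mu_0,\mu_1)$. Two remarks: first, the constant you obtain, $2\ent(\rho\m)\le K^{-1}\int_{\{\rho>0\}}\Gamma(\rho)/\rho\dd\m$, i.e.\ $\mathrm c_{LS}=1/K$ in the normalization of \eqref{eq:46bis}, is the correct one --- it is the value consistent with the Talagrand inequality \eqref{eq:talag} proved in Corollary~\ref{cor:LS} --- so the ``$\mathrm c_{LS}=K$'' appearing in the statement is a typo of the paper rather than a flaw in your argument; second, the paper's own self-contained proof of the log-Sobolev inequality (Corollary~\ref{cor:LS}) proceeds instead through the $\EVI_K$ property of $\sfP^\cE$, the geodesic $K$-convexity of $\ent$ and the identification \eqref{eq:Fisher-slope} of the slope with the Fisher information, a route that entirely avoids the self-improvement step and uses only \eqref{eq:53}--\eqref{eq:54} in the final limit $t\to\infty$, exactly as you do.
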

Our goal is now to prove that under the contractivity assumption 
$\BE K\infty$ the upper length distance associated to
$W_{\cE,*}$ coincides with $W_\cE$. In the proof the following lemma will play a crucial role.
\begin{lemma}
  \label{lem:speed2} If $\BE K\infty$ holds, 
  then for any curve $(\mu_t)_{t\in[0,1]}$ in
  $AC^2\big([0,1],(\cP(X),W_{\cE,*})\big)$ with $\mu_t=\rho_t \m$ 
  and any $\phi\in \mathcal A_\cE$ one has (denoting by $\abs{\dot\mu_t}$ the metric derivative w.r.t.~$W_{\cE,*}$)
\begin{align}\label{eq:speed-est-1}
  \biggl|\frac{\mathrm d}{\mathrm d s}\int \rho_s \phi\dd \m\biggl|_{s=t}\leq\abs{\dot\mu_t}\cdot\sqrt{\int \Gamma(\phi)\rho_t\dd \m}\qquad\text{for a.e.~$t\in (0,1)$}\;.
\end{align}  
\end{lemma}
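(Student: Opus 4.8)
The plan is to bound the two one-sided derivatives of $s\mapsto\int\rho_s\phi\dd\m$ separately and then combine them. Since replacing $\phi$ by $-\phi$ leaves both $|\dot\mu_t|$ and $\int\Gamma(\phi)\rho_t\dd\m$ unchanged while reversing the sign of the functional, it suffices to prove the one-sided upper bound $\frac{\mathrm d^{+}}{\mathrm d s}\int\rho_s\phi\dd\m\big|_{s=t}\le|\dot\mu_t|\,\big(\int\Gamma(\phi)\rho_t\dd\m\big)^{1/2}$ at each $t$ that is simultaneously a point of metric differentiability of the curve and a Lebesgue point of the time integrands involved. As a preliminary I would check that $s\mapsto\int\rho_s\phi\dd\m$ is absolutely continuous, hence a.e.\ differentiable: the crude drop $\psi_\sigma:=\phi-\tfrac\sigma2\|\Gamma(\phi)\|_\infty$ is admissible in Definition~\ref{def:Wdual_cE}, and inserting it into the scaled duality bound of Remark~\ref{rem:equivalent:HJ} gives $\big|\int\phi\,(\rho_t-\rho_s)\dd\m\big|\le\|\Gamma(\phi)\|_\infty^{1/2}\,W_{\cE,*}(\mu_s,\mu_t)\le\|\Gamma(\phi)\|_\infty^{1/2}\int_s^t|\dot\mu_r|\dd r$.

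The heart of the argument is a localized sharpening in which $\|\Gamma(\phi)\|_\infty$ is replaced by the true energy $\int\Gamma(\phi)\rho_t\dd\m$. For this I would construct, for each small $\delta>0$, a Hamilton--Jacobi subsolution $\psi^\delta=(\psi^\delta_\sigma)_{\sigma\in[0,\delta]}$ in the admissible class of Definition~\ref{def:Wdual_cE}, issuing from $\psi^\delta_0=\phi$ and satisfying $\psi^\delta_\sigma\to\phi$ together with the averaged energy convergence $\tfrac1\delta\int_0^\delta\Gamma(\psi^\delta_\sigma)\dd\sigma\to\Gamma(\phi)$ strongly enough to pass to the limit against $\rho_{t+h}$ as $\delta,h\downarrow0$. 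Granting such $\psi^\delta$, the scaled duality bound $2\delta\int(\psi^\delta_\delta\rho_{t+h}-\phi\rho_t)\dd\m\le W_{\cE,*}^2(\mu_t,\mu_{t+h})$, combined with $\psi^\delta_\delta-\phi=\int_0^\delta\partial_\sigma\psi^\delta_\sigma\dd\sigma$ and the subsolution inequality $\partial_\sigma\psi^\delta_\sigma\le-\tfrac12\Gamma(\psi^\delta_\sigma)$ tested against the nonnegative density $\rho_{t+h}$, yields
\[
\int\phi\,(\rho_{t+h}-\rho_t)\dd\m\le\frac1{2\delta}W_{\cE,*}^2(\mu_t,\mu_{t+h})+\frac12\int_0^\delta\!\int\Gamma(\psi^\delta_\sigma)\rho_{t+h}\dd\m\dd\sigma.
\]
Setting $\delta=\lambda h$, dividing by $h$ and letting $h\downarrow0$ turns this into $\frac{\mathrm d^{+}}{\mathrm d s}\int\rho_s\phi\dd\m\big|_{s=t}\le\tfrac1{2\lambda}|\dot\mu_t|^2+\tfrac\lambda2\int\Gamma(\phi)\rho_t\dd\m$, and optimizing over $\lambda>0$ by Young's inequality gives exactly \eqref{eq:speed-est-1}.

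The main obstacle is precisely the construction of $\psi^\delta$ with the sharp, \emph{integrated} energy bound: a spatially constant drop is too lossy, and the heat flow $\sfP^\cE_\sigma\phi$ is \emph{not} itself a Hamilton--Jacobi subsolution, because testing $\Delta_\cE\sfP^\cE_\sigma\phi+\tfrac12\Gamma(\sfP^\cE_\sigma\phi)$ against a nonnegative $\psi\in\mathcal A_\cE$ produces the sign-indefinite term $-\int\Gamma(\psi,\sfP^\cE_\sigma\phi)\dd\m$. This is exactly where $\BE K\infty$ is used: I would regularize $\phi$ through the semigroup and exploit the gradient contractivity $\Gamma(\sfP^\cE_\sigma\phi)\le\mathrm e^{-2K\sigma}\,\sfP^\cE_\sigma\Gamma(\phi)$ to control the energy of the evolved test function and to verify a (time-rescaled) subsolution inequality in the sense of Remark~\ref{rem:equivalent:HJ}(2), with an exponential weight $\vartheta$ whose primitive $\alpha(\delta)=\int_0^\delta\vartheta$ collapses to $\delta$ as $\delta\to0$, so that the optimization above is unaffected in the limit. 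Once this short-time subsolution is available, the remaining steps are the elementary duality and Young manipulations displayed above.
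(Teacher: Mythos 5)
Your overall skeleton matches the paper's proof: the scaled duality formula of Remark~\ref{rem:equivalent:HJ}, a Hamilton--Jacobi subsolution on a short interval of length $\delta=\lambda h$ coupled to the time increment, and a final optimization in $\lambda$ by Young's inequality; even your preliminary absolute-continuity step via the constant drop is the paper's $W_{\cE,*,1}$ argument (Remark~\ref{rem:anche_questa_bis}). But the core of your proof is missing: the family $\psi^\delta$ is never constructed, and the construction you hint at cannot produce it. As you yourself observe, the obstruction is the sign-indefinite term $\Delta_\cE\sfP^\cE_\sigma\phi$, and ``regularizing $\phi$ through the semigroup'' does nothing to remove it, since $\BE K\infty$ controls only $\Gamma$ of the evolved function, never its Laplacian. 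The idea that is genuinely needed --- and the only place where $\BE K\infty$ really enters --- is the Hopf--Cole/Kuwada conjugation used in the paper: set $\psi:=\e^{\phi}$ and
\begin{equation*}
\psi_s:=2\,\sfP^\cE_s\log\sfP^\cE_{\delta-s}\psi,\qquad s\in[0,\delta]\;.
\end{equation*}
Writing $u_s:=\sfP^\cE_{\delta-s}\psi$, the identity $\psi^{-1}\Delta_\cE\psi-\Delta_\cE\log\psi=\Gamma(\log\psi)$ together with the commutation of $\Delta_\cE$ with $\sfP^\cE_s$ makes the two Laplacian contributions cancel exactly, leaving $\partial_s\psi_s=-2\,\sfP^\cE_s\Gamma(\log u_s)\le-\tfrac12\,\e^{2Ks}\,\Gamma(\psi_s)$ by gradient contractivity: this is precisely the exponentially weighted subsolution you postulate, with $\alpha(\delta)=\rmI_{2K}(\delta)\sim\delta$ (for $K\ge0$ it is directly admissible; for $K<0$ the paper reparameterizes time).

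There is a secondary mismatch that would force you to redo your bookkeeping even after inserting this construction: the Hopf--Cole family does not satisfy your normalization $\psi^\delta_0=\phi$ --- its endpoints are $2\log\sfP^\cE_\delta\e^{\phi}$ and $2\sfP^\cE_\delta\phi$, each only $O(\delta)$-close to $2\phi$ --- and your required averaged-energy limit $\tfrac1\delta\int_0^\delta\Gamma(\psi^\delta_\sigma)\dd\sigma\to\Gamma(\phi)$ must be tested against $\rho_{t+h}$, which is delicate because the densities are only weakly continuous in time and need not be bounded. The paper sidesteps both issues: it plugs the two endpoints into the scaled duality inequality, Taylor-expands them in $\delta=\lambda(s-t)$ for $\psi\in\cA_\cE\cap D(\Delta_\cE)$ with $\inf\psi>0$ (this restriction is removed afterwards by approximation), and the first-order terms combine, via $\Delta_\cE\log\psi-\psi^{-1}\Delta_\cE\psi=-\Gamma(\log\psi)$, into exactly the term $\lambda\int\Gamma(\phi)\rho_t\dd\m$, after which the optimization in $\lambda$ and the substitution $\psi=\e^{\phi}$ conclude as you describe. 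So: right frame, right role assigned to $\BE K\infty$ and to the exponential weight, but the decisive step --- exponentiate, run the heat flow backward inside the logarithm and forward outside --- is absent, and without it the argument does not close.
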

\begin{proof} First, defining $W_{\cE,*,1}\leq W_{\cE,*}$ as in \eqref{eq:anche_questa_bis} of Remark~\ref{rem:anche_questa_bis}, we obtain that
$s\mapsto\int\phi\rho_s\dd\m$ is absolutely continuous for all $\phi\in\mathcal A_\cE$. 
Let $\psi\in\mathcal A_\cE\cap D(\Delta_\cE)$ with $\inf\psi>0$. Using the identity $-\psi^{-1}\Delta_\cE\psi+\Delta_\cE\log\psi=-\Gamma(\log\psi)$
and the gradient contractivity condition it is easy to check that, for $K\geq 0$,
$$
\psi_s:=2 \sfP^\cE_s\log \sfP^\cE_{\delta-s}\psi\qquad s\in [0,\delta]
$$ 
is admissible in $W_{\cE,*}$. We fix a point $t$ where $s\mapsto\mu_s$ is metrically differentiable and $s\mapsto \int\rho_s\log\psi\dd\m$
is differentiable. Fix $s>t$ and $\delta=\lambda(s-t)$
with $\lambda>0$. From the inequality 
$$
\delta\biggl|\int (\sfP^\cE_\delta\log\psi)\rho_t\dd\m-(\log\sfP^\cE_\delta\psi)\rho_s\dd\m\biggr|\leq
\frac 14 W_{\Ch,*}^2(\mu_s,\mu_t)\;,
$$
dividing both sides by $(s-t)^2$ and letting $s\to t$ gives
$$
\lambda\biggl|\lambda\int (\Delta_\cE\log\psi-\frac{\Delta_\cE\psi}{\psi}) \rho_t\dd\m-
\ddt \int\rho_t\log\psi\dd\m\biggr|\leq \frac 14 |\dot\mu_t|^2\;.
$$
It follows that
$$
|\ddt \int\rho_t\log\psi\dd\m|\leq \lambda \int \Gamma(\log\psi)\rho_t\dd\m+\frac 1{4\lambda}|\dot\mu_t|^2\;.
$$
By a simple approximation, the same inequality holds for a.e.~$t\in (0,1)$ for all $\psi\in\mathcal A_\cE$ with $\inf\psi>0$, i.e.
removing the assumption $\psi\in D(\Delta_\cE)$.
By minimizing w.r.t.~$\lambda$ and setting $\psi=\e^\phi$ we get the result.

In the case $K<0$ we need to consider the reparameterization $2 \sfP^\cE_{\theta(s)}\log\sfP^\cE_{\theta(\delta)-\theta(s)}\psi$, $s\in [0,\delta]$,
where $\theta(0)=0$ and $\theta'(s)=\e^{-2Ks}$. Since $\theta(s)=s+o(s)$ as $s\downarrow 0$, the same expansions above work with this modified
function. 
\end{proof}

\begin{remark}\label{rem:diffusion}{\rm
Notice that Lemma~\ref{lem:speed2} and the next proposition could be reproduced even in the metric setting, since
the proof of Lemma~\ref{lem:speed2} used only the diffusion formula $\Delta_\cE\phi(f)=\phi'(f)\Delta_\cE f+\phi''(f)\Gamma(f)$, known
to be true also in the metric setting (see \cite[Prop.~4.11]{Gigli}), where $\Delta$ might be nonlinear. 
On the other hand, we preferred to state these results in this section because gradient
contractivity conditions are expected to hold only in presence of quadratic energies.
In the subclass of Minkowski spaces, it is known (see \cite{OhtaSturm12}) that contractivity of the
heat flow w.r.t. to $W_\sfd$ holds if and only if the Minkowski structure is
induced by an inner product.
}\end{remark}

\begin{proposition}[$W_\cE$ is the upper length distance of $W_{\cE,*}$]\label{prop:comparalebis}
If \ref{eq:BE-grad} holds, then $W_\cE$
is the upper length extended distance associated to $W_{\cE,*}$ according to \eqref{eq:81}. 
\end{proposition}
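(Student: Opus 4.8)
The plan is to prove the statement by establishing the two inequalities between $W_\cE$ and the upper length distance $\overline{(W_{\cE,*})}_\ell$ associated to $W_{\cE,*}$ via \eqref{eq:81}; no geodesics are needed, only a comparison of the metric derivatives of curves. Throughout I denote by $\abs{\dot\mu_t}_*$ the metric derivative of a curve with respect to $W_{\cE,*}$, and I use the length property of $W_\cE$, so that (the analogue of \eqref{eq:feb8}) $W_\cE(\rho_0,\rho_1)=\inf\bigl\{\int_0^1\|\rho_t'\|\,\dd t:\rho_t\in {\sf CE}^2(X,\cE,\m)\bigr\}$.

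The inequality $\overline{(W_{\cE,*})}_\ell\le W_\cE$ is the easy one and does not even use $\BE K\infty$. Arguing exactly as in Proposition~\ref{prop:comparale_cE} (the comparison $W_\cE\ge W_{\cE,*}$), applied on subintervals and after rescaling, every curve $\rho_t\in {\sf CE}^2(X,\cE,\m)$ satisfies $W_{\cE,*}^2(\mu_s,\mu_t)\le (t-s)\int_s^t\|\rho_r'\|^2\,\dd r$; hence it is $W_{\cE,*}$-absolutely continuous with $\abs{\dot\mu_t}_*\le\|\rho_t'\|$ for a.e.\ $t$. Since such curves form a subclass of the competitors in the definition of $\overline{(W_{\cE,*})}_\ell$, we get $\overline{(W_{\cE,*})}_\ell\le \inf_{\rho\in {\sf CE}^2}\int_0^1\abs{\dot\mu_t}_*\,\dd t\le \inf_{\rho\in {\sf CE}^2}\int_0^1\|\rho_t'\|\,\dd t=W_\cE$.

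The substantial direction is $\overline{(W_{\cE,*})}_\ell\ge W_\cE$, and here $\BE K\infty$ enters through Lemma~\ref{lem:speed2}. I would fix a $W_{\cE,*}$-absolutely continuous curve $\mu_t=\rho_t\m$ of finite length joining $\mu_0$ to $\mu_1$ (if the length is infinite there is nothing to prove), noting that finiteness of $W_{\cE,*}$ forces $\mu_t\in\cP^a(X)$; after a constant-speed reparametrization on $[0,1]$, which leaves the length unchanged, we may assume $\mu\in AC^2\bigl([0,1];(\cP(X),W_{\cE,*})\bigr)$. Then Lemma~\ref{lem:speed2} gives, for every $\phi\in\mathcal A_\cE$,
\[
\Bigl|\ddt \int \rho_t\phi\,\dd\m\Bigr|\le \abs{\dot\mu_t}_*\Bigl(\int \Gamma(\phi)\rho_t\,\dd\m\Bigr)^{1/2}\qquad\text{for a.e.\ } t,
\]
while the proof of that lemma also shows that $t\mapsto\int\rho_t\phi\,\dd\m$ is absolutely continuous. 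Integrating this bound over $[s,t]$ recovers exactly the continuity inequality \eqref{eq:criterion_ce} with admissible function $c(r)=\abs{\dot\mu_r}_*$, so that $\rho_t\in {\sf CE}^2(X,\cE,\m)$ with $\|\rho_t'\|\le\abs{\dot\mu_t}_*$ a.e.

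It then follows, since $\rho_t$ is admissible in the length representation of $W_\cE$, that $W_\cE(\mu_0,\mu_1)\le\int_0^1\|\rho_t'\|\,\dd t\le\int_0^1\abs{\dot\mu_t}_*\,\dd t$; minimizing over all competitor curves yields $W_\cE\le\overline{(W_{\cE,*})}_\ell$, and combining with the previous paragraph gives equality. The main obstacle is precisely the passage from a $W_{\cE,*}$-absolutely continuous curve to a solution of the continuity inequality carrying the sharp speed bound $\|\rho_t'\|\le\abs{\dot\mu_t}_*$: this is where the diffusion/gradient-contractivity content of $\BE K\infty$ is indispensable, through the logarithmic test functions $2\sfP^\cE_s\log\sfP^\cE_{\delta-s}\psi$ constructed in Lemma~\ref{lem:speed2}, since without it one only controls $\int\rho_t\phi\,\dd\m$ by the weaker $W_{\cE,*,1}$ and cannot recover $\|\rho_t'\|$.
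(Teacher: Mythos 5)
Your proof is correct and takes essentially the same route as the paper: the easy inequality follows from the fact that ${\sf CE}^2(X,\cE,\m)$ curves are $W_{\cE,*}$-absolutely continuous with $|\dot\mu_t|_*\le\|\rho_t'\|$ together with the length representation of $W_\cE$, and the substantial direction is exactly the paper's application of Lemma~\ref{lem:speed2} (where $\BE K\infty$ enters), which converts any $W_{\cE,*}$-absolutely continuous curve into a solution of the continuity inequality \eqref{eq:criterion_ce} with $\|\rho_t'\|\le|\dot\mu_t|_*$, hence $W_\cE^2(\mu_0,\mu_1)\le\int_0^1|\dot\mu_t|_*^2\,\dd t$. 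Your explicit constant-speed reparametrization from $AC$ to $AC^2$ curves is a detail the paper leaves implicit, but it is needed and handled correctly.
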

\begin{proof}
Since $W_\cE$ is length and larger than $W_{\cE,*}$, one inequality is obvious. Let us apply Lemma~\ref{lem:speed2} to
obtain
$$
\biggl|\int f\rho_t\dd\m-\int f\rho_s\dd\m\biggr|\leq\int_s^t|\dot\mu_r|\biggl(\int \Gamma(f)\rho_r\dd\m\biggr)^{1/2}\dd r
$$
for all $\mu_t=\rho_t\m\in AC^2\big([0,1],(\cP(X),W_{\cE,*})\big)$ and all $f\in\mathcal A_\cE$. 
It follows that $\mu_t$ is absolutely continuous w.r.t.~$W_\cE$ and that 
$$
W_\cE^2(\mu_1,\mu_0)\leq \int_0^1|\dot\mu_t|^2\dd t\;,
$$ 
which provides, by the arbitrariness of $\mu_t$, the converse inequality.
\end{proof}
We conclude this section by proving that $\BE K\infty$ allows for a further 
regularization in the definition of $W_{\cE,*}$.
Let us first introduce the Banach space 
\begin{equation}
  \label{eq:34}
  D_\infty(\Delta_\cE):=\Big\{f\in L^\infty(X,\m)\cap D(\Delta_\cE):\ 
  \Delta_\cE f\in L^\infty(X,\m)\Big\}\;
\end{equation}
endowed with the graph norm $ \|f\|_{D_\infty}:=\|f\|_\infty+\|\Delta_\cE f\|_\infty$ and
let us recall (see \cite{Ambrosio-Mondino-Savare13}) that if $\BE K\infty$ holds
then $D_\infty(\Delta_\cE)$ is an algebra, continuously
imbedded in $\cA_\cE$; in particular, there exists a constant $C_K>0$ satisfying
\begin{equation}
  \label{eq:45}
  \big\|\Gamma(f)\big\|_\infty\le
  C_K\,\|f\|_\infty\,\|f\|_{D_\infty}
  \le C_K\|f\|_{D_\infty}^2\quad
  \text{for every }f\in D_\infty(\Delta_\cE)\;
\end{equation}
\begin{lemma}
  \label{lem:Dinfty}
  The extended distance $W_{\cE,*}$ can be expressed by the duality
  formula \eqref{eq:defWdual_cE} where the supremum runs among all $\phi\in
  C^\infty([0,1];D_\infty(\Delta_\cE))$.
\end{lemma}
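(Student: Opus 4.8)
The plan is to start from the description of $W_{\cE,*}$ already available in this setting: arguing exactly as in Lemma~\ref{lem:equivalent-class} (whose proof carries over verbatim, as recorded in the paragraph preceding the statement), the supremum in \eqref{eq:defWdual_cE} may be taken over competitors $\phi\in C^\infty([0,1];\cA_\cE)$ for which the subsolution inequality $\partial_t\phi_t+\tfrac12\Gamma(\phi_t)\le 0$ holds $\m$-a.e.\ for every $t\in[0,1]$. I would fix such a $\phi$ together with a parameter $s>0$ and set $\phi^s_t:=\sfP^\cE_s\phi_t$, the spatial action of the heat semigroup. Since under $\BE K\infty$ the semigroup $\sfP^\cE_s$ maps $L^\infty(X,\m)$ continuously into $D_\infty(\Delta_\cE)$ (see \cite{Ambrosio-Mondino-Savare13}, together with the algebra property recorded around \eqref{eq:45}), and since $\sfP^\cE_s$ is a fixed bounded linear operator, one obtains $\phi^s\in C^\infty([0,1];D_\infty(\Delta_\cE))$ with $\partial_t\phi^s_t=\sfP^\cE_s\partial_t\phi_t$.

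The core of the argument is then to check that, after a constant rescaling, $\phi^s$ becomes an \emph{exact} subsolution. Applying the order-preserving operator $\sfP^\cE_s$ to $\partial_t\phi_t\le-\tfrac12\Gamma(\phi_t)$ gives $\partial_t\phi^s_t\le-\tfrac12\sfP^\cE_s\Gamma(\phi_t)$, whereas $\BE K\infty$ yields $\Gamma(\phi^s_t)=\Gamma(\sfP^\cE_s\phi_t)\le \e^{-2Ks}\sfP^\cE_s\Gamma(\phi_t)$; combining the two,
\[
\partial_t\phi^s_t+\frac{\e^{2Ks}}2\,\Gamma(\phi^s_t)\le 0\qquad \m\text{-a.e., for every }t .
\]
Consequently the rescaled map $\tilde\phi_t:=\e^{2Ks}\phi^s_t$ satisfies
\[
\partial_t\tilde\phi_t+\frac12\Gamma(\tilde\phi_t)=\e^{2Ks}\Big(\partial_t\phi^s_t+\frac{\e^{2Ks}}2\Gamma(\phi^s_t)\Big)\le 0 ,
\]
so that $\tilde\phi$ is an admissible subsolution belonging to $C^\infty([0,1];D_\infty(\Delta_\cE))$ (recall $D_\infty(\Delta_\cE)\hookrightarrow\cA_\cE$ by \eqref{eq:45}).

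It then remains to let $s\downarrow 0$. Using the self-adjointness \eqref{eq:ancora_dual},
\[
\int\big(\tilde\phi_1\rho_1-\tilde\phi_0\rho_0\big)\dd\m=\e^{2Ks}\int\big(\phi_1\,\sfP^\cE_s\rho_1-\phi_0\,\sfP^\cE_s\rho_0\big)\dd\m ,
\]
and since $\sfP^\cE_s\rho_i\to\rho_i$ strongly in $L^1(X,\m)$ while $\phi_i\in L^\infty(X,\m)$, the right-hand side converges to $\int(\phi_1\rho_1-\phi_0\rho_0)\dd\m$. Thus, for every admissible $\phi$ and every $s>0$, the supremum restricted to $C^\infty([0,1];D_\infty(\Delta_\cE))$ is at least $2\int(\tilde\phi_1\rho_1-\tilde\phi_0\rho_0)\dd\m$, and letting $s\downarrow0$ this lower bound tends to $2\int(\phi_1\rho_1-\phi_0\rho_0)\dd\m$; taking the supremum over $\phi$ gives the restricted supremum $\ge W_{\cE,*}^2(\rho_0,\rho_1)$. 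The reverse inequality is trivial, since every element of $C^\infty([0,1];D_\infty(\Delta_\cE))$ solving \eqref{eq:HJdistr_cE} is already admissible in \eqref{eq:defWdual_cE}.

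The step I expect to be the main obstacle is the \emph{second-order} $L^\infty$ regularization $\sfP^\cE_s(L^\infty)\subseteq D_\infty(\Delta_\cE)$: boundedness of $\phi^s_t$ is immediate from the Markov property, but boundedness of $\Delta_\cE\phi^s_t$ genuinely requires $\BE K\infty$, and this is exactly where \cite{Ambrosio-Mondino-Savare13} enters (if a single application of $\sfP^\cE_s$ did not suffice one would write $\sfP^\cE_s=\sfP^\cE_{s/2}\circ\sfP^\cE_{s/2}$ and regularize twice). Everything else is bookkeeping of the sign in the Bakry–\'Emery estimate and the elementary rescaling that converts the $\e^{2Ks}$-weighted subsolution into a genuine one.
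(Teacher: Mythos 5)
Your reduction to $C^\infty([0,1];\cA_\cE)$, the Bakry--\'Emery computation showing that $\sfP^\cE_s\phi$ is a subsolution of the $\e^{2Ks}$-weighted Hamilton--Jacobi inequality, the rescaling $\tilde\phi=\e^{2Ks}\sfP^\cE_s\phi$ (correct, by $2$-homogeneity of $\Gamma$), and the passage to the limit via \eqref{eq:ancora_dual} are all sound. The gap is exactly where you suspected it: the claim that $\sfP^\cE_s$ maps $L^\infty(X,\m)$ (or even $\cA_\cE$) continuously into $D_\infty(\Delta_\cE)$ under \ref{eq:BE-grad} is \emph{false} in general, and \cite{Ambrosio-Mondino-Savare13} does not assert it. A concrete counterexample is the Ornstein--Uhlenbeck semigroup on $(\R,\gamma)$, which satisfies $\BE 1\infty$: for $f(x)=\sin(\lambda x)$ one has $f\in\cA_\cE$ and $\sfP_s f(x)=\e^{-\lambda^2(1-\e^{-2s})/2}\sin(\lambda \e^{-s}x)$, whose generator image contains the drift term $-c\,\lambda\e^{-s}\,x\cos(\lambda\e^{-s}x)$ and is therefore unbounded; so $\sfP_s f\notin D_\infty(\Delta_\cE)$. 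What $\BE K\infty$ buys is only the \emph{first-order} regularization $\sfP^\cE_s:L^\infty\to\cA_\cE$ (the estimate $\|\Gamma(\sfP_s^\cE f)\|_\infty\le c(K,s,\|f\|_\infty)$ used in Theorem~\ref{thm:ultrap}), not the second-order one. Your fallback also fails: writing $\sfP^\cE_s=\sfP^\cE_{s/2}\circ\sfP^\cE_{s/2}$ gains nothing, since the composition is again just the semigroup at time $s$, and $\Delta_\cE\sfP^\cE_s f=\sfP^\cE_{s/2}\big(\Delta_\cE\sfP^\cE_{s/2}f\big)$ with $\Delta_\cE\sfP^\cE_{s/2}f$ only in $L^2$, which $\sfP^\cE_{s/2}$ does not map into $L^\infty$ without ultracontractivity.

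The paper's proof repairs precisely this point by replacing the single operator $\sfP^\cE_s$ with the \emph{time-mollified} semigroup $\sfh^\eps$ of \eqref{eq:sg-moll}, $\sfh^\eps f=\int_0^\infty\frac1\eps\eta(t/\eps)\,\rme^{(K\wedge 0)t}\,\sfP^\cE_t f\dd t$ with $\eta\in C^\infty_c(0,\infty)$. Since $\Delta_\cE\sfP^\cE_tf=\frac{\dd}{\dd t}\sfP^\cE_tf$, an integration by parts in the time variable dumps the derivative onto the smooth compactly supported kernel, so that $\|\Delta_\cE\sfh^\eps f\|_\infty\le C_\eps\|f\|_\infty$ using nothing more than the $L^\infty$-contractivity of $\sfP^\cE_t$; this is \eqref{eq:31}, and it is the second-order bound that no single $\sfP^\cE_s$ can provide. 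The role of $\BE K\infty$ is then confined to the commutation property \eqref{eq:30}, $\Gamma(\sfh^\eps f)\le\sfh^\eps\Gamma(f)$, which shows that $\sfh^\eps\phi$ is again an exact subsolution of \eqref{eq:HJdistr_cE} (the weight $\rme^{(K\wedge 0)t}$ built into $\sfh^\eps$ plays the role of your rescaling factor). With $\sfP^\cE_s$ replaced by $\sfh^\eps$, the remainder of your argument---regularity of the curve, preservation of the constraint, and weak$^*$ passage to the limit in $\int(\phi_1\rho_1-\phi_0\rho_0)\dd\m$---coincides with the paper's proof.
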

\begin{proof}
  Arguing as in Lemma~\ref{lem:equivalent-class} it is not restrictive
  to take the supremum of \eqref{eq:defWdual_cE} assuming $\phi\in C^\infty([0,1];\cA_\cE)$. We then set
  $\phi^\eps_t:=\sfh^\eps \phi_t$ where $\sfh^\eps$ is the
  mollification of $\sfP^\cE$ introduced by \eqref{eq:sg-moll}. Since
  $\sfh^\eps$ is a bounded linear operator $\cA_\cE$ to $D_\infty(\Delta_\cE)$, the
  curve $t\mapsto \phi^\eps_t$ still belongs to $C^\infty([0,1];D_\infty(\Delta_\cE))$.
  On the other hand, the commutation property
  \eqref{eq:30} shows that $\phi^\eps$ is still a subsolution to
  \eqref{eq:HJdistr_cE}.
  Since $\phi^\eps_t \to \phi_t$ 
  weakly$*$ in $L^\infty(X,\m)$ as $\eps\downarrow0$, it is immediate to
  check that 
  \begin{displaymath}
    \lim_{\eps\downarrow0}2\int
    \big(\rho_1\phi_1^\eps-\rho_0\phi^\eps_0\big)\dd\m=
    2\int \big(\rho_1\phi_1-\rho_0\phi_0\big)\dd\m\;.
  \end{displaymath}
\end{proof}
\subsection{Bakry-\'Emery condition and contractivity of the Heat semigroup}
We say that $\sfP^\cE$ is $K$-contractive w.r.t.~$W_\cE$ if
\begin{equation}\label{eq:contraction_cE}
W_\cE(\sfP^\cE_t f,\sfP^\cE_t g)\leq \e^{-Kt}W_\cE(f,g)\qquad\forall t\geq 0
\end{equation}
for all $f,\,g\in L^1_+(X,\m)$ with $\int f\dd\m=\int g\dd\m=1$.

The proof of the Bakry--\'Emery gradient estimate below will use some
results of the theory developed recently in
\cite[Thm.~4.6]{AT14} for the continuity equation in metric measure spaces. Its proof uses
Hilbert space techniques and a vanishing viscosity argument, choosing as algebra
of functions the set $\V\cap L^\infty(X,\m)$.

\begin{theorem}\label{thm:CE-auton-gradient} Assume that $L^2(X,\m)$ is separable.
  Let $V\in D(\Delta_\cE)\cap L^\infty(X,\m)$ with $\Delta_\cE V\in L^\infty(X,\m)$. Then for any
  $\bar\rho\in L^\infty(X,\m)$ and any $T\in (0,\infty)$ there exists
  $\rho\in C^0_{w*}([0,T];L^\infty(X,\m))$ with $\rho_0=\bar\rho$ and 
  \begin{align}\label{eq:eneidV}
    \int f \rho_{s_1}\dd\m - \int f \rho_{s_2}\dd\m =\int_{s_1}^{s_2}\int\Gamma(f,V)\rho_r\dd\m\dd r
    \qquad\forall \text{$0\leq s_1\leq s_2\leq T$}
    \end{align}
    for all $f\in\V\cap L^\infty(X,\m)$. In particular $\rho\in {\sf CE}^2(X,\cE,\m)$ and the metric derivative of
    $s\mapsto \mu_s$ w.r.t.
    $W_\cE$ can be estimated by
   \begin{equation}\label{eq:stimamet2}
|\dot\mu_s|^2\leq \int \Gamma(V) \rho_s\dd\m\qquad\text{for a.e.~$s\in (0,T)$\;.}
\end{equation}
\end{theorem}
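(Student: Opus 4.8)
The plan is to realize $\rho$ as a solution of the continuity equation associated with the autonomous velocity field (derivation) $b=-\nabla V$, whose divergence $\dv b=-\Delta_\cE V$ is bounded by hypothesis: indeed \eqref{eq:eneidV} reads $\ddt\int f\rho_t\dd\m=-\int\Gamma(f,V)\rho_t\dd\m$, i.e.\ $\partial_t\rho+\dv(\rho\,b)=0$ in the weak sense dual to $\V\cap L^\infty(X,\m)$. Existence of such a solution is essentially the content of the continuity-equation theory of \cite[Thm.~4.6]{AT14}, applied with the test algebra $\V\cap L^\infty(X,\m)$; I would obtain it by a vanishing-viscosity scheme, solving for each $\eps>0$ the regularized linear problem
\begin{equation*}
\partial_t\rho^\eps=\eps\Delta_\cE\rho^\eps+\Gamma(\rho^\eps,V)+\rho^\eps\,\Delta_\cE V,\qquad \rho^\eps_0=\bar\rho,
\end{equation*}
which is well posed in $C([0,T];L^2(X,\m))\cap L^2(0,T;\V)$ by Hilbert-space (Lions--Galerkin) techniques, the separability of $L^2(X,\m)$ furnishing the basis for the Galerkin approximation.

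The second step is a family of $\eps$-uniform a priori bounds. Testing the equation with powers of $\rho^\eps$ and using the identity $\int\Gamma((\rho^\eps)^p,V)\dd\m=-\int(\rho^\eps)^p\Delta_\cE V\dd\m$ together with the sign of the viscous term, one finds $\ddt\|\rho^\eps_t\|_p^p\le (p-1)\|\Delta_\cE V\|_\infty\|\rho^\eps_t\|_p^p$; letting $p\to\infty$ this gives the crucial uniform bound $\|\rho^\eps_t\|_\infty\le \e^{\|\Delta_\cE V\|_\infty t}\|\bar\rho\|_\infty$, while the choice $f=1$ yields conservation of mass. Testing with $\rho^\eps$ itself and again cancelling the drift against half of the zeroth-order term gives $\ddt\|\rho^\eps_t\|_2^2+2\eps\cE(\rho^\eps_t)\le\|\Delta_\cE V\|_\infty\|\rho^\eps_t\|_2^2$, so that $\eps\int_0^T\cE(\rho^\eps_t)\dd t$ stays bounded and the viscous contribution $\eps\,\cE(f,\rho^\eps)$ is $O(\sqrt\eps)$ in $L^1(0,T)$, hence disappears in the limit. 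The role of the hypothesis $\Delta_\cE V\in L^\infty(X,\m)$ is precisely to keep these bounds uniform, even though $\Gamma(V)$ is only known to lie in $L^1(X,\m)$.

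With these estimates I would pass to the limit. The $L^\infty$-bound makes $\{\rho^\eps\}$ weakly$^*$ relatively compact in $L^\infty((0,T)\times X)$, while the inequality $\big|\ddt\int f\rho^\eps_t\dd\m\big|\le \eps\big|\cE(f,\rho^\eps_t)\big|+\big(\int\Gamma(f)\rho^\eps_t\dd\m\big)^{1/2}\big(\int\Gamma(V)\rho^\eps_t\dd\m\big)^{1/2}$ shows that, for each fixed $f\in\V\cap L^\infty(X,\m)$, the maps $t\mapsto\int f\rho^\eps_t\dd\m$ are equibounded and equicontinuous up to an $O(\sqrt\eps)$ error. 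Fixing a countable dense family of test functions (separability of $L^2(X,\m)$) and running a diagonal Arzel\`a--Ascoli argument, I extract a subnet along which $\rho^\eps_t\to\rho_t$ weakly$^*$ for every $t$, with $\rho\in C^0_{w*}([0,T];L^\infty(X,\m))$ and $\rho_0=\bar\rho$; passing to the limit in the weak formulation (the viscous term vanishing by the previous step) yields \eqref{eq:eneidV} for all $f\in\V\cap L^\infty(X,\m)$. When $\bar\rho$ is a probability density, the minimum principle (the same $L^p$ test applied to the negative part) and mass conservation guarantee that each $\rho_t$ is again a probability density.

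Finally, the two stated consequences are elementary. Since $\mathcal A_\cE\subset\V\cap L^\infty(X,\m)$, \eqref{eq:eneidV} and the pointwise Cauchy--Schwarz inequality $\Gamma(f,V)\le\Gamma(f)^{1/2}\Gamma(V)^{1/2}$ give
\begin{equation*}
\Big|\int f\rho_{s_2}\dd\m-\int f\rho_{s_1}\dd\m\Big|\le\int_{s_1}^{s_2}\Big(\int\Gamma(f)\rho_r\dd\m\Big)^{1/2}\Big(\int\Gamma(V)\rho_r\dd\m\Big)^{1/2}\dd r,
\end{equation*}
which is \eqref{eq:criterion_ce} with $c(r)=(\int\Gamma(V)\rho_r\dd\m)^{1/2}$; this $c$ lies in $L^\infty(0,T)\subset L^2(0,T)$ because $\int\Gamma(V)\rho_r\dd\m\le\|\rho_r\|_\infty\cE(V)$ is uniformly bounded, so $\rho\in{\sf CE}^2(X,\cE,\m)$ with $\|\rho_t'\|^2\le\int\Gamma(V)\rho_t\dd\m$. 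Restricting and reparametrizing this curve, the very definition \eqref{eq:matthias_distance_cE} of $W_\cE$ (equivalently the $\cE$-analogue of Theorem~\ref{thm:abs_char}) gives $\mu_t\in AC^2([0,T];(\cP(X),W_\cE))$ with $|\dot\mu_t|\le\|\rho_t'\|$, whence \eqref{eq:stimamet2}. I expect the genuine difficulty to lie entirely in the existence step: because $\Gamma(V)$ need not be bounded, the drift is not form-bounded with respect to $\cE$, and only the divergence bound $\Delta_\cE V\in L^\infty(X,\m)$ keeps the $L^\infty$ and $L^2$ estimates uniform in $\eps$ --- this is exactly where the machinery of \cite{AT14} is really needed.
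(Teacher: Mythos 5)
Your proposal follows essentially the same route as the paper: the paper proves this theorem only by invoking \cite[Thm.~4.6]{AT14}, describing the argument as ``Hilbert space techniques and a vanishing viscosity argument, choosing as algebra of functions the set $\V\cap L^\infty(X,\m)$'', which is precisely your regularized-parabolic/Lions--Galerkin scheme with the $\eps$-uniform $L^p$ and $L^2$ bounds driven by $\|\Delta_\cE V\|_\infty$, the diagonal weak$^*$ compactness argument, and the Cauchy--Schwarz derivation of the ${\sf CE}^2$ property and of \eqref{eq:stimamet2}. The only caveat is that when $\Gamma(V)\notin L^\infty(X,\m)$ the drift form is not bounded on $\V\times\V$, so even the well-posedness of the viscous problem requires the truncation/approximation devices of \cite{AT14} rather than a bare application of Lions' theorem --- a point you correctly flag as the place where that machinery is genuinely needed.
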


Now we prove that the Bakry--\'Emery
gradient estimate \ref{eq:BE-grad} is equivalent to $K$-contractivity of $\sfP^\cE$ w.r.t.~$W_\cE$. 

\begin{theorem}\label{thm:contraction-BE}
  If $\sfP^\cE$ is $K$-contractive w.r.t.~to $W_\cE$ and $L^2(X,\m)$
  is separable, then the Bakry-Emery condition $\BE K\infty$ holds.
  Conversely, if $\BE K\infty$ holds, then $\sfP^\cE$ is $K$-contractive w.r.t.~$W_\cE$.
\end{theorem}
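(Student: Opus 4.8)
We prove the two implications separately, the passage from $\BE K\infty$ to contractivity being elementary and the converse being a duality of Kuwada type, which is where separability of $L^2(X,\m)$ enters.

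\emph{$\BE K\infty$ implies $K$-contractivity.} Fix $t\ge 0$ and densities $\rho_0,\rho_1$, and let $(\rho_s)_{s\in[0,1]}\in{\sf CE}^2(X,\cE,\m)$ join them. I consider the heat-flow image $\tilde\rho_s:=\sfP^\cE_t\rho_s$, which is again a curve of densities. For $f\in\mathcal A_\cE$ the maximum principle together with \eqref{eq:BE-grad} yields $\sfP^\cE_t f\in\mathcal A_\cE$, so by self-adjointness \eqref{eq:ancora_dual} and the continuity inequality \eqref{eq:criterion_ce} for $(\rho_s)$,
\[
\Bigl|\int f\tilde\rho_t\dd\m-\int f\tilde\rho_s\dd\m\Bigr|=\Bigl|\int (\sfP^\cE_t f)\rho_t\dd\m-\int (\sfP^\cE_t f)\rho_s\dd\m\Bigr|\le\int_s^t\|\rho_r'\|\Bigl(\int\Gamma(\sfP^\cE_t f)\rho_r\dd\m\Bigr)^{1/2}\dd r .
\]
Applying \eqref{eq:BE-grad} and again \eqref{eq:ancora_dual} I bound $\int\Gamma(\sfP^\cE_t f)\rho_r\dd\m\le e^{-2Kt}\int(\sfP^\cE_t\Gamma(f))\rho_r\dd\m=e^{-2Kt}\int\Gamma(f)\tilde\rho_r\dd\m$, so $\tilde\rho_s\in{\sf CE}^2(X,\cE,\m)$ with $\|\tilde\rho_s'\|\le e^{-Kt}\|\rho_s'\|$. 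Integrating the squared speeds and minimizing over the connecting curves gives $W_\cE(\sfP^\cE_t\rho_0,\sfP^\cE_t\rho_1)\le e^{-Kt}W_\cE(\rho_0,\rho_1)$.

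\emph{$K$-contractivity implies $\BE K\infty$.} It suffices to show, for every $f\in D_\infty(\Delta_\cE)$, $t>0$ and bounded probability density $\rho$, the weak inequality $\int\Gamma(\sfP^\cE_t f)\rho\dd\m\le e^{-2Kt}\int(\sfP^\cE_t\Gamma(f))\rho\dd\m$; the pointwise statement then follows from the arbitrariness of $\rho\ge0$, and the extension from $D_\infty(\Delta_\cE)$ to $\mathcal A_\cE$ from the mollification $\sfh^\eps$ of Lemma~\ref{lem:Dinfty}. Set $V:=\sfP^\cE_t f\in D_\infty(\Delta_\cE)$ and let $(\rho_s)_{s\in[0,\sigma]}$, $\rho_0=\rho$, be the curve driven by $V$ produced by Theorem~\ref{thm:CE-auton-gradient}, so that $\mu_s=\rho_s\m$ is $W_\cE$-absolutely continuous with bounded densities and, by \eqref{eq:stimamet2}, $|\dot\mu_s|^2\le\int\Gamma(V)\rho_s\dd\m$. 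Push it forward by the heat flow, $\nu_s:=(\sfP^\cE_t\rho_s)\m$. Contractivity yields $|\dot\nu_s|\le e^{-Kt}|\dot\mu_s|$ for a.e.\ $s$, while arguing as for the metric distance (Theorem~\ref{thm:abs_char}) the uniformly bounded, $W_\cE$-absolutely continuous curve $(\nu_s)$ lies in ${\sf CE}^2(X,\cE,\m)$ with $\|\nu_s'\|=|\dot\nu_s|$. Since $\int f\,\nu_s\dd\m=\int V\rho_s\dd\m$, testing the evolution \eqref{eq:eneidV} of $(\rho_s)$ with $h=V$ gives $\bigl|\tfrac{d}{ds}\int f\,\nu_s\dd\m\bigr|=\int\Gamma(V)\rho_s\dd\m$. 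Combining this with the continuity inequality for $(\nu_s)$, the contraction bound and \eqref{eq:stimamet2} I obtain for a.e.\ $s$
\[
\int\Gamma(V)\rho_s\dd\m\le|\dot\nu_s|\Bigl(\int\Gamma(f)\nu_s\dd\m\Bigr)^{1/2}\le e^{-Kt}\Bigl(\int\Gamma(V)\rho_s\dd\m\Bigr)^{1/2}\Bigl(\int\Gamma(f)\nu_s\dd\m\Bigr)^{1/2}.
\]
Dividing by $(\int\Gamma(V)\rho_s\dd\m)^{1/2}$, squaring, and using $\int\Gamma(f)\nu_s\dd\m=\int(\sfP^\cE_t\Gamma(f))\rho_s\dd\m$ (again \eqref{eq:ancora_dual}) gives $\int\Gamma(\sfP^\cE_t f)\rho_s\dd\m\le e^{-2Kt}\int(\sfP^\cE_t\Gamma(f))\rho_s\dd\m$ for a.e.\ $s\in(0,\sigma)$. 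Both sides are continuous in $s$ (the densities being weakly$^*$ continuous), so the inequality holds at $s=0$, which is the claim.

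\emph{Main obstacle.} The delicate part is the second implication, and within it the two speed identifications: the upper bound $|\dot\mu_s|^2\le\int\Gamma(V)\rho_s\dd\m$ for the $V$-driven curve, which is precisely the content of Theorem~\ref{thm:CE-auton-gradient} and rests on its vanishing-viscosity construction in the separable Hilbert space $\V\cap L^\infty(X,\m)$, and the identity $\|\nu_s'\|=|\dot\nu_s|$ for the pushed curve, i.e.\ the fact that $W_\cE$ is a genuine Benamou--Brenier (metric-speed) distance. Once these are in force, contractivity of $W_\cE$ is exactly dual to the pointwise gradient estimate, and the remaining steps reduce to Cauchy--Schwarz/Young manipulations and the routine density reduction from $D_\infty(\Delta_\cE)$ to $\mathcal A_\cE$.
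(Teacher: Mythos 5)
Your proof is correct and follows essentially the same route as the paper: $\BE K\infty$ implies contractivity by pushing ${\sf CE}^2$ curves through $\sfP^\cE_t$ and contracting their speeds via \eqref{eq:ancora_dual} and the gradient estimate, while the converse runs the continuity equation of Theorem~\ref{thm:CE-auton-gradient} driven by $V=\sfP^\cE_t f$, pushes it forward by the heat flow, and compares speeds through the contraction hypothesis (Kuwada-type duality), with separability of $L^2(X,\m)$ entering exactly through Theorem~\ref{thm:CE-auton-gradient}. The only differences are cosmetic: the paper works with time-integrated estimates on $[0,h]$ divided by $h$ rather than a.e.\ differential inequalities in $s$, it only needs (and only implicitly uses) the inequality $\|\nu_s'\|\le|\dot\nu_s|$ where you assert equality, and it carries out the final extension to all of $\cA_\cE$ (indeed to $\V$) via the mollifier \eqref{eq:sg-moll} and a truncation directly, rather than through Lemma~\ref{lem:Dinfty}, whose proof assumes $\BE K\infty$.
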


\begin{proof} Fix $t\geq 0$. We first consider $g\in \V\cap L^\infty(X,\m)$ with $\Delta_\cE g\in
  L^\infty(X,\m)$. Fix a measure $\mu_0=\rho_0\m\in \cP(X)$ with $\rho_0\in
  L^\infty(X,\m)$. Let us denote by $(\rho_s)_{s\in [0,1]}$ the solution
  to the continuity equation driven by the gradient of the constant 
  (w.r.t.~the time parameter $s$) potential $V:=\sfP^\cE_tg$
  starting from $\rho_0$, given by Theorem~\ref{thm:CE-auton-gradient}. Now, let us first note that for any
  $h>0$ one can apply \eqref{eq:eneidV} with $f=\sfP^\cE_tg$ to get
  \begin{align}\label{eq:c1}
    \int \rho_h\sfP^\cE_tg\dd\m - \int \rho_0\sfP^\cE_tg\dd\m = \int_0^h\int\Gamma(\sfP^\cE_tg)\rho_s\dd\m\dd s\;.
  \end{align}
  On the other hand, putting $\mu^t_s=(\sfP^\cE_t\rho_s)\m$ and denoting by $\abs{\dot\mu^t_s}$ the metric derivative of the curve
  $s\mapsto\mu^t_s$, we can estimate:
  \begin{align*}
  \int \rho_h\sfP^\cE_tg\dd\m - \int \rho_0\sfP^\cE_tg\dd\m
   &=\int g \sfP_t^\cE \rho_h\dd\m - \int g\sfP_t^\cE\rho_0\dd\m\leq 
   \int_0^h\abs{\dot\mu^t_s}\left(\int \Gamma(g)\sfP^\cE_t\rho_s\dd\m\right)^{\frac12}\dd s\\
   ~&\leq~
   \e^{-Kt}\left(\int_0^h\abs{\dot\mu_s}^2\dd s\right)^{\frac12}\left(\int_0^h\int \Gamma(g)\sfP^\cE_t\rho_s\dd\m\dd s\right)^{\frac12}\\
   ~&\leq~
   \e^{-Kt}\left(\int_0^h\int\Gamma(\sfP^\cE_tg)\rho_s\dd\m\dd s\right)^{\frac12}\left(\int_0^h\int \sfP^\cE_t\Gamma(g)\rho_s\dd\m\dd s\right)^{\frac12}\;,
  \end{align*}
  where we have used first the inequality
  $\abs{\dot\mu^t_s}\leq\e^{-Kt}\abs{\dot\mu_s}$ (derived by the Wasserstein
  contraction \eqref{eq:contraction_cE}) and then \eqref{eq:stimamet2}. Combining with \eqref{eq:c1} we
  obtain
  \begin{align}\label{eq:c2}
    \int_0^h\int\Gamma(\sfP^\cE_tg)\rho_s\dd\m\dd s \leq\e^{-2Kt}\int_0^h\int \sfP^\cE_t\Gamma(g)\rho_s\dd\m\dd s\;.
  \end{align}
  Dividing by $h$, letting $h\downarrow0$ and using the weak$^*$
  continuity of the curve $\rho_s$ we finally get:
  \begin{align}\label{eq:c3}
    \int\Gamma(\sfP^\cE_tg)\rho_0\dd\m ~\leq~ \e^{-2Kt}\int \sfP^\cE_t\Gamma(g)\rho_0\dd\m\;.
  \end{align}
  By homogeneity, the same inequality holds for any $\rho_0\in L^\infty_+(X,\m)$.
  This clearly implies $\Gamma(\sfP^\cE_tg) \leq\e^{-2Kt} \sfP^\cE_t\Gamma(g)$ $\m$-a.e.~in $X$.

  To prove the assertion for arbitrary $g\in\V\cap L^\infty(X,\m)$ we argue by
  approximation. 
  Consider the following mollification of the semigroup, defined for
  $\eps>0$ and $f \in L^2(X,\m)$ via:
  \begin{align}\label{eq:sg-moll}
    \mathsf  h^\eps
    f=\int_0^\infty\frac{1}{\eps}\eta\left(\frac{t}{\eps}\right) 
  \rme^{(K\land 0)\, t}\,\nc \sfP^\cE_tf\,\dd t\;,
\end{align}
with a non-negative kernel $\eta\in C^\infty_c(0,\infty)$ satisfying
$\int_0^\infty\eta(t)\dd t=1$. It is easily seen that $\mathsf h^\eps$ is
a linear contraction in $\cA_\cE$ satisfying
\begin{equation}
  \label{eq:30}
  \|\sfh^\eps f\|_\infty\le \|f\|_\infty,\quad
  \Gamma(\sfh^\eps f)\le \sfh^\eps \Gamma(f),\quad 
  \text{for every }f\in \cA_\cE\;
\end{equation}
and
\begin{equation}
  \label{eq:31}
  f\in L^\infty(X,\m)\quad \Rightarrow\quad
  \Delta_\cE f\in L^\infty(X,\m),\quad
  \|\Delta_\cE f\|_\infty\le C_\eps \|f\|_\infty
\end{equation}
for some constant $C_\eps>0$. 
From the previous argument we thus obtain
  \begin{align*}
    \int\Gamma(\sfP^\cE_tg_\eps)\rho_0\dd\m\leq\e^{-2Kt}\int \sfP^\cE_t\Gamma(g_\eps)\rho_0\dd\m\;,
  \end{align*}
  for any $\rho_0\in L^\infty_+(X,\m)$. To conclude, it is sufficient to
  check that as $\eps\downarrow 0$ we have $\sfP^\cE_tg_\eps\to \sfP_t^\cE g$ and
  $g_\eps\to g$ in $\V$ as $\eps\to 0$. By convexity of $\cE$ this in turn follows from
  the fact that for any $f\in\V$ we have $\cE(\sfP^\cE_sf-f)\to0$ as
  $s\downarrow 0$.
  
  Eventually we can prove the statement for any $g\in\V$ with a truncation argument.
  
  In order to prove the converse statement, notice first that \ref{eq:BE-grad} implies that $\mathcal A_{\cE}$ is
  invariant under the action of the semigroup. Then, recalling the definition \eqref{eq:criterion_ce} of ${\sf CE}^2(X,\cE,\m)$,
  \eqref{eq:ancora_dual} shows that $\sfP^\cE_t$ maps curves $\rho_s\in {\sf CE}^2(X,\cE,\m)$ to curves 
  $\sigma_s:=\sfP^\cE_t\rho_s\in {\sf CE}^2(X,\cE,\m)$ with $\|\sigma_s'\|\leq \e^{-Kt}\|\rho_s'\|$. By minimization we obtain the
  contractivity property.
\end{proof}
We prove now, 
by standard methods, that \ref{eq:BE-grad} implies
$K$-contractivity of $\sfP^\cE$ also w.r.t.~$W_{\cE,*}$,
a property  that will also follow as a consequence of the EVI estimates 
of the next section.
We don't know if the converse implication, known to
be true for $W_\cE$ under the separability assumption on $L^2(X,\m)$, holds.
\begin{proposition} \label{prop:BE_contra_dual}
If \ref{eq:BE-grad} holds, then
$W_{\cE,*}(\sfP_t^\cE \rho_0,\sfP_t^\cE\rho_1)\leq \e^{-Kt}W_{\cE,*}(\rho_0,\rho_1)$
for all $t\geq 0$ and $\rho_0\m,\,\rho_1\m\in\cP^a(X)$.
\end{proposition}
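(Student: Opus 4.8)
The plan is to exploit the dual definition of $W_{\cE,*}$ together with the self-adjointness of $\sfP^\cE$ and the gradient estimate $\BE K\infty$, transporting an admissible potential for the evolved measures back to one for the original measures at the cost of a time-rescaling that produces exactly the factor $\e^{-Kt}$.

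First I would fix $t\ge 0$ and pick a potential $\phi$ that is (almost) optimal in the supremum defining $\tfrac12 W_{\cE,*}^2(\sfP_t^\cE\rho_0,\sfP_t^\cE\rho_1)$. By the $W_{\cE,*}$-analogues of Lemma~\ref{lem:equivalent-class} and Lemma~\ref{lem:Dinfty} it is not restrictive to assume $\phi\in C^\infty([0,1];D_\infty(\Delta_\cE))$, so that the subsolution inequality $\partial_s\phi_s+\tfrac12\Gamma(\phi_s)\le 0$ holds pointwise $\m$-a.e.\ for every $s$, and all the manipulations below are legitimate. I then set $\tilde\phi_s:=\sfP_t^\cE\phi_s$. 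Since $\BE K\infty$ makes $\cA_\cE$ (and $D_\infty(\Delta_\cE)$) invariant under the semigroup, as already used in the proof of Theorem~\ref{thm:contraction-BE}, the curve $s\mapsto\tilde\phi_s$ is again admissible, and $\sfP_t^\cE$ commutes with $\partial_s$.

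The key step is to show that $\tilde\phi$ is a subsolution of the time-rescaled Hamilton--Jacobi inequality with constant coefficient $\vartheta\equiv\e^{2Kt}$. Indeed, commuting $\sfP_t^\cE$ with $\partial_s$ and then applying the gradient estimate $\Gamma(\sfP_t^\cE\phi_s)\le\e^{-2Kt}\sfP_t^\cE\Gamma(\phi_s)$ gives
\begin{equation*}
\partial_s\tilde\phi_s=\sfP_t^\cE\partial_s\phi_s\le-\frac12\,\sfP_t^\cE\Gamma(\phi_s)\le-\frac{\e^{2Kt}}2\,\Gamma(\tilde\phi_s)\qquad\text{$\m$-a.e.\ in $X$.}
\end{equation*}
Thus $\tilde\phi$ satisfies $\partial_s\tilde\phi_s+\tfrac{\vartheta}2\Gamma(\tilde\phi_s)\le 0$ with $\vartheta\equiv\e^{2Kt}$, and the $W_{\cE,*}$ version of the scaling estimate in Remark~\ref{rem:equivalent:HJ}(2) (with $a=0$, $b=1$, so that $\alpha(1)=\e^{2Kt}$) yields
\begin{equation*}
2\,\e^{2Kt}\int\bigl(\rho_1\tilde\phi_1-\rho_0\tilde\phi_0\bigr)\dd\m\le W_{\cE,*}^2(\rho_0,\rho_1).
\end{equation*}

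Finally I would undo the heat operator using the self-adjointness relation \eqref{eq:ancora_dual}, which gives $\int\rho_i\tilde\phi_i\dd\m=\int\rho_i\,\sfP_t^\cE\phi_i\dd\m=\int\phi_i\,\sfP_t^\cE\rho_i\dd\m$ for $i=0,1$; hence the left-hand side above equals $2\,\e^{2Kt}\int(\phi_1\,\sfP_t^\cE\rho_1-\phi_0\,\sfP_t^\cE\rho_0)\dd\m$. Rearranging,
\begin{equation*}
2\int\bigl(\phi_1\,\sfP_t^\cE\rho_1-\phi_0\,\sfP_t^\cE\rho_0\bigr)\dd\m\le\e^{-2Kt}\,W_{\cE,*}^2(\rho_0,\rho_1),
\end{equation*}
and taking the supremum over all admissible $\phi$ gives $W_{\cE,*}^2(\sfP_t^\cE\rho_0,\sfP_t^\cE\rho_1)\le\e^{-2Kt}W_{\cE,*}^2(\rho_0,\rho_1)$, i.e.\ the claim. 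I expect the only delicate point to be the bookkeeping for the admissible class: checking that $\tilde\phi_s=\sfP_t^\cE\phi_s$ stays among the test potentials (continuity in $s$ with values in $L^\infty(X,\m)$, integrability with values in $\V$, and the $\m$-a.e.\ pointwise validity of the subsolution inequality), which is precisely what the reduction to $C^\infty([0,1];D_\infty(\Delta_\cE))$ via Lemma~\ref{lem:Dinfty} and the $\BE K\infty$-invariance of $\cA_\cE$ are there to guarantee.
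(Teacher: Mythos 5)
Your proof is correct and follows essentially the same route as the paper: push the admissible potential through the self-adjoint semigroup, use \ref{eq:BE-grad} to verify the Hamilton--Jacobi subsolution property, and conclude by duality. The only cosmetic difference is that the paper absorbs the factor directly by checking that $\psi_s=\e^{2Kt}\sfP^\cE_t\phi_s$ is admissible, whereas you invoke the time-rescaling estimate of Remark~\ref{rem:equivalent:HJ}(2) with constant $\vartheta\equiv\e^{2Kt}$ --- which, for constant $\vartheta$, reduces to exactly that multiplication; your extra bookkeeping via Lemma~\ref{lem:Dinfty} only makes the admissibility check more explicit.
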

\begin{proof} Take $\phi$ admissible in the definition of $W_{\cE,*}$ and note that
 $\psi=\e^{2Kt}\sfP^\cE_t\phi$ is again admissible. Indeed, \ref{eq:BE-grad} gives
 $$
   \frac {\dd}{\dd s} \psi +\frac12\Gamma(\psi) \leq \e^{2Kt}\sfP^\cE_t\Big[\dds \phi +\frac12\Gamma(\phi)\Big] \leq 0\;.
 $$
  Thus the definition of $W_{\cE,*}$ gives:
  $$
   \frac12 W_{\cE,*}^2(\rho_0,\rho_1)\geq\int\psi_1\rho_1\dd\m - \int\psi_0\rho_0\dd\m
   = \e^{2Kt}\int \bigl(\phi_1 \sfP^\cE_t\rho_1\dd\m - \phi_0\sfP_t^\cE\rho_0\bigr)\dd\m
   $$  
  and the statement follows by taking the supremum w.r.t.~$\phi$.
\end{proof}

\section{From gradient contractivity to EVI and consequences}\label{sec:actionnew}
In this section $(X,\mathcal B,\cE,\m)$ is an energy measure space
satisfying $\BE K\infty$ for some $K\in\R$.
The main result of this section is: 
\begin{theorem}[$\sfP^\cE$ satisfies $\EVI_K$ relative to $W_{\cE,*}$]\label{thm:EVI-mix}
  For all $\mu=\rho\m\in\cP^a(X)$, $\sigma\in D(\ent)$ with $W_{\cE,*}(\mu,\sigma)<\infty$  
  one has $\ent(\sfP_t^\cE\rho\,\m)<\infty$, $W_{\cE,*}(\sfP_t^\cE\rho\,\m,\sigma)<\infty$ for all $t>0$ and
  (recall that $\ddtr$ stands for upper right derivative)
  \begin{align}\label{eq:EVI-mix}
    \ddtr\frac12 W_{\cE,*}^2(\sfP^\cE_t\rho\,\m,\sigma) + \frac{K}{2}W_{\cE,*}^2(\sfP_t^\cE\rho\,\m,\sigma)\leq \big[\ent(\sigma)-\ent(\sfP_t^\cE\rho\,\m)\big]
  \qquad{ \forall t\geq 0}\;.
  \end{align}
  \end{theorem}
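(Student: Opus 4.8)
The plan is to establish \eqref{eq:EVI-mix} by a duality argument based on the representation \eqref{eq:defWdual_cE} of $W_{\cE,*}$, which is precisely the ``duality estimate'' announced in the introduction; the $\EVI_K$ property for $W_\cE$ would then follow from it together with Theorem~\ref{thm:self-improvement} and Proposition~\ref{prop:comparalebis}. Throughout I write $\rho_t:=\sfP^\cE_t\rho$, $\mu_t:=\rho_t\m$ and $\sigma=\varsigma\m$. By the semigroup property of $\sfP^\cE$ it suffices to prove the differential inequality \eqref{def:evik} at each $t$ and then propagate it by the Gronwall-type integration recalled after \eqref{eq:75} (multiply by $\rme^{Kt}$ and integrate). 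First I would reduce, via Lemma~\ref{lem:equivalent-class} and Lemma~\ref{lem:Dinfty}, to test subsolutions $\phi\in C^\infty([0,1];D_\infty(\Delta_\cE))$, for which $\phi_s,\Gamma(\phi_s)\in L^\infty(X,\m)$, the chain rule and the integration-by-parts identity $\int\Gamma(\phi_0,\rho_t)\dd\m=-\int\phi_0\,\Delta_\cE\rho_t\dd\m$ are available, and the subsolution inequality $\partial_s\phi_s+\tfrac12\Gamma(\phi_s)\le0$ holds $\m$-a.e.\ for every $s$.

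\emph{Step 1 (derivative of the dual distance).} Since $\tfrac12 W_{\cE,*}^2(\mu_t,\sigma)=\sup_\phi\int(\phi_1\varsigma-\phi_0\rho_t)\dd\m$ is a supremum of functionals that are \emph{affine} in $\rho_t$, and $t\mapsto\rho_t$ solves the heat equation, I would bound the upper right derivative by choosing, for each small $h>0$, a subsolution $\phi^h$ that is $\eta$-optimal for $\tfrac12 W_{\cE,*}^2(\mu_{t+h},\sigma)$ and using it as a competitor at time $t$. Subtracting the two inequalities gives
$$
\tfrac12 W_{\cE,*}^2(\mu_{t+h},\sigma)-\tfrac12 W_{\cE,*}^2(\mu_t,\sigma)\le \int\phi^h_0(\rho_t-\rho_{t+h})\dd\m+\eta=\int_t^{t+h}\!\!\int\Gamma(\phi^h_0,\rho_s)\dd\m\dd s+\eta .
$$
Dividing by $h$ and letting $h,\eta\downarrow0$ yields
$$
\ddtr\tfrac12 W_{\cE,*}^2(\mu_t,\sigma)\le \int\Gamma(\phi_0,\rho_t)\dd\m=-\int\phi_0\,\Delta_\cE\rho_t\dd\m ,
$$
where $\phi$ is a subsolution (nearly) optimal at time $t$.

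\emph{Step 2 (the Bakry--\'Emery action estimate).} This is the heart of the matter. Here I would use the optimality of $\phi$ (which is where $\sigma$ enters, through the terminal pairing $\int\phi_1\varsigma\dd\m$), the subsolution property \eqref{eq:HJdistr_cE}, the entropy-dissipation identity \eqref{eq:derentropy}, and the contractivity built into $\BE K\infty$ (as already exploited in Lemma~\ref{lem:speed2} and Proposition~\ref{prop:BE_contra_dual}) to prove
$$
\int\Gamma(\phi_0,\rho_t)\dd\m+\frac K2 W_{\cE,*}^2(\mu_t,\sigma)\le \ent(\sigma)-\ent(\mu_t).
$$
The mechanism is the standard ``action monotonicity'': one pairs the heat flow with the evolving subsolution, differentiates $r\mapsto\int\phi_r u_r\dd\m$, and uses $\partial_r\phi_r+\tfrac12\Gamma(\phi_r)\le0$ to absorb the cross term $-\int\Gamma(\phi_r,u_r)\dd\m$; the Bakry--\'Emery--improved Bochner inequality upgrades the naive Young estimate to exactly the factor $\tfrac K2 W_{\cE,*}^2$, while \eqref{eq:derentropy} and the convexity of $\ent$ produce the gap $\ent(\sigma)-\ent(\mu_t)$. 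Combining Step~1 and Step~2 gives \eqref{eq:EVI-mix}.

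\emph{Finiteness and obstacles.} The finiteness of $\ent(\mu_t)$ and of $W_{\cE,*}(\mu_t,\sigma)$ for $t>0$ would come out as a by-product: $\BE K\infty$ yields the contraction of Proposition~\ref{prop:BE_contra_dual}, so $W_{\cE,*}(\mu_t,\sigma)\le W_{\cE,*}(\mu_t,\sfP^\cE_t\sigma)+W_{\cE,*}(\sfP^\cE_t\sigma,\sigma)$ stays finite, and the regularization estimate of the abstract $\EVI_K$ theory (the analogue of \eqref{eq:may_31}) forces $\ent(\mu_t)<\infty$ for $t>0$; since $\m$ is a probability measure, $\ent\ge0$, so there are no lower-bound issues. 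I expect the main obstacle to be Step~2: making the action estimate rigorous at the available regularity (working only with $\phi\in C^\infty([0,1];D_\infty(\Delta_\cE))$ and essentially bounded densities $\rho_t$) and, crucially, extracting the sharp $K$-dependence from $\BE K\infty$ rather than the curvature-free Young inequality. A secondary technical point is the envelope argument in Step~1, needed to pass from the $h$-dependent near-optimizers $\phi^h_0$ to a single subsolution optimal at time $t$; this is handled by using that $W_{\cE,*}^2(\cdot,\sigma)$ is a supremum of affine functionals, that one-sided difference quotients only require a competitor, and the convexity and lower semicontinuity of $W_{\cE,*}^2$ recorded in Remark~\ref{rem:equivalent:HJ}.
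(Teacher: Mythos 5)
There is a genuine gap, and it sits exactly where you flagged it: Step~2 is the theorem, and your sketch of it does not contain the idea that makes it work. The difficulty is that nothing in your Step~1 objects can ever produce the term $\ent(\sigma)-\ent(\mu_t)$: the dual functional $\int(\phi_1\varsigma-\phi_0\rho_t)\dd\m$ pairs the subsolution linearly with the densities, and no amount of ``action monotonicity'' plus $\BE K\infty$ turns a linear pairing into an entropy difference. The paper's proof resolves this by \emph{coupling the two dualities}: it writes $\tfrac12 W_{\cE,*}^2(\sfP_t^\cE\rho\,\m,\sigma)+t\ent(\sfP_t^\cE\rho\,\m)-t$ as a single supremum over pairs $(\phi,\psi)$, where $\phi$ is a Hamilton--Jacobi subsolution and $\psi$ comes from the Legendre (conjugate) representation $t\ent(\rho\m)-t=\sup_\psi\bigl(\int\rho\psi\dd\m-t\int\rme^{\psi/t}\dd\m\bigr)$. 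Then, for each such pair, it constructs an auxiliary curve $(\psi_s)_{s\in[0,1]}$ with $\psi_1=\psi$ such that $s\mapsto\sfP^\cE_{ts}(\phi_s+\psi_s)$ is a subsolution of a reweighted Hamilton--Jacobi inequality (the weight $\rme^{2Kts}$ is where $\BE K\infty$ enters, via $\rme^{2Kts}\Gamma(\sfP^\cE_{ts}f)\le\sfP^\cE_{ts}\Gamma(f)$); this requirement reduces to a backward \emph{nonlinear} PDE for $\psi_s$, which the Hopf--Cole substitution $\zeta_s=\rme^{\psi_s/t}$ linearizes into a backward linear parabolic equation solvable by Lions' theorem, with a maximum principle keeping $\zeta_s$ pinched between positive constants. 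The decisive point --- absent from your proposal --- is that this linear equation is \emph{mass preserving}, so $\int\rme^{\psi_0/t}\dd\m=\int\rme^{\psi/t}\dd\m$ and the error term in the resulting estimate vanishes identically, yielding the integrated inequality
\begin{equation*}
  \frac12 W_{\cE,*}^2(\sfP_t^\cE\rho\,\m,\sigma)+t\,\ent(\sfP_t^\cE\rho\,\m)\le
  \frac{t}{\rmI_{2K}(t)}\,\frac12 W_{\cE,*}^2(\rho\m,\sigma)+t\,\ent(\sigma)\;,
\end{equation*}
from which the differential form \eqref{eq:EVI-mix} follows by the expansion $t/\rmI_{2K}(t)=1-Kt+o(t)$ and the semigroup property. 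Without the combined supremum and this auxiliary backward construction, your claimed intermediate inequality in Step~2 has no visible proof route.

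Two secondary problems. First, your Step~1 envelope argument is not sound as stated: to bound the \emph{upper} right derivative you must use near-optimizers $\phi^h$ for $W_{\cE,*}^2(\mu_{t+h},\sigma)$, which vary with $h$; the supremum in \eqref{eq:defWdual_cE} is not attained in general and you have no uniform control on $\Gamma(\phi^h_0)$ allowing you to replace them in the limit by a single subsolution ``optimal at time $t$'' (using a fixed near-optimizer at time $t$ bounds the derivative from \emph{below}, not above). The paper avoids pointwise differentiation of the distance entirely by proving the integrated inequality. Second, your finiteness argument for $\ent(\sfP_t^\cE\rho\,\m)$ invokes the regularization estimate \eqref{eq:may_31}, which is itself a consequence of the $\EVI_K$ property you are trying to prove --- circular in your differential scheme, whereas in the paper's scheme finiteness is an automatic by-product of the integrated inequality.
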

  
Before entering into the technical details, 
let us briefly explain the main idea of the proof.

First of all, thanks to the semigroup property of $\sfP^\cE_t$, 
it is sufficient to prove an ``integrated'' version of
\eqref{eq:EVI-mix}, namely
\begin{align}\label{eq:EVI-mix-ter}
  \frac12 W_{\cE,*}^2(\sfP_t^\cE\rho\,\m,\sigma) 
  +t
  \ent(\sfP_t^\cE\rho\,\m)\le 
  \frac {t}{{\mathrm I}_{2K}(t)}\frac12 W_{\cE,{*}}^2(\rho\m,\sigma)+ t\ent(\sigma)\qquad\forall t>0\;.
\end{align}
{Indeed, the expansion $t/{\mathrm I}_{2K}(t)=1-Kt+o(t)$ and the lower semicontinuity of $\ent$
provide \eqref{eq:EVI-mix} at $t=0$, and the semigroup property provides
the result for all positive times. Notice also that \eqref{eq:EVI-mix-ter} implies all finiteness properties in the statement of the theorem.}

We express the left-hand side by using a dual representation formula,
obtained by combining 
\eqref{eq:defWdual_cE} with the classical conjugate representation of the
Entropy functional
\begin{displaymath}
  \ent(\rho\,\m)=
  \sup_{\zeta\in L^\infty(X,\m)} \Big(\int \rho\zeta\dd\m-\int
  \rme^{\zeta-1}\dd\m\Big)
  =\sup_{\zeta\in L^\infty(X,\m)\cap \V} \Big(\int \rho\zeta\dd\m-\int \rme^{\zeta-1}\dd\m\Big)\;,
\end{displaymath}
where we have restricted the supremum to functions in $\V$ 
by a standard regularization argument (e.g.~by applying
\eqref{eq:sg-moll}).  After the simple transformation $\zeta=1+\psi/t$ yields
\begin{equation}
  \label{eq:35}
  t\ent(\rho\,\m)-t=
  \sup_{\psi\in L^\infty(X,\m)\cap\V} \Big(\int \rho\psi\dd\m-{t }\int \rme^{\psi/t}\dd\m\Big)\;.
\end{equation}
Replacing $\rho$ with $\sfP_t^\cE\rho$, adding the squared distance
term, and using the symmetry of $\sfP^\cE$,
we end up with
\begin{align}
  \label{eq:36}
  \frac12 W_{\cE,*}^2(\sfP_t^\cE\rho\,\m,\sigma) 
  +t
  \ent(\sfP_t^\cE\rho\,\m)-t=
  \sup_{(\phi_t),\psi}\Big(\int \rho\,\sfP_t^\cE(\phi_1+\psi)\dd \m-
  \int \phi_0\dd \sigma-{ t}\int \rme^{\psi/t}\dd\m\Big)\;,
\end{align}
where $\phi$ runs among subsolutions of \eqref{eq:HJdistr_cE} and
$\psi$ runs in $L^\infty(X,\m)\cap\V$. 

Let us now suppose that for every choice of $(\phi_s)_{s\in [0,1]}$ and $\psi\in
L^\infty(X,\m)\cap\V$ we can find a curve $(\psi_s)_{s\in [0,1]}$ in $\cA_\cE$ such that
\begin{align}
  \label{eq:39}
  \psi_1=\psi\quad\text{and}\quad 
  \dds \sfP_{ts}^\cE\big(\phi_s+\psi_s\big)+\frac
  {\rme^{2Kts}}2\Gamma\big(\sfP_{ts}^\cE(\phi_s+\psi_s)\big) \le 0\;.
\end{align}
Recalling \eqref{eq:69} and \eqref{eq:70} with $a=0$, $b=1$, $\vartheta(s)=\rme^{2Kts}$, 
such a curve provides the following upper bound for the term inside the ``$\sup$''
in \eqref{eq:36}
\begin{align}
  \notag\int &\rho\,\sfP_t^\cE(\phi_1+\psi)\dd \m-
  \int \phi_0\dd \sigma-{t }\int \rme^{\psi/t}\dd\m\\&=
  \int \rho\,\sfP_t^\cE(\phi_1+\psi_1)\dd \m-
  \int (\phi_0+\psi_0)\dd \sigma+
  \int \psi_0\dd\sigma-{t }\int \rme^{\psi_0/t}\dd\m+
  {t }\int \big(\rme^{\psi_0/t}-\rme^{\psi/t}\big)\dd\m
  \notag\\&
  \le\frac t{{\mathrm I}_{2K}(t)}
  W^2_{\cE,*}(\rho\,\m,\sigma)+t\ent(\sigma)-t+
  {t }\int \big(\rme^{\psi_0/t}-\rme^{\psi/t}\big)\dd\m\;.
  \label{eq:40}
\end{align}
Expanding \eqref{eq:39} and recalling that  
$\rme^{2Kts}\Gamma\big(\sfP_{ts}^\cE(\phi_s+\psi_s)\big)\le 
\sfP_{ts}^\cE\Gamma\big(\phi_s+\psi_s\big)$ by the 
Bakry-\'Emery condition, we see that 
\eqref{eq:39} is surely satisfied if
\begin{align}
  \label{eq:41}
  \sfP_{ts}^\cE\Big(\dds \psi_s+t\Delta_\cE \psi_s+
  \frac 12
  \Gamma(\psi_s)+t\Delta_\cE\phi_s+\Gamma(\phi_s,\psi_s)\Big)\le 0\;
\end{align}
where we used the fact that 
\begin{displaymath}
  \sfP_{ts}^\cE\Big(\dds \phi_s+\frac 12 \Gamma(\phi_s)\Big)\le 0\;
\end{displaymath}
since $\phi$ is a subsolution of \eqref{eq:HJdistr_cE} and
$\sfP_{ts}^\cE$ is positivity preserving.
This property and the non-negativity of $\Gamma(\psi_s)$ show that 
a candidate for \eqref{eq:41} is provided by the backward Cauchy problem
\begin{equation}
  \label{eq:42}
  \dds \psi_s+t\Delta_\cE \psi_s+
  \Gamma(\psi_s)+t\Delta_\cE\phi_s+\Gamma(\phi_s,\psi_s)=0,\quad 
  s\in [0,1],\qquad
  \psi_{1}=\psi\;
\end{equation}
which can be reduced to the linear backward parabolic problem
\begin{equation}
  \label{eq:44}
  \dds \zeta_s +t\Delta_\cE\zeta_s+
  \zeta_s \Delta_\cE\phi_s+\Gamma(\phi_s,\zeta_s)=0,\quad s\in
  [0,1],\qquad \zeta_1:=\rme^{\psi/t}
\end{equation} 
by applying the well known Hopf-Cole transformation
\begin{equation}
  \label{eq:43}
  \zeta_s:=\rme^{\psi_s/t}\;.
\end{equation}
In conclusion, we have found that 
solving \eqref{eq:44} and setting $\psi_s:=t\log \zeta_s$
we get the bound \eqref{eq:40}. 
Miraculously enough, 
since
\begin{displaymath}
  \int \zeta_s \Delta_\cE\phi_s\dd\m=-
  \int \Gamma(\phi_s,\zeta_s)\dd\m,\quad
  t\int \Delta_\cE\zeta_s\dd\m=0\;
\end{displaymath}
equation \eqref{eq:44} is mass preserving, so that
\begin{displaymath}
  \int \rme^{\psi_0/t}\dd\m=
  \int \zeta_0\dd\m=
  \int \zeta_1\dd\m=
  \int \rme^{\psi/t}\dd\m\;
\end{displaymath}
and with this particular choice the last integral term of
\eqref{eq:40} vanishes; since $\phi$ and $\psi$ are arbitrary,
we obtain \eqref{eq:EVI-mix-ter}. 
\begin{proof}[Let us now check the technical details of the above argument.]
  We divide the
  proof in a few steps: first of all, we will prove the existence of a
  sufficiently smooth solution to \eqref{eq:44}.  We will then show
  that it takes values in a compact interval of $(0,\infty)$, so that
  it will not be difficult to check that the logarithmic
  transformation $\psi_s=t\log \zeta_s$ provides an admissible
  solution to \eqref{eq:41}.

  \textbf{Step 1:} \emph{for every $t>0$, $\zeta_1\in \V$ and
    $\phi\in C^1([0,1];D_\infty(\Delta_\cE))$, there exists a 
    solution $\zeta\in W^{1,2}(0,1;L^2(X,\m))\cap
    L^2(0,1;D(\Delta_\cE))$ (and thus in $C^0([0,1];\V)$)
    of \eqref{eq:44}.}

  Reversing the time order setting $\tilde\zeta_s:= \zeta_{1-s},\
  \tilde\phi_s:=\phi_{1-s}$ and recalling the ``integration by parts''
  formula
  \begin{equation}
    \label{eq:60}
    -\int 
    (\Delta_\cE\tilde\phi_s)\,
    \tilde\zeta_s\eta\dd\m=\cE (\tilde\phi_s,\tilde\zeta_s \eta)=
    \int\tilde\zeta_s \Gamma(\tilde\phi_s,\tilde\eta)\dd\m+
    \int \eta\Gamma(\tilde\phi_s,\tilde\zeta_s)\dd\m\qquad \eta\in \V
  \end{equation}
  which holds since $\Delta\phi_s,\Gamma(\phi_s) \in L^\infty(X,\m)$,
  \eqref{eq:44} is equivalent to the forward Cauchy problem
  \begin{equation}
    \label{eq:57}
    \dds \tilde\zeta_s -t\Delta_\cE\tilde\zeta_s-
    \tilde\zeta_s \Delta_\cE\tilde\phi_s-\Gamma(\tilde\phi_s,\tilde\zeta_s)=0,\quad s\in
    [0,1],\qquad
    \tilde\zeta_0:=\rme^{\psi/t}\in L^2(X,\m)\;
  \end{equation}
  which admits the variational formulation
  \begin{equation}
    \label{eq:58}
    \dds \int \tilde\zeta_s\eta\dd\m+a_s(\tilde\zeta_s,\eta)=0\quad
    \text{in }(0,1)\quad \text{for every }\eta\in \V,
  \end{equation}
  where $(a_s)_{s\in [0,1]}$ is the continuous family of bounded
  bilinear forms in $\V\times \V$
  \begin{equation}
    \label{eq:59}
    a_s(\zeta,\eta):=t\cE(\zeta,\eta)+\int \zeta
    \Gamma(\tilde\phi_s,\eta)\dd\m\qquad \zeta,\,\eta \in \V\;
  \end{equation}
  Since $C:=\sup_s\|\Gamma(\phi_s)^{1/2}\|_\infty<\infty$, we get
  \begin{equation}
    \label{eq:61}
    \Big|\int \zeta
    \Gamma(\tilde\phi_s,\eta)\dd\m\Big|\le C\|\zeta\|_2 \cE(\eta)^{1/2}
  \end{equation}
  and we easily prove that there exist $\lambda,\,\alpha>0$ (depending
  on $t$) such that
  \begin{equation}
    \label{eq:62}
    a_s(\zeta,\zeta)+\lambda \|\zeta\|_2^2\ge \alpha
    \|\zeta\|_{\V}^2\;
  \end{equation}
  A (unique) variational solution $\tilde \zeta \in
  W^{1,2}(0,1;\V')\cap L^2(0,1;\V)$ (and therefore continuous with
  values in $L^2(X,\m)$) then follows by applying J.L.~Lions Theorem, see \cite[Sect.~4.4, Thm.~4.1]{LM72}.
  On the other hand,
  \eqref{eq:57} and the uniform $L^\infty$ bound on $\Delta_\cE\phi_s$
  show that
  \begin{displaymath}
    \dds \tilde\zeta_s-t\Delta_\cE\tilde\zeta_s\in
    L^2(0,1;L^2(X,\m))\; ;
  \end{displaymath}
  since $-t \Delta_\cE$ is the selfadjoint operator in $L^2(X,\m)$
  associated
  to the symmetric Dirichlet form $t\cE$ and since
  $\tilde \zeta_0\in \V$, the standard regularity results for variational evolution 
  equation in Hilbert spaces (see, e.g.~\cite[Chap.~III, Sect.~3]{Brezis73} yield
  $\zeta_s\in W^{1,2}(0,1;L^2(X,\m))$.
Eventually the equation \eqref{eq:57} provides the $L^2(0,1;D(\Delta_\cE))$ regularity.

  \textbf{Step 2.} \emph{Under the same assumptions of the previous
    step, if $|\Delta_\cE \phi_s|\le D$ $\m$-a.e. for every $s\in
    [0,1]$ and $0<\alpha\le \zeta_1\le \beta<\infty$ $\m$-a.e., then }
  \begin{equation}
    \label{eq:63}
    \alpha\rme^{-D(1-s)}\le \zeta_0\le \beta\rme^{D(1-s)}\quad 
    \text{$\m$-a.e.~in $X$ for every $s\in [0,1]$\;}
  \end{equation}
  We just observe that for every function $\theta\in C^1([0,1])$ the
  perturbed solution $\omega_s:=\tilde\zeta_s -\theta_s$ satisfies the
  equation
  \begin{equation}
    \label{eq:64}
    \dds \omega_s -t\Delta_\cE\omega_s-
    \omega_s
    \Delta_\cE\tilde\phi_s-\Gamma(\tilde\phi_s,\omega_s)=f_s,\quad
    f_s=-(\theta_s'+\theta_s \Delta_\cE\tilde\phi_s)
    ,\quad s\in
    [0,1]\;
  \end{equation}
  which can also be written as
  \begin{equation}
    \label{eq:65}
    {\dds} 
    \int  
    \omega_s \eta\dd\m+a_s(\omega_s,\eta)=\int
    f_s\omega_s\dd\m\quad \text{for every }\eta \in \V\quad
    \text{a.e.~in }(0,1).
  \end{equation}
  Choosing $\theta_s=\beta \rme^{Ds}$ we get $f_s\le0$ and
  $\omega_0\le 0$.  Choosing $\eta_s:=(\omega_s)_+$ in \eqref{eq:65}
  {and using the Leibniz rule (whose validity can easily be justified in this setting)} we get
  \begin{equation}
    \label{eq:66}
    \frac 12 \dds \int \eta_s^2 \dd\m-\lambda \int \eta_s^2 \dd\m\le 0
  \end{equation}
  where we used the fact that
  \begin{displaymath}
    a_s(\omega,\omega_+)=a_s(\omega_+,\omega_+)\ge -\lambda \int
    (\omega_+)^2\dd\m\quad
    \text{for every }\omega\in \V\;
  \end{displaymath}
  Since $\eta_0=0$, {\eqref{eq:66}} 
  yields $\eta_s=0$ $\m$-a.e.~in $X$
  for every $s$; we thus obtain $\omega_s\le 0$ and therefore
  $\tilde\zeta_s\le \beta\rme^{Ds}$.  The same argument, choosing
  $\theta_s:=\alpha\rme^{-Ds}$ and $\eta_s=(\omega_s)_-$ yields the
  other inequality $\tilde\zeta_s\ge \alpha\rme^{-Ds}$.

  \textbf{Step 3:} {\em If $\phi\in C^1([0,1];D_\infty(\Delta_\cE))$
    is a subsolution to \eqref{eq:HJdistr_cE} then $\psi_s:=t\log
    \zeta_s$ satisfy}
  \begin{equation}
    \label{eq:68}
    \dds \sfP_{ts}^\cE (\phi_s+\psi_s)+\rme^{2Kts}
    \frac 12\Gamma\big(\sfP_{ts}^\cE(\phi_s+\psi_s)\big)\le 0,\quad 
    s\in [0,1]\;
  \end{equation} 
  The transformation $\psi_s:=t\log \zeta_s$ is admissible thanks to
  the lower and upper bounds proved in the previous step; using the
  fact that
  \begin{displaymath}
    \Gamma(\phi_s,\psi_s)=\frac {{ t}}{\zeta_s}\Gamma(\phi_s,\zeta_s),\qquad
    \Delta_\cE \psi_s+t\Gamma(\psi_s)=\frac{t}{\zeta_s}\Delta_\cE
      \zeta_s\quad\text{in }L^1(X,\m)\;
  \end{displaymath}
  we obtain \eqref{eq:42}; notice that $\psi\in
  W^{1,2}(0,1;L^2(X,\m))\cap C^0([0,1];\V)$ and $\Delta_\cE
  \psi\in L^1(0,1;L^1(X,\m))$.

  Since $\phi$ is a subsolution to \eqref{eq:HJdistr_cE} we get
  \begin{equation}
    \label{eq:67}
    \dds (\phi_s+\psi_s)+t\Delta_\cE (\phi_s+\psi_s)+
    \frac 12\Gamma(\phi_s+\psi_s)\le 0,\quad 
    s\in [0,1]\;
  \end{equation} 
  applying the positivity preserving $\sfP_{ts}^\cE$ and observing
  that
  \begin{displaymath}
    \dds \big(\sfP_{ts}^\cE \varphi_s\big)=\sfP_{ts}^\cE\dds
    \varphi_s+t\Delta_\cE \sfP_{ts}^\cE \varphi_s=\sfP_{ts}^\cE\Big(
    \dds
    \varphi_s+t\Delta_\cE \varphi_s\Big)\;
  \end{displaymath}
  whenever $\varphi\in W^{1,2}(0,1;L^2(X,\m))$ with $\Delta_\cE
  \varphi\in L^1(0,1;L^1(X,\m))$, we get
  \begin{equation}
    \label{eq:68bis}
    \dds \sfP_{ts}^\cE (\phi_s+\psi_s)+\sfP_{ts}^\cE
    \frac 12\Gamma(\phi_s+\psi_s)\le 0,\quad 
    s\in [0,1]\;
  \end{equation}
  which yields \eqref{eq:68} by the $\BE K\infty$ gradient commutation
  property.
\end{proof}
The following corollary is a direct consequence of \eqref{eq:EVI-mix-ter}, see also the metric regularization estimate
\eqref{eq:may_31}.
\begin{corollary}[LlogL regularization] \label{thm:LlogL-Eulerian}
  For any $\mu=\rho\m\in\cP^a(X)$ and $\sigma\in D(\ent)$ we have:
  \begin{align}\label{eq:LlogL-tilde}
    \ent(\sfP^\cE_t\rho\,\m) \leq \ent(\sigma) + 
     \frac{K}{\e^{2Kt}-1} W_{\cE,*}^2(\mu,\nu)\qquad\forall t>0\;.
  \end{align}
\end{corollary}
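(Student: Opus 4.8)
The plan is to obtain the estimate as an immediate consequence of the integrated inequality \eqref{eq:EVI-mix-ter} proved in Theorem~\ref{thm:EVI-mix}. First I would dispose of the degenerate case: for every $t>0$ the coefficient $K/(\e^{2Kt}-1)$ is strictly positive (for $K>0$ both numerator and denominator are positive, for $K<0$ both are negative, and for $K=0$ it is to be read as the limiting value $1/(2t)$), so whenever $W_{\cE,*}(\mu,\sigma)=\infty$ the right-hand side of \eqref{eq:LlogL-tilde} equals $+\infty$ and there is nothing to prove. I may therefore assume $W_{\cE,*}(\mu,\sigma)<\infty$, which is precisely the hypothesis under which Theorem~\ref{thm:EVI-mix} guarantees $\ent(\sfP_t^\cE\rho\,\m)<\infty$ and $W_{\cE,*}(\sfP_t^\cE\rho\,\m,\sigma)<\infty$, so that \eqref{eq:EVI-mix-ter} is available with all terms finite.

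Starting from
\begin{equation*}
  \frac12 W_{\cE,*}^2(\sfP_t^\cE\rho\,\m,\sigma)
  +t\,\ent(\sfP_t^\cE\rho\,\m)\le
  \frac{t}{\rmI_{2K}(t)}\,\frac12 W_{\cE,*}^2(\rho\m,\sigma)+t\,\ent(\sigma),
\end{equation*}
I would simply discard the nonnegative term $\tfrac12 W_{\cE,*}^2(\sfP_t^\cE\rho\,\m,\sigma)$ on the left and divide through by $t>0$, obtaining
\begin{equation*}
  \ent(\sfP_t^\cE\rho\,\m)\le \ent(\sigma)+\frac{1}{2\rmI_{2K}(t)}\,W_{\cE,*}^2(\rho\m,\sigma).
\end{equation*}
The last step is the algebraic identification of the constant: for $K\neq 0$ one has $\rmI_{2K}(t)=(\e^{2Kt}-1)/(2K)$, whence $1/(2\rmI_{2K}(t))=K/(\e^{2Kt}-1)$, which is exactly the coefficient appearing in \eqref{eq:LlogL-tilde}; for $K=0$ the identity $\rmI_0(t)=t$ gives the value $1/(2t)$, consistently with the metric regularization estimate \eqref{eq:may_31}.

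There is no real obstacle here: the entire analytic content of the corollary is already packaged in \eqref{eq:EVI-mix-ter}, and only the bookkeeping of the multiplicative constant, together with the trivial handling of the case $W_{\cE,*}(\mu,\sigma)=\infty$, requires any attention.
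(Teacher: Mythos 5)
Your proof is correct and is exactly the argument the paper intends: the corollary is stated there as a direct consequence of the integrated inequality \eqref{eq:EVI-mix-ter}, and your steps (discarding the nonnegative squared-distance term, dividing by $t$, and identifying $1/(2\rmI_{2K}(t))=K/(\e^{2Kt}-1)$, with $1/(2t)$ as the $K=0$ limit) are precisely that deduction. Your explicit handling of the case $W_{\cE,*}(\mu,\sigma)=\infty$ is a sensible addition that the paper leaves implicit.
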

Also the following corollary is a direct consequence of Theorem~\ref{thm:self-improvement} and of the fact that
$W_{\cE}$ is the upper length distance induced by $W_{\cE,*}$.
\begin{corollary}[$\sfP^\cE$ satisfies $\EVI_K$ relative to $W_{\cE}$]\label{cor:EVI-mix}
  For all $\mu=\rho\m\in\cP^a(X)$, $\sigma\in D(\ent)$ with $W_{\cE}(\mu,\sigma)<\infty$  
  one has $\ent(\sfP_t^\cE\rho\,\m)<\infty$, $W_{\cE}(\sfP_t^\cE\rho\,\m,\sigma)<\infty$ for all $t>0$ and
  \begin{align}\label{eq:EVI-mix-bis}
    \ddtr\frac12 W_{\cE}^2(\sfP^\cE_t\rho\,\m,\sigma) + \frac{K}{2}W_{\cE}^2(\sfP_t^\cE\rho\,\m,\sigma)\leq \ent(\sigma)-\ent(\sfP_t^\cE\rho\,\m)
  \qquad\forall t\geq 0\;.
  \end{align}
  \end{corollary}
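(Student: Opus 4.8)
The plan is to obtain \eqref{eq:EVI-mix-bis} as a direct instance of the abstract self-improvement principle, Theorem~\ref{thm:self-improvement}, applied on the extended metric space $(\cP^a(X),W_{\cE,*})$ with functional $F=\ent$ and semigroup $\sfS=\sfP^\cE$ acting by $\rho\m\mapsto(\sfP^\cE_t\rho)\m$. The three inputs already available are: Theorem~\ref{thm:EVI-mix}, which provides the $\EVI_K$ inequality \eqref{eq:EVI-mix} relative to $W_{\cE,*}$ together with the integrated finiteness/regularization estimate \eqref{eq:EVI-mix-ter}; Proposition~\ref{prop:BE_contra_dual}, which gives $K$-contractivity of $\sfP^\cE$ with respect to $W_{\cE,*}$; and Proposition~\ref{prop:comparalebis}, which identifies $W_\cE$ with the upper length extended distance $\overline{(W_{\cE,*})}_\ell$ associated to $W_{\cE,*}$.

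First I would verify that $\sfP^\cE$ is an $\EVI_K$ gradient flow of $\ent$ relative to $W_{\cE,*}$ in the sense of Definition~\ref{def:evi_semigroup}, on the domain $D$ of those $\mu=\rho\m\in\cP^a(X)$ having finite $W_{\cE,*}$-distance from $D(\ent)$. Condition~(iii) is precisely Proposition~\ref{prop:BE_contra_dual}. For condition~(ii), Theorem~\ref{thm:EVI-mix} supplies the pointwise inequality \eqref{eq:EVI-mix}, while the remaining requirements of Definition~\ref{def:evikgf} are routine: local absolute continuity of $t\mapsto\sfP^\cE_t\rho\,\m$ with respect to $W_{\cE,*}$ follows from the estimate \eqref{eq:june2} together with $W_{\cE,*}\le W_\cE$; the lower semicontinuity and monotonicity of $t\mapsto\ent(\sfP^\cE_t\rho\,\m)$ are standard; and the ``starting from $\mu$'' conditions follow from the lower semicontinuity of $\ent$ and the continuity of the curve in $W_{\cE,*}$. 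Condition~(i) holds by the definition of $D$.

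Once this is in place, Theorem~\ref{thm:self-improvement} immediately yields that $\sfP^\cE$ is an $\EVI_K$ gradient flow of $\ent$ relative to $\overline{(W_{\cE,*})}_\ell$, which by Proposition~\ref{prop:comparalebis} equals $W_\cE$, and this is exactly \eqref{eq:EVI-mix-bis}. The finiteness assertions follow in turn: since $W_\cE\ge W_{\cE,*}$ by Proposition~\ref{prop:comparale_cE}, the hypothesis $W_\cE(\mu,\sigma)<\infty$ forces $W_{\cE,*}(\mu,\sigma)<\infty$, so Theorem~\ref{thm:EVI-mix} gives $\ent(\sfP^\cE_t\rho\,\m)<\infty$ for $t>0$; the finiteness of $W_\cE(\sfP^\cE_t\rho\,\m,\sigma)$ then comes from the integral form of the self-improved inequality (the analogue of \eqref{eq:75} for $\overline{(W_{\cE,*})}_\ell$, which is produced inside the proof of Theorem~\ref{thm:self-improvement}), whose right-hand side is finite because both entropies are.

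The main obstacle I anticipate is bookkeeping rather than conceptual: one must respect that $\ent\equiv+\infty$ off $\cP^a(X)$ and that $W_{\cE,*}$ is genuinely extended, so Theorem~\ref{thm:self-improvement} is applied only after restricting the ambient space to the $W_{\cE,*}$-finiteness class $D$ of measures at finite distance from $D(\ent)$. This is legitimate because the self-improvement argument is entirely local—it treats each pair at finite length distance separately via $\eps$-chains and the integral estimate \eqref{eq:75}—so restricting to $D$ does not affect it; one should nonetheless confirm that passing to the (upper) length distance does not merge distinct finiteness classes in a way that breaks the initial-condition requirements of Definition~\ref{def:evikgf}. Apart from this, verifying that the heat-flow curves meet the absolute-continuity and ``starting from'' conditions is the only delicate point.
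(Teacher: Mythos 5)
Your proposal is correct and follows essentially the same route as the paper, which proves this corollary in one line as "a direct consequence of Theorem~\ref{thm:self-improvement} and of the fact that $W_{\cE}$ is the upper length distance induced by $W_{\cE,*}$" (i.e.\ Theorem~\ref{thm:EVI-mix} plus Proposition~\ref{prop:comparalebis}). The extra verifications you carry out---checking the conditions of Definition~\ref{def:evi_semigroup} via Proposition~\ref{prop:BE_contra_dual} and Theorem~\ref{thm:EVI-mix}, and deducing the finiteness assertions from \eqref{eq:EVI-mix-ter} and the integral form of the self-improved inequality---are exactly the details the paper leaves implicit.
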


We can now obtain the geodesic property of $D(\ent)$ and the convexity of $\ent$, relative to $W_\cE$.
This provides a link with the theory developed independently by Lott-Villani and Sturm 
of synthetic lower bounds on the Ricci tensor, based on convexity properties of $\ent$ (see \cite{Villani09}).
\begin{theorem}[Geodesic convexity of the entropy functional]\label{thm:geoconv}
  $(D(\ent),W_{\cE}))$ is an extended geodesic metric space:
    for every couple of measures $\mu,\,\nu\in D(\ent)$ with
    $W_{\cE}(\mu,\nu)<\infty$ there exists a $W_\cE$-Lipschitz curve $\mu_t:[0,1]\to D(\ent)$ such that 
    \begin{equation}
      \label{eq:82}
      \mu_0=\mu,\quad
      \mu_1=\nu,\quad
      W_{\cE}(\mu_s,\mu_t)=|t-s|W_{\cE}(\mu,\nu),\quad
      s,\,t\in [0,1]\; .
    \end{equation}
    In addition, the ``finitary'' length distance generated by $W_{\cE,*}$ according to \eqref{eq:71} coincides
    with $W_\cE$ on $D(\ent)\times D(\ent)$ and $\ent$ is $K$-convex on every curve as in \eqref{eq:82}:
    \begin{equation}
      \label{eq:83}
      \ent(\mu_t)\le (1-t)\ent(\mu_0)+t\ent(\mu_1)-\frac K2
      t(1-t)W_\cE^2(\mu,\nu)\;.
    \end{equation}
    Finally, for all $\mu=\rho\,\m\in D(\ent)$ the slope of the entropy coincides with the Fisher information
    \begin{equation}\label{eq:Fisher-slope}
    4\cE(\sqrt{\rho})=|\rmD^-_{W_\cE}\ent|^2(\rho\,\m)\;.
    \end{equation}
\end{theorem}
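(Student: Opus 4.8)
The plan is to derive all four assertions from the abstract results of Section~\ref{sec:2}, applied to the space $\cP^a(X)$ with functional $F=\ent$, distance $\sfd=W_{\cE,*}$, and $\tau$ the weak $L^1(X,\m)$-topology. The starting point is Theorem~\ref{thm:EVI-mix}: $\sfP^\cE$ is an $\EVI_K$ gradient flow of $\ent$ relative to $W_{\cE,*}$, hence by the self-improvement principle (Theorem~\ref{thm:self-improvement}) also relative to the length distance $(W_{\cE,*})_\ell$. To invoke the geodesic-convexity part of Corollary~\ref{cor:appgeo} and the remark following it, I must check that (i) $W_{\cE,*}$ is $\tau$-lower semicontinuous and (ii) the sublevels $\{\ent\le c\}$ are $\tau$-compact.

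Point (i) is the energy-measure-space analogue of Remark~\ref{rem:equivalent:HJ}(4): $W_{\cE,*}^2$ is a supremum of weakly continuous linear functionals on $L^1(X,\m)$, hence jointly convex and l.s.c.\ for the weak $L^1$-topology. Point (ii) follows from the de la Vall\'ee--Poussin/Dunford--Pettis criterion: a bound $\int\rho\log\rho\dd\m\le c$ forces uniform integrability of the densities, so $\{\ent\le c\}$ is relatively weakly $L^1$-compact, and it is weakly closed by lower semicontinuity of $\ent$; in particular these sublevels are complete for $W_{\cE,*}$. With (i)--(ii) in hand, the remark after Corollary~\ref{cor:appgeo} yields that $(D(\ent),(W_{\cE,*})_\ell)$ is a geodesic space and that the $\EVI_K$ property relative to $(W_{\cE,*})_\ell$ gives the $K$-convexity inequality \eqref{eq:80} along geodesics.

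It then remains to identify $(W_{\cE,*})_\ell$ with $W_\cE$ on $D(\ent)\times D(\ent)$. By Proposition~\ref{prop:comparalebis} the upper length distance $\overline{(W_{\cE,*})}_\ell$ equals $W_\cE$; since the sublevels of $\ent$ are complete for $W_{\cE,*}$, the last assertion of Corollary~\ref{cor:appgeo} gives $(W_{\cE,*})_\ell=\overline{(W_{\cE,*})}_\ell$ on $D(\ent)\times D(\ent)$, whence $(W_{\cE,*})_\ell=W_\cE$ there. This simultaneously proves the existence of constant-speed $W_\cE$-geodesics \eqref{eq:82}, the coincidence of the finitary length distance of $W_{\cE,*}$ with $W_\cE$, and, rewriting \eqref{eq:80} for $\sfd_\ell=W_\cE$, the convexity estimate \eqref{eq:83}.

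For the slope identity \eqref{eq:Fisher-slope}, the inequality $4\cE(\sqrt\rho)\le|\rmD^-_{W_\cE}\ent|^2(\rho\,\m)$ is the energy-measure-space version of Lemma~\ref{lem:fisherboundsslope}: the proof, based on the heat-flow estimate $|\dot\mu_t|^2\le\mathsf F(\rho_t)$ from \eqref{eq:june2} and the dissipation formula \eqref{eq:derentropy}, carries over verbatim. The reverse inequality is where the real work lies, and it is here that $K$-convexity is essential. Since $\ent$ is $K$-geodesically convex and $(D(\ent),W_\cE)$ is geodesic, the descending slope coincides with the global slope, $|\rmD^-_{W_\cE}\ent|(\rho\,\m)=\sup_{\sigma}\big[(\ent(\rho\,\m)-\ent(\sigma))/W_\cE(\rho\,\m,\sigma)+\tfrac K2 W_\cE(\rho\,\m,\sigma)\big]^+$ (cf.\ \cite{Ambrosio-Gigli-Savare08}), and I would combine this with the HWI-type inequality $\ent(\rho\,\m)-\ent(\sigma)\le W_\cE(\rho\,\m,\sigma)\sqrt{4\cE(\sqrt\rho)}-\tfrac K2 W_\cE^2(\rho\,\m,\sigma)$ to bound the supremum by $\sqrt{4\cE(\sqrt\rho)}$. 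The HWI estimate itself is obtained by differentiating $\ent$ along a $W_\cE$-geodesic issuing from $\rho\,\m$, using the sharp dissipation rate $4\cE(\sqrt\rho)$ of the heat flow at the initial time together with $K$-convexity to control the remainder. The main obstacle is precisely the careful derivation of this HWI inequality and the justification of the global-slope representation; by contrast, the geodesic structure and the convexity \eqref{eq:83} come almost for free once the abstract hypotheses (i)--(ii) have been verified.
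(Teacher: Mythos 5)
Your handling of the first three assertions is correct and follows the paper's own route: you apply Corollary~\ref{cor:appgeo} (and the geodesic-convexity remark after it) with $F=\ent$, $\sfd=W_{\cE,*}$ and $\tau$ the weak $L^1(X,\m)$-topology, using Dunford--Pettis compactness of the sublevels of $\ent$, weak-$L^1$ lower semicontinuity of $W_{\cE,*}$, and Proposition~\ref{prop:comparalebis} to identify the resulting length distances with $W_\cE$; the inequality $4\cE(\sqrt\rho)\le|\rmD^-_{W_\cE}\ent|^2(\rho\,\m)$ is also treated exactly as in the paper (Lemma~\ref{lem:fisherboundsslope} together with \eqref{eq:june2}).

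The gap is in the converse inequality $|\rmD^-_{W_\cE}\ent|^2(\rho\,\m)\le4\cE(\sqrt\rho)$, and it is essentially a circularity. Your two ingredients are (i) the global-slope representation of $|\rmD^-_{W_\cE}\ent|$ for the $K$-convex functional $\ent$, and (ii) the HWI inequality $\ent(\rho\,\m)-\ent(\sigma)\le W_\cE(\rho\,\m,\sigma)\sqrt{4\cE(\sqrt\rho)}-\tfrac K2W_\cE^2(\rho\,\m,\sigma)$. Ingredient (i) is fine, but your derivation of (ii) does not work: differentiating $\ent$ along a $W_\cE$-geodesic issuing from $\rho\,\m$ and using $K$-convexity yields only
\begin{displaymath}
\ent(\rho\,\m)-\ent(\sigma)\le W_\cE(\rho\,\m,\sigma)\,|\rmD^-_{W_\cE}\ent|(\rho\,\m)-\tfrac K2W_\cE^2(\rho\,\m,\sigma)\;,
\end{displaymath}
i.e.\ HWI with the \emph{slope} on the right-hand side, and combining this with (i) is a tautology. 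The quantity $4\cE(\sqrt\rho)$ is the dissipation rate of the \emph{heat flow}, which is a different curve from the geodesic; replacing the slope by $\sqrt{4\cE(\sqrt\rho)}$ in (ii) is precisely the inequality you set out to prove, and in this abstract setting there is no Otto-type calculus identifying the initial velocity of a geodesic that would give an independent proof.

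A non-circular repair must inject the Fisher information through the heat flow. For instance, from the integrated $\EVI_K$ inequality \eqref{eq:75} applied to $\sfP^\cE$ relative to $W_\cE$ (Corollary~\ref{cor:EVI-mix}) one obtains
\begin{displaymath}
\ent(\rho\,\m)-\ent(\sigma)\le W_\cE(\rho\,\m,\sigma)\,\limsup_{t\downarrow0}\frac{W_\cE(\sfP^\cE_t\rho\,\m,\rho\,\m)}{t}-\frac K2W_\cE^2(\rho\,\m,\sigma)\;,
\end{displaymath}
but then one must prove the initial-speed bound $\limsup_{t\downarrow0}t^{-1}W_\cE(\sfP^\cE_t\rho\,\m,\rho\,\m)\le\sqrt{4\cE(\sqrt\rho)}$. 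Since \eqref{eq:june2} only gives $W_\cE(\sfP^\cE_t\rho\,\m,\rho\,\m)\le\int_0^t\sqrt{\mathsf F(\sfP^\cE_s\rho)}\dd s$ and $\mathsf F$ is merely lower semicontinuous, this requires an upper estimate such as the $\BE K\infty$-consequence $\mathsf F(\sfP^\cE_s\rho)\le\e^{-2Ks}\mathsf F(\rho)$ --- a fact established neither in the paper nor in your proposal. The paper avoids all of this: it observes that along the heat flow the $L^2$ dissipation rate \eqref{eq:derentropy} and the metric dissipation rate coming from the $\EVI_K$ property coincide, so that $4\cE(\sqrt{\sfP^\cE_t\rho})=|\rmD^-_{W_\cE}\ent|^2(\sfP^\cE_t\rho\,\m)$ for a.e.\ $t>0$; it then lets $t\downarrow0$ for $\rho\in\V$ with $\inf\rho>0$, using lower semicontinuity of the slope (a consequence of the just-proved convexity) together with $\cE(\sqrt{\sfP^\cE_t\rho})\to\cE(\sqrt\rho)$, and concludes by truncation. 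Either adopt that argument or supply the missing Fisher-information decay estimate to complete your HWI route.
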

\begin{proof} 
We are going to apply Corollary~\ref{cor:appgeo} with $X=D(\ent)$, $\sfd=W_{\cE,*}$,
$F=\ent$ and $\sf S=\sf P$ (we identify here measures with probability densities, as usual). 
We know from Theorem~\ref{thm:EVI-mix} that ${\sf S}$ provides a $\EVI_K$-gradient flow of $F$ in $X$, hence
$(\cP^a(X),W_{\cE,*,\ell})$ (where $W_{\cE,*,\ell}$ is the length distance associated to $W_{\cE,*}$ as in \eqref{eq:71}) 
is a length space and the same holds for all sublevels $\{\ent\leq c\}$, $c\in [0,\infty)$. 
On the other hand, since the sublevels are compact w.r.t.~the weak $L^1(X,\m)$ topology (thanks to
Dunford-Pettis theorem), we immediately obtain that $(\{\ent\leq c\},W_{\cE,*})$ are complete, thanks
to the lower semicontinuity of $W_{\cE,*}$ w.r.t. the weak $L^1(X,\m)$ convergence. It follows by
Corollary~\ref{cor:appgeo} that on $D(\ent)\times D(\ent)$ the distance $W_{\cE,*,\ell}$ coincides with the upper
length distance induced by $W_{\cE,*}$, namely $W_\cE$ (by Proposition~\ref{prop:comparalebis}).

Thanks to compactness, the length properties of the sublevels can be improved to geodesic
properties by the remarks made after Corollary~\ref{cor:appgeo}. We can now use Corollary~\ref{cor:EVI-mix} 
to improve the $\EVI_K$ property from $W_{\cE,*}$ to $W_{\cE}$, getting then the convexity of $\ent$ along geodesics
of $W_\cE$.

We need only to prove the inequality $\geq$ in \eqref{eq:Fisher-slope}, since the converse inequality can be
proved independently of
curvature assumptions, see the proof of Lemma~\ref{lem:fisherboundsslope} in the metric setup and
recall \eqref{eq:june2}. We start from the observation that for any $\rho\in L^2_+(X,\m)$ one has
$$
4\cE(\sqrt{{\sf P}_t^\cE\rho})=|\rmD^-_{W_\cE}\ent|^2({\sf P}_t^\cE\rho\,\m)\qquad\text{for a.e. $t>0$}
$$
by looking at the energy dissipation rates from the $L^2$ point of view and by the $W_\cE$ point of view (the latter
is derived from the $\EVI_K$ property). Now, if $\rho\in\V$ with $\inf\rho>0$ we can pass to the limit as $t\downarrow 0$
along a suitable sequence and use the lower semicontinuity of $|\rmD_{W_\cE}^-\ent|$ w.r.t. $W_\cE$ convergence (derived from the convexity of entropy)
to obtain the inequality $\geq$ in \eqref{eq:Fisher-slope}. For general probability densities we argue by truncation,
using once more the lower semicontinuity of $|\rmD_{W_\cE}^-\ent|$.
\end{proof}
We conclude pointing out some standard consequences of the $K$-convexity of $\ent$.

\begin{corollary} [Convexity of bounded densities, Log-Sobolev and
  transport inequalities]\label{cor:LS}
\ \\
(i) If $K\geq 0$, then the sets $\{\mu=\rho\,\m\in\cP^a(X):\
\|\rho\|_\infty\leq c\}$ are geodesically convex w.r.t.~$W_\cE$, 
i.e.~every couple of measures $\mu_i=\rho_i\m$, $i=0,\,1$,
with $\|\rho_i\|_\infty\leq c$ and $W_\cE(\mu_0,\mu_1)<\infty$,
can be connected by a geodesic $\mu_t=\rho_t\m$ as in \eqref{eq:82}
such that $\|\rho_t\|_\infty\le c$ for every $t\in [0,1]$. 

\noindent
(ii) If $K>0$ and $\cE$ is irreducible according to \eqref{eq:53},
then the log-Sobolev inequality \eqref{eq:46bis} holds with ${\textrm c_{LS}=K}$.
In particular, thanks to Lemma~\ref{lem:connectivity2}, one has the Talagrand inequality
\begin{equation}\label{eq:talag}
\frac K2 W_\cE^2(\mu,\m)\leq\ent(\mu)\qquad\forall\mu\in\cP^a(X)\;.
\end{equation}
\end{corollary}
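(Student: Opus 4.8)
The plan is to read off both assertions from the $K$-convexity of $\ent$ along $W_\cE$-geodesics (Theorem~\ref{thm:geoconv}) and the slope identity \eqref{eq:Fisher-slope}. I begin with (ii), the more direct consequence. Since $\m$ is a probability measure, $\ent\ge0$ with equality only at $\m$, so $\m$ is the unique minimizer; irreducibility \eqref{eq:53}, equivalent to the ergodicity \eqref{eq:54}, moreover forces $\ent(\sfP^\cE_{t}\rho\,\m)\to0$ as $t\to\infty$. The heart of the matter is the gradient estimate
\[
  |\rmD^-_{W_\cE}\ent|^2(\rho\,\m)\ \ge\ 2K\,\ent(\rho\,\m)\qquad\text{for all }\rho\,\m\in D(\ent).
\]
To prove it I may assume $\cE(\sqrt\rho)<\infty$ and set $y_n:=\sfP^\cE_{t_n}\rho\,\m$ with $t_n\uparrow\infty$. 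By Remark~\ref{rem:heat} and \eqref{eq:june2} the heat curve lies in ${\sf CE}^2(X,\cE,\m)$, hence has finite $W_\cE$-length; in particular $W_\cE(\rho\,\m,y_n)<\infty$ and Theorem~\ref{thm:geoconv} supplies a $W_\cE$-geodesic $(\mu^n_s)_{s\in[0,1]}$ from $\rho\,\m$ to $y_n$. Inserting it into \eqref{eq:83}, dividing by $W_\cE(\mu^n_s,\rho\,\m)=s\,W_\cE(\rho\,\m,y_n)$ and letting $s\downarrow0$ gives
\[
  |\rmD^-_{W_\cE}\ent|(\rho\,\m)\ \ge\ \frac{\ent(\rho\,\m)-\ent(y_n)}{W_\cE(\rho\,\m,y_n)}+\frac K2\,W_\cE(\rho\,\m,y_n)\ \ge\ \sqrt{2K\bigl(\ent(\rho\,\m)-\ent(y_n)\bigr)},
\]
the last step being Young's inequality; letting $n\to\infty$ and using $\ent(y_n)\to0$ proves the estimate. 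Combined with \eqref{eq:Fisher-slope}, i.e.\ $|\rmD^-_{W_\cE}\ent|^2(\rho\,\m)=4\cE(\sqrt\rho)$, this is precisely the logarithmic Sobolev inequality \eqref{eq:46bis}, and Lemma~\ref{lem:connectivity2} upgrades it to the Talagrand inequality \eqref{eq:talag}.

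For (i) the strategy is to realize $\{\rho:\|\rho\|_\infty\le c\}$ as the zero level of a geodesically convex internal energy. Consider $\mathcal U_c(\rho\,\m):=\int(\rho-c)_+\dd\m$, a convex functional vanishing exactly on that set. Granting that $\mathcal U_c$ is geodesically convex along $W_\cE$-geodesics when $K\ge0$, for any $\mu_0,\mu_1$ with $\|\rho_i\|_\infty\le c$ and $W_\cE(\mu_0,\mu_1)<\infty$ a geodesic $(\mu_t)$ from Theorem~\ref{thm:geoconv} would satisfy $\mathcal U_c(\mu_t)\le(1-t)\mathcal U_c(\mu_0)+t\,\mathcal U_c(\mu_1)=0$, i.e.\ $\|\rho_t\|_\infty\le c$ for every $t$, which is exactly the claimed convexity of the set. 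Equivalently one can work with the smooth power functionals $\rho\mapsto\int\rho^m\dd\m$, use $\int\rho_t^m\le c^{m-1}$ along the geodesic, and pass to the limit $m\to\infty$ through $\|\rho_t\|_m\uparrow\|\rho_t\|_\infty$.

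The crux, and the step I expect to be the main obstacle, is the displacement convexity of $\mathcal U_c$ (equivalently of the power functionals): unlike the sublevel convexity of $\ent$, it genuinely encodes the sign of the curvature and is not a formal consequence of the convexity of $\ent$ alone. I would obtain it by rerunning the Eulerian/EVI scheme of Theorem~\ref{thm:EVI-mix} with $\ent$ replaced by $\mathcal U_c$ (respectively $\int\rho^m\dd\m$), whose $L^2(X,\m)$-gradient flow is the associated nonlinear diffusion and whose Eulerian second-derivative computation again reduces to the Bakry--\'Emery commutation \eqref{eq:BE-grad}; alternatively one may invoke the displacement-convexity theory for functionals of the class $\mathcal{DC}_\infty$ under $\BE K\infty$. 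In the power-functional variant one closes the argument with the passage to the supremum over $m$ together with the weak $L^1$-closedness of the constraint along the $W_\cE$-Lipschitz geodesics, whereas with the single functional $\mathcal U_c$ no such limit is needed.
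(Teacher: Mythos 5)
Your part (ii) is correct and follows essentially the paper's own route. The inequality you extract from \eqref{eq:83} by dividing by $W_\cE(\mu^n_s,\rho\,\m)=s\,W_\cE(\rho\,\m,y_n)$, letting $s\downarrow0$ and applying Young's inequality is exactly the metric estimate that the paper imports from \cite[Lem.~2.4.13]{Ambrosio-Gigli-Savare08} (under $K$-convexity the descending slope controls the entropy gap); after that, both arguments let $\sfP^\cE_t\rho$ run to equilibrium using irreducibility \eqref{eq:53}/\eqref{eq:54}, and conclude via the identity \eqref{eq:Fisher-slope} and Lemma~\ref{lem:connectivity2}.

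Part (i), however, contains a genuine gap, and you have located it yourself: the geodesic convexity of $\mathcal{U}_c(\rho\,\m)=\int(\rho-c)_+\dd\m$, or of the power functionals $\int\rho^m\dd\m$, is not available in this framework, and neither of your proposed routes closes it. Rerunning the scheme of Theorem~\ref{thm:EVI-mix} with $\ent$ replaced by $\mathcal U_c$ or $\int\rho^m\dd\m$ is not a routine modification: that proof is tied to the logarithmic entropy through its Legendre transform, the Hopf--Cole substitution $\zeta_s=\rme^{\psi_s/t}$, and the resulting mass conservation of the linearized backward equation; for a power entropy the associated gradient flow is a nonlinear (porous-medium type) diffusion, the Hopf--Cole trick is unavailable, and $\BE K\infty$ yields no analogous commutation or EVI in this generality. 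Invoking ``$\mathcal{DC}_\infty$ theory under $\BE K\infty$'' is likewise unjustified here: what the paper establishes is only the $K$-convexity of $\ent$ (Theorem~\ref{thm:geoconv}), and in general metric measure settings the convexity of the other internal energies is not a formal consequence of it---as you yourself note. (Moreover $U(r)=(r-c)_+$ does not even belong to $\mathcal{DC}_\infty$, since $s\mapsto\rme^s\,(\rme^{-s}-c)_+$ fails to be convex, so even a Lott--Villani-type theory would have to be reached through the power functionals and the limit $m\to\infty$.)

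The missing idea is that no new displacement convexity is needed at all: the $L^\infty$ bound along geodesics can be forced by the $K$-convexity of $\ent$ alone. This is what the paper does, by applying the restriction/rescaling-of-transport-plans argument of \cite[Prop.~3.3]{AGS11b} to the geodesics produced by Theorem~\ref{thm:geoconv}: one localizes the geodesic plan on the region where the density would exceed $c$, rescales it to a probability, and applies the entropy convexity \eqref{eq:83} to the rescaled measures, which rules out a density excess when $K\geq0$ (the support-size assumptions of that proposition only enter through the term $-\tfrac K2 t(1-t)W^2$, hence only matter for $K<0$). If you want to salvage your route via power functionals, you would first have to prove their convexity along $W_\cE$-geodesics, which in this abstract setting is an open problem rather than a lemma you can cite.
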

\begin{proof} For the geodesic convexity of the sets
  $\{\mu=\rho\,\m\in\cP^a(X):\ \|\rho\|_\infty\leq c\}$ the rescaling
  of transport plans as in \cite[Prop.~3.3]{AGS11b} applies (notice
  that the assumptions on the side of the supports made therein play a
  role only when $K<0$).

The metric argument of \cite[Lem.~2.4.13]{Ambrosio-Gigli-Savare08}, relying on $K$-convexity, can be applied to give 
$$
\ent(\rho_1\,\m)-\ent(\rho_0\,\m)\leq \frac{1}{2K}|\rmD^-\ent|^2(\rho_1\,\m)\;,
$$
for any pair of measures $\mu_0=\rho_0\,\m$, $\mu_1=\rho_1\,\m$ in $D(\ent)$ with $W_{\cE}(\mu_0,\mu_1)<\infty$. It
$\rho\in D(\ent)$ we apply this inequality with $\rho_0=\sfP_t\rho$ and $\rho_1=\rho$ and we let
$t\to\infty$ to get, by the irreducibility of $\cE$ and \eqref{eq:54},
$$
\ent(\rho\,\m)\leq \frac{1}{2K}|\rmD^-\ent|^2(\rho\,\m)\;.$$ 
Using \eqref{eq:Fisher-slope} we obtain the log-Sobolev inequality.
\end{proof}

\begin{corollary}\label{lem:june17}
The transport inequality \eqref{eq:talag} implies that the class of measures in $\cP^a(X)$ with bounded density w.r.t. 
$\m$ is dense in $D(\ent)$ w.r.t. $W_\cE$.
\end{corollary}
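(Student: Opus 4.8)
The plan is to approximate a given $\mu=\rho\m\in D(\ent)$ by truncating the unbounded part of $\rho$ and redistributing the excess mass uniformly, controlling the resulting $W_\cE$-error through the joint convexity of $W_\cE^2$ and the transport inequality \eqref{eq:talag}. If $\rho$ is already essentially bounded there is nothing to prove, so I assume $\rho\notin L^\infty(X,\m)$. For $n\in\N$ set $\rho^{(n)}:=\rho\wedge n$, $\beta_n:=(\rho-n)_+=\rho-\rho^{(n)}$ and $m_n:=\int\beta_n\dd\m\in(0,1)$; since $\rho\in L^1(X,\m)$ one has $m_n\downarrow 0$. I would then define the bounded probability density $\rho_n:=\rho^{(n)}+m_n\le n+1$ and $\mu_n:=\rho_n\m$. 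As $\rho_n$ is bounded on the finite measure space $(X,\m)$, the measure $\mu_n$ belongs to the class of measures with bounded density and $\ent(\mu_n)<\infty$.

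First I would record the two convex decompositions
\begin{equation*}
\mu=(1-m_n)\sigma_n+m_n\nu_n,\qquad \mu_n=(1-m_n)\sigma_n+m_n\m,
\end{equation*}
where $\sigma_n:=\tfrac{1}{1-m_n}\rho^{(n)}\m$ and $\nu_n:=\tfrac{1}{m_n}\beta_n\m$ are probability measures in $\cP^a(X)$. Since $W_\cE^2$ is jointly convex on $\cP^a(X)\times\cP^a(X)$, comparing these two combinations (whose first components coincide) gives
\begin{equation*}
W_\cE^2(\mu,\mu_n)\le (1-m_n)\,W_\cE^2(\sigma_n,\sigma_n)+m_n\,W_\cE^2(\nu_n,\m)=m_n\,W_\cE^2(\nu_n,\m).
\end{equation*}
Applying the transport inequality \eqref{eq:talag} to $\nu_n\in\cP^a(X)$ then yields
\begin{equation*}
W_\cE^2(\mu,\mu_n)\le \frac{2 m_n}{K}\,\ent(\nu_n)=\frac 2K\Big(\int \beta_n\log\beta_n\dd\m-m_n\log m_n\Big).
\end{equation*}

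It remains to show that the right-hand side vanishes as $n\to\infty$, and this equi-integrability estimate is the only genuinely quantitative point. The term $m_n\log m_n\to 0$ since $m_n\downarrow 0$. For the other term I would split $\{\beta_n>0\}=\{\rho>n\}$ according to whether $\beta_n\ge 1$ or $\beta_n<1$: on $\{0<\beta_n<1\}$ the integrand lies in $[-1/e,0)$ and is supported on $\{\rho>n\}$, whose $\m$-measure tends to $0$, so that contribution is infinitesimal; on $\{\beta_n\ge 1\}\subset\{\rho\ge n+1\}$ the monotonicity of $x\mapsto x\log x$ on $[1,\infty)$ together with $\beta_n=\rho-n\le\rho$ gives $\beta_n\log\beta_n\le\rho\log\rho$, whence $\int_{\{\beta_n\ge1\}}\beta_n\log\beta_n\dd\m\le\int_{\{\rho\ge n+1\}}\rho\log\rho\dd\m$. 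Since $\ent(\mu)<\infty$ and $\m$ is finite, the function $\rho\log\rho$ is $\m$-integrable, so by absolute continuity of the integral the last quantity tends to $0$. Combining these estimates shows $m_n\,\ent(\nu_n)\to 0$, hence $W_\cE(\mu,\mu_n)\to 0$, proving the asserted density. The main obstacle is precisely this last tail estimate; everything else reduces to the structural facts—joint convexity of $W_\cE^2$ and \eqref{eq:talag}—already at our disposal.
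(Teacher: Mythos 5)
Your proof is correct, and it follows the same broad strategy as the paper's own argument — truncate $\rho$, write $\mu$ and the bounded approximant as convex combinations sharing a common component, use convexity of $W_\cE^2$ to isolate the tail, and kill the tail with the transport inequality \eqref{eq:talag} — but your decomposition is genuinely different and slightly more economical. The paper approximates by the \emph{conditioned} measures $\mu^k=\one_{\{\rho\le k\}}\mu/a_k$; in the resulting decomposition the normalized tail $b_k^{-1}\one_{\{\rho>k\}}\rho\,\m$ is paired with $\mu^k$ itself, so the paper must pass through $\m$ by the triangle inequality and apply \eqref{eq:talag} \emph{twice} (once to $\mu^k$, once to the tail). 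You instead return the excess mass as a constant, $\rho_n=\rho\wedge n+m_n$, which makes the second component of $\mu_n$'s decomposition equal to $\m$ itself; joint convexity then yields $W_\cE^2(\mu,\mu_n)\le m_n W_\cE^2(\nu_n,\m)$ directly, and a \emph{single} application of \eqref{eq:talag} finishes, with no triangle inequality. The price is a fiddlier tail-entropy estimate: since $\beta_n\log\beta_n=(\rho-n)_+\log(\rho-n)_+$ is negative on $\{n<\rho<n+1\}$, you need the sign splitting (negative part bounded by $\m(\{\rho>n\})/e$, positive part dominated by $\int_{\{\rho\ge n+1\}}\rho\log\rho\,\dd\m$ via monotonicity of $x\mapsto x\log x$ on $[1,\infty)$ and $\beta_n\le\rho$), whereas the paper's tail quantity $\int_{\{\rho>k\}}\rho\log\rho\,\dd\m+b_k\log(1/b_k)$ is nonnegative term by term and vanishes by integrability of $\rho\log\rho$ alone. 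Both routes are complete; yours trades one use of the Talagrand inequality for a more careful equi-integrability argument.
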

\begin{proof}
Let us approximate $\mu$ by the measures $\mu^k:=1_{\{\rho\leq k\}}\mu/a_k$, where $a_k=\int_{\{\rho\leq k\}}\rho\dd\m\uparrow 1$
are the normalization constants. If $\rho^k$ are the densities of $\mu^k$, writing $b_k=1-a_k=\int_{\{\rho>k\}}\rho\dd\m$ and
$$
\rho^k=1_{\{\rho\leq k\}}\rho+\bigl(\frac {1}{a_k}-1)1_{\{\rho\leq k\}}\rho=
1_{\{\rho\leq k\}}\rho+b_k\rho^k\;,
\qquad
\rho=1_{\{\rho\leq k\}}\rho+1_{\{\rho>k\}}\rho
$$
adding the constant term $1_{\{\rho\leq k\}}\rho$ to the solutions to the continuity inequality we get
$$
W_\cE^2(\mu^k,\mu)\leq b_k W_\cE^2(\mu^k,\frac{1}{b_k}1_{\{\rho>k\}}\rho\m)\;.
$$
It suffices then
to show that $\lim_kb_kW_\cE(\mu^k,b_k^{-1}1_{\{\rho>k\}}\rho\m)=0$. 
To this aim, we compare both measures with $\m$. The transport inequality then gives
$$
b_k W_\cE^2(\mu^k,\m)\leq \frac {2b_k} K \ent(\mu^k)\rightarrow 0
$$
and
\begin{equation}\label{eq:eforselultima}
b_k W_\cE^2(\frac {1}{b_k}1_{\{\rho>k\}}\rho,\m)\leq\frac 2 K \biggl[\int_{\{\rho>k\}}\rho\ln\rho\dd\m+
b_k\ln(\frac{1}{b_k})\biggr]\rightarrow 0\;.
\end{equation}
\end{proof}

\section{From differentiable to metric structures and conversely}\label{sec:transfer}

\subsection{Energy measure spaces induce extended metric measure spaces}
 
In this section $(X,\mathcal B,\cE,\m)$ is an energy measure space according to Definition~\ref{def:extmmenergy};
following the construction explained in Section~\ref{sec:3}, page \pageref{ssubsec:family}, we 
are going to introduce an extended metric-topological structure 
starting from given a family $\cL$ 
of \emph{pointwise defined} real functions such that
\begin{subequations}
\label{subeq:emts}
\begin{equation}\label{eq:ass1}
  \begin{gathered}
    \cL\subset \{f:X\to \R: f\text{ is $\mathcal B$-measurable
      and bounded, $\Gamma(f)\le 1$}\}\;,\\
    \quad \text{$\cL$ separates points of $X$}\;,
  \end{gathered}
\end{equation}
so that (equivalence $\m$-a.e.~classes of) elements of $\cL$ belong
to $\cA_\cE$. Then
\begin{equation}
  \label{eq:29}
  \tau \ \text{is the Hausdorff topology 
    in $X$ generated by $\cL$}\,,
\end{equation}
i.e.~$\tau$ is the coarsest topology
such that all the functions of $\cL$ are continuous: $(X,\tau)$ is
automatically completely regular. Restricting $\m$ to $\BorelSets{\tau}\subset\mathcal B$, we will
assume that 
\begin{equation}\label{eq:ass2}
\m\in\cP(X) \quad\text{(i.e. $\m$ is Radon in $\BorelSets{\tau}$)},\quad \supp \m=X\,;
\end{equation}
we can then consider the class 
\begin{equation}
  \label{eq:32}
  \cAs=\big\{f\in \cA_\cE\cap C_b(X,\tau): \Gamma(f)\le 1\big\}
\end{equation} 
(where we identify functions in $\cAs$ with their unique $\tau$-continuous representative)
containing $\cL$ and use $\mathcal A_\cE^*$  
to define canonically $\sfd_\cE:X\times X\to [0,\infty]$ by
\begin{equation}\label{eq:defCE}
\sfd_\cE(x,y):=\sup\left\{|f(x)-f(y)|:\ f\in \cAs\right\}\;,
\end{equation}
\end{subequations}
so that $(X,\sfd_\cE)$ is an extended metric space. 

In addition, if $I$ is the collection of finite subsets of $\mathcal A_\cE^*$, for $i\in I$ we define
\begin{equation}
\sfd_i(x,y):=\sup_{f\in i}|f(x)-f(y)|\;.\label{eq:47}
\end{equation}
Notice that $\sfd_i$ is only a semidistance, i.e. it is symmetric and it satisfies the triangle inequality. We shall
also use the fact that $\sfd_i(\cdot,y)\in\mathcal \cAs$ for all $y\in X$, with $\Gamma(\sfd_i(\cdot,y))\leq 1$
$\m$-a.e.~in $X$. If we endow $X$ with the semidistance $\sfd_i$ it is
immediately seen that $(X,\sfd_i)$ is separable. 

We thus get that $(X,\tau,\sfd_\cE)$ is an extended metric-topological space 
according to Definition~\ref{def:luft1}
and that $(X,\tau,\sfd_\cE,\m)$ is an extended metric measure space according to Definition~\ref{def:extmm}.

\begin{remark}
  \label{rem:typical}
  The typical case of this construction occurs when $\supp\m=X$ and
$\cL$ can be identified with a subset of $\mathcal A_\cE\cap C_b(X,\tau_0)$
for some preexisting topology $\tau_0$ in $X$;
in this case the condition $\supp\m=X$ provides uniqueness of the
continuous representative
and $\tau$ is coarser than $\tau_0$, so that \eqref{eq:ass2} is 
satisfied if $\m\in \cP(X,\tau_0)$. Notice that 
$\tau=\tau_0$ if $\tau_0$ is generated by $\cL$.
\end{remark}
\begin{proposition}\label{prop:Wchain_cE}
Under assumptions \eqref{eq:ass1}, \eqref{eq:ass2} one has $W_{\cE,*}\geq W_{\sfd_\cE}$, where
$W_{\cE,*}$ is defined in \eqref{eq:defWdual_cE}. 
\end{proposition}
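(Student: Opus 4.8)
The plan is to follow the proof of Proposition~\ref{prop:comparale} (the metric analogue $W_{\Ch,*}\ge W_\sfd$) almost verbatim, with $\Gamma$ playing the role of $|\rmD\cdot|_w$, and to exploit the special finite-dimensional structure of the approximating semidistances. Since $\sfd_\cE=\sup_i\sfd_i$ along the directed family $I$ of finite subsets of $\cAs$, and $W_{\sfd_i}\uparrow W_{\sfd_\cE}$ by Theorem~\ref{thm:compact_joint}(c), it suffices to fix $i=\{f_1,\dots,f_N\}\subset\cAs$ and prove $W_{\sfd_i}\le W_{\cE,*}$. For this I would invoke the duality formula \eqref{eq:dualityQ} for $W_{\sfd_i}$: it is enough to show that, for every $\phi\in\cF$ as in \eqref{eq:good_tests}, the Hopf--Lax evolution $t\mapsto Q^i_t\phi$, with $Q^i_t\phi(y)=\inf_{x}\phi(x)+\tfrac1{2t}\sfd_i^2(x,y)$, is an admissible competitor in Definition~\ref{def:Wdual_cE}. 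Indeed, taking the curve $s\mapsto Q^i_s\phi$ on $[0,1]$ (regularized near $s=0$ exactly as in Proposition~\ref{prop:comparale}) one has endpoints $\phi$ and $Q^i_1\phi$, so admissibility yields $2\big(\int Q^i_1\phi\,\rho_1\dd\m-\int\phi\,\rho_0\dd\m\big)\le W_{\cE,*}^2(\rho_0,\rho_1)$, and the supremum over $\phi\in\cF$ combined with \eqref{eq:dualityQ} gives $W_{\sfd_i}^2\le W_{\cE,*}^2$.

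The heart of the matter is to verify the Hamilton--Jacobi subsolution property \eqref{eq:HJdistr_cE} for $Q^i_t\phi$ \emph{in terms of $\Gamma$}, and here I would use the factorisation induced by $\sfd_i$. Writing $\kappa_i:=(f_1,\dots,f_N):X\to\R^N$, one has $\sfd_i(x,y)=\|\kappa_i(x)-\kappa_i(y)\|_\infty$, so that, as in \eqref{eq:good_tests1}, $Q^i_t\phi$ depends on $y$ only through $\kappa_i(y)$ and factorises as $Q^i_t\phi=G_t\circ\kappa_i$, where
\[
G_t(w):=C\wedge\inf_{x\in K}\Big(\phi(x)+\tfrac1{2t}\|\kappa_i(x)-w\|_\infty^2\Big),\qquad w\in\R^N,
\]
is the quadratic inf-convolution on $(\R^N,\|\cdot\|_\infty)$ of a bounded function on the compact set $\kappa_i(K)$. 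The $G_t$ are uniformly bounded and uniformly Lipschitz, and Lipschitz in $t$ uniformly in $w$; the classical Hopf--Lax theory on $(\R^N,\|\cdot\|_\infty)$ then gives the pointwise a.e.\ inequality $\partial_t G_t+\tfrac12\|\nabla G_t\|_1^2\le0$, the dual norm $\|\cdot\|_1$ arising because the metric slope of the $\ell^\infty$-distance is the $\ell^1$-norm of the gradient.

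I would then transfer this to $X$ through the chain rule for the carr\'e du champ. Each $f_j\in\cAs$ satisfies $\Gamma(f_j)\le1$, so Cauchy--Schwarz gives $|\Gamma(f_j,f_k)|\le1$ $\m$-a.e.; since $\big(\Gamma(f_j,f_k)\big)_{j,k}$ is positive semidefinite with entries bounded by $1$, the multivariate diffusion/chain rule (valid for the Lipschitz map $G_t$ composed with $\kappa_i$ in a strongly local carr\'e-du-champs setting) yields
\[
\Gamma(Q^i_t\phi)=\sum_{j,k}(\partial_jG_t\,\partial_kG_t)(\kappa_i)\,\Gamma(f_j,f_k)\le\Big(\sum_j|\partial_jG_t|\Big)^2\!(\kappa_i)=\|\nabla G_t\|_1^2(\kappa_i)\qquad\m\text{-a.e.}
\]
Combining the two displays gives $\partial_t Q^i_t\phi+\tfrac12\Gamma(Q^i_t\phi)\le\big(\partial_tG_t+\tfrac12\|\nabla G_t\|_1^2\big)\circ\kappa_i\le0$ pointwise $\m$-a.e. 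Integrating against an arbitrary nonnegative $\psi\in\cA_\cE$, and using the monotonicity and $L^\infty$-Lipschitz-in-time character of $t\mapsto Q^i_t\phi$ (so that $t\mapsto\int Q^i_t\phi\,\psi\dd\m$ is absolutely continuous with the expected derivative), produces exactly \eqref{eq:56}, i.e.\ the subsolution property in the duality with $\cA_\cE$. Equivalently, one may combine the purely metric subsolution property \eqref{eq:subsolutionQt} relative to $\sfd_i$ with the comparison $\Gamma(Q^i_t\phi)\le\big({\rm Lip}_{a,\sfd_i}^*(Q^i_t\phi,\cdot)\big)^2$, which is itself a consequence of the same chain-rule estimate.

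Finally I would record the routine admissibility checks: the $\Gamma$-bound shows $Q^i_t\phi\in\cA_\cE$ with $\|Q^i_t\phi\|_{\cA_\cE}$ bounded uniformly in $t$, hence $Q^i_t\phi\in L^\infty(0,1;\V)\subset L^1(0,1;\V)$, while boundedness and the $L^\infty$-Lipschitz dependence on $t$ give $({\mathscr L}^1\otimes\cB)$-measurability and weak$^*$ continuity into $L^\infty(X,\m)$, matching Definition~\ref{def:Wdual_cE}. The main obstacle will be the chain-rule step: one must justify the multivariate chain rule for the merely Lipschitz (not $C^1$) function $G_t$ composed with $\kappa_i$, relying on the strong locality of $\cE$ and the existence of $\Gamma$, and combine it correctly with the almost-everywhere differentiability of $G_t$ furnished by Rademacher's theorem in $\R^N$.
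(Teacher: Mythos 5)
Your outer strategy is the same as the paper's: reduce, via the monotone convergence $W_{\sfd_i}\uparrow W_{\sfd_\cE}$ from Theorem~\ref{thm:compact_joint} and the duality formula \eqref{eq:dualityQ}, to showing that the Hopf--Lax evolution $Q^i_t\phi$ is admissible in \eqref{eq:defWdual_cE}, and obtain the Hamilton--Jacobi subsolution property by combining the metric subsolution property \eqref{eq:subsolutionQt} with a \emph{pointwise} bound of $\Gamma(Q^i_t\phi)$ by a squared metric gradient relative to $\sfd_i$. The gap lies in how you prove that pointwise bound. You invoke a multivariate chain rule for the merely Lipschitz function $G_t$ composed with $\kappa_i=(f_1,\dots,f_N)$, with $\nabla G_t$ furnished by Rademacher's theorem. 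But $\nabla G_t$ is defined only up to $\mathcal L^N$-negligible sets, while nothing in \eqref{eq:ass1}, \eqref{eq:ass2} prevents the image measure $(\kappa_i)_\#\m$ from being singular with respect to $\mathcal L^N$ (take, e.g., $f_1=f_2$, so that the image lies on the diagonal of $\R^2$). Hence $\|\nabla G_t\|_1^2\circ\kappa_i$ is not even a well-defined $\m$-a.e.\ function, let alone a proven upper bound for $\Gamma(Q^i_t\phi)$: validating a Lipschitz chain rule through a map into $\R^N$, $N\ge 2$, is precisely the Bouleau--Hirsch energy-image-density problem, which is not among the hypotheses of an energy measure space and is not known in this generality. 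Mollification does not repair this: applying the smooth chain rule to $G_t\ast\rho_\eps$, using $|\Gamma(f_j,f_k)|\le 1$, and passing to the limit with the weak stability property \eqref{eq:20} only yields the \emph{global} bound $\Gamma(Q^i_t\phi)\le {\rm Lip}(Q^i_t\phi,\sfd_i)^2$, because $\nabla(G_t\ast\rho_\eps)\circ\kappa_i$ need not converge on the set that $(\kappa_i)_\#\m$ actually charges; the Hamilton--Jacobi inequality, however, requires the local bound.

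This is exactly the point the paper isolates in Lemma~\ref{lem:3.12AOP}, whose proof avoids any chain rule in $\R^N$: the bound $\Gamma(f)\le |\rmD f|^2$ (slope w.r.t.\ $\sfd_i$) for bounded $\sfd_i$-Lipschitz $f$ is obtained by discretizing the Hopf--Lax infimum over a countable $\sfd_i$-dense set, using strong locality on the finitely many sets where each competitor $f(z_j)+\sfd_i^2(z_j,\cdot)/2t$ realizes the minimum (each competitor lies in $\V\cap L^\infty$ with a one-dimensional chain rule, since $\sfd_i(z_j,\cdot)\in\cAs$ by \eqref{eq:47}), and then passing to the limit via the weak stability \eqref{eq:20} together with the identity \eqref{eq:25} linking the slope to the behaviour of minimizing sequences. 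Any honest completion of your factorized argument (localizing by McShane-type extensions relative to $\sfd_i$, using locality, and approximating countably) ends up reconstructing this lemma. So, as written, the proposal has a genuine gap at its central step; the architecture is right, but the missing ingredient is precisely the content of Lemma~\ref{lem:3.12AOP}.
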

\begin{proof}
Let $\sfd_i$ as above. By Theorem~\ref{thm:compact_joint} we need only to show that $W_{\sfd_i}\leq W_{\cE,*}$. 
In order to prove this property, taking \eqref{eq:HopfLax} and the comments immediately after into account, it suffices to show that 
$$Q^i_t\phi(x):=\inf_{y\in X}\phi(y)+\frac 1{2t}\sfd_i^2(x,y)$$
is admissible in \eqref{eq:defWdual} whenever $\phi$ is bounded and $\sfd_i$-Lipschitz. By applying Lemma~\ref{lem:3.12AOP} below
to $Q^i_t\phi(\cdot)$ we get $\Gamma(Q^i_t\phi(\cdot))\leq |\rmD_i Q^i_t\phi(\cdot)|^2$ $\m$-a.e.~in $X$, where $|\rmD_i f|$ denotes the slope w.r.t.~$\sfd_i$.
Taking into account the subsolution property \eqref{eq:subsolutionQt} of $Q^i_t\phi$, we obtain $\partial_tQ^i_t\phi+\Gamma(Q^i_t\phi)/2\leq 0$.
\end{proof}

Let $\sfd$ be a finite semidistance in $X$.
In the proof of the next lemma we are going to use in $(X,\sfd)$ the following links between the 
descending slope in \eqref{eq:descending_slope} computed w.r.t.~$\sfd$ and
the functions $Q_tf$, $f\in {\rm Lip}(X,\sfd)$, provided by the Hopf-Lax formula 
\begin{equation}\label{eq:HopfLax_sfdi}
Q_tf(x)=\inf_{y\in Y} f(y)+\sfd^2(x,y)/2t\;,
\end{equation}
see \cite[Sec.~3]{AGS12} for the proof (see also \cite[Lem.~3.1.5]{Ambrosio-Gigli-Savare08}):
\begin{equation}\label{eq:25}
|\rmD f|^2(x)\geq |\rmD^- f|^2(x)=\limsup_{t\downarrow 0}\int_0^1\Big(\frac{\rmD^+f(x,tr)}{tr}\Big)^2\dd r\;.
\end{equation}
Here, $\rmD^+f(x,t)$ is defined by
$$
\rmD^+f(x,t):=\sup\left\{\limsup_{n\to\infty}\sfd(x_n,x):\ \text{$(x_n)$ minimizing sequence in \eqref{eq:HopfLax_sfdi}}\right\}\;.
$$
It is not hard to show, by diagonal arguments, that $\rmD^+ f$ is upper semicontinuous in $X\times (0,\infty)$, endowed
with the product of $\sfd$ and of the Euclidean distance (see again \cite[Sec.~3]{AGS12}). These results are stated in
\cite{AGS12} for metric spaces, and they can be immediately adapted to degenerate space $(X,\sfd)$, just noticing 
that $\sfd(x,y)=0$ implies $f(x)=f(y)$, i.e. lifting them from the quotient metric space to $(X,\sfd)$.

\begin{lemma}\label{lem:3.12AOP}
Let $\sfd$ be a bounded $(\tau\times\tau)$-continuous semidistance in $X$ with 
$(X,\sfd)$ separable, $\sfd(\cdot,y)\in\V$ and $\Gamma(\sfd(\cdot,y))\leq 1$ $\m$-a.e.~in $X$
for all $y\in X$. Then, for all $f:X\to\R$ bounded and $\sfd$-Lipschitz, denoting by
$|\rmD f|$ the slope w.r.t.~$\sfd$, one has $\Gamma(f)\leq |\rmD f|^2$ $\m$-a.e.~in $X$.
\end{lemma}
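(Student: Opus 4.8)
The plan is to compare $\Gamma(f)$ with the metric slope through the Hopf--Lax regularization $Q_t f$, exploiting that the squared distance functions have a controlled carré du champ. First I would fix a bounded $\sfd$-Lipschitz $f$ and, using the separability of $(X,\sfd)$, choose a $\sfd$-dense sequence $(y_n)$ and write
\begin{equation*}
Q_t f(x)=\inf_{y\in X}\Big(f(y)+\tfrac1{2t}\sfd^2(x,y)\Big)=\inf_{n}\Big(f(y_n)+\tfrac1{2t}\sfd^2(x,y_n)\Big)=:\inf_n g_n(x).
\end{equation*}
Each $g_n$ lies in $\V\cap L^\infty(X,\m)$: indeed $\sfd(\cdot,y_n)\in\V$ is bounded and, by the chain rule for $\Gamma$ together with the hypothesis $\Gamma(\sfd(\cdot,y_n))\le1$,
\begin{equation*}
\Gamma(g_n)=\tfrac1{t^2}\,\sfd^2(\cdot,y_n)\,\Gamma(\sfd(\cdot,y_n))\le\tfrac1{t^2}\,\sfd^2(\cdot,y_n)\le\tfrac{D^2}{t^2},
\end{equation*}
$D$ being a bound for $\sfd$. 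Since the right-hand side is $\m$-integrable, the partial minima $u_N:=\min_{n\le N}g_n$ form a sequence in $\V$ with equibounded energy converging in $L^2(X,\m)$ to $Q_t f$, and the $L^2$-lower semicontinuity of $\cE$ gives $Q_t f\in\V$.

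The core of the argument is the pointwise energy bound
\begin{equation}\label{eq:plankey}
\Gamma(Q_t f)\le\Big(\frac{\rmD^+f(\cdot,t)}{t}\Big)^2\qquad\m\text{-a.e. in }X,
\end{equation}
which is where the assumption $\Gamma(\sfd(\cdot,y))\le1$ is decisive. It rests on the locality and lattice calculus of strongly local Dirichlet forms: for a finite minimum $u_N=\min_{n\le N}g_n$ one has $\Gamma(u_N)=\Gamma(g_{n})$ on the set where the $n$-th constraint is active, so $\Gamma(u_N)(x)\le\sfd^2(x,y_{n(x)})/t^2$, with $y_{n(x)}$ a minimizer among the first $N$ indices. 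Letting $N\to\infty$, $(y_{n(x)})$ becomes a genuine minimizing sequence for $Q_t f(x)$, whence $\limsup_N\sfd(x,y_{n(x)})\le\rmD^+f(x,t)$ by the very definition of $\rmD^+$; an energy stability/lower-semicontinuity argument then upgrades the finite-$N$ bounds to \eqref{eq:plankey}. The upper semicontinuity of $\rmD^+f$ in $(x,t)$ is used to keep the right-hand side measurable and to control it in the limit. (The degeneracy of $\sfd$ causes no trouble, since $\sfd(x,y)=0$ forces $f(x)=f(y)$, so everything descends from the quotient metric space.)

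Finally I would let $t\downarrow0$. As $f$ is bounded and $\sfd$-Lipschitz, $Q_t f\to f$ uniformly, and \eqref{eq:plankey} gives $\sup_t\cE(Q_tf)\le 4(\Lip_\sfd f)^2\,\m(X)<\infty$, so $f\in\V$. To identify the limiting bound with the slope I would use the metric identity \eqref{eq:25} in its primitive form $f-Q_t f=\tfrac12\int_0^t(\rmD^+f(\cdot,s)/s)^2\,\dd s$, so that the time averages of the right-hand side of \eqref{eq:plankey} converge, in the $\limsup$ sense of \eqref{eq:25}, to $|\rmD^-f|^2\le|\rmD f|^2$. Combining \eqref{eq:plankey} with Jensen's inequality for the nonnegative quadratic form $u\mapsto\int\Gamma(u)\varphi\,\dd\m$ along the averaged curve $\bar f_t:=\tfrac1t\int_0^t Q_s f\,\dd s\to f$, together with the lower semicontinuity of this weighted energy for $0\le\varphi\in\cA_\cE$, yields $\int\Gamma(f)\varphi\,\dd\m\le\int|\rmD f|^2\varphi\,\dd\m$ for all such $\varphi$, hence $\Gamma(f)\le|\rmD f|^2$ $\m$-a.e.

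The main obstacle is precisely the passage from finite minima to the countable infimum in \eqref{eq:plankey}: the lattice formula for $\Gamma$ must be coupled with a stability statement guaranteeing that the ``active-constraint'' energy survives $N\to\infty$, and then with the limit $t\downarrow0$, where $(\rmD^+f(\cdot,t)/t)^2$ converges only in a Cesàro/averaged sense. This forces the use of \eqref{eq:25} and of the weighted lower semicontinuity along $\bar f_t$, rather than a naive pointwise passage to the limit.
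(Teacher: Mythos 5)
Your proposal is correct and follows essentially the same route as the paper's proof: discretize the Hopf--Lax infimum over a countable $\sfd$-dense set, use locality and the chain rule to bound $\Gamma$ of the finite minima by the squared distance to an active minimizer, upgrade this to $\Gamma(Q_t f)\le \bigl(\rmD^+ f(\cdot,t)/t\bigr)^2$ via weak $L^2$-stability of $\sqrt{\Gamma}$ (Mazur), and finally let $t\downarrow 0$ through the Ces\`aro representation of the descending slope. The only difference is cosmetic: in the last step the paper integrates over arbitrary Borel sets and applies Fatou's lemma twice, whereas you average $Q_s f$ in time and test against nonnegative $\varphi\in\cA_\cE$ using Jensen's inequality; both versions rest on the same weak lower semicontinuity of the localized energy.
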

\begin{proof} In the proof, which follows closely \cite[Lem.~3.12]{AGS12}, 
we will use the following weak stability property of the $\Gamma$ operator, which follows easily
by Mazur's lemma: if $f_n\in \V$ and $f_n\to f$ in $L^2(X,\m)$, then
\begin{equation}\label{eq:20}
\sqrt{\Gamma(f_n)}\to G\quad\text{weakly in $L^2(X,\m)$}
\qquad\text{implies}\qquad  G\geq\sqrt{\Gamma(f)}\quad\text{$\m$-a.e.~in $X$\;.}
\end{equation}
If $(z_i)$ is a countable $\sfd$-dense subset of $X$ we define
\begin{equation}\label{eq:37}
  Q_t^n f(x)=\min_{1\le i\le n} f(z_i)+\frac 1{2t}\sfd^2(z_i,x)\;,\qquad
  Q_t f(x)=\min_{y\in X} f(y)+\frac 1{2t}\sfd^2(y,x)\;,
\end{equation}
and we set $I_n(x):=\big\{i\in \{1,\ldots,n\}:\ z_i\text{ minimizes }\eqref{eq:37}\big\}$. By the density of $(z_i)$, it is clear
that $Q_t^n f\downarrow Q_t f$ as $n\to\infty$.
Therefore, if $\zeta_n(x)\in I_n(x)$, it turns out that $(\zeta_n(x))$ is a minimizing sequence for $Q_tf(x)$, namely
\begin{displaymath}
    \frac 1{2t}\sfd^2(x,\zeta_n(x))+f(\zeta_n(x))\to
    Q_t f(x)\quad\text{as }n\to\infty\;.
  \end{displaymath}
The very definition of $\rmD^+f(x,t)$ then gives
  \begin{equation}\label{eq:aop1}
    \limsup_{n\to\infty}\sfd(x,\zeta_n(x))\le \rmD^+f(x,t)\;.
  \end{equation}
  Since $Q^n_tf(x)=f(z_i)+\sfd^2(z_i,x)/2t$ on $\{x:\ \zeta_n(x)=z_i\}$, the locality property and the fact that  
  $\sfd(z_i,\cdot)$ belongs to $\mathcal A_\ce$ together with the chain rule 
  yield
\begin{displaymath}
  \Gamma(Q_t^n f)(x)\le\frac 1{t^2} \max_{i\in I_n(x)}\sfd^2(x,z_i)
   \quad\text{for $\m$-a.e.~$x\in \{\zeta_n=z_i\}$\;.}
\end{displaymath}
If we define $\zeta_n(x)$ as the smallest index $j$, among those that realize the maximum 
for $\sfd(z_i,x)$, $i\in I_n(x)$, the previous formula yields
  \begin{equation}\label{eq:24b}
   \Gamma(Q_t^n f)(x)\leq \frac 1{t^2} \sfd^2(x,\zeta_n(x))
   \qquad \text{for $\m$-a.e.~$x\in X$\;.}
 \end{equation}
  Since $Q_t^n f$ and $\Gamma(Q_t^n f)$ are uniformly bounded and $Q_t^n f$ converges pointwise to
  $Q_t f$, considering any weak limit point $G$ of
  $\sqrt{\Gamma(Q_t^n f)}$ in $L^2(X,\m)$ we obtain
  by \eqref{eq:20}, \eqref{eq:aop1} and \eqref{eq:24b} that
  \begin{equation}\label{eq:aop2}
    \Gamma(Q_t f)(x)\le G^2(x)\le\frac{\bigl(D^+f(x,t)\bigr)^2}{t^2}
    \quad\text{for $\m$-a.e.~$x\in X$\;.}
  \end{equation}
  
  Since $f$ is Lipschitz, it follows that $\rmD^+f(x,t)/t$ is uniformly
  bounded and since $\sfd$ is $(\tau\times\tau)$-continuous the function $\rmD^+f$ is Borel in $X\times (0,\infty)$.
  Integrating \eqref{eq:25} on an arbitrary Borel set $A$ and
  applying Fatou's Lemma, from \eqref{eq:aop2} we get
  \begin{align*}
    \int_A |\rmD f|^2\dd\m&\ge \int_A  \limsup_{t\downarrow 0}
    \int_0^1 \Big(\frac{D^+f(x,tr)}{tr}\Big)^2\dd r\dd\m(x)\\&\ge 
    \limsup_{t\downarrow0} \int_0^1 \int_A
    \Big(\frac{ D^+f(x,tr)}{tr}\Big)^2\dd\m(x)\,\dd r\\&\ge 
    \limsup_{t\downarrow0}\int_0^1 \int_A
    \Gamma(Q_{tr} f)(x)\dd \m(x)\,\dd r\\&\ge 
    \int_0^1 \liminf_{t\downarrow 0}
    \Big(\int_A \Gamma(Q_{tr} f)\dd\m\Big)\dd r \ge\int_A \Gamma(f)\dd\m\;,
  \end{align*}
  where in the last inequality we applied
  \eqref{eq:20} once more. Since $A$ is arbitrary we conclude.
\end{proof}

The $\tau$-upper regularity 
has already been identified in \cite{AGS12} as a crucial compatibility condition between
the topological and the metric/differentiable structures, needed to identify $\cE$ with a Cheeger energy.

\begin{definition}[$\tau$-upper regularity]\label{def:upper_regularity_cE}
Let $(X,\mathcal B,\cE,\m)$ be an energy measure space with $\BorelSets{\tau}\subset\mathcal B$ for some topology $\tau$ in $X$.
We say that $\cE$ is $\tau$-upper regular if for all $f\in \V$ there exist:
\begin{itemize}
\item[(a)] functions $f_n\in {\rm Lip}_b(X,\sfd,\tau )$ with $f_n\to f$ in $L^2(X,\m)$;
\item[(b)] bounded 
  $\tau$-upper semicontinuous functions $g_n$ with $g_n\geq\sqrt{\Gamma(f_n)}$ $\m$-a.e.~in $X$ with 
$\limsup_n\int g_n^2\dd\m\leq\cE(f)$.
\end{itemize}
\end{definition}

\begin{theorem}\label{thm:ultrap} Let $(X,\mathcal B,\cE,\m)$ be an
energy measure space and let $(X,\tau,\sfd_\cE,\m)$ be the extended
metric-topological structure associated to a set $\cL$ as in {\upshape (\ref{subeq:emts}a,b,c,d,e)}. 
Then $\cE\leq \Ch_{\sfd_{\cE}}$ and, in particular, $W_\cE\geq W_{\Ch_{\sfd_\cE}}$. The equality 
$\cE= \Ch_{\sfd_{\cE}}$ holds iff $\cE$ is $\tau$-upper regular. In particular, if $\cE$ is $\tau$-upper regular, one has:
\begin{itemize}
\item[(a)] the classes of $2$-absolutely continuous curves $\mu_t=\rho_t\m$ w.r.t.~$W_\cE$, $W_{\cE,*}$ and $W_{\sfd_{\cE}}$ with
$\rho_t\in L^\infty(L^\infty(X,\m))$ coincide and the same is true for the corresponding metric derivatives;
\item[(b)] {If \ref{eq:BE-grad} holds}, the metric gradient flows of $\ent$ w.r.t.~$W_\cE$, $W_{\cE,*}$ and $W_{\sfd_{\cE}}$ coincide with $\sfP^\cE_t$;
\item[(c)] If \ref{eq:BE-grad} holds  and
$\sfP^\cE_t$ maps $C_b(X)$ in $C_b(X)$, then $W_\cE$ is the upper length distance
in $\cP^a(X)$ associated to $W_{\sfd_\cE}$
according to \eqref{eq:81}.
\item[(d)] If \ref{eq:BE-grad} holds with $K\geq 0$ (resp. $K>0$ and $\sfP$ is irreducible),
then $W_\cE$ restricted to $\{\mu=\rho\,\m:\ \|\rho\|_\infty\leq c\}$ (resp. $D(\ent)$)
is the upper length distance in $\cP^a(X)$ associated to $W_{\sfd_\cE}$
according to \eqref{eq:81}.
\end{itemize}
\end{theorem}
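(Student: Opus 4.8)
The plan is to establish the two structural facts $\cE\le\Ch_{\sfd_\cE}$ and ``equality $\iff\tau$-upper regularity'', and then to read off (a)--(d) by inserting the identification $\cE=\Ch_{\sfd_\cE}$ into the results of Sections~\ref{sec:extendedcpa}--\ref{sec:actionnew}. For the inequality $\cE\le\Ch_{\sfd_\cE}$, recall from \eqref{eq:47} that $\sfd_\cE=\sup_i\sfd_i$ with the $\sfd_i$ bounded and $(\tau\times\tau)$-continuous, so Theorem~\ref{thm:stabchee} identifies $\Ch_{\sfd_\cE}$ with the $L^2$-lower semicontinuous envelope of $\inf_i\Ch_{\sfd_i}$. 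As $\cE$ is itself $L^2$-lower semicontinuous, it suffices to show $\cE\le\Ch_{\sfd_i}$ for each $i$. For $f\in{\rm Lip}_b(X,\tau,\sfd_i)$, Lemma~\ref{lem:3.12AOP} applied to the finite semidistance $\sfd_i$ (whose gauges $\sfd_i(\cdot,y)$ lie in $\cAs$ with $\Gamma(\sfd_i(\cdot,y))\le1$) gives $\Gamma(f)\le|\rmD_i f|^2\le({\rm Lip}_a(f,\sfd_i,\cdot))^2$ $\m$-a.e., so $\cE(f)\le\int({\rm Lip}_a^*(f,\sfd_i,\cdot))^2\dd\m$; minimizing over recovery sequences and using lower semicontinuity of $\cE$ yields $\cE\le\Ch_{\sfd_i}$, hence $\cE\le\Ch_{\sfd_\cE}$. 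Finally, locality and the chain rule promote this functional inequality to the pointwise bound $\Gamma(f)\le|\rmD f|_{w,\sfd_\cE}^2$ $\m$-a.e., which makes the defining continuity inequality for $W_\cE$ stronger than that for $W_{\Ch_{\sfd_\cE}}$, whence $W_\cE\ge W_{\Ch_{\sfd_\cE}}$.

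\emph{Equality iff $\tau$-upper regularity.} The implication ``$\cE=\Ch_{\sfd_\cE}\Rightarrow\tau$-upper regular'' is precisely the second assertion of Theorem~\ref{thm:stability}: for $f\in\V=D(\Ch_{\sfd_\cE})$ it furnishes, along a subnet, $\sfd_{\beta(j)}$-Lipschitz $f_j\to f$ in $L^2$ with ${\rm Lip}_a(f_j,\sfd_{\beta(j)},\cdot)\to|\rmD f|_{w,\sfd_\cE}$ in $L^2$; since $\sfd_{\beta(j)}$ is $(\tau\times\tau)$-continuous, these asymptotic Lipschitz constants are $\tau$-upper semicontinuous and dominate $\sqrt{\Gamma(f_j)}=|\rmD f_j|_{w,\sfd_\cE}$, so they are admissible $g_j$ with $\limsup_j\int g_j^2\dd\m=\cE(f)$. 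The reverse implication is the main obstacle. Assuming $\cE$ is $\tau$-upper regular, with $f_n$ Lipschitz and $\tau$-upper semicontinuous $g_n\ge\sqrt{\Gamma(f_n)}$, I must convert this weak gradient bound into control of the \emph{asymptotic} Lipschitz constant, which is what $\Ch_{\sfd_\cE}$ measures. Following \cite{AGS12}, I would use the $\tau$-upper semicontinuity of $g_n$ to run a weighted inf-convolution (using the gauges $\sfd_i(\cdot,y)$) producing Lipschitz functions whose asymptotic Lipschitz constants are $\m$-a.e.\ bounded by $g_n$ up to errors vanishing on fine scales; integrating and diagonalizing then gives $\Ch_{\sfd_\cE}(f)\le\limsup_n\int g_n^2\dd\m\le\cE(f)$, which with the first part yields $\cE=\Ch_{\sfd_\cE}$.

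\emph{Consequences (a) and (b).} Once $\cE=\Ch_{\sfd_\cE}$, the carré du champ coincides with the minimal relaxed slope and hence ${\sf CE}^2(X,\cE,\m)={\sf CE}^2(X,\Ch_{\sfd_\cE},\m)$, giving $W_\cE=W_{\Ch_{\sfd_\cE}}$ and $W_{\cE,*}=W_{\Ch_{\sfd_\cE},*}$. Then (a) is the equality of metric derivatives in Corollary~\ref{cor:eqmet} (for uniformly bounded densities, the $W_{\sfd_\cE}$- and $W_\cE$-speeds agree) squeezed through the chain $W_{\sfd_\cE}\le W_{\cE,*}\le W_\cE$ of Proposition~\ref{prop:comparale}. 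For (b), under $\BE K\infty$ Theorem~\ref{thm:EVI-mix} and Corollary~\ref{cor:EVI-mix} show $\sfP^\cE$ is the $\EVI_K$- (hence metric) gradient flow of $\ent$ for both $W_{\cE,*}$ and $W_\cE$; combined with Theorem~\ref{thm:mainidenti} and the identity $|\rmD^-_{W_{\sfd_\cE}}\ent|^2=\mathsf F=4\cE(\sqrt{\cdot})$ from Theorem~\ref{thm:geoconv} (supplying the lower semicontinuity of the descending slope that Theorem~\ref{thm:mainidenti} requires), all three metric gradient flows coincide with $\sfP_t=\sfP^\cE_t$.

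\emph{Consequences (c) and (d).} Proposition~\ref{prop:comparalebis} already exhibits $W_\cE$ as the upper length distance of $W_{\cE,*}$, and $W_{\sfd_\cE}\le W_{\cE,*}$ gives one inequality $(W_{\sfd_\cE})_\ell\le W_\cE$; the matching lower bound is the crux. For (c) I would use $\sfP^\cE_t(C_b)\subset C_b$ to upgrade the duality estimate of Proposition~\ref{prop:Wchain_cE}, built on the Hopf--Lax subsolutions $Q^i_t\phi$ associated to $\sfd_\cE$, into a genuine $\EVI_K$ inequality for $\sfP^\cE$ \emph{relative to $W_{\sfd_\cE}$}; the self-improvement principle Theorem~\ref{thm:self-improvement} then transports $\EVI_K$ to the (curve-based, \eqref{eq:81}) length distance, and $\EVI_K$-rigidity forces it to equal $W_\cE$. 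For (d), where the Feller hypothesis is dropped, I would replace it by compactness: Dunford--Pettis renders the sublevels $\{\ent\le c\}$ weakly $L^1$-compact and, for $K\ge0$, the sets $\{\|\rho\|_\infty\le c\}$ geodesically convex by Corollary~\ref{cor:LS}; on these $W_{\cE,*}$-complete sets Corollary~\ref{cor:appgeo}, together with the finitary identity $W_{\cE,*,\ell}=W_\cE$ of Theorem~\ref{thm:geoconv}, upgrades the length property of $W_{\sfd_\cE}$ to the asserted identification with $W_\cE$.
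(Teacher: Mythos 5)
Your first half tracks the paper and is fine: the inequality $\cE\le\Ch_{\sfd_\cE}$ via Theorem~\ref{thm:stabchee}, Lemma~\ref{lem:3.12AOP} and lower semicontinuity of $\cE$, the necessity of $\tau$-upper regularity via Theorem~\ref{thm:stability}, and consequences (a)--(b) via Corollary~\ref{cor:eqmet}, Theorem~\ref{thm:mainidenti} and the chain $W_{\sfd_\cE}\le W_{\cE,*}\le W_\cE$ (modulo a misattribution: the identity $|\rmD^-\ent|^2=\mathsf F$ w.r.t.\ $W_{\sfd_\cE}$ comes from Lemma~\ref{lem:fisherboundsslope} combined with \cite[Thm.~7.6]{AGS11a}, not from Theorem~\ref{thm:geoconv}, which concerns $W_\cE$). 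The first genuine gap is the sufficiency direction of the ``iff'', which you yourself call the main obstacle and then only gesture at. The mechanism is not a weighted inf-convolution: inf-convolving with the gauges $\sfd_i(\cdot,y)$ produces $\sfd_i$-Lipschitz functions but gives no control of their asymptotic Lipschitz constants by the majorants $g_n$, because nothing in such a construction connects $\Gamma$ back to the metric. The only available bridge is that $\sfd_\cE$ is \emph{defined} in \eqref{eq:defCE} as a supremum over $\cAs=\{h\in\cA_\cE\cap C_b(X):\ \Gamma(h)\le 1\}$, and the actual argument (from \cite[Prop.~3.11]{AGS12}, reproduced in the paper) exploits this via a locality/patching construction: to prove the slope bound $|\rmD f|_{\sfd_\cE}\le 1$ on the $\tau$-open set $U=\{g<1\}$, one fixes $x_0\in U$, chooses finitely many $f_1,\dots,f_N\in\cL$ and $r>0$ with $F:=\{\delta\le r\}\subset U$, where $\delta:=\max_n|f_n-f_n(x_0)|$, and sets $h:=\max\{\min\{r,|f-f(x_0)|\},\delta\}$. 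Locality of $\Gamma$ gives $\Gamma(h)\le 1$ $\m$-a.e.\ (where $h$ agrees with the truncation of $|f-f(x_0)|$ one is inside $U$, so $\Gamma(f)\le g^2<1$ there; elsewhere $h=\delta$ and $\Gamma(\delta)\le 1$), $\tau$-continuity gives $h\in\cAs$, hence $h\le\sfd_\cE(\cdot,x_0)$ by \eqref{eq:defCE}; since $h\ge|f(\cdot)-f(x_0)|$ for $\sfd_\cE(\cdot,x_0)$ small, the slope bound follows, and together with the upper semicontinuity of $g$ this yields $|\rmD f|_{w,\sfd_\cE}\le g$ $\m$-a.e., whence $\Ch_{\sfd_\cE}(f)\le\limsup_n\int g_n^2\dd\m\le\cE(f)$ by lower semicontinuity. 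Without this competitor-in-$\cAs$ step there is no way to convert a carr\'e du champ bound into a metric one, and the heart of the theorem is missing.

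The second gap is in (c)--(d). Your route --- upgrade Proposition~\ref{prop:Wchain_cE} to an $\EVI_K$ inequality for $\sfP^\cE$ relative to $W_{\sfd_\cE}$ and then invoke ``$\EVI_K$-rigidity'' --- does not work. First, $\EVI_K$ does not descend from a larger distance to a smaller one: $W_{\sfd_\cE}\le W_{\cE,*}$ gives no comparison between the upper right derivatives of $\tfrac12 W_{\sfd_\cE}^2(\sfP^\cE_t\rho\,\m,\sigma)$ and of $\tfrac12 W_{\cE,*}^2(\sfP^\cE_t\rho\,\m,\sigma)$, and your sketch never says where the Feller hypothesis would concretely enter. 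Second, there is no rigidity principle: $\EVI_K$ holds simultaneously for genuinely different distances (for both $W_{\cE,*}$ and $W_\cE$, by Theorem~\ref{thm:EVI-mix} and Corollary~\ref{cor:EVI-mix}), so even an $\EVI_K$ relative to the length distance of $W_{\sfd_\cE}$ would not identify that distance with $W_\cE$. The actual content of (c) is the converse continuity estimate \eqref{eq:gita2}: every $W_{\sfd_\cE}$-absolutely continuous curve $\mu_t=\rho_t\m$ in $\cP^a(X)$ lies in ${\sf CE}^2(X,\cE,\m)$ with $\|\rho_t'\|\le|\dot\mu_t|$, whence $W_\cE(\mu_0,\mu_1)\le\int_0^1|\dot\mu_s|\dd s$. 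The paper proves this by localizing (with the same patching argument as above) the upper gradient property of $\psi\in\cA_\cE\cap C_b(X)$ against bounded $\tau$-upper semicontinuous $g\ge\sqrt{\Gamma(\psi)}$ along curves, integrating over a test plan representing $\mu_t$, and then handling a general $\phi\in\cA_\cE$ by inserting $\psi=\sfP_t^\cE\phi$, using the $\BE K\infty$ regularization $\sfP_t^\cE:L^\infty(X,\m)\to\cA_\cE$ together with the Feller property (which keeps bounded $\tau$-upper semicontinuous upper bounds stable under $\sfP_t^\cE$), and finally letting $t\downarrow0$; in (d) the Feller property is replaced by the convexity of the sets $\{\|\rho\|_\infty\le c\}$ when $K\ge0$ and, when $K>0$, by the $W_\cE$-density of bounded densities in $D(\ent)$ from Corollary~\ref{lem:june17}. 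Your compactness-based sketch for (d) likewise never produces this estimate relating $|\dot\mu_t|_{W_{\sfd_\cE}}$ to $\|\rho_t'\|$, which is the crux of both statements.
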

\begin{proof} We prove first the inequality $\cE\leq \Ch_{\sfd_{\cE}}$.
Taking Theorem~\ref{thm:stabchee} and the lower semicontinuity of $\cE$ into account, it suffices to show that
$\cE\leq \Ch_i$, where $\Ch_i$ is the Cheeger energy associated to the semimetric measure space $(X,\tau,\sfd_i,\m)$ and 
$\sfd_i$ is the monotone approximation of $\sfd_{\cE}$ illustrated by \eqref{eq:47}. By Lemma~\ref{lem:3.12AOP}
we obtain
$$
\cE(f)\leq\int |\rmD_i f|^2\dd\m
$$
for all $f:X\to\R$ bounded and $\sfd_i$-Lipschitz, where $|\rmD_i f|$ denotes the slope w.r.t.~$\sfd_i$. Using
Proposition~\ref{prop:calculus}(b) and once more the lower semicontinuity of $\cE$ we conclude.

The necessity of $\tau$-upper regularity for the validity of the equality $\cE=\Ch_{\sfd_{\cE}}$ follows by
applying Theorem~\ref{thm:stability} to the $(\tau\times\tau)$-continuous semidistances $\sfd_i$ which monotonically converge
to $\sfd_\cE$: one obtains the $\tau$-upper regularity (along a subnet $i=\beta(j)$) with
$f_i\in {\rm Lip}(X,\tau,\sfd_i)$ and with the $\sfd_i$-upper semicontinuous 
  (and thus also $\tau$-upper semicontinuous) functions
$g_i={\rm Lip}_a(f_i,\sfd_i,\cdot)$. 

For the sufficiency of $\tau$-upper regularity we follow the argument
in \cite[Prop.~3.11]{AGS12}. 
Thanks to the $\tau$-upper regularity, in order
to prove that $\Ch_{\sfd_{\cE}}\leq\cE$ it is sufficient to show that $f\in D(\Ch_{\sfd_\cE})$ and that $|\rmD f|_{w,\sfd_\cE}\leq g$ $\m$-a.e.~in $X$
whenever $f\in {\rm Lip}(X,\tau,\sfd_\cE)$ 
and $g$ is a bounded $\tau$-upper semicontinuous function such that $g\geq \sqrt{\Gamma(f)}$
$\m$-a.e.~in $X$. By the very definition of $\Ch_{\sfd_\cE}$,
$f\in D(\Ch_{\sfd_{\cE}})$. In order to prove the inequality $|\rmD f|_{w,\sfd_\cE}\leq g$ $\m$-a.e.~in $X$ we will 
prove the inequality for the slope $|\rmD f|_{\sfd_\cE}$. We need only, thanks to the upper semicontinuity of $g$, 
to prove the pointwise inequality $|\rmD f|_{\sfd_\cE}\leq c$ in
the $\tau$-open set $U:=\{g<c\}$. By homogeneity, we can assume
$c=1$ and we fix $x_0\in U$; since $\tau$ is generated by $\cL$ we can find a finite
collection $(f_n)_{n=1}^N$ of elements of $\cL$ and $r>0$ such that 
\begin{displaymath}
  F:=\big\{x\in X:\ \max_{1\leq n\leq N} |f_n(x)-f_n(x_0)|\le r\big\}\subset U\;.
\end{displaymath}
Set
$$
\delta(x):=\max_{1\leq n\leq N} |f_n(x)-f_n(x_0)|,\quad
\nc {l(x)}:=\min\{r,|f(x)-f(x_0)|\}\;,\quad
h(x):=\max\{{l}(x),\delta(x)\nc \}\;,
$$
and notice that $\delta\in \cAs$. 
Since $\{h={l}\}=\{{l}\geq\delta\}\subset F  
\subset U$ and since $\Gamma(l)\leq 1$ in $U$, 
by locality we get $\Gamma(h)\leq 1$ $\m$-a.e.~in $X$;
since $h$ is $\tau$-continuous we get $h\in \cAs$, so that \eqref{eq:defCE} yields 
$$
h(x)=h(x)-h(x_0)\leq\sfd_\cE(x,x_0)\;.
$$
Since the topology induced by $\sfd_\cE$ is stronger than $\tau$,
the $\tau$-continuity of $f$ gives $h(x)\geq |f(x)-f(x_0)|$ for
$\sfd_\cE(x,x_0)$ sufficiently small. It follows that $|\rmD f|_{\sfd_\cE}(x_0)\leq 1$.

Finally, statements (a) and (b) follow by Corollary~\ref{cor:eqmet} and Theorem~\ref{thm:mainidenti} of the metric theory, taking also
the inequalities $W_{\sfd_\cE}\leq W_{\cE,*}\leq W_{\cE}$ into account.

Let us prove now statement (c). It suffices to show that $W_{\cE}^2(\rho_0\m,\rho_1\m)\leq
\int_0^1|\dot\mu_s|^2\dd s$ for any absolutely continuous curve $\mu_t$ w.r.t.~$W_{\sfd_\cE}$ contained in $\cP^a(X)$, 
where $|\dot\mu_t|$ denotes the metric derivative w.r.t.~$W_{\sfd_\cE}$. Since $\cE$ is $\tau$-upper regular, we can identify 
$\cE$ with $\Ch_{\sfd_\cE}$ and $\Gamma(f)$ with $|\rmD f|_w^2$. By the definition of $W_\cE$, it will be sufficient to prove the inequality
\begin{equation}\label{eq:gita2}
\biggl|\int\phi\rho_{s_1}\dd\m-\int\phi\rho_{s_2}\dd\m\biggr|\leq
 \int_{s_1}^{s_2}\biggl(\int \Gamma(\phi)\rho_s\dd\m\biggr)^{1/2}|\dot\mu_s|\dd s
\end{equation}
for all $\phi\in\mathcal A_\cE$ and $0\leq s_1\leq s_2\leq 1$.

We start from the observation that for any $\psi\in\mathcal A_\cE\cap C_b(X)$ and any bounded $\tau$-upper semicontinuous function
$g\geq \sqrt{\Gamma(\psi)}$ $\m$-a.e.~in $X$ the Lipschitz property of $\psi$ w.r.t.~$\sfd_\cE$ can be ``localized'' as in the
first part of the proof of the theorem to get that $\psi\circ\eta$ is absolutely continuous in $[0,1]$ for all $\eta\in AC([0,1];(X,\sfd_\cE))$ with
$|(\psi\circ\eta)'|\leq g(\eta)|\dot\eta|$ a.e.~in $(0,1)$. By integrating this inequality along a test plan $\eeta$ representing the curve $\mu_t$
we get
$$
\biggl|\int \psi\rho_{s_1}\dd\m-\int\psi\rho_{s_2}\dd\m\biggr|\leq
\int_{s_1}^{s_2}\biggl(\int g^2\rho_s\dd\m\biggr)^{1/2}|\dot\mu_s|\dd s\;.
$$
Now, as in \cite[Thm.~3.15]{AGS12} we can apply the regularization property
$$
\sfP_t^\cE:L^\infty(X,\m)\to\mathcal A_\cE\;,\qquad \|\Gamma(\sfP_t^\cE f)\|_\infty\leq c(K,t,\|f\|_\infty)\quad\forall f\in L^\infty(X,\m)
$$
derived from \ref{eq:BE-grad} with $\Gamma$-calculus techniques (see for instance \cite[Cor.~2.3]{AGS12} for a proof) and the Feller property
$\sfP^\cE_t:C_b(X)\to C_b(X)$ (which implies, by monotone approximation, 
that the class of bounded $\tau$-upper semicontinuous functions is invariant under $\sfP_t^\cE$)
to get from the previous inequality with $\psi=\sfP_t^\cE\phi$ the inequality
$$
\biggl|\int \sfP_t^\cE\phi\rho_{s_1}\dd\m-\int \sfP_t^\cE\phi\rho_{s_2}\dd\m\biggr|\leq
\int_{s_1}^{s_2}\biggl(\int (c\wedge \e^{-2Kt}\sfP^\cE_tg^2)\rho_s\dd\m\biggr)^{1/2}|\dot\mu_s|\dd s
$$
with $c=c(K,t,\|\phi\|_\infty)$. Now, for all $\phi\in\mathcal A_\cE$ we exploit once more the $\tau$-upper regularity assumption, using in
the previous inequality functions 
$\phi_n\in\mathcal A_\cE$ with $\|\phi_n\|_\infty\leq\|\phi\|_\infty$ and bounded $\tau$-upper semicontinuous functions $g_n\geq\sqrt{\Gamma(\phi_n)}$ 
with $\phi_n\to\phi$ and $g_n\to \sqrt{\Gamma(\phi)}$ in $L^2(X,\m)$, to get 
\begin{eqnarray*}
\biggl|\int \sfP_t^\cE\phi\rho_{s_1}\dd\m-\int \sfP_t^\cE\phi\rho_{s_2}\dd\m\biggr|&\leq&
\int_{s_1}^{s_2}\biggl(\int (c\wedge \e^{-2Kt}\sfP^\cE_t\Gamma(\phi))\rho_s\dd\m\biggr)^{1/2}|\dot\mu_s|\dd s\\&\leq&
\e^{-Kt}\int_{s_1}^{s_2}\biggl(\int \sfP^\cE_t\Gamma(\phi)\rho_s\dd\m\biggr)^{1/2}|\dot\mu_s|\dd s\;.
\end{eqnarray*}
Eventually we can take the limit as $t\downarrow 0$ to obtain \eqref{eq:gita2}.

The proof of (d) in the case $K\geq 0$
is similar and uses the convexity properties of $\{\mu=\rho\,\m:\ \|\rho\|_\infty\leq c\}$ to avoid the regularization based
on the Feller property $\sfP_t^\cE:C_b(X)\to C_b(X)$. In the case $K>0$ the result can be extended to $D(\ent)$ thanks
to the $W_\cE$-density of $\{\mu=\rho\,\m:\ \rho\in L^\infty(X,\m)\}$ in $D(\ent)$, ensured by Corollary~\ref{lem:june17}.
\end{proof}

\subsection{Extended metric measure spaces induce energy measure spaces}

In view of the results of this section, it is useful to consider the case when $\Ch$ is a quadratic form, namely
to assume that the parallelogram identity holds:
\begin{equation}\label{eq:parallelogram}
\text{$\Ch(f+g)+\Ch(f-g)=2\Ch(f)+2\Ch(g)$ for all $f,\,g\in L^2(X,\m)$\;.}
\end{equation} 

\begin{definition}[Asymptotically Hilbertian spaces]
We say that an extended metric measure space $(X,\tau,\sfd,\m)$ is asymptotically Hilbertian if $\Ch$ satisfies
the parallelogram identity \eqref{eq:parallelogram}.
\end{definition}

In the proof of the next theorem we will also need the following calculus property, borrowed from \cite{AGS11b}.

 \begin{lemma}[Plan representing the gradient, horizontal and vertical derivatives]
   \label{lem:horizontal-vertical}
    Assume that $(X,\sfd)$ is complete and let $u\in D(\Ch)\cap L^\infty(X,\m)$. Then there exists a test
    plan $\ssigma\in\cP(X^{[0,1]})$ representing the gradient of $u$ in the following
    sense:
  $$
  \lim\limits_{t\downarrow0} \frac{E_t}{t}= \lim\limits_{t\downarrow0} \frac{u\circ\e_0-u\circ\e_t}{E_t}=|\rmD u|_w\circ\e_0 \quad\text{in } 
  L^2(X^{[0,1]},\ssigma)\;,
  $$
  where $E_t(\eta):=\sqrt{t\int_0^t|\dot\eta(s)|^2\dd s}$.
   Moreover, for any $f\in D(\Ch)$ one has:
   \begin{align}\label{eq:horizontal-vertical}
     \liminf\limits_{t\downarrow0}\int\frac{f(\eta(t))-f(\eta(0))}{t}\dd\ssigma\geq
     \limsup\limits_{\eps\downarrow0}\int\frac{|\rmD u|^2_w(\eta(0))-|\rmD (u+\eps f)|^2_w(\eta(0))}{2\eps}\dd\ssigma\;.
    \end{align}
  \end{lemma}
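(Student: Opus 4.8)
The plan is to build $\ssigma$ from the heat flow started at a density tailored to $u$, to read off the ``representing'' identities from the calculus rules of Proposition~\ref{prop:calculus} together with the entropy dissipation \eqref{eq:easter5}, and finally to derive \eqref{eq:horizontal-vertical} by a one–sided upper gradient estimate combined with the convexity of $\eps\mapsto\int|\rmD(u+\eps f)|_w^2\dd\mu_0$. For the construction, since $u\in L^\infty(X,\m)$ the density $\rho_0:=\e^{u}/Z$ with $Z:=\int\e^{u}\dd\m$ is a probability density with $0<c\le\rho_0\le C$; set $\rho_s:=\sfP_s\rho_0$ and $\mu_s:=\rho_s\m$. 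By the minimum and maximum principle the bounds $0<c\le\rho_s\le C$ persist on $[0,1]$, and Remark~\ref{rem:heat} gives $\rho_s\in{\sf CE}^2(X,\Ch,\m)$. Theorem~\ref{thm:abs_char} then yields $\mu_s\in AC^2([0,1];(\cP(X),W_\sfd))$, and the superposition principle Proposition~\ref{prop:lisini} (this is where completeness of $(X,\sfd)$ enters) produces $\ssigma\in\cP(X^{[0,1]})$ concentrated on $AC^2$ curves with $(\e_s)_\#\ssigma=\mu_s$ and $\int|\dot\eta(s)|^2\dd\ssigma=|\dot\mu_s|^2$ a.e.; since $(\e_s)_\#\ssigma=\rho_s\m\le C\m$, $\ssigma$ is a test plan. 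The choice $\rho_0=\e^{u}/Z$ is made so that $\log\rho_0=u-\log Z$ and, by the chain rule Proposition~\ref{prop:calculus}(e), $|\rmD\rho_0|_w=\rho_0|\rmD u|_w$; hence $\mathsf F(\rho_0)=\int|\rmD u|_w^2\dd\mu_0=:M=\||\rmD u|_w\circ\e_0\|_{L^2(\ssigma)}^2$.

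For the representing property write $b_t:=u\circ\e_0-u\circ\e_t$, $P_t:=\|b_t/E_t\|_{L^2(\ssigma)}$ and $Q_t:=\|E_t/t\|_{L^2(\ssigma)}$. The upper bound $\limsup_tP_t^2\le M$ is exactly Proposition~\ref{prop:calculus}(h). Using $\int(E_t/t)^2\dd\ssigma=\tfrac1t\int_0^t|\dot\mu_s|^2\dd s\le\tfrac1t\int_0^t\mathsf F(\rho_s)\dd s=\tfrac1t\bigl(\ent(\mu_0)-\ent(\mu_t)\bigr)$ (the equality being \eqref{eq:easter5}), together with the convexity of $r\mapsto r\log r$ and $\log\rho_0=u-\log Z$, which give $\int b_t\dd\ssigma=\int u(\rho_0-\rho_t)\dd\m\ge\ent(\mu_0)-\ent(\mu_t)$, I obtain $Q_t^2\le\tfrac1t\int b_t\dd\ssigma\le P_tQ_t$, hence $Q_t\le P_t$; lower semicontinuity of $\mathsf F$ moreover yields $\liminf_t\tfrac1t\int b_t\dd\ssigma\ge M$. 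Squeezing, $M\le\liminf_t\tfrac1t\int b_t\dd\ssigma\le\liminf_tP_tQ_t\le\limsup_tP_t^2\le M$ forces $P_t,Q_t\to\sqrt M$ and $\tfrac1t\int b_t\dd\ssigma\to M$, so the Hilbert–space identity $\|b_t/E_t-E_t/t\|^2=P_t^2-2\cdot\tfrac1t\int b_t\dd\ssigma+Q_t^2\to0$ shows the two quotients share a common $L^2(\ssigma)$–limit; this limit is identified as $|\rmD u|_w\circ\e_0$ by sandwiching $|b_t/E_t|\le R_t:=\bigl(\tfrac1t\int_0^t|\rmD u|_w^2\circ\e_s\dd s\bigr)^{1/2}$ via Proposition~\ref{prop:calculus}(g) and observing $\|R_t\|_{L^2}^2=\tfrac1t\int_0^t\!\int|\rmD u|_w^2\rho_s\dd\m\,\dd s\to M$ by weak-$*$ continuity of $s\mapsto\rho_s$.

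For \eqref{eq:horizontal-vertical} I would set $\Phi(\eps):=\int|\rmD(u+\eps f)|_w^2\dd\mu_0$, convex in $\eps$ with $\Phi(0)=M$. Applying Proposition~\ref{prop:calculus}(g) to $u+\eps f$ in the form $(u+\eps f)\circ\e_t-(u+\eps f)\circ\e_0\ge-\int_0^t|\rmD(u+\eps f)|_w\circ\e_s\,|\dot\eta|\dd s$, integrating in $\ssigma$, dividing by $t$ and letting $t\downarrow0$, I use $\tfrac1t\int(u\circ\e_t-u\circ\e_0)\dd\ssigma\to-M$ (representing property) and bound the right-hand side by Cauchy–Schwarz, the two factors tending to $\sqrt{\Phi(\eps)}$ (space–time average of $|\rmD(u+\eps f)|_w^2$, again by weak-$*$ continuity of $\rho_s$) and $\sqrt{\Phi(0)}=\sqrt M$ (the limit $Q_t\to\sqrt M$). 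This gives $\eps\liminf_t\tfrac1t\int(f\circ\e_t-f\circ\e_0)\dd\ssigma\ge\Phi(0)-\sqrt{\Phi(\eps)\Phi(0)}$; dividing by $\eps$, writing $\Phi(0)-\sqrt{\Phi(\eps)\Phi(0)}=\sqrt{\Phi(0)}\,\bigl(\Phi(0)-\Phi(\eps)\bigr)/\bigl(\sqrt{\Phi(0)}+\sqrt{\Phi(\eps)}\bigr)$ and letting $\eps\downarrow0$ produces the lower bound $-\tfrac12\Phi'(0^+)$, which by convexity equals $\limsup_{\eps\downarrow0}\int\frac{|\rmD u|_w^2-|\rmD(u+\eps f)|_w^2}{2\eps}\dd\mu_0$, i.e.\ the claimed inequality.

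I expect the main obstacle to be the identification of the common limit in the representing property as precisely $|\rmD u|_w\circ\e_0$, rather than merely the convergence of the $L^2$–norms: this rests on the saturation of the Cauchy–Schwarz inequalities forced by the matching upper bound (h) and the lower bound coming from the entropy dissipation, and on upgrading the norm convergence $\|R_t\|\to\||\rmD u|_w\circ\e_0\|$ to strong convergence, for which the weak-$*$ continuity of the heat–flow densities and the test-plan bound $(\e_s)_\#\ssigma\le C\m$ are essential. Everything else reduces to the calculus rules of Proposition~\ref{prop:calculus} and elementary convexity.
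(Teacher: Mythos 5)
Your argument is essentially the one the paper intends: the paper's proof is a pointer to \cite{AGS11b} (Lem.~4.15 and Lem.~4.5), whose content is exactly your scheme --- heat flow started from $c\,\e^u$, lifted by Proposition~\ref{prop:lisini} (justified via Remark~\ref{rem:heat} and Theorem~\ref{thm:abs_char}), representing property extracted from the entropy dissipation \eqref{eq:easter5} together with the relative-entropy (Jensen) inequality and Proposition~\ref{prop:calculus}(g),(h), and then \eqref{eq:horizontal-vertical} by the splitting $f=\frac1\eps\big((u+\eps f)-u\big)$, Cauchy--Schwarz and convexity of $\Phi$. Two remarks. First, the paper additionally reweights the lifted plan, setting $\ssigma:=(v^{-1}\circ\e_0)\,\eeta$ with $v=c\,\e^u$, so that $(\e_0)_\#\ssigma=\m$, whereas your plan has $(\e_0)_\#\ssigma=Z^{-1}\e^u\m$; since $u\in L^\infty(X,\m)$ the weight is bounded above and away from zero, $L^2(\ssigma)$- and $L^2(\eeta)$-convergence coincide, so your unweighted plan satisfies the lemma as stated (no initial marginal is prescribed) --- the normalization $(\e_0)_\#\ssigma=\m$ only matters downstream, when the lemma is used for the Leibniz rule in Theorem~\ref{thm:ultrap1}. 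Second, the identification step you flag as the main obstacle needs more than you write: the strong $L^2(\ssigma)$-convergence $R_t\to|\rmD u|_w\circ\e_0$ is \emph{not} a consequence of the weak-$*$ continuity of $s\mapsto\rho_s$ (that only gives $\|R_t\|_{L^2(\ssigma)}\to\sqrt M$); it follows by approximating $|\rmD u|_w^2$ in $L^1(X,\m)$ by bounded $\tau$-continuous functions (density from Lemma~\ref{lem:sigmaalgebras}) and using the uniform bound $(\e_s)_\#\ssigma\le C\m$ together with the $\tau$-continuity of $\ssigma$-a.e.\ curve. Moreover the sandwich $|b_t|/E_t\le R_t$ plus norm convergence alone does not pin down the limit (the quotient could oscillate under the envelope); you also need the Cauchy--Schwarz saturation $\int R_t\,(E_t/t)\dd\ssigma\to M$, which follows from $\tfrac1t\int b_t\dd\ssigma\to M$, $|b_t|/t\le R_t\,E_t/t$ and $\|R_t\|\,Q_t\to M$, and which yields $\|R_t-E_t/t\|_{L^2(\ssigma)}\to0$, hence $E_t/t\to|\rmD u|_w\circ\e_0$ and then $b_t/E_t\to|\rmD u|_w\circ\e_0$ strongly. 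With these two points supplied, your proof is complete and coincides in substance with the paper's.
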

  \begin{proof}
    The first result is proven as in \cite[Lem. 4.15]{AGS11b} and relies on
    Proposition~\ref{prop:lisini} applied to the semigroup $\sfP_t$ starting from $v:=c\e^u$ (with $c$ suitable
    normalization constant), and then defining $\ssigma:=v^{-1}\circ e_0\eeta$.
    The possibility to apply Proposition~\ref{prop:lisini} to the gradient flow is ensured by \eqref{eq:contiCh} and Theorem~\ref{thm:abs_char}.
    The second result is obtained as in \cite[Lem.~4.5]{AGS11b}.
  \end{proof}

\begin{theorem}\label{thm:ultrap1}
If $(X,\tau,\sfd,\m)$ is an asymptotically Hilbertian extended
metric-topological measure space with $(X,\sfd)$ complete, 
and if $\Ch$ denotes the associated Cheeger energy, defining $\cE:=\Ch$ one has that
$(X,\BorelSets{\tau},\cE,\m)$ is an energy measure space according to Definition~\ref{def:extmmenergy}.
In addition:
\begin{itemize}
\item[(a)] $\Gamma(f)=\abs{\rmD f}^2_w$ for any $f\in \V\cap L^\infty(X,\m)$ and the formula
$$
\Gamma(f,g):=\lim_{\epsilon\downarrow 0}\frac{|\rmD (f+\epsilon g)|_w^2-|\rmD f|_w^2}{2\epsilon}
 \qquad\text{in $L^1(X,\m)$}
 $$
extends the $\Gamma$ operator from $\V\cap L^\infty(X,\m)$ to the whole of $\V$.
\item[(b)] the extended distance  
\begin{equation}\label{eq:defCEbis}
\sfd_\cE(x,y):=\sup\left\{|f(x)-f(y)|:\ f\in \V\cap C_b(X),\,\,\Gamma(f)\leq 1\right\}
\end{equation}
satisfies
$\sfd_\cE\geq \sfd$ and, denoting by $\Ch_{\sfd_\cE}$ the Cheeger
energy associated to the new metric-topological structure
$(X,\tau,\sfd_{\cE},\m)$, one has $\Ch_{\sfd_\cE}=\cE$.
\item[(c)] $\sfd_{\cE}=\sfd$ if and only if $f\in \V\cap C_b(X)$ and $|\rmD f|_w\leq 1$ $\m$-a.e.~in $X$ implies 
$f\in {\rm Lip}_b(X,\tau,\sfd)$ with ${\rm Lip}(f,\sfd)\leq 1$. 
\end{itemize}
\end{theorem}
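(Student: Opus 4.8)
The plan is to treat the main assertion together with (a) first, then deduce (b) and (c) from the stability and upper-regularity machinery of the previous sections. For the main assertion, the asymptotically Hilbertian hypothesis means $\Ch$ obeys the parallelogram identity, so setting $\cE:=\Ch$ and polarizing produces a symmetric bilinear form on $\V=D(\Ch)$ whose quadratic part is $\Ch$; its domain contains ${\rm Lip}_b(X,\tau,\sfd)$ and is therefore dense in $L^2(X,\m)$ by Lemma~\ref{lem:sigmaalgebras}. The Markovian property $\cE(\phi\circ f)\le\cE(f)$ for $1$-Lipschitz $\phi$ follows from the chain rule Proposition~\ref{prop:calculus}(e), strong locality from Proposition~\ref{prop:calculus}(c), and closedness/lower semicontinuity from Proposition~\ref{prop:calculus}(a). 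For the carré du champs and statement (a), I would follow \cite{AGS11b}: since $\Ch$ is quadratic, the map $\eps\mapsto|\rmD(f+\eps g)|_w^2$ admits an $L^1(X,\m)$-derivative at $\eps=0$, whose half defines a symmetric bilinear $\Gamma(f,g)$ extending $|\rmD f|_w^2$; the comparison between horizontal and vertical derivatives furnished by Lemma~\ref{lem:horizontal-vertical} (which uses the completeness of $(X,\sfd)$ via Proposition~\ref{prop:lisini}) is exactly what is needed to verify that this $\Gamma$ satisfies the defining identity \eqref{def:carre}, so that $\Gamma(f)=|\rmD f|_w^2$ on $\V\cap L^\infty(X,\m)$. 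This simultaneously yields the energy measure space structure of Definition~\ref{def:extmmenergy} and part (a).

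For (b) I first show $\sfd_\cE\ge\sfd$. Every $f\in{\rm Lip}_{b,1}(X,\tau,\sfd)$ has ${\rm Lip}_a(f,\cdot)\le1$, hence $|\rmD f|_w\le{\rm Lip}_a^*(f,\cdot)\le1$ by Proposition~\ref{prop:calculus}(d), so $\Gamma(f)=|\rmD f|_w^2\le1$ and $f$ belongs to the competitor class $\{f\in\V\cap C_b(X):\Gamma(f)\le1\}$ defining $\sfd_\cE$ in \eqref{eq:defCEbis}. Since $\sfd(x,y)=\sup\{|f(x)-f(y)|:f\in{\rm Lip}_{b,1}(X,\tau,\sfd)\}$ — the functions $\sfd_i(\cdot,z)$ already realize this supremum, cf.\ Lemma~\ref{lem:obvious-but-useful} — this inclusion forces $\sfd_\cE\ge\sfd$.

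To prove $\Ch_{\sfd_\cE}=\cE$, I view $(X,\BorelSets{\tau},\cE,\m)$ as an energy measure space and apply Theorem~\ref{thm:ultrap} with the choice $\cL:={\rm Lip}_{b,1}(X,\tau,\sfd)$, which satisfies \eqref{eq:ass1}, generates the original $\tau$ by Lemma~\ref{lem:obvious-but-useful}, and produces precisely the distance $\sfd_\cE$ of \eqref{eq:defCEbis} (the competitors being genuinely $\tau$-continuous, pointwise-defined functions, so the construction via continuous representatives is unambiguous). Theorem~\ref{thm:ultrap} gives $\cE\le\Ch_{\sfd_\cE}$ in general, with equality iff $\cE$ is $\tau$-upper regular; the only remaining point is thus to check that the Cheeger energy $\cE=\Ch$ is $\tau$-upper regular. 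I obtain this from the last assertion of Theorem~\ref{thm:stability} applied to the $(\tau\times\tau)$-continuous semidistances $\sfd_i\uparrow\sfd$: for $f\in D(\Ch)$ it produces, along a subnet, $\sfd_{\beta(j)}$-Lipschitz functions $f_j\to f$ in $L^2(X,\m)$ — which are automatically $\sfd$-Lipschitz (since $\sfd_{\beta(j)}\le\sfd$) and $\tau$-continuous — together with the $\sfd_{\beta(j)}$-upper semicontinuous, hence $\tau$-upper semicontinuous, functions $g_j:={\rm Lip}_a(f_j,\sfd_{\beta(j)},\cdot)\ge|\rmD f_j|_{w,\sfd}$ satisfying $\int g_j^2\dd\m\to\int|\rmD f|_w^2\dd\m=\cE(f)$. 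These are exactly the data required by Definition~\ref{def:upper_regularity_cE}, so $\cE$ is $\tau$-upper regular and $\Ch_{\sfd_\cE}=\cE$.

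Finally, (c) is immediate from $\sfd_\cE\ge\sfd$: one has $\sfd_\cE=\sfd$ iff $\sfd_\cE\le\sfd$. If every $f\in\V\cap C_b(X)$ with $|\rmD f|_w\le1$ is $1$-Lipschitz w.r.t.\ $\sfd$, each competitor in \eqref{eq:defCEbis} obeys $|f(x)-f(y)|\le\sfd(x,y)$, giving $\sfd_\cE\le\sfd$; conversely, if $\sfd_\cE=\sfd$, any such $f$ satisfies $|f(x)-f(y)|\le\sfd_\cE(x,y)=\sfd(x,y)$ and is bounded and $\tau$-continuous, i.e.\ $f\in{\rm Lip}_{b,1}(X,\tau,\sfd)$. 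I expect the genuinely analytic obstacle to be the carré du champs identification of the first paragraph (part (a)); parts (b) and (c) are an assembly of previously established results, with the only delicate bookkeeping being the verification that the data produced by Theorem~\ref{thm:stability} indeed meet Definition~\ref{def:upper_regularity_cE} — in particular that the $\sfd_{\beta(j)}$-Lipschitz approximants qualify as $\sfd$-Lipschitz because $\sfd_{\beta(j)}\le\sfd$.
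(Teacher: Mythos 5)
Your proposal is correct and follows essentially the same route as the paper: locality and the chain rule for $|\rmD\cdot|_w$ give the Dirichlet form structure, part (a) is deferred to \cite{AGS11b} via Lemma~\ref{lem:horizontal-vertical}, the inequality $\sfd_\cE\geq\sfd$ comes from Proposition~\ref{prop:calculus}(d), and $\Ch_{\sfd_\cE}=\cE$ is obtained by combining the $\tau$-upper regularity furnished by Theorem~\ref{thm:stability} with Theorem~\ref{thm:ultrap}, with (c) read off from the definitions. The only difference is one of emphasis: you spell out the verification that Theorem~\ref{thm:stability}'s output meets Definition~\ref{def:upper_regularity_cE} and that $\cL={\rm Lip}_{b,1}(X,\tau,\sfd)$ satisfies the hypotheses of Theorem~\ref{thm:ultrap}, steps the paper states without detail, while the paper instead sketches the bilinearity argument for $\Gamma$ in (a) that you leave to the reference.
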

\begin{proof}
By the locality and chain rule properties of $f\mapsto |\rmD f|_w$ stated in Proposition~\ref{prop:calculus}(c,e), 
the asymptotically Hilbertian assumption ensures that $\cE:=\Ch$ is a strongly local and Markovian Dirichlet form in $L^2(X,\m)$.

The proof of statement (a) can be obtained as in \cite[Sec.~4]{AGS11b}, see in particular
\cite[Thm.~4.18]{AGS11b}. For convenience we briefly sketch the proof. Let us set for $f,\,g\in \V$
  \begin{align*}
    G(f,g):=\lim_{\epsilon\downarrow 0}\frac{|\rmD (f+\epsilon
      g)|_w^2-|\rmD f|_w^2}{2\epsilon} \qquad\text{in } L^1(X,\m)\;.
  \end{align*}
  Note that the limit above exists as a monotone limit by convexity of
  the minimal weak upper gradient, and that $\cE(f,g)=\int
  G(f,g)\dd\m$. One first establishes the following chain rule for
  $G$: for all $f,\,g\in \V$ and $\phi:\R\to\R$ non-decreasing, Lipschitz and $C^1$,
  one has
  \begin{align}\label{eq:G-chain}
    G(f,\phi(g))=\phi'(g)G(f,g)\;,\quad \int
    G(\phi(g),f)\dd\m=\int\phi'(g)G(g,f)\dd\m\;.
  \end{align}
  This is proven arguing as in \cite[Lem.~4.7]{AGS11b}. Moreover one follows
  \cite[Prop.~4.17]{AGS11b} using Lemma~\ref{lem:horizontal-vertical}
  to establish the Leibniz rule
  \begin{align}\label{eq:G-Leibniz}
   \cE(f,gh)=\int h G(f,g) + g G(f,h)\dd\m
  \end{align}
  for all $f,\,g,\,h\in \V\cap L^\infty(X,\m)$ with $g,\,h\geq0$.
  
  To prove the claim it is sufficient to show that $G$ is bilinear and
  symmetric and therefore in turn it is sufficient to prove that
  $f\mapsto \int h |\rmD f|_w^2\dd\m$ is quadratic for every nonnegative
  bounded Borel $h$ or, by approximation, $h\in \V\cap
  L^\infty(X,\m)$. Now use \eqref{eq:G-chain}, \eqref{eq:G-Leibniz} to
  write
  \begin{align*}
    \int h |\rmD f|_w^2\dd\m =\int h G(f,f)\dd\m = -\cE(\frac12 f^2,h) + \cE(f,fh)\;.
  \end{align*}
  We conclude by quadraticity of the terms on the right hand side.
  
  The proof of statement (b) is easy, since all functions $f\in {\rm
    Lip}_b(X,\tau,\sfd)$ belong to $C_b(X)$ and
  $\sqrt{\Gamma(f)}=|\rmD f|_w\leq {\rm Lip}(f)$ $\m$-a.e.~in $X$. It
  follows that all the distances $\sfd_i$ approximating $\sfd$ from
  below are admissible in \eqref{eq:defCEbis}, so that
  $\sfd_\cE\geq\sfd$. Since Theorem~\ref{thm:stability} ensures that
  $\cE=\Ch$ is $\tau$-upper regular, by Theorem~\ref{thm:ultrap} we
  obtain $\Ch_{\sfd_\cE}=\cE$.
   
  In order to prove statement (c), notice that the inclusion $\{f\in
  C_b(X)\cap \V:\ |\rmD f|_w\leq 1\}$ in the class of
  $1$-Lipschitz functions w.r.t.~$\sfd$ implies, by the very
  definition of $\sfd_\cE$, that $\sfd_\cE\leq\sfd$.  The converse is
  obvious, again by the definition of $\sfd_\cE$.
\end{proof}

\section{Examples}
\label{sec:examples}

Here we collect natural examples of energy measure spaces and
extended metric measure spaces.

\subsection{Degenerate Dirichlet forms}

Consider $X=\R^2$ equipped with the usual topology $\tau$,
the Borel $\sigma$-algebra $\cB(\tau)$,  and the standard Gaussian measure $\gamma(\mathrm{d} x)=(2\pi)^{-1}
\e^{-|x|^2/2}\dd x$. Consider further a Dirichlet form measuring energy only in the first coordinate, i.e.
\begin{align*}
  \cE(f)=\int |\partial_1 f(x_1,x_2)|^2\dd\gamma(x_1,x_2) 
\end{align*}
for $f\in L^2(\R^2,\m)$ with $f(\cdot,x_2)\in W^{1,2}_{\text{loc}}(\R)$
for a.e. $x_2\in\R$. Then $(\R^2,\cB(\tau),\cE,\gamma)$ is an energy
measure space according to Definition~\ref{def:extmmenergy}.

As the distance generated from the algebra $\cA^*_{\cE}$ according to \eqref{eq:32}, \eqref{eq:defCE} one obtains
\begin{align*}
  \sfd_\cE\big((x_1,x_2),(y_1,y_2)\big)=
  \begin{cases}
    |x_1-y_1| & \text{if } x_2=y_2\;,\\
    +\infty & \text{else}
  \end{cases}
\end{align*}
and $(X,\tau,\sfd_\cE,\gamma)$ is an extended metric measure space according
to Definition~\ref{def:extmm}.

Note that the Bakry--\'Emery condition $\BE 1\infty$ is satisfied for
this Dirichlet form, but $\cE$ is not irreducible (see
\eqref{eq:53}); in fact, it is easy to construct measures
with bounded densities but infinite $W_{\cE}$ distance from $\m$.
Using a standard approximation by restriction, one can check that $\cE$ is $\tau$-upper regular. It follows that
$\cE$ coincides with the Cheeger energy induced by
$\sfd_\cE$, by Theorem~\ref{thm:ultrap}.

\subsection{Abstract Wiener spaces}

Consider a separable Banach space $X$ (or, more generally, a locally convex topological space)
and a centered, non-degenerate Gaussian measure $\gamma$ in $X$.
The Cameron-Martin space $\cH\subset X$ is the image under the mapping 
$$
Rf:=\int f(x)x\dd\gamma(x)\qquad f\in L^2(X,\gamma)
$$
of the so-called reproducing kernel of $\gamma$, namely the closure $H$ in $L^2(X,\gamma)$ of $\{\langle x',x\rangle\}_{x'\in X'}$
(here $X'$ is the topological dual of $X$ and $\langle\cdot,\cdot\rangle$ is the canonical pairing). It is easily seen that $R:H\to\cH$ is
injective, hence $\cH$ inherits from $L^2(X,\gamma)$ a Hilbert structure. The structure $(X,\gamma,\cH)$ is called
abstract Wiener space.

Denote by $\cH^\infty_b$ the set of smooth, bounded cylinder
functions, i.e. the functions $f$ of the form
$f(x)=\phi\big(\langle x_1',x\rangle,\ldots,\langle x_n',x\rangle\big)$, for
$x_1',\dots,x_n'\in X'$ and $\phi:\R^n\to\R$ smooth and bounded.
For such a cylinder function we define its gradient via
\begin{align*}
  \nabla_\cH f(x) = \sum\limits_{i=1}^n\frac{\partial\phi}{\partial z_i}\big(\langle x_1',x\rangle,\ldots,\langle x_n',x\rangle\big)
  R(\langle x_i',\cdot\rangle)\;.
\end{align*}
It is well-known that the quadratic form
\begin{align*}
  \cE(f)=\int |\nabla_\cH f|_\cH^2\dd\gamma\qquad f\in\cH^\infty_b\;
\end{align*}
is closable, that $\cH^\infty_b$ is dense in $L^2(X,\gamma)$ and that $\cE$ admits a carr\'e du champ operator,
see \cite{Bouleau-Hirsch91}. 
Thus $(X,\cB(\tau),\cE,\gamma)$, with
$\tau$ being the weak or strong topology of the Banach space $X$ is an energy
measure space according to Definition~\ref{def:extmmenergy}.

The associated semigroup $\sfP$ is given by Mehler's formula
\begin{align*}
  \sfP_tf(x)=\int f(\e^{-t}x+\sqrt{1-\e^{-2t}}y) \dd\gamma(y)\;. 
\end{align*}
From this one can check that the Bakry--\'Emery condition
$\BE 1\infty$ holds. Using a Rademacher type theorem for abstract Wiener spaces, see \cite{ES93}, one
can check that the induced distance according to \eqref{eq:32}, \eqref{eq:defCE} is the Cameron--Martin distance
\begin{align*}
  \sfd_\cE(x,y)=
  \begin{cases}
    |x-y|_\cH & \text{if } x-y\in \cH\;,\\
    +\infty & \text{else}\;. 
  \end{cases}
\end{align*}
Thus $(X,\tau,\sfd_\cE,\gamma)$ is an extended metric measure space according to Definition~\ref{def:extmm}.

\subsection{Configuration spaces}
Configuration spaces appear naturally as the state space for systems
of infinitely many indistinguishable diffusing particles. Let $M$ be a
Riemannian manifold with metric tensor
$\langle\cdot,\cdot\rangle$. The configuration space $\Upsilon$ over
$M$ is the set of all locally finite counting measures, i.e.
\begin{align*}
  \Upsilon=\{\gamma\in \mathcal{M}(M) : \gamma(K)\in\N_0\ \forall K\subset M \text{ compact}\}\;.
\end{align*}
The space $\Upsilon$ is equipped with the vague topology, denoted by $\tau$, in
duality with continuous and compactly supported functions.

A natural differentiable and energy structure on the configuration
space has been introduced in \cite{AKR98}, by lifting the geometry on
$M$, as we shall briefly describe. The tangent space
$T_\gamma\Upsilon$ consists of all $\gamma$-square integrable
vector fields on $M$ and is equipped with the inner product
\begin{align*}
  \langle V_1,V_2\rangle_\gamma=\int\langle V_1(x),V_2(x)\rangle_x\dd\gamma(x)\;. 
\end{align*}
Let $Cyl_b^\infty$ be the set of smooth and bounded cylinder
functions, i.e. functions $F:\Upsilon\to\R$ of the form
$F(\gamma)=g\big(\gamma(\phi_1),\ldots,\gamma(\phi_n)\big)$ where
$g\in C^\infty_b(\R^n)$ and $\phi_1,\dots,\phi_n\in C^\infty_c(M)$ and
we write $\gamma(\phi)=\int\phi\dd\gamma$.  Given $F\in Cyl_b^\infty$
we define its gradient at $\gamma$ as the vector field on $M$ given by
\begin{align*}
  T_\gamma\Upsilon\ni\nabla^\Upsilon F(\gamma) 
  =\sum\limits_{i=1}^n\frac{\partial g}{\partial z_i}\big(\gamma(\phi_1),\ldots,\gamma(\phi_n)\big)\nabla\phi_i\;.
\end{align*}
Similarly, for a cylindrical ``vector field'' on $\Upsilon$ of the
form $W=\sum_i F_iV_i$ with $F_i\in Cyl^\infty_b$ and $V_i$ smooth,
compactly supported vector fields on $M$, we define its divergence as
\begin{align*}
  \rm{div}^\Upsilon(W)(\gamma)=\sum_i\langle\nabla^\Upsilon F_i,V_i\rangle_\gamma + F_i(\gamma) \gamma(\rm{div}V_i)\;.
\end{align*}
The natural reference measure on $\Upsilon $ is the Poisson measure
$\pi$, that can be characterized by its Laplace transform, i.e. for all $f\in C_b(M)$:
\begin{align*}
  \int_{\Upsilon}\exp\big(\gamma(f)\big)\dd\pi(\gamma) =\exp\left(\int_M\exp\big(f(x)\big)-1 \dd\text{vol}(x)\right)\;.
\end{align*}
This is (up to the intensity) the unique {probability}
measure such that the gradient
and the divergence are adjoint in $L^2(\Upsilon,\pi)$, see
\cite[Thm. 3.2]{AKR98}. The quadratic form
\begin{align*}
  \cE(F,F)=\int \langle\nabla^\Upsilon F,\nabla^\Upsilon \rangle\dd\pi
\end{align*}
defined on $Cyl^\infty_b$ is closable to a Dirichlet form admitting a
carr\'e du champ operator, see \cite[Cor. 1.4]{AKR98},
\cite[Prop. 1.4]{RS99}, so that $(\Upsilon, \cB(\tau),\cE,\pi)$ is a
Energy measure space according to Definition~\ref{def:extmmenergy}. The associated semigroup is given by the
evolution of infinitely many independent Brownian particles on $M$.

The induced distance according to \eqref{eq:32}, \eqref{eq:defCE} is known to be the
$L^2$-transport distance between (non-normalized) configurations (see
\cite[Thm. 1.5]{RS99}), i.e.
\begin{align*}
  \sfd_\cE(\gamma,\eta)=\inf\limits_q\sqrt{\int\sfd^2(x,y)\dd q(x,y)}\;,
\end{align*}
where {$\sfd$ is the Riemannian distance and} the infimum is over all couplings $q$ of $\gamma$ and $\eta$.
$(X,\tau,\sfd_\eps,\pi)$ is now an extended metric measure space
according to Definition~\ref{def:extmm}.

It is shown in \cite[Prop. 2.3]{EH14} that $\cE$ coincides with the Cheeger energy constructed
from $d_\cE$ (as defined in \cite{AGS11a} based on Lipschitz
constants, but similar arguments apply to the construction used here
based on asymptotic Lipschitz constants).

A detailed study of curvature bounds for configuration spaces can be
found in \cite{EH14}. It has been shown that various notions of
curvature bounds lift from the base space $M$ to the configuration
space. In particular, if the Ricci curvature of $M$ is bounded below
by $K$, an Evolution Variational Inequality and the Bakry--\'Emery
gradient estimate with constant $K$ for the semigroup hold on the
configuration space.


\begin{thebibliography}{11}

\bibitem{AKR98}
S.~Albeverio, Y.~Kondratiev, and M.~R\"ockner.
\newblock {\em Analysis and geometry on configuration spaces}.
\newblock J. Funct. Anal., {\bf 154} (1998), 444--500.
  1998.

\bibitem{Ambrosio-Gigli-Savare08}
L.~Ambrosio, N.~Gigli, and G.~Savar{\'e}.
\newblock {\em Gradient flows in metric spaces and in the space of probability
  measures}.
\newblock Lectures in Mathematics ETH Z\"urich. Birkh\"auser Verlag, Basel,
  2008.

\bibitem{AGS11a}
L.~Ambrosio, N.~Gigli, and G.~Savar{\'e}.
\newblock {\em Calculus and heat flow in metric measure spaces and applications to
  spaces with Ricci bounds from below.}
\newblock Invent. Math., {\bf 195} (2014), 289--391.

\bibitem{AGS11b}
L.~Ambrosio, N.~Gigli, and G.~Savar{\'e}.
\newblock {\em Metric measure spaces with Riemannian Ricci curvature bounded
  from below.}
\newblock Duke Math. J., {\bf 163} (2014), 1405--1490.

\bibitem{AGS12}
L.~Ambrosio, N.~Gigli, and G.~Savar{\'e}.
\newblock {\em Bakry-\'{E}mery curvature-dimension condition and Riemannian
  Ricci curvature bounds.}
\newblock  Annals of Probability, {\bf 43} (2015), 339--404.

\bibitem{ACDm}
L.~Ambrosio, M.~Colombo, and S.~DiMarino.
\newblock {\em Sobolev spaces in metric measure spaces: reflexivity and lower semicontinuity of slope.}
\newblock Advanced Studies in Pure Mathematics, {\bf 67} (2015), 1--58.

\bibitem{Ambrosio-Mondino-Savare13}
L.~Ambrosio, A.~Mondino, G.~Savar\'e.
\newblock
{\em On the Bakry-\'{E}mery condition, the gradient estimates and the Local-to-Global property of RCD*(K,N) metric measure spaces}.
\newblock The Journal of Geometric Analysis, {\rm to appear.}
ArXiv e-prints 1309.4664 (2013) 1--29. 

\bibitem{AES}
L.~Ambrosio, M.~Erbar, and G.~Savar{\'e}. In preparation.

\bibitem{AT14}
L.~Ambrosio and D.~Trevisan.
\newblock {\em Well posedness of Lagrangian flows and continuity equations in
  metric measure spaces.}
\newblock Analysis \& PDE, {\bf 7} (2014), 1179--1234.

\bibitem{Bakry06}
D.~Bakry.
\newblock
{\em Functional inequalities for {M}arkov semigroups}, in
  Probability measures on groups: recent directions and trends, Tata Inst.
  Fund. Res., Mumbai, 2006, pp.~91--147.

\bibitem{Bakry-Emery84}
D.~Bakry and M.~\'Emery.
\newblock 
{\em Diffusions hypercontractives,} in
S\'eminaire de probabilit\'es, XIX, 1983/84, vol.~1123, Springer, Berlin,
1985, pp.~177--206.

\bibitem{Bakry-Gentil-Ledoux14}
D.~Bakry, I.~Gentil, and M.~Ledoux.
\newblock
{\em Analysis and Geometry of Markov
  Diffusion Operators}.
\newblock Vol.~348 of Grundlehren der mathematischen
  Wissenschaften, Springer, 2014.

\bibitem{Biroli-Mosco95}
M.~Biroli and U.~Mosco. 
\newblock {\em A Saint-Venant type principle for
Dirichlet forms on discontinuous media.}
  Ann. Mat. Pura Appl. (4), {\bf 169} (1995), 125--181.

\bibitem{Bogachev}
V.I.~Bogachev.
\newblock {\em Measure Theory I, II}.
\newblock Springer, Berlin, 2007.

\bibitem{Bouleau-Hirsch91}
N.~Bouleau and F.~Hirsch. 
\newblock {\em Dirichlet forms and analysis on {W}iener
  spaces}.
\newblock  Vol.~14 of De Gruyter studies in Mathematics, De Gruyter, 1991.

\bibitem{Brezis73}
H.~Br{\'e}zis.
\newblock {\em Op\'erateurs maximaux monotones et semi-groupes de
                  contractions dans les espaces de {H}ilbert}.
\newblock North-Holland Mathematics Studies, No. 5. Notas de
                  Matem\'atica (50), North-Holland Publishing Co., 1973.

\bibitem{Cheeger00}
J.~Cheeger. 
\newblock {\em Differentiability of {L}ipschitz functions on metric
  measure spaces}. Geom. Funct. Anal., {\bf 9} (1999), 428--517.

\bibitem{Daneri-Savare}
S.~Daneri and G.~Savar{\'e}.
\newblock {\em Lecture notes on gradient flows and optimal transport,}
\newblock in Optimal Transportation. Theory and Applications, 
Y.~Ollivier, H.~Pajot, C.~Villani eds. 
London Mathematical Society Lecture Note Series {\bf 413} (2014), 100--144.

\bibitem{Daneri-Savare1}
S.~Daneri and G.~Savar{\'e}.
\newblock {\em Eulerian calculus for the displacement
  convexity in the Wasserstein distance}, SIAM J. Math. Anal., {\bf 40} (2008),
  1104--1122.

\bibitem{ES93}
 O.~Enchev and D.~Stroock.
\newblock {\em Rademacher's theorem for {W}iener functionals}, Ann. Probab.,
{\bf 21} (1993), 25--33.

\bibitem{EH14}
    M.~Erbar, and M.~Huesmann.
    \newblock {\em Curvature bounds for configuration spaces}.
  Calc. Var. Partial Differential
  Equations, DOI:10.1007/s00526-014-0790-1, (2014).
 
\bibitem{EKS}
M.~Erbar, K.~Kuwada and K.T.~Sturm. 
\newblock {\em On the equivalence of the entropic curvature-dimension
condition and Bochner's inequality on metric measure spaces.}
\newblock Invent. Math., DOI: 10.1007/s00222-014-0563-7, (2014).

\bibitem{Fukushima-Oshima-Takeda11}
M.~Fukushima, Y.~Oshima, and M.~Takeda. 
\newblock {\em Dirichlet forms and symmetric {M}arkov processes}.
\newblock  Vol.~19 of de Gruyter Studies in Mathematics,
  Walter de Gruyter \& Co., Berlin, extended~ed., 2011.

\bibitem{Gigli10}
N.~Gigli. \newblock {\em On the heat flow on metric measure spaces: existence,
  uniqueness and stability}. Calc. Var. Partial Differential Equations, {\bf 39}
  (2010), 101--120.

\bibitem{AGMS12}
N.~Gigli, A.~Mondino, and G.~Savar\'e.
\newblock {\em Convergence of pointed
  non-compact metric measure spaces and stability of {R}icci curvature bounds
  and heat flows.}  arXiv:1311.4907 (2013).

\bibitem{Gigli}
N.~Gigli.
\newblock {\em On the differential structure of metric measure spaces and
  applications}. Memoirs of the AMS, {\bf 236}, n.~1113 (2014).

\bibitem{Gigli-Bangxian}
N.~Gigli and H.Bangxian.
\newblock {\em The continuity equation on metric measure spaces.}
\newblock Calc. Var. Partial Differential Equations, {\bf 53}  (2015), 149--177.

\bibitem{Hino-Ramirez}
M.~Hino, J.A.~Ramirez.
\newblock {\em Small-time Gaussian behavior of symmetric diffusion semigroups.}
The Annals of Probability, {\bf 31} (2003), 1254--1295.

\bibitem{Kellerer84}
H.G.~Kellerer.
\newblock {\em Duality theorems for marginal problems.}
\newblock
Z. Wahrsch. Verw. Gebiete, {\bf 67} (1984), 399--432.

\bibitem{Koskela_Zhou}
P.~Koskela and Y.Zhou.
\newblock {\em Geometry and Analysis of Dirichlet forms.}
\newblock  Adv. Math., {\bf 231} (2012), 2755--2801. 

\bibitem{Koskela_Zhou_Shanmugalingam}
P.~Koskela, Y.~Zhou, and N.Shanmugalingam.
\newblock {\em Geometry and Analysis of Dirichlet forms II.}
\newblock J. Funct. Anal.,  {\bf 267}  (2014),  2437--2477.

\bibitem{Kuwada10}
K.~Kuwada.
\newblock {\em Duality on gradient estimates and {W}asserstein controls.}
\newblock Journal of Functional Analysis, {\bf 258} (2010), 3758--3774.

\bibitem{LM72}
{J.L.~Lions and E.~Magenes}.
\newblock {\em Non Homogeneous Boundary Value Problems and
                  Applications}, Vol. I.
\newblock {Springer}, New-York, 1972.

\bibitem{Li14}
S.~Lisini.
\newblock {\em Absolutely continuous curves in extended Wasserstein--Orlicz spaces.}
\newblock {ArXiv e-prints 1402.7328} (2014), 1--22.

\bibitem{Munkres00}
J.R.~Munkres
\newblock{\em Topology: a first course.}
\newblock Prentice-Hall, Inc., Englewood Cliffs, N.J., 2000.

\bibitem{OhtaSturm12}
S.I.~Ohta and K.T.~Sturm.
\newblock {\em Non-contraction of heat flow on {M}inkowski spaces.}
\newblock Arch. Ration. Mech. Anal., {\bf 204} (2012), 917--944.

\bibitem{Otto-Villani00}
F.~Otto and C.~Villani.
\newblock {\em Generalization of an inequality by
  {T}alagrand and links with the logarithmic {S}obolev inequality.}
\newblock J. Funct. Anal., {\bf 173} (2000), pp.~361--400.

\bibitem{RS99}
M.~R\"ockner and A.~Schied.
\newblock {\em A general duality theorem for marginal problems.}
\newblock J. Funct. Anal., {\bf 169} (1999), 325--356.

\bibitem{Schwartz73}
L.~Schwartz. {\em  Radon measures on arbitrary topological spaces and cylindrical measures},
Tata Institute of Fundamental Research Studies in Mathematics, No.~6, 1973.

\bibitem{Serfaty}
S.~Serfaty. {\em $\Gamma$-convergence of gradient flows and applications to Ginzburg-Landau
 vortex dynamics.} Topics on concentration phenomena and problems with multiple
 scales,  267--292, Lect. Notes Unione Mat. Ital., 2, Springer, Berlin,  2006.

\bibitem{Shioya}
T.~Shioya. 
\newblock {\em Metric measure geometry -- Gromov's theory of convergence and concentration of metrics and measures.}
\newblock IRMA series of the European Mathematical Society, {\em to
  appear}. ArXiv e-prints 1410.0428 (2014), 1--172.

\bibitem{Stollmann10}
P.~Stollmann.
\newblock {\em A dual characterization of length spaces with
  application to Dirichlet metric spaces}.
\newblock Polska Akademia Nauk. Instytut
Matematyczny. Studia Mathematica, {\bf 198} (2010), 221--233.

\bibitem{Sturm_single}
K.T.~Sturm.
\newblock {\em Is a diffusion process determined by its intrinsic metric?}
\newblock Chaos Solitons Fractals, {\bf 8} (1997), 1855--1860.

\bibitem{Sturm_spaces}
K.T.~Sturm.
\newblock {\em The space of spaces: curvature bounds and gradient flows on the space of metric measure spaces.}
\newblock ArXiv e-prints 1208.0434 (2012), 1--75.

\bibitem{Vil03}
C.~Villani. 
\newblock {\em Topics in optimal transportation.}
\newblock Graduate Studies in Mathematics, {\bf 58},
American Mathematical Society, 2003.

\bibitem{Villani09}
C.~Villani. 
\newblock {\em Optimal transport. Old and new.} 
\newblock Grundlehren
  der Mathematischen Wissenschaften {\bf 338}, Springer-Verlag, Berlin, 2009.

\bibitem{Wang11}
F.Y.~Wang. 
\newblock {\em Equivalent semigroup properties for the
  curvature-dimension condition}. Bull. Sci. Math., {\bf 135} (2011), 803--815.
  
\bibitem{Weaver1}
N.~Weaver.
\newblock {\em Lipschitz Algebras.}
\newblock World Scientific, 1999.

\bibitem{Weaver2}
N.~Weaver.
\newblock {\em Quantum relations.} 
\newblock  Mem. Amer. Math. Soc., {\bf 215} (2012), 81--140.

\bibitem{Zaev}
D.~Zaev.
\newblock {\em $L^p$ Wasserstein distances on state and quasi-state spaces of $C^*$ algebras.}
\newblock ArXiv 1505.06061.

\end{thebibliography}
\end{document}